\numberwithin{equation}{section}
\numberwithin{equation}{section}
\newtheorem{proposition}{Proposition}[section]
\newtheorem{lemma}[proposition]{Lemma}
\newtheorem{corollary}[proposition]{Corollary}
\newtheorem{theorem}[proposition]{Theorem}
\newtheorem{thmintro}{Theorem}
\newtheorem{conjintro}[thmintro]{Conjecture}
\theoremstyle{definition}
\newtheorem{definition}[proposition]{Definition}
\newtheorem{conjecture}[proposition]{Conjecture}
\theoremstyle{remark}
\newtheorem{remark}[proposition]{Remark}
\newcommand{\Rmnum}[1]{\expandafter\@slowromancap\romannumeral #1@}
\def \g{\mathfrak{g}}
\def \k{\mathfrak{k}}
\def \N{\mathbb{N}}
\def \Q{\mathbb{C}}
\def \C{\mathbb{C}}
\def \Z{\mathbb{Z}}
\def \I{\mathbb{I}}
\def \Br{\mathrm{Br}}
\def \cm{{\mathcal M}}
\def \bF{\mathbb{F}}
\def \bc {\mathbf{c}}
\def \bd {\mathbf{d}}
\def \fg{\mathfrak{g}}
\def \A{\mathcal{A}}
\def \wI{\I_{\circ}}
\def \wItau{\I_{\circ,\tau}}
\def \bI{\I_{\bullet}}
\def \gr{\mathrm{gr}}
\def \PZ{\mathcal{Z}}
\def \F{\mathbb{F}}
\def \cR{\mathcal{R}} 
\def \cP{\mathcal{P}}
\def \cT{\mathcal{T}}
\def \bs{\mathbf{r}} 
\def \bF{\mathbb{F}}
\def \bw{w_\bullet}
\def \bwi{w_{\bullet,i}}
\def \bbw{{\boldsymbol{w}}}
\def \ba{\mathbf{a}}
\def \tx{\widetilde{x}}
\def \ty{\widetilde{y}}
\def \tz{\widetilde{z}}
\def \su{\mathfrak{u}}
\def \Im{\mathrm{Im}}
\def \Id{\mathrm{Id}}
\newcommand{\U}{\mathbf{U}}
\newcommand{\qbinom}[2]{\begin{bmatrix} #1\\#2 \end{bmatrix} }
\def \ov{\overline}
\def \un{\underline}
\def \X{\mathcal{X}}
\newcommand{\nc}{\newcommand}
\nc{\greentext}[1]{\textcolor{green}{#1}}
\nc{\redtext}[1]{\textcolor{red}{#1}}
\nc{\bluetext}[1]{\textcolor{blue}{#1}}
\nc{\brown}[1]{\browntext{ #1}}
\nc{\green}[1]{\greentext{ #1}}
\nc{\red}[1]{\redtext{ #1}}
\nc{\blue}[1]{\bluetext{ #1}}
\def \TT{\mathbf T}
\newcommand{\wt}{\text{wt}}
\def \U{\mathrm U}
\def \Ui{\mathrm{U}^\imath}
\def \ty{\widetilde{y}}
\def \bB{\mathbf{B}}
\def \bE{\mathbf{E}}
\def \bF{\mathbf{F}}
\def \bc{\mathbf{c}}
\def \Gr{\mathrm{Gr}\,}
\def \het{\mathrm{ht}}
\def \tr{\mathrm{tr}}
\def \Irr{\mathrm{Irr}\;}
\def \Spec{\mathrm{MaxSpec}\;}
\def \MaxSpec{\mathrm{MaxSpec}\;}
\def \sgn{\mathrm{sgn}}
\def \rank{\mathrm{rank}\;}
\def \P{\mathcal{P}}
\def \tT{\mathcal{T}}
\def \cO{\mathcal{O}}
\def \cOf{\mathcal{O}_{\mathrm{int}}}
\def \D{D}
\def \Uiv{\Ui_{v}}
\def \Ziv{Z_v^\imath}
\def \Zi{Z_0^\imath}
\def \Fr{\mathrm{Fr}}
\def \Fri{\mathrm{Fr}^\imath}
\def \orProd{\prod^{\longleftarrow}}
\newcommand{\arxiv}[1]{\href{http://arxiv.org/abs/#1}{\tt arXiv:\nolinkurl{#1}}}
\subjclass[2020]{Primary 17B37, 17B63}
\begin{document}

\title[Quantum symmetric pairs at roots of unity]{Representations of quantum symmetric pairs at roots of unity}

\author[Jinfeng Song]{Jinfeng Song}
\address{Department of Mathematics, The Hong Kong University of Science and Technology, Clear Water Bay, Hong Kong SAR, P.R.China}
\email{jfsong@ust.hk}

\author[Weinan Zhang]{Weinan Zhang}
\address{Department of Mathematics and New Cornerstone Science Laboratory, The University of Hong Kong, Pokfulam, Hong Kong SAR, P.R.China}
\email{mathzwn@hku.hk}

\begin{abstract}
    Let $\theta$ be an involution of a complex semisimple Lie algebra $\g$ and $(\U_v,\Uiv)$ be the associated quantum symmetric pair at an odd root of unity $v$. In this paper, generalizing the approach of De Concini-Kac-Procesi for quantum groups, we study the structures and irreducible representations of the iquantum group $\Uiv$. 
    
    We establish a Frobenius center of $\Uiv$ as a coideal subalgebra of the Frobenius center of the quantum group $\U_v$. Via a quantum Frobenius map, we show that the Frobenius center of $\Uiv$ is isomorphic to the coordinate algebra of a Poisson homogeneous space $\X$ of the dual Poisson-Lie group $G^*$. We define a filtration on $\Uiv$ such that the associated graded algebra is $q$-commutative. Using this filtration, we show that the full center of $\Uiv$ is generated by the Frobenius center and the Kolb-Letzter center, and we determine the degree of $\Uiv$. 
    We show that irreducible representations of $\Uiv$ are parametrized by $\theta$-twisted conjugacy classes. We determine the maximal dimension of those irreducible representations, and show that the dimension of an irreducible representation is maximal if the corresponding twisted conjugacy class has maximal dimension. We also study the branching problem for irreducible $\U_v$-modules when restricting to $\Uiv$. 
\end{abstract}

\maketitle

 \setcounter{tocdepth}{1}
\tableofcontents

\section{Introduction}

\subsection{Backgrounds}
Let $\g$ be a complex semisimple Lie algebra and $\U$ be the corresponding Drinfeld-Jimbo quantum group with a quantum parameter $q$. While there are rich theories on quantum groups when $q$ is generic, many interesting phenomena appear when $q$ is specialized at roots of unity. For instance, quantum groups at roots of unity are closely related to modular representation theory (cf. \cite{Lus90,AJS94}) and provide a large class of Cayley-Hamilton algebras (cf. \cite{DCPRR05}). More recently, they have been applied in geometric representation theory and the Langlands program (cf. \cite{GK93,ABG04}).

Let $\theta$ be a Lie algebra involution of $\g$, and $\k\subset \g$ be the fixed-point subalgebra. It is known that $\k$ is a reductive Lie algebra. Associated with the symmetric pair $(\g,\k)$, the \emph{quantum symmetric pair} $(\U,\Ui)$ introduced by Letzter \cite{Let02}, consists of the Drinfeld-Jimbo quantum group $\U$ and a coideal subalgebra $\Ui\subset \U$, called the \emph{iquantum group}. Following the pioneering work of Bao and Wang \cite{BW18a}, it has become clear that iquantum groups are vast generations of quantum groups, and many deep theories regarding quantum groups admit highly nontrivial generalizations to the setting of quantum symmetric pairs; see the survey \cite{Wa23} and references therein. 

There are two distinguished integral forms of quantum groups: the \emph{Lusztig form} and the \emph{De Concini-Kac form}. As for their generalizations to quantum symmetric pairs, the study of Lusztig type form of iquantum groups was initiated by Bao-Wang in their construction of the icanonical basis \cite{BW18b}, and its specialization at roots of unity was developed in \cite{BS21,BS22}. On the other hand, the De Concini-Kac type form of iquantum groups was introduced only recently by the first author \cite{So24} and its specialization at roots of unity remains largely unknown. 

Despite the burgeoning development of the structure theory of quantum symmetric pairs, the basic properties of representations of $\Ui$ remain widely open. In particular, due to the lack of a natural triangular decomposition for iquantum groups, the classification of irreducible modules have merely been established in several special types so far; cf. \cite{Wat21,Wat25,Wen20}. 

The goal of the current paper is to provide a systematic study of the structures and irreducible representations of the De Concini-Kac type form of iquantum groups at odd roots of unity. In particular, for these algebras, we obtain a parametrization of irreducible modules and describe their dimensions. This can be viewed as a generalization of the series of works by De Concini-Kac-Procesi \cite{DCK90,DCKP92,DCP93} on quantum groups. Our theory recovers the theory of quantum groups when $\theta$ is the trivial involution.

\subsection{De Concini-Kac quantum groups}\label{sec:sckq}
Let us outline the representation theory of quantum groups at roots of unity developed by De Concini-Kac-Procesi. Let $\ell$ be a positive odd integer that is relatively prime to root lengths of $\g$, and let $v$ be a primitive $\ell$th root of unity. Let $\U_{\A}\subset \U$ be the \emph{De Concini-Kac integral form}, and let $\U_v$ be the specialization of $\U_{\A}$ obtained from $q\mapsto v$. 

It follows from the work of De Concini-Kac-Procesi \cite{DCK90,DCKP92} that the irreducible representations of $\U_v$ are essentially parametrized by the conjugacy classes of the associated simply connected group $G$. More explicitly, let $(B^+,B^-)$ be a pair of opposite Borel subgroups of $G$ and let $G^*\subset B^+\times B^-$ be the \emph{dual Poisson-Lie group}. Then there is a central Hopf subalgebra $Z_0$ of $\U_v$, known as the \emph{Frobenius center}, which is isomorphic to the coordinate algebra of $G^*$ via the quantum Frobenius map
\[
\Fr:\C[G^*] \overset{\sim}{\longrightarrow} Z_0 \subset \U_v.
\]
Hence one obtains a canonical map
\begin{equation}\label{eq:irr}
\Irr \U_v\longrightarrow \MaxSpec Z_0\cong G^*,
\end{equation}
sending an irreducible representation of $\U_v$ to the maximal ideal $\text{Ann}_{Z_0}(V)$ of annihilators of $V$ in $Z_0$. For $x\in G^*$, let $\mathfrak{m}_x\subset Z_0$ be the associated maximal ideal and $\U_{v,x}=\U_v/\mathfrak{m}_x\U_v$ be the corresponding fibre algebra. Let $\varphi_0:G^*\rightarrow G$ be the map given by $(g_1,g_2)\mapsto g_1^{-1}g_2$. Then $\varphi_0$ is a $2^{\rank \g}$-covering map onto the open Bruhat cell of $G$. One main result of \cite{DCKP92} states the isomorphism $\U_{v,x}\cong \U_{v,y}$ of finite-dimensional algebras, assuming that $\varphi_0(x)$ and $\varphi_0(y)$ are in the same conjugacy class of $G$. In particular, by composing the map \eqref{eq:irr} with $\varphi_0$, one obtains a finite map 
\[
\Irr \U_v\longrightarrow G
\]
with isomorphic fibres along conjugacy classes of $G$. This picture was axiomatized later as the so-called \emph{Poisson order} in \cite{BG03}.

\subsection{The Frobenius center of iquantum groups}
The current paper generalizes the theory of De Concini-Kac-Procesi to quantum symmetric pairs. In order to study the representations of iquantum groups at roots of unity, we first construct their Frobenius centers.

Let $\Ui_{\A}=\Ui\cap \U_{\A}$ be the integral form of $\Ui$ introduced in \cite{So24}, and $\Uiv$ be the specialization of $\Ui_{\A}$ obtained by setting $q\mapsto v$. 
Let $K^\perp\subset G^*$ be the identity component of the subgroup $\{(g,\theta(g))\in G^*\mid g\in G\}$. It is known (cf. {\em loc. cit.}) that $K^\perp$ is a coisotropic subgroup of $G^*$, and hence the affine quotient $$\X=K^\perp\backslash G^*$$ is naturally a Poisson homogeneous space of $G^*$. It was shown in \cite{So24} that, when $q$ is specialized at $1$, the iquantum group $\Ui_1$ is isomorphic to the coordinate algebra $\C[\X]$ as Poisson algebras.

\begin{thmintro}[Theorem \ref{thm:Fri} \& Proposition~\ref{prop:coideal}] 
The map $\Fr$ restricts to an algebra embedding
\[
\Fri:\C[\X]\cong \Ui_1 \longrightarrow \Uiv,
\]
which identifies the coordinate algebra $\C[\X]$ with a central subalgebra $\Zi$ of $\Uiv$. Moreover, under the embedding $\Uiv\hookrightarrow \U_v$, $\Zi$ is identified with a coideal Poisson subalgebra of $Z_0$. 
\end{thmintro}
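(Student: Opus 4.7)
The argument naturally splits into three parts: (i) showing that the quantum Frobenius $\Fr$ restricts to an algebra map $\Fri:\Ui_1\to \Uiv$; (ii) verifying centrality of the image $\Zi$ in $\Uiv$; and (iii) showing that $\Zi$ is a coideal and Poisson subalgebra of $Z_0$. The substance is in part (i); parts (ii)--(iii) will follow by transporting properties of $\Fr$ on $\C[G^*]$ through the embedding $\Fri$.

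For part (i), the plan is to work at the integral level. By \cite{So24}, the integral form $\Ui_\A\subset \U_\A$ admits a generating set $\{b_i\}$ whose specializations $b_i|_{q=1}$ give algebra generators of $\Ui_1\cong \C[\X]$, together with explicit classical descriptions of the corresponding coordinate functions on $G^*$. I would examine the $\ell$-th powers $b_i^\ell\in \Ui_\A$ and their specializations at $q=v$. By definition such a specialization lies in $\Uiv$, and by the standard $\ell$-power argument of De Concini--Kac applied to the ambient $\U_v$ it is central in $\U_v$, hence in $Z_0$. Thus $b_i^\ell|_{q=v}\in \Uiv\cap Z_0$. It remains to match these quantum $\ell$-th powers with the Frobenius images of the classical $\ell$-th powers of the coordinate generators of $\C[\X]$; this identification then produces the desired embedding $\Fri$, and injectivity of $\Fri$ is inherited from that of $\Fr$.

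Part (ii) is then immediate: $\Zi=\Fri(\Ui_1)\subset Z_0$ is central in $\U_v$, so it is central in the subalgebra $\Uiv\subset \U_v$. For part (iii), the model is the classical inclusion $\C[\X]\subset \C[G^*]$. For $f\in\C[\X]$ (viewed as a left $K^\perp$-invariant function on $G^*$) and $k\in K^\perp$, the coproduct satisfies $\Delta(f)(kg_1,g_2)=f(kg_1g_2)=f(g_1g_2)=\Delta(f)(g_1,g_2)$, so $\Delta(\C[\X])\subset \C[\X]\otimes \C[G^*]$; and the coisotropy of $K^\perp$ is exactly the condition that the Poisson bracket of two $K^\perp$-invariants remains $K^\perp$-invariant, making $\C[\X]$ a Poisson subalgebra of $\C[G^*]$. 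Since the quantum Frobenius is a Poisson Hopf-algebra isomorphism $\C[G^*]\overset{\sim}{\to}Z_0$, these two classical properties transport verbatim to $\Zi=\Fri(\C[\X])$ as a coideal Poisson subalgebra of $Z_0$.

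The main obstacle lies in the explicit identification in part (i): matching the $\ell$-th powers of the iquantum generators at $q=v$ with the Frobenius images of the classical coordinate functions describing $\C[\X]\subset \C[G^*]$. Unlike in the Drinfeld--Jimbo case, where $E_i^\ell,F_i^\ell,K_i^{\pm\ell}$ have transparent behaviour, the iquantum generators typically mix $E$-type and $F$-type elements, and their $\ell$-th powers involve delicate combinatorics modulo $(q-v)$. One natural path is to reduce to rank-one building blocks using the structural results of \cite{So24}, complemented by the Lusztig-form analysis of \cite{BS21,BS22}, and then to handle the general case by an induction on rank together with a root-vector description internal to $\Ui_\A$.
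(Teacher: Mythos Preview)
Your overall three-part structure is sound, and parts (ii) and (iii) match the paper's arguments essentially verbatim. The gap is in part (i), and it is precisely where you yourself flag an obstacle---but the obstacle is more serious than a matching problem.

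You propose to take naive $\ell$-th powers $b_i^\ell$ of the integral generators and invoke ``the standard $\ell$-power argument of De Concini--Kac'' to conclude centrality in $\U_v$. This does not work. The De Concini--Kac argument establishes centrality of $E_\beta^\ell$, $F_\beta^\ell$, $K_\mu^\ell$ for \emph{root vectors}; it says nothing about $\ell$-th powers of mixed elements like $B_i=F_i+Y_i$ with $Y_i=-c_i q_i^{-\langle\alpha_i^\vee,w_\bullet\alpha_{\tau i}\rangle/2}T_{w_\bullet}(E_{\tau i})K_i^{-1}$. Indeed $B_i^\ell$ is in general \emph{not} the right element: the correct central element is the modified power $B_i^{[\ell]}$ built from Bao--Wang's idivided powers (Section~\ref{sec:iDP}), and the key input is the rank-one identity $B_i^{[\ell]}=F_i^\ell+Y_i^\ell$ in $\U_v$ (Lemma~\ref{le:bkl}, from Sale's thesis). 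Since $\Fr(\un{B_i})=\Fr(\un{F_i})+\Fr(\un{Y_i})=F_i^\ell+Y_i^\ell$, this identity is exactly what shows $\Fr(\un{B_i})\in\Uiv$. Your closing remark about reducing to rank one via \cite{BS21,BS22} points in the right direction, but the specific mechanism---replacing $B_i^\ell$ by $B_i^{[\ell]}$---is the missing idea.

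A second point, not a gap but a significant simplification you miss: rather than work with algebra generators and induct on rank, the paper observes that $\Fr^{-1}(\Uiv\cap Z_0)$ is a \emph{Poisson} subalgebra of $\U_1$ (since $\Fr$ is a Poisson isomorphism and $\Uiv\cap Z_0$ is Poisson-closed). By \cite{So24}, $\Ui_1$ is generated as a Poisson algebra by the simple elements $\un{B_i}$, $\un{E_j}$, $\un{F_j}$, $\un{K_\mu}$, so it suffices to check these finitely many Chevalley-type generators---no root vectors, no induction. This Poisson trick is what makes the proof short.
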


We call $\Fri$ the \emph{quantum Frobenius map}, and $\Zi$ the \emph{Frobenius center} of $\Uiv$. When $\theta$ is the trivial involution, we recover the construction of the central subalgebra $Z_0$ of $\U_v$ \cite{DCK90,DCKP92}. Unlike the quantum group case where generators of $Z_0$ are simply $\ell$th powers of root vectors, the generators of $\Zi$ involve a variant of the idivided powers introduced by Bao and Wang \cite{BW18a, BW21}, which are certain highly non-trivial rank 1 elements. Nevertheless, the leading terms of these elements are well-understood (cf. \cite{LYZ25}) and this leads to an explicit PBW basis of $\Uiv$ as a $\Zi$-module. 


\begin{thmintro}[Theorem~\ref{thm:findim}]
The iquantum group $\Uiv$ is a free $\Zi$-module of rank $\ell ^{\dim \k}$ and there is an explicit PBW basis of $\Uiv$ as a $\Zi$-module. 
\end{thmintro}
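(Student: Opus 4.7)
The plan is to reduce the freeness assertion to a basis-counting problem in a $q$-commutative associated graded algebra, following the strategy of De~Concini--Kac for $\U_v$ \cite{DCK90,DCKP92}. First I would take the integral PBW basis of $\Ui_\A$ established in \cite{So24} and specialize it at $q\mapsto v$ to obtain an ordered monomial basis of $\Uiv$ in $N$ root-type generators $x_1,\ldots,x_N$. A count based on the classical limit $\Ui_1\cong\C[\X]$ of Poisson algebras shows $N=\dim\X=\dim\k$, using the identity $\dim K^\perp=\dim\g-\dim\k$.

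Next I would equip $\Uiv$ with the filtration $F^\bullet\Uiv$ constructed earlier in the paper, whose associated graded is $q$-commutative. In $\gr\Uiv$ the PBW monomials $x_1^{a_1}\cdots x_N^{a_N}$ still form a basis, and the generators pairwise commute up to powers of $v$. Since $\ell$ is the order of $v$, the $\ell$th powers $x_i^\ell$ are automatically central in $\gr\Uiv$. By Theorem~\ref{thm:Fri} combined with the explicit leading-term computation of idivided powers in \cite{LYZ25}, the symbol in $\gr\Uiv$ of each standard generator of $\Zi\cong\C[\X]$ is a nonzero scalar multiple of some $x_i^\ell$; hence $\gr\Zi$ is the (polynomial) subalgebra of $\gr\Uiv$ generated by $\{x_1^\ell,\ldots,x_N^\ell\}$.

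The freeness assertion then reduces to the elementary fact that a $q$-commutative polynomial algebra in $N$ variables is free of rank $\ell^N$ over the central subring generated by the $\ell$th powers of the variables, with explicit basis $\{x_1^{a_1}\cdots x_N^{a_N}:0\le a_i<\ell\}$. A standard filtered-to-graded lifting argument then promotes this $\gr\Zi$-basis of $\gr\Uiv$ to a free $\Zi$-basis of $\Uiv$ of the same cardinality $\ell^N=\ell^{\dim\k}$, giving both the freeness and the explicit PBW basis over $\Zi$.

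The principal obstacle I anticipate is identifying the symbols of the generators of $\Zi$ in the second step. Unlike the pure quantum group case, where the Frobenius generators are literally $\ell$th powers of PBW root vectors, the images under $\Fri$ of the coordinate functions on $\X$ involve the idivided powers of Bao--Wang \cite{BW18a,BW21}, which differ from naive $\ell$th powers by lower-weight corrections. Verifying that these corrections lie in strictly lower filtration degree---so that the symbol in $\gr\Uiv$ is indeed $x_i^\ell$ up to a nonzero scalar---is the crux of the argument and relies essentially on the explicit formulas of \cite{LYZ25}.
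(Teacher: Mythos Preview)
Your approach is correct but takes a genuinely different route from the paper's. Both arguments use a leading-term reduction for the spanning step (reducing exponents modulo $\ell$ via $B_\beta^{m}=B_\beta^{[k\ell]}B_\beta^{r}+\text{l.o.t.}$); the divergence is in the linear-independence step. The paper does \emph{not} pass to the $q$-commutative associated graded of Section~\ref{sec:deg}. Instead it uses only the coarse $h^\imath$-filtration together with the embedding $\iota_v:\Uiv\hookrightarrow\U_v$: the leading term of each candidate basis monomial $M_{\mu,\ba,\bc,\bd}$ in $\U_v$ is the corresponding $F$-monomial \eqref{eq:PBWFv}, these $F$-monomials (for exponents $<\ell$, $\mu\in P^\theta/\ell P^\theta$) are $Z_0$-linearly independent by Proposition~\ref{prop:DCKv}(2), and $\Zi\subset Z_0$ gives $\Zi$-linear independence. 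Your route via the refined $\N^{N+M+2}$-filtration and Theorem~\ref{thm:assoc} is more internal to $\Uiv$ and avoids invoking the De~Concini--Kac result for $\U_v$ directly; the trade-off is that it requires the machinery of Section~\ref{sec:deg}, which in the paper's logical order is developed \emph{after} Theorem~\ref{thm:findim} (there is no circularity, but you would have to reorder the exposition).

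Two points of bookkeeping to tighten. First, the PBW generators split as $L+2M$ root-type generators $B_{\beta_r},F_{\gamma_t},E_{\gamma_t}$ together with the Laurent Cartan part $K_\mu$ ($\mu\in P^\theta$); the latter sit in filtration degree $0$, so you should handle them separately and then assemble the rank count as $\ell^{L+2M}\cdot\ell^{\operatorname{rank}P^\theta}=\ell^{\dim\k}$. Second, the claim that the symbol of each Frobenius generator $B_\beta^{[\ell]}=\Fri(\un{B_\beta})$ in $\Gr\Uiv$ is $\bar B_\beta^{\,\ell}$ requires not the formulas of \cite{LYZ25} for idivided powers per se, but rather Lemma~\ref{le:frbb} (giving $F_\beta^\ell$ as the $h^\imath$-leading term) together with the fact that $h^\imath$ is the dominant coordinate in the total degree \eqref{eq:totaldeg}; this is exactly the obstacle you identified, and it is handled by Lemma~\ref{le:frbb} rather than by \cite{LYZ25} alone.
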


\subsection{Simple modules and twisted conjugacy classes}
The Frobenius center $\Zi$ of the iquantum group induces a canonical map 
\begin{equation}\label{eq:chi}
\Irr \Uiv\longrightarrow \MaxSpec \Zi\cong \X
\end{equation}
sending an irreducible representation $V$ to the maximal ideal $\text{Ann}_{\Zi}(V)$ of annihilators of $V$ in $\Zi$. For $x\in \X$, let $\mathfrak{m}_x\subset \Zi$ be the associated maximal ideal and $\Ui_{v,x}=\Ui_v/\mathfrak{m}_x\Ui_v$ be the corresponding fibre algebras. 

The involution $\theta$ integrates to an algebraic group involution of $G$. For $g\in G$, the set $\mathcal{C}_\theta(g)=\{hg\theta(h)^{-1}\mid h\in G\}$ is called the \emph{$\theta$-twisted conjugacy class} of $g$. Let 
\begin{equation}\label{eq:phi}
\varphi:\X\longrightarrow G
\end{equation}
be the map given by $[ (g_1,g_2)]\mapsto \theta(g_1)^{-1}g_2$. We show in Proposition \ref{prop:syml} that $\varphi$ is a $2^{|\I_\bullet|}$-covering map onto its image. Here $(\I=\I_\circ\cup\I_\bullet,\tau)$ denotes the underlying \emph{Satake diagram}; see Section \ref{sec:qsp}. 

Composing the maps in \eqref{eq:chi} and \eqref{eq:phi}, we obtain a finite map
\begin{equation}
        \Psi:\Irr \Uiv\longrightarrow G,
\end{equation}
whose image can be determined directly using Proposition~\ref{prop:syml}. The next result shows that fibres of $\Psi$ are isomorphic along $\theta$-twisted conjugacy classes.

\begin{thmintro}[Theorem \ref{thm:thetaconjugacy}]\label{thm:iiso}
    For $x,y\in \X$, suppose that $\varphi(x)$ and $\varphi(y)$ are in the same $\theta$-twisted conjugacy class of $G$. Then $\Ui_{v,x}$ is isomorphic to $\Ui_{v,y}$ as finite-dimensional algebras.
\end{thmintro}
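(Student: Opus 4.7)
The plan is to generalize the De Concini--Kac--Procesi argument via the Brown--Gordon framework of Poisson orders (\cite{BG03}). There are three ingredients: (i) realize $\Uiv$ as a Poisson order over its Frobenius center $\Zi$; (ii) identify the resulting Poisson bracket on $\Zi$ with the intrinsic one on $\C[\X]$ coming from the Poisson homogeneous space structure on $\X$; and (iii) match the symplectic leaves of $\X$ with $\varphi$-preimages of $\theta$-twisted conjugacy classes of $G$. Together with the Brown--Gordon principle, which asserts that fibres of a Poisson order over points on the same symplectic leaf are isomorphic as algebras, this yields the desired statement.

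For (i), given $z\in\Zi$ with a lift $\tilde z\in\Ui_{\A}$, and $u\in\Uiv$ with a lift $\tilde u\in\Ui_{\A}$, the commutator $[\tilde z,\tilde u]$ lies in $(q-v)\Ui_{\A}$ because the image of $\tilde z$ in $\Uiv$ is central. Hence
\[
\partial_z(u)\;:=\;\bigl((q-v)^{-1}[\tilde z,\tilde u]\bigr)\big|_{q=v}\;\in\;\Uiv
\]
is a well-defined derivation in $u$, independent of the choice of lifts by a standard specialization argument. One then checks the Leibniz and Jacobi identities so that $z\mapsto\partial_z$ equips $\Uiv$ with the structure of a Poisson $\Zi$-order, and restricts to a Poisson bracket $\{\,,\,\}_{\Zi}$ on $\Zi$. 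For (ii), identifying $\{\,,\,\}_{\Zi}$ with the Poisson bracket on $\C[\X]$ proceeds by the same classical-limit computation that already underlies the isomorphism $\Ui_1\cong \C[\X]$ of Poisson algebras from \cite{So24}: since $\Fri$ is induced from the quantum Frobenius map $\Fr$, which pulls back the Sklyanin structure on $G^*$, the reduction through the coisotropic subgroup $K^\perp$ gives precisely the Poisson homogeneous space structure on $\X$.

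For (iii), one uses the classical Semenov--Tian--Shansky picture: the map $(g_1,g_2)\mapsto g_1^{-1}g_2$ from $G^*$ to $G$ intertwines symplectic leaves with conjugacy classes via the dressing action. The $\theta$-twisted analogue on $\X=K^\perp\backslash G^*$ should send symplectic leaves onto $\theta$-twisted conjugacy classes under $\varphi([g_1,g_2])=\theta(g_1)^{-1}g_2$, with the $K^\perp$-quotient converting the ordinary adjoint action into the twisted action $h\cdot g=hg\theta(h)^{-1}$. Combined with Proposition~\ref{prop:syml}, which already shows $\varphi$ is a finite cover onto its image, it suffices to verify this infinitesimally: one computes the Hamiltonian vector fields $\{\Fri^{-1}(z),\,-\}_{\X}$ for $z\in\Zi$ and shows that at each point $x\in\X$ they span the tangent space to the preimage under $\varphi$ of the $\theta$-twisted conjugacy class of $\varphi(x)$. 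Given (i)--(iii), the hypothesis that $\varphi(x)$ and $\varphi(y)$ are $\theta$-conjugate places $x$ and $y$ on the same symplectic leaf of $\X$, and the Brown--Gordon theorem (applied to the module-finite Poisson order $\Uiv$ over $\Zi$, which is module-finite by Theorem~\ref{thm:findim}) yields $\Ui_{v,x}\cong \Ui_{v,y}$.

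The main obstacle is step (iii). Reducing the Sklyanin bracket by the coisotropic subgroup $K^\perp$ and simultaneously accommodating the $\theta$-twist in $\varphi$ requires a careful Poisson-geometric analysis of $\X$; the classical dressing-action description of symplectic leaves of $G^*$ must be upgraded to the quotient and transported through $\varphi$. By contrast, step (i) is formally parallel to the quantum group case of \cite{DCKP92}, modulo mild adjustments for the coideal (rather than Hopf) nature of $\Ui_{\A}$, and step (ii) is largely a bookkeeping consequence of the compatibility of $\Fri$ with the classical limit.
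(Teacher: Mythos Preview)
Your overall architecture (Poisson order $+$ Brown--Gordon) is correct and matches the paper through steps (i) and (ii), but there is a genuine gap in step (iii). You assert that ``the hypothesis that $\varphi(x)$ and $\varphi(y)$ are $\theta$-conjugate places $x$ and $y$ on the same symplectic leaf of $\X$,'' and your infinitesimal verification would only establish that the symplectic leaf through $x$ is the \emph{connected component} of $\varphi^{-1}(\mathcal{C}_\theta(\varphi(x)))$ containing $x$. But $\varphi$ is a $2^{|\I_\bullet|}$-fold cover (Proposition~\ref{prop:syml}), and the paper's Proposition~\ref{prop:syml} states explicitly that symplectic leaves are connected components of such preimages --- the preimage of a single twisted conjugacy class is generally a disjoint union of several leaves. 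So $\varphi(x)\sim_\theta\varphi(y)$ does \emph{not} force $x$ and $y$ onto the same leaf, and Brown--Gordon alone is insufficient.

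The paper closes this gap in two steps. First, Lemma~\ref{le:gg'} shows that if $\varphi(x)\sim_\theta\varphi(y)$ then there is a $2$-torsion element $t\in H_\theta^\circ$ with $\varphi(x)$ and $t\,\varphi(y)\,t$ in the same symplectic leaf of $(G,\pi_\theta)$. Second, the proof of Theorem~\ref{thm:thetaconjugacy} constructs explicit rescaling automorphisms $c_t$ and $r_\delta$ of $\Uiv$ (sending $E_i\mapsto\alpha_i(t)E_i$, etc.) that preserve $\Zi$ and induce the finite-group translations between components; these automorphisms give isomorphisms $\Ui_{v,y}\cong\Ui_{v,y'}$ whenever $y,y'$ lie in distinct leaves over the same twisted conjugacy class. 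Only after reducing to the same leaf does Brown--Gordon apply. Your plan needs this extra layer: identify the finite component group of the cover and lift its action to algebra automorphisms of $\Uiv$.
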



In \cite{LY08,Lu14}, Lu and Yakimov introduced a twisted Poisson structure $\pi_\theta$ on $G$ such that every $\theta$-twisted conjugacy class is a union of symplectic leaves. Using this twisted Poisson structure $\pi_\theta$, we show in Proposition~\ref{prop:syml} that the above map $\varphi$ is Poisson and then relate the symplectic leaves of $\X$ with $\theta$-twisted conjugacy classes of $G$. In general, the preimages of $\theta$-twisted conjugacy classes are not connected; however, we show that there is a torus action between distinct connected components, which lifts to rescalings on iquantum groups. Together with the general theory of Poisson order \cite{BG03}, this allows us to prove Theorem~\ref{thm:iiso}.

\subsection{Dimensions of simple modules}

In their work \cite{DCK90}, De Concini-Kac determined the \emph{degree} (= the maximal dimension of irreducible representations) of the quantum group $\U_v$ at a root of unity. They, together with Procesi \cite{DCKP93a}, also proved that an irreducible representation of $\U_v$ has maximal dimension if the associated conjugacy class has maximal dimension.

We generalize their results to the setting of iquantum groups.

\begin{thmintro}[Corollary \ref{cor:dimv} \& Theorem \ref{thm:genrep}]\label{thm:dim}
    Let $V$ be an irreducible representation of $\Uiv$, and let $N_0=(\dim \k-\rank \k)/2$. Then the dimension of $V$ is at most $\ell^{N_0}$. Suppose that $\Psi(V)$ belongs to the $\theta$-twisted conjugacy class of maximal dimension ($=\dim \g-\rank \k$). Then the dimension of $V$ is exactly $\ell^{N_0}$.
\end{thmintro}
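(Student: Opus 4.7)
The plan is to prove the two statements separately: first the universal upper bound $\dim V \leq \ell^{N_0}$, then the matching lower bound under the assumption on $\Psi(V)$. For the upper bound (Corollary~\ref{cor:dimv}), I would deduce it from the degree computation for $\Uiv$ announced in the introduction. Given $[\Uiv : \Zi] = \ell^{\dim\k}$ and $Z(\Uiv) = \Zi \cdot Z_{KL}$, with the Kolb-Letzter center contributing $\rank\k$ new independent central generators whose $\ell$-th powers land in $\Zi$, the rank $[\Uiv : Z(\Uiv)]$ equals $\ell^{\dim\k - \rank\k} = \ell^{2N_0}$. Since the $q$-commutative associated graded of the filtration on $\Uiv$ forces $\Uiv$ to be a prime PI algebra, its PI degree equals $\sqrt{[\Uiv : Z(\Uiv)]} = \ell^{N_0}$, which bounds the dimension of any irreducible representation.

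For the lower bound (Theorem~\ref{thm:genrep}), the strategy is to apply the Brown-Gordon machinery \cite{BG03} to $\Uiv$ viewed as a Poisson $\Zi$-order, with the Poisson bracket on $\Zi \cong \C[\X]$ inherited from the Poisson homogeneous space structure on $\X$. The corank of $\pi_{\X}$ at a generic point equals $\rank\k$, because the classical limit of the Kolb-Letzter center supplies exactly $\rank\k$ independent Poisson-central functions on $\X$; hence the maximum dimension of a symplectic leaf of $\X$ is $\dim\X - \rank\k = 2N_0$. By Proposition~\ref{prop:syml}, these maximal-dimensional symplectic leaves correspond under $\varphi$ to the maximal $\theta$-twisted conjugacy classes in $G$. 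Consequently, when $\Psi(V) = \varphi(\chi(V))$ lies in the maximal class $\mathcal{C}$, the point $\chi(V) \in \X$ sits on a symplectic leaf of dimension $2N_0$, placing it in the Azumaya locus of the Poisson $\Zi$-order. Therefore $\Ui_{v,\chi(V)}$ is Azumaya of degree $\ell^{N_0}$ over its center, so every irreducible of $\Ui_{v,\chi(V)}$ has dimension exactly $\ell^{N_0}$, which forces $\dim V = \ell^{N_0}$. Theorem~\ref{thm:iiso} remains available to transfer the Azumaya conclusion between different connected components of $\varphi^{-1}(\mathcal{C})$ if needed.

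The main obstacle is the symplectic-geometric step: verifying that the Azumaya locus of the Poisson $\Zi$-order contains $\varphi^{-1}(\mathcal{C})$, which hinges on identifying the corank of $\pi_{\X}$ with $\rank\k$ and tying the maximal symplectic leaves of $\X$ to the maximal $\theta$-twisted conjugacy classes of $G$ through $\varphi$. A secondary, more routine task is verifying the Poisson $\Zi$-order axioms of \cite{BG03} for $(\Uiv, \Zi)$; here the coideal Poisson embedding $\Zi \hookrightarrow Z_0$ and the compatibility of the quantum Frobenius $\Fri$ with Poisson brackets supply the required input, paralleling the De Concini-Kac-Procesi framework for $(\U_v, Z_0)$.
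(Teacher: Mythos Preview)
Your outline for the upper bound is essentially the paper's argument: the degree of $\Uiv$ is shown to be $\ell^{N_0}$ (Theorem~\ref{thm:deg}) by combining the lower bound $\deg\Uiv\ge\deg\Gr\Uiv=\ell^{N_0}$ with the upper bound coming from $\dim_{\Zi}\Ziv\ge\ell^{\rank\k}$, and then Proposition~\ref{prop:repalg} gives $\dim V\le\ell^{N_0}$.

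For the lower bound, however, there is a genuine gap. You correctly set up the Poisson $\Zi$-order framework and correctly identify that $\chi(V)$ lies on a symplectic leaf $\cO$ of dimension $2N_0$. But the implication ``$\dim\cO$ is maximal $\Rightarrow$ $\chi(V)$ lies in the Azumaya locus $\X^0$'' is \emph{not} a formal consequence of Brown--Gordon. Their theorem only tells you that $\X^0$ is a union of symplectic cores; it does not single out which cores. Knowing the corank of $\Pi_\X$ and the leaf dimensions is insufficient: one still has to exclude the possibility that $\X^0$ misses some maximal-dimensional leaf entirely.

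The paper's proof of Theorem~\ref{thm:genrep} fills exactly this gap, and by an algebraic rather than symplectic-geometric argument. The key input is that the leading terms of the Kolb--Letzter generators $d_{\nu_i}$ in $\Gr\Ui_1$ are monomials in disjoint sets of the PBW variables (Proposition~\ref{prop:dv}), so $\{d_i-d_i(x)\}$ form a regular sequence and $\overline{\cO_x}$ is cut out by these $\rank\k$ equations (Proposition~\ref{prop:defidel}). One then degenerates through the filtration: the associated graded ideal is generated by the monomials $\overline{d_i}$, whose zero locus contains an irreducible component $\cO_1'$ obtained by killing certain $B_{\beta_{k_i}}$. The quotient $P=\Gr\Uiv/(B_{\beta_{k_i}})$ is again a twisted polynomial algebra whose skew matrix is the restriction of $S_\ell$ to a direct summand complementary to $\ker S_\ell$, so $\deg P=\ell^{N_0}$. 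Finally a semicontinuity lemma of De Concini--Kac--Procesi (\cite[Lemma~1.5]{DCKP93a}) transfers $\cO_1'\cap\overline{\X}^0\neq\emptyset$ back to $\overline{\cO_x}\cap\X^0\neq\emptyset$, and then Corollary~\ref{cor:sympl} (or Theorem~\ref{thm:thetaconjugacy}) propagates the Azumaya property across the whole leaf. Your proposal contains none of this degeneration mechanism.
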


The proof of Theorem \ref{thm:dim} is based on a filtration of the algebra $\Uiv$, where the associated graded algebra is $q$-commutative; see Section \ref{sec:deg} for the construction of this filtration. This filtration generalizes the remarkable filtration on the De Concini-Kac quantum group introduced in \cite[\S 1.7]{DCK90}. This filtration helps us to describe the full center of $\Uiv$, and it may be of independent interests.

Furthermore, when $\Psi(V)$ lies in a lower-dimensional $\theta$-twisted conjugacy class, we propose the following conjecture regarding the dimension of general irreducible representations of $\Uiv$.

\begin{conjintro}[Conjecture \ref{conj:dim}]\label{conj:intro}
    Let $V$ be an irreducible representation of $\Uiv$, and let $a(V)$ be the dimension of the $\theta$-twisted conjugacy containing $\Psi(V)$. Then the dimension of $V$ is divided by $\ell ^{(a(V)-\dim \g+\dim\k)/2}$.
\end{conjintro}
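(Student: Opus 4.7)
The plan is to extend the Poisson-geometric strategy of \cite{DCKP92} for quantum groups to iquantum groups. First, I would equip $\Uiv$ with the structure of a Poisson $\Zi$-order in the sense of \cite{BG03}. The required bracket $\{-,-\}\colon \Zi \otimes \Uiv \to \Uiv$ is constructed on the integral form in the standard way: for $z \in \Zi$, lift to $\tilde z \in \Ui_{\A}$ via $\Fri$ and set
\[
\{z, u\} := \tfrac{1}{\ell(q-v)}[\tilde z, \tilde u] \pmod{q - v}.
\]
Since $\Zi$ is central modulo $q-v$, this descends to a well-defined derivation whose restriction to $\Zi$ recovers the Poisson bracket on $\C[\X]$ underlying Theorem~1.

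Next, I would combine this with the Poisson geometry from Proposition~\ref{prop:syml}. Because $\varphi\colon \X \to G$ is a finite Poisson covering onto its image (for the Lu--Yakimov twisted structure $\pi_\theta$), and each $\theta$-twisted conjugacy class $\mathcal{C}_\theta(g)$ is a union of symplectic leaves of $\pi_\theta$, the preimage $\varphi^{-1}(\mathcal{C}_\theta(g))$ is a union of symplectic leaves of $\X$. By the Brown--Gordon transitivity theorem for Poisson orders, the fibre algebras $\Ui_{v,x}$ are isomorphic along each symplectic leaf of $\X$, and, by Theorem~\ref{thm:thetaconjugacy}, along the whole twisted conjugacy class. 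So it suffices to prove the divisibility at a single convenient point in each $\theta$-twisted conjugacy class.

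The main task is then to establish $\ell^{r} \mid \dim V$ with $r = (a(V) - \dim\g + \dim\k)/2$ at such a point. The strategy is to construct an action of a rank-$r$ abelian group of automorphisms on the fibre $\Ui_{v,x}$ under which the orbit of any irreducible $V$ consists of $\ell^r$ distinct central characters, forcing $\ell^r \mid \dim V$ by the standard weight-space argument. These automorphisms should arise by integrating Hamiltonian derivations $\{z,-\}$ on $\Uiv$ associated to a maximal family $z_1,\ldots, z_r \in \Zi$ whose Hamiltonian vectors span a generic tangent slice to $\mathcal{C}_\theta(g)$ inside $\varphi(\X)$; the dimension count $r = (a(V) - \dim\g + \dim\k)/2$ is exactly the generic rank of this slice. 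The principal obstacle — and the reason this remains a conjecture — is the integration step: it requires an iquantum analogue of the quantum coadjoint action of \cite{DCKP92}, combined with a ``relative'' version of the real rank-one skew-derivations (as in \cite{BW18b,BS22}) needed to lift Hamiltonians of $\Zi$ to honest automorphisms of $\Uiv$, and such a framework is not yet available in the literature on iquantum groups.
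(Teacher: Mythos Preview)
This statement is a \emph{conjecture} in the paper (Conjecture~\ref{conj:dim}), not a theorem; the paper offers no proof, only the remark that it specializes to the De Concini--Kac--Procesi conjecture when $\theta=\Id$ and that it holds in the maximal-dimension case by Theorem~\ref{thm:genrep}. You have correctly recognized this and presented a strategy rather than a proof, explicitly naming the integration step as the obstacle. So there is nothing to compare your argument against.

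A few comments on the strategy itself. Your first two paragraphs --- the Poisson-order structure on $\Uiv$ over $\Zi$ and the reduction along symplectic leaves/twisted conjugacy classes --- are exactly what the paper establishes in Proposition~\ref{prop:Porder}, Proposition~\ref{prop:syml}, and Theorem~\ref{thm:thetaconjugacy}, so that part is on solid ground. The third paragraph is where the content lies, and here your diagnosis of the difficulty is accurate but the proposed mechanism deserves scrutiny. The ``abelian group of automorphisms forcing $\ell^r$ distinct central characters'' heuristic is the quantum coadjoint action picture of \cite{DCKP92}, but even in the quantum group case that approach did \emph{not} settle the full divisibility conjecture: the proofs in \cite{Kre06} and \cite{Sev21} go through Slodowy-slice quantization and $q$-$W$-algebras respectively, not through integrating Hamiltonian flows. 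So while developing a relative quantum coadjoint action for $\Uiv$ would be valuable structure, history suggests the divisibility statement is more likely to yield to an iquantum analogue of the Kremnizer or Sevostyanov machinery --- i.e.\ constructing suitable transverse slices to $\theta$-twisted conjugacy classes and quantizing them --- than to the direct integration route you sketch.
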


Conjecture \ref{conj:intro} generalizes the conjecture of De Concini-Kac-Procesi \cite[Conjecture 6.8]{DCKP92} regarding the dimensions of irreducible representations of quantum groups at roots of unity. We remark that the De Concini-Kac-Procesi conjecture was proved by Kremnizer \cite{Kre06} and Sevostyanov \cite{Sev21}.

\subsection{The full center}
It was determined in \cite{DCKP92} that the full center of quantum groups at roots of unity is generated by two subalgebras: one is the Frobenius center $Z_0$ discussed in Section \ref{sec:sckq}, and the other is obtained by specializing the center at generic $q$ (i.e., the Harish-Chandra center).

The center of the iquantum group $\Ui$ at generic $q$ was determined by Kolb and Letzter \cite{KL08}, which we will call the \emph{Kolb-Letzter center}. In particular, they showed that the center is a polynomial algebra in $m$ variables, where $m$ is the rank of $\k$. Using the universal $K$-matrices, Kolb \cite{Ko20} later gave a conceptual construction of a basis of the center at generic $q$. In Proposition \ref{le:intn}, we show that those basis elements of the center belong to the integral form $\Ui_{\A}$, and hence one can consider their specializations at roots of unity. Let $Z^\imath_v$ be the center of $\Uiv$, and let $Z^\imath_1\subset Z^\imath_v$ be the central subalgebra obtained from the specialization of the Kolb-Letzter center.


\begin{thmintro}[Theorem \ref{thm:deg} \& Theorem \ref{thm:icenter}]\label{thm:cen}
    The center $Z_v^\imath$ of $\Uiv$ is generated by $\Zi$ and $Z_1^\imath$ as a $\C$-algebra. The natural map $\MaxSpec Z^\imath_v\rightarrow \MaxSpec \Zi$ is a finite map of degree $\ell^{\rank \k}$.
\end{thmintro}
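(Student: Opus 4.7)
The plan is to prove both assertions together, following the strategy of De Concini--Kac--Procesi adapted via the filtration constructed in Section~\ref{sec:deg}. First I would settle the degree assertion by a dimension count. By Theorem~\ref{thm:findim}, $\Uiv$ is a free $\Zi$-module of rank $\ell^{\dim \k}$, and by Theorem~\ref{thm:dim} its degree equals $\ell^{N_0}$ with $N_0 = (\dim\k - \rank\k)/2$. The general theory of Poisson orders (Brown--Gordon) yields a dense open Azumaya locus $\mathcal{U}\subset \X$; for $x\in \mathcal{U}$ the fibre $\Ui_{v,x}$ is Azumaya of degree $\ell^{N_0}$ over its center, so
\[
\dim_\C Z(\Ui_{v,x}) \;=\; \frac{\dim_\C \Ui_{v,x}}{\ell^{2N_0}} \;=\; \frac{\ell^{\dim \k}}{\ell^{\dim \k - \rank \k}} \;=\; \ell^{\rank \k}.
\]
Since $Z^\imath_v$ is a finitely generated $\Zi$-submodule of $\Uiv$ whose generic fibre equals $Z(\Ui_{v,x})$, the induced map $\MaxSpec Z^\imath_v \to \MaxSpec \Zi$ is finite of degree $\ell^{\rank \k}$.

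For the generation statement, I would transfer the question to the associated graded algebra $\gr \Uiv$ of the filtration of Section~\ref{sec:deg}. This graded algebra is $q$-commutative in the generators supplied by the PBW basis of Theorem~\ref{thm:findim}, so its center is spanned by those monomials whose exponent vectors lie in a certain integer lattice determined by the $q$-commutation matrix modulo $\ell$. The graded generators of $\Zi$ are precisely the $\ell$th-power monomials of these variables, while the graded images of the generators of $Z^\imath_1$---whose availability over the integral form is supplied by Proposition~\ref{le:intn}---furnish further central monomials, corresponding to exponent vectors that account, modulo $\ell$, for the remaining part of the kernel lattice. Together these exhaust the central monomials of $\gr \Uiv$, so $\gr Z^\imath_v$ is generated by $\gr \Zi$ and $\gr Z^\imath_1$; a standard induction on filtration degree then lifts this to $Z^\imath_v = \C\langle \Zi, Z^\imath_1\rangle$. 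The two parts are consistent: the subalgebra $\C\langle \Zi, Z^\imath_1\rangle$ is a free $\Zi$-module of rank $\ell^{\rank \k}$, since $Z^\imath_1$ is polynomial of rank $\rank \k$ and each generator has $\ell$th power in $\Zi$, matching the degree established above.

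The hard part will be identifying the graded images of the Kolb--Letzter generators inside $Z(\gr \Uiv)$. Unlike the quantum group case, where the Harish--Chandra center admits a transparent description via characters of Weyl group orbits, Kolb's construction of the $\imath$-center via universal $K$-matrices is rather implicit, and one must verify that the leading terms of the $\rank\k$ specialized generators span precisely the complement to the $\ell$th-power monomials in $Z(\gr \Uiv)$ modulo $\gr \Zi$. Controlling these leading terms---and establishing their algebraic independence modulo $\gr \Zi$ after specialization at $v$---is the key technical step; once it is in place, the remaining arguments follow formally from the filtration and the free $\Zi$-module structure of $\Uiv$.
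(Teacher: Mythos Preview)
Your second paragraph and your identification of the hard step in the third paragraph are essentially the paper's approach to Theorem~\ref{thm:icenter}: pass to $\Gr\Uiv$, use the explicit description of the center of a $q$-commutative algebra (Proposition~\ref{prop:cendeg}), compute the leading terms of the Kolb--Letzter generators $d_{\nu_i}$ (Proposition~\ref{prop:dv}), match them with the monomial generators of $Z(\Gr\Uiv)$, and lift via Lemma~\ref{le:degc}. That part is fine.

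The first paragraph, however, is circular. You invoke Theorem~\ref{thm:dim} to get $\deg\Uiv=\ell^{N_0}$, but Theorem~\ref{thm:dim} points to Corollary~\ref{cor:dimv}, which is derived \emph{from} Theorem~\ref{thm:deg}---and Theorem~\ref{thm:deg} is precisely half of the statement you are proving. The paper does not know the degree of $\Uiv$ in advance; it establishes it by a sandwich argument. The lower bound $\deg\Uiv\ge\deg\Gr\Uiv=\ell^{N_0}$ comes from Theorem~\ref{prop:degd}. For the upper bound the paper proves directly (Proposition~\ref{prop:dzl}) that the $\ell^{m}$ monomials $d_{\nu_1}^{k_1}\cdots d_{\nu_m}^{k_m}$ with $0\le k_i<\ell$ are linearly independent over $\Zi$, using the leading-weight computation of Lemma~\ref{lem:dmuleading}; this yields $\dim_{\Zi}\Ziv\ge\ell^{m}$, hence $\dim_{Q(\Ziv)}Q(\Uiv)\le\ell^{2N_0}$ and $\deg\Uiv\le\ell^{N_0}$. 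Only after this are the degree of $\Uiv$ and the rank of $\Ziv$ over $\Zi$ both pinned down. Your Azumaya/Poisson-order argument is a legitimate way to read off $\dim_\C Z(\Ui_{v,x})$ once the degree is known, but it cannot replace the independence step that actually furnishes the degree.

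A smaller point: your consistency check asserts that each $d_{\nu_i}$ has $\ell$th power in $\Zi$. This is not proved in the paper (and is not needed). What is true is that $\overline{d_{\nu_i}}^{\,\ell}\in\Gr\Zi$ since $\overline{d_{\nu_i}}$ is a monomial, but that only says $d_{\nu_i}^\ell$ agrees with an element of $\Zi$ up to lower filtration degree, not on the nose. The paper bypasses this by proving the $\Zi$-linear independence of the $d^\mu$ directly.
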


In the quantum group cases, the counterpart of Theorem \ref{thm:cen} was established by De Concini-Kac-Procesi \cite{DCKP92}. Their approach relies on an explicit construction of a family of generic irreducible representations, called \emph{baby Verma modules}, together with the quantum Harish-Chandra isomorphism. However, due to the absence of triangular decompositions for $\Uiv$, it is a highly nontrivial problem to construct analogues of baby Verma modules for general iquantum groups, and no natural analogue of the quantum Harish-Chandra isomorphism is known for iquantum groups.

Our proof of Theorem \ref{thm:cen} for iquantum groups is more direct and purely algebraic. We explicitly compute the leading terms of generators of $\Zi$ and $Z_1^\imath$, and compare them with central elements in the associated graded algebra $\Gr\Uiv$. We believe that our arguments will be helpful in studying the geometry of twisted conjugacy classes as well.

\subsection{The branching problems}
In \cite{DCPRR05}, a general machinery was developed to study the decomposition of modules over a Cayley-Hamilton algebra when restricting to certain subalgebras.
We show that the pair $\Uiv\subset \U_v$, together with $\Zi\subset Z_0$, satisfies the compatibility condition introduced \emph{loc. cit.} (see Proposition~\ref{prop:comp}), and this allows us to utilize their machinery to obtain the following results on the branching problem of $\U_v$-modules restricting to $\Uiv$.


\begin{thmintro}[Theorem \ref{thm:dec} \& Corollary \ref{cor:dsum}]\label{thm:inbran}
    (1) When restricting to $\Uiv$, generic simple $\U_v$-modules are semisimple $\Uiv$-modules. Moreover, the multiplicities of simple $\Uiv$-modules in generic simple $\U_v$-modules satisfy the constraint \eqref{eq:hvw}. 
    
    (2) Generic simple $\Uiv$-modules are direct summands of restrictions of simple $\U_v$-modules.
\end{thmintro}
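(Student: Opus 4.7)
The plan is to deduce both parts from the general branching machinery for compatible pairs of Cayley--Hamilton algebras developed in \cite{DCPRR05}. The first step is to assemble the ingredients: by Proposition~\ref{prop:comp}, the pair $\Uiv \subset \U_v$ together with $\Zi \subset Z_0$ is compatible in the sense of \emph{loc.\ cit.}; by \cite{DCK90}, $\U_v$ is a prime Cayley--Hamilton algebra of degree $\ell^{(\dim\g-\rank\g)/2}$; and by Theorem~\ref{thm:dim}, $\Uiv$ is a prime Cayley--Hamilton algebra of degree $\ell^{N_0}$ where $N_0=(\dim\k-\rank\k)/2$. These are precisely the hypotheses required to invoke the branching theorem of \emph{loc.\ cit.}

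For part (1), let $\pi:\MaxSpec Z_0 \twoheadrightarrow \MaxSpec \Zi$ denote the surjection dual to the coideal embedding $\Zi \hookrightarrow Z_0$. A generic simple $\U_v$-module $V$ has central character at an Azumaya point $x$ of $\U_v$; I would show that for $x$ in a suitable dense open subset, the image $y = \pi(x)$ also lies in the Azumaya locus of $\Uiv$, so that both fibres $\U_{v,x}$ and $\Ui_{v,y}$ are matrix algebras. Since $V|_{\Uiv}$ factors through the semisimple algebra $\Ui_{v,y}$, it decomposes as $V|_{\Uiv} \cong \bigoplus_W h(V,W)\cdot W$ over the simple $\Ui_{v,y}$-modules $W$, and comparing dimensions yields the constraint \eqref{eq:hvw}. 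For part (2), I would reverse the direction: starting from a generic simple $\Uiv$-module $W$ with central character $y$ in the Azumaya locus, surjectivity of $\pi$ produces a lift $x$, which can be chosen so that $\U_{v,x}$ is also a matrix algebra; the compatibility condition then yields a simple $\U_v$-module $V$ at $x$ whose restriction to $\Uiv$ contains $W$ as a direct summand, essentially by Frobenius reciprocity for the inclusion of semisimple matrix algebras $\Ui_{v,y} \hookrightarrow \U_{v,x}$.

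The main obstacle is to certify that the generic loci are large enough to carry out both steps simultaneously, i.e., to exhibit a dense open subset of $\MaxSpec \Zi$ above which $\Uiv$ is Azumaya and whose preimage in $\MaxSpec Z_0$ meets the Azumaya locus of $\U_v$ in a dense open set. Here the Poisson geometry developed earlier in the paper becomes indispensable: the Poisson map $\varphi:\X\to G$ of Proposition~\ref{prop:syml}, together with the Poisson-order isomorphisms of Theorem~\ref{thm:iiso}, identifies Azumaya loci of $\Uiv$ with certain unions of $\theta$-twisted conjugacy classes, while the analogous statement of \cite{DCKP92} controls the Azumaya locus of $\U_v$. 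Compatibility of these two Poisson-geometric descriptions then supplies the genuinely generic locus on which both branching statements simultaneously take effect.
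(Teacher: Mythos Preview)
Your overall architecture is correct and matches the paper: both parts are deduced from the compatible-pair machinery of \cite{DCPRR05}, using Proposition~\ref{prop:comp} and the degree computations for $\U_v$ and $\Uiv$. However, your identification of the ``main obstacle'' is mistaken, and this leads you to invoke tools that are neither needed nor used.

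The Poisson geometry of Proposition~\ref{prop:syml} and Theorem~\ref{thm:iiso} plays \emph{no role} in this argument. The non-empty Zariski open loci you need come for free from general algebra (Proposition~\ref{prop:repalg}(3), i.e.\ Corollary~\ref{cor:dimv} for $\Uiv$ and its analogue for $\U_v$), together with the degree of $\Ziv$ over $\Zi$ (Theorem~\ref{thm:deg}) and of $Z$ over $Z_0$ (\cite{DCKP92}). Concretely, the paper takes: a non-empty open $\Omega_1\subset G^*$ over which the fibre $\Lambda^{-1}(x)\subset\MaxSpec Z$ consists of $\ell^{\rank\g}$ distinct points and every simple $\U_{v,x}$-module has dimension $\ell^N$; an analogous open $\Omega_2\subset\X$ for $\Uiv$; and the open $\Omega_0\subset G^*$ supplied directly by \cite{DCPRR05} on which $V(x)\cong V(\pi(x))^{\oplus r}$. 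The set $\Omega=\Omega_0\cap\Omega_1\cap\pi^{-1}(\Omega_2)$ is automatically non-empty open because $G^*$ is irreducible and $\pi$ is a quotient map. No identification of Azumaya loci with twisted conjugacy classes is required.

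Two smaller points. First, ``comparing dimensions'' gives only the first equation of \eqref{eq:hvw}; the second equation requires the global isomorphism $\bigoplus_{V\in\Irr\U_{v,x}}V\cong\bigoplus_{W\in\Irr\Ui_{v,\bar x}}W^{\oplus r}$ obtained by combining the \cite{DCPRR05} branching with the fibre-counting over $\Omega_1,\Omega_2$, and then reading off the coefficient of each $W$. Second, for part~(2) you do not need Frobenius reciprocity or to lift $y$ to a carefully chosen $x$: take $\Omega^\imath=\pi(\Omega)$ (open since $\pi$ is a quotient map), and for $y\in\Omega^\imath$ pick any $x\in\pi^{-1}(y)\cap\Omega$; the second equality in \eqref{eq:hvw} already forces $h_{V,W}>0$ for some $V\in\Irr\U_{v,x}$.
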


When the underlying symmetric pair is of \emph{diagonal type} $(\U_v\otimes \U_v,\U_v)$, Theorem \ref{thm:inbran} recovers a result of \cite{DCPRR05} on the tensor product decompositions for representations of quantum groups at roots of unity.

\subsection{Small quantum symmetric pairs}
The simple modules of $\U_v$ parametrized by the identity conjugacy class are of particular interest. They are exactly simple modules of a finite-dimensional Hopf algebra $\su$ with $\dim \su=\ell^{\dim \g}$ introduced by Lusztig \cite{Lus90}, known as the \emph{small quantum group}. 


We introduce in Definition~\ref{def:smQSP} a coideal subalgebra $\su^\imath$ of $\su$ with $\dim \su^\imath=\ell^{\dim \k}$, which arises from the quantum symmetric pair $(\U_v,\Uiv)$ at roots of unity. We call the pair $(\su,\su^\imath)$ the \emph{small quantum symmetric pair}, and $\su^\imath$ the \emph{small iquantum group}. It follows from our definition that simple modules of $\su^\imath$ are exactly simple $\Uiv$-modules which are parametrized by the twisted conjugacy class of the identity element. 

Ginzburg-Kumar \cite{GK93} showed that the cohomology $\mathrm{H}^{2*}(\su,\C)$ of the small quantum group $\su$ is isomorphic to the coordinate algebra $\C[\mathcal{N}]$ of the nilpotent cone $\mathcal{N}$, and this result has been further applied in the geometric representation theory; for example, see \cite{ABG04}. It would be interesting to carry out similar constructions for small iquantum groups.

\subsection{Relative braid group symmetries}
It was shown in \cite{DCP93} that Lusztig braid group symmetries preserves $\U_\A$ and the quantum Frobenius map intertwines braid group symmetries on $\C[G^*]$ and $\U_v$.
In the theory of quantum symmetric pairs, the relative braid group symmetries $\TT_i$, generalizing Lusztig braid group symmetries, were systematically developed by Wang and the second author \cite{WZ23,WZ25}. These symmetries are shown to be integral \cite{SZ25} and hence they induces symmetries on both $\C[\X]$ and $\Uiv$. It is natural to study the compatibility between the quantum Frobenius map $\Fri$ and the relative braid group symmetries $\TT_i$.

\begin{thmintro}[Theorem \ref{thm:tfcom}]\label{thm:braid}
    Suppose that the symmetric pair is of \emph{quasi-split type}. Then we have 
    \begin{equation}\label{eq:braid}
\TT_i \circ \Fri=\Fri\circ \TT_i.
\end{equation}
In particular, $\Zi$ is closed under the action of $\TT_i$.
\end{thmintro}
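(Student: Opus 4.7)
The plan is to verify \eqref{eq:braid} on a set of $\C$-algebra generators of $\Ui_1\cong\C[\X]$, since both sides of \eqref{eq:braid} are $\C$-algebra homomorphisms $\Ui_1\to\Uiv$. First I would invoke the integrality of the relative braid group symmetries from \cite{SZ25}: $\TT_i$ is an automorphism of the De Concini-Kac integral form $\Ui_{\A}$, and so it specializes to compatible automorphisms on both $\Uiv$ (via $q\mapsto v$) and $\Ui_1\cong\C[\X]$ (via $q\mapsto 1$), each intertwining the corresponding surjection out of $\Ui_\A$. Next I would record the explicit description of the Frobenius center $\Zi=\Fri(\Ui_1)$ that comes out of the construction of $\Fri$ in Theorem~\ref{thm:Fri}: $\Zi$ is generated inside $\Uiv$ by elements $\fkZ_j\in\Uiv$ obtained by specializing explicit $\ell$-th idivided-power lifts of the generators $B_j\in\Ui_\A$, and these lifts have classical limits equal (up to units) to the monomials $\overline{B_j}^{\,\ell}$ that generate $\C[\X]$.

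Given this set-up, verifying \eqref{eq:braid} on each generator $\overline{B_j^\ell}$ reduces to the identity
\[
\TT_i\bigl(\fkZ_j\bigr) \;=\; \Fri\bigl(\TT_i(\overline{B_j^\ell})\bigr) \quad\text{in } \Uiv,
\]
which amounts to asserting that, inside $\Ui_\A$, the operation $\TT_i$ commutes with the $\ell$-th idivided-power construction modulo terms annihilated by the specialization $q\mapsto v$. This is a rank-one-type computation, since both sides depend only on the $j$-th root subalgebra of $\Ui_\A$ and its image under $\TT_i$.

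The main obstacle, and the step that uses the quasi-split hypothesis, is this rank-one reduction. When $\I_\bullet=\emptyset$, the relative braid $\TT_i$ on $\Uiv$ is obtained (after an $\Ui_\A$-admissible rescaling) from Lusztig's braid symmetries on $\U_v$ \textemdash\ concretely, $T_i$ when $\tau(i)=i$ and the commuting product $T_iT_{\tau(i)}$ when $\tau(i)\ne i$ \textemdash\ via the embedding $\Uiv\hookrightarrow\U_v$. The analogous identity $T_j\circ\Fr=\Fr\circ T_j$ for the Lusztig quantum Frobenius was established by De Concini-Procesi in \cite{DCP93} (acting classically on $\C[G^*]$ via the corresponding dressing transformations on $G^*$). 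Restricting this identity along the pair of embeddings $\C[\X]\hookrightarrow\C[G^*]$ and $\Uiv\hookrightarrow\U_v$ then yields the desired rank-one commutation. In the non-quasi-split case the presence of Lusztig symmetries $T_j$ for $j\in\I_\bullet$ in the definition of $\TT_i$ breaks this direct descent and requires a finer analysis.

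Once \eqref{eq:braid} is established, the closedness of $\Zi$ under $\TT_i$ is immediate:
\[
\TT_i(\Zi) \;=\; \TT_i\bigl(\Fri(\Ui_1)\bigr) \;=\; \Fri\bigl(\TT_i(\Ui_1)\bigr) \;\subseteq\; \Fri(\Ui_1) \;=\; \Zi,
\]
using that $\TT_i$ preserves $\Ui_1\cong\C[\X]$.
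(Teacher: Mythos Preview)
Your proposal has a fundamental gap in its main step. The claim that in the quasi-split case $\TT_i$ is obtained ``from Lusztig's braid symmetries on $\U_v$ via the embedding $\Uiv\hookrightarrow\U_v$'', up to an admissible rescaling, is false. Lusztig's $T_i$ does not preserve $\Ui$: already for $\tau i=i$ one has $T_i(F_i)\in K_i\U^+$ and $T_i(E_iK_i^{-1})\in \U^-K_i$, so $T_i(B_i)$ separates the $E$- and $F$-parts and leaves the coideal subalgebra. The relative braid symmetries $\TT_i$ of \cite{WZ23} are genuinely new automorphisms of $\Ui$; their relation to Lusztig's $T_{\bs_i}$ goes through conjugation by a rank-one quasi $K$-matrix, which neither lies in $\U_v$ nor commutes with $\Fr$. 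Consequently the De Concini--Procesi identity $T_j\circ\Fr=\Fr\circ T_j$ cannot be ``restricted along the embeddings'' to give \eqref{eq:braid}.

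There is also a reduction issue. The simple-root elements $\underline{B_j}$ do not generate the commutative algebra $\Ui_1\cong\C[\X]$ as a $\C$-algebra (one needs all root vectors $\underline{B_\beta}$); what is true is that $\underline{B_j},\underline{k_j}^{\pm1}$ generate $\Ui_1$ as a \emph{Poisson} algebra. The paper exploits this: it first checks that both $\TT_i\circ\Fri$ and $\Fri\circ\TT_i$ respect the Poisson bracket (immediate from the integrality of $\TT_i$ and the definition of the bracket on $\Zi$), and then verifies \eqref{eq:braid} on these Poisson generators. The cases $x=\underline{k_j}^{\pm1}$ and $x=\underline{B_i},\underline{B_{\tau i}}$ are direct. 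The substantive work is the case $x=\underline{B_j}$ with $j\neq i,\tau i$: here the paper uses the explicit closed formulas for $\TT_i(B_j^{[\ell]})$ from \cite{WZ25} and carries out a careful $q$-binomial computation at $q=v$, split into the three cases $a_{i,\tau i}\in\{2,0,-1\}$. This computation is exactly where the quasi-split hypothesis enters, since the required higher-power formulas from \cite{WZ25} are presently only available in that generality.
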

We conjecture that Theorem \ref{thm:braid} holds in the full generality. The main obstacle is that the formulas of $\TT_i$ acting on higher powers of generators are only known for quasi-split types at this point; cf. \cite{WZ25} for these formulas in quasi-split type.

\subsection{Organizations}

The paper is organized as follows. In Section \ref{sec:pre}, we collect necessary backgrounds on quantum algebras. In Section \ref{sec:Fcent}, we construct and study the Frobenius center of iquantum groups. We construct small quantum symmetric pairs in Section \ref{sec:smQSP}. In Section \ref{sec:deg}, we construct a filtration of $\Uiv$ such that the associated graded algebra is $q$-commutative, and study the degree and center of the associated graded algebra. In Section \ref{sec:inii}, we determine the degree and the full center of $\Uiv$. In Section \ref{sec:par}, we relate simple modules of $\Uiv$ with $\theta$-twisted conjugacy classes of $G$. In Section \ref{sec:dim}, we determine the dimension of irreducible modules associated with regular twisted conjugacy classes. In Section \ref{sec:bran}, we study branching problems of $\U_v$-modules restricting to $\Uiv$-modules, and show that generic simple $\Uiv$-modules are direct summands of simple $\U_v$-modules. In Section \ref{sec:comp}, we study the compatibility between relative braid groups symmetries and quantum Frobenius maps.

\vspace{0.2in}

\noindent {\bf Acknowledgement: } The authors thank Weiqiang Wang for sharing a copy of Thomas Sale's thesis and many helpful discussions. The authors thank Huanchen Bao, Jiang-Hua Lu, Gus Schrader, and Ivan Losev for helpful discussions. JS is supported by the Glorious Sun Charity Fund. WZ is partially supported by the New Cornerstone Foundation through the New Cornerstone Investigator grant awarded to Xuhua He.

\section{Preliminaries}\label{sec:pre}

In this section, we collect backgrounds on quantum groups and quantum symmetric pairs. 

\subsection{Quantum groups}\label{sec:qg}

Let $\fg$ be a complex semisimple Lie algebra. Let $(a_{ij})_{i,j\in\I}$ be the associated Cartan matrix where $\I$ is a finite set of indices. Let $\{\epsilon_i\}_{i\in\I}$ be a set of relatively prime positive integers such that $\epsilon_i a_{ij}=\epsilon_j a_{ji}$ for $i,j\in \I$. Let $\mathcal{R}$ be the finite reduced root system associated with the Cartan matrix. Let $\{\alpha_i\}_{i\in\I}$ and $\{\omega_i\}_{i\in\I}$ be the set of simple roots and fundamental weights. Then there is a $|\I|$-dimensional Euclidean vector space $E$ containing $\mathcal{R}$ with the standard inner product $(\cdot , \cdot)$, such that $(\alpha_i,\alpha_j)=\epsilon_ia_{ij}$ and $(\omega_i,\alpha_j)=\delta_{ij}\epsilon_j$ for $i,j\in \I$. Let $P=\Z[\omega_i\mid i\in\I]$ be the weight lattice and $Q=\Z[\alpha_i\mid i\in \I]$ be the root lattice. Let $Q^+=\N[\alpha_i\mid i\in \I]$. For $\gamma=\sum_{i\in \I}{n_i\alpha_i}\in Q$, define the \emph{height} of $\gamma$ by $\text{ht}(\gamma)=\sum_{i\in\I}n_i$. For $i\in\I$ and $\mu\in P$, write $\langle \alpha_i^\vee,\mu\rangle=(\alpha_i,\mu)/\epsilon_i$. Let $W$ be the associated Weyl group with simple reflections $s_i$ ($i\in \I$) and the length function $l(\cdot)$. The $W$ acts on $E$ preserving the inner product. Let $\rho\in P$ denote the half sum of all positive roots.

Let $q$ be an indeterminate and $\C(q)$ be the field of rational functions in $q$ with coefficients in $\C$, the field of complex numbers. Fix a square root $q^{1/2}$ of $q$ in the algebraic closure of $\C(q)$. 
We denote
\[
q_i:=q^{\epsilon_i}, \qquad \forall i\in \I.
\]
Denote, for $r,m \in \N,i\in \I$,
\[
 [r]_i =\frac{q_i^r-q_i^{-r}}{q_i-q_i^{-1}},
 \quad
 [r]_i!=\prod_{s=1}^r [s]_i, \quad \qbinom{m}{r}_i =\frac{[m]_i [m-1]_i \ldots [m-r+1]_i}{[r]_i!}.
\]

The simply-connected Drinfeld-Jimbo quantum group $\U$ is defined to be the $\C(q^{1/2})$-algebra generated by $E_i,F_i, K_\mu$ for $i\in\I,\mu\in P$, which subjects to the following relations:
\begin{align}
&K_\mu K_{\nu}=K_{\mu+\nu},\; K_{0}=1, \label{eq:KK}
\\
&K_{\mu}E_i=q^{(\mu,\alpha_i)} E_i K_j, \quad K_\mu F_i=q^{-(\mu,\alpha_i)}F_i K_\mu,\\
&[E_i,F_j]=\delta_{ij}(q_i-q_i^{-1})(K_i^{-1}-K_i), \label{Q4}
\end{align}
where $K_i=K_{\alpha_i}$, and the quantum Serre relations, that is, for $i\neq j \in \I$,
\begin{align}
& \sum_{r=0}^{1-a_{ij}} (-1)^r  \qbinom{1-a_{ij}}{r}_i E_i^{r} E_j  E_i^{1-a_{ij}-r}=0,
  \label{eq:serre1} \\
& \sum_{r=0}^{1-a_{ij}} (-1)^r  \qbinom{1-a_{ij}}{r}_i F_i^{r} F_j  F_i^{1-a_{ij}-r}=0.
  \label{eq:serre2}
\end{align}

The algebra $\U=\bigoplus_{\mu\in Q}\U_\mu$ is naturally $Q$-graded, where $\deg E_i=\alpha_i$, $\deg F_i=-\alpha_i$, and $\deg K_\mu=0$, for $i\in \I$ and $\mu\in P$.

The algebra $\U$ admits a Hopf algebra structure with the comultiplication $\Delta$, the counit $\epsilon$ given by
\begin{align}
\begin{split}
\Delta(E_i)&=E_i\otimes 1 + K_i\otimes E_i, \qquad\; \epsilon(E_i)=0,
\\
\Delta(F_i)&=F_i\otimes K_i^{-1} + 1\otimes F_i,  \qquad \epsilon(F_i)=0,
\\
\Delta(K_\mu)&=K_\mu \otimes K_\mu,\qquad\qquad\qquad \epsilon(K_\mu)=1.
\end{split}
\end{align}

For $i\in\I$, let us write 
\begin{equation}\label{eq:recs}
    \mathbf{F}_i=\frac{F_i}{q_i^{1/2}(q_i-q_i^{-1})},\qquad \text{and}\qquad\mathbf{E}_i=\frac{E_i}{q_i^{-1/2}(q_i^{-1}-q_i)}.
\end{equation}
Then $\mathbf{E}_i,\mathbf{F}_i$ $(i\in\I)$, $K_\mu$ ($\mu\in P$) coincide with the usual Chevalley generators of quantum groups; cf. \cite[3.1.1]{Lus93}.

Let $T_i=T_{i,+1}'',i\in \I$ be the braid group symmetries on $\U$ as in \cite[37.1.3]{Lus93}. Let $\A=\C[q^{1/2},q^{-1/2}]$. The De Concini--Kac--Procesi form $\U_\A$ (see \cite{DCP93}) is defined to be the smallest $\A$-subalgebra of $\U$ which contains $E_i,F_i,K_\mu$ for all $i\in \I,\mu\in P$ and is closed under the actions of all $T_i,i\in \I$. 

Fix a reduced expression $w_0=s_{i_1} s_{i_2}\cdots s_{i_N}$ of the longest element $w_0$ in $W$. For $1\le k\le N$, set $\beta_k=s_{i_1}\cdots s_{i_{k-1}}(\alpha_{i_k})$, and the root vectors associated to $\beta_k$ are given by
\begin{align}
\label{eq:F-root}
&F_{\beta_k}=T_{i_1}T_{i_2}\cdots T_{i_{k-1}}(F_{i_k}),\qquad E_{\beta_k}=T_{i_1}T_{i_2}\cdots T_{i_{k-1}}(E_{i_k}).
\end{align}
For $\ba=(a_1,a_2,\ldots,a_N)\in \N^N$, denote
\begin{equation}\label{eq:fea}
  F^{\ba}=\orProd_{1\leq t\leq N}F_{\beta_t}^{a_t},
  \qquad 
  E^{\ba}=\orProd_{1\leq t\leq N}E_{\beta_t}^{a_t}.
\end{equation}
Here for any ordered set $\{A_1,A_2,\cdots,A_k\}$, we write 
\[
\orProd_{1\leq t\leq k} A_t=A_kA_{k-1}\cdots A_1
\]
to denote the ordered monomial.

It was shown in \cite{DCP93} that the set of monomials $\{K_\mu F^{\ba} E^{\bc}| \mu\in P, \ba,\bc\in \N^N\}$ forms an $\A$-basis of $\U_\A$, which is called the \emph{PBW basis}.

\begin{remark}
The PBW basis is defined with the reversed ordering (of root vectors) to ensure orthogonality with respect to the Lusztig bilinear pairing. This fact will be used in Section \ref{sec:inii}. 
\end{remark}

\subsection{Quantum symmetric pairs}\label{sec:qsp}
A symmetric pair $(\g,\g^\theta)$ consists of a semisimple Lie algebra $\g$ and a Lie algebra involution $\theta$ on $\g$. It is well-known that irreducible symmetric pairs are classified (up to conjugations) by Satake diagrams.

A Satake diagram $(\I=\bI \cup \wI,\tau)$ (cf. \cite{Ko14}) consists of a partition $\bI\cup \wI$ of $\I$, and an involution $\tau$ of $\I$ (possibly $\tau=\Id$) such that
\begin{itemize}
\item[(1)] $a_{ij}=a_{\tau i,\tau j}$ for $i,j\in\I$, 
\item[(2)] $\tau(\bI)=\bI$,
\item[(3)]
 $\bw(\alpha_j) = - \alpha_{\tau j}$ for $j\in \bI$,
\item[(4)]
 If $j\in \wI$ and $\tau j =j$, then $\alpha_j(\rho_{\bullet}^\vee)\in \Z$.
\end{itemize}
Here $\bw$ is the longest element in the Weyl group $W_\bullet$ associated to $\bI$ and $\rho_{\bullet}^\vee$ is the half sum of positive coroots in the subcoroot system generated by $\I_\bullet$. 

The involution $\tau$ naturally extends to an involution on $P$ such that the bilinear pairing between $P$ and $Q$ is invariant under the involution $\tau$.

Given a Satake diagram $(\I=\wI\cup\bI,\tau )$, the (simply-connected) iquantum group $\Ui$ is the $\Q(q^{1/2})$-subalgebra of $\U$ generated by
\begin{align}
\label{def:iQG}
\begin{split}
&B_i=F_i-c_i q^{-\langle \alpha^\vee_i,\bw \alpha_{\tau i} \rangle/2} T_{\bw}( E_{\tau i}) K_i^{-1} \qquad (i\in \wI),
\\ 
&K_\mu \qquad\qquad\qquad\qquad (\mu\in P^\theta),
\\
&E_j,\quad B_j=F_j \qquad\qquad (j\in \bI).
\end{split}
\end{align}

Here $c_i\in\{\pm 1\}$ ($i\in\I_\circ$) such that 
\begin{align}\label{def:ci}
\begin{split}
 c_i c_{\tau i}&=(-1)^{2\alpha_i(\rho_{\bullet}^\vee)},\quad \text{ for } i\in \I_\circ;
 \\
c_i&=c_{\tau i}, \quad \text{ if } \big(\alpha_i,\theta(\alpha_i)\big)=0.
\end{split}
\end{align}
For convenience, we assume that
$c_i=-1$, if $\tau i=i$ or $w_\bullet (\alpha_i)=\alpha_i$.

Following \cite{SZ25}, we consider the following localization 
$$
\A'=\A[(1+q)^{-1}, [\epsilon_i]!^{-1}\mid i\in\I]\subseteq \mathbb{C}(q^{1/2}).
$$ 

Let $\U_{\A'}=\U_\A \otimes_{\A} \A' $ and $\Ui_{\A'} =\U_{\A'} \cap \Ui$. 
For any $\A'$-algebra $R$, write $\Ui_R=R\otimes_{\A'}\Ui_{\A'}$ and $\U_R=R\otimes \U_{\A'}$. By \cite[Theorem~3.11]{SZ25} and \cite[Corollary~4.3]{LYZ25}, $\Ui_{\mathcal{A}'}$ admits an $\A'$-basis as a free $\A'$-module with the leading terms given by a subset of PBW-basis of $\U_{\A'}$. Therefore for any $\A'$-algebra $R$, one has the natural embedding $\Ui_R\hookrightarrow\U_R$. In the rest of the paper we will always identify $\Ui_R$ as a subalgebra of $\U_R$. For $x\in \U_{\A'}$ (resp., $\Ui_{\A'}$) and an $\A'$-algebra $R$, if there is no confusion we still write $x$ to denote its image in $\U_R$ (resp., $\Ui_R$).

\subsection{Semi-classical limits}\label{sec:limit}

Let $\C_1$ be the $\A'$-algebra, where $\mathbb{C}_1$ is the field of complex numbers and the $\A'$-algebra structure is given by the map $q^{1/2}\mapsto 1$. Let us write $$\U_1=\U_{\C_1}\qquad \text{ and } \qquad \Ui_1=\Ui_{\mathbb{C}_1}.$$
Denote by $\un{x}$ the image of $x\in \U_\A$ in $\U_1$. It was shown in \cite{DCP93} that $[x,y]\in (q^{1/2}-1)\U_\A$ for any $x,y\in \U_\A$. Thus, $\U_1$ is commutative and is moreover equipped with a Poisson structure defined by 
\begin{equation}\label{def:SCPoisson}
\{\un{x},\un{y}\}=\un{\frac{[x,y]}{2(q^{1/2}-1)}}.
\end{equation}
Similarly, $\Ui_1$ is equipped with a Poisson structure and the natural embedding $\Ui_1\rightarrow \U_1$ is Poisson. We next recall the geometric description of the Poisson algebras $\U_1$ and $\Ui_1$.

Let $G$ be the complex semisimple simply connected group with the Lie algebra $\g$. Let $(B^+,B^-)$ be a pair of Borel subgroups where $H=B^+\cap B^-$ is the maximal torus. Given a Satake diagram in Section \ref{sec:qsp}, there is a group involution $\theta: G\rightarrow G$ which lifts the underlying involution $\theta$ on $\g$; cf. \cite[Section~2.7]{SZ25}\cite{Spr09}. The \emph{dual Poisson-Lie group} $G^*$ is the subgroup
\begin{align}\label{eq:Gdual}
G^*=\{(b_1,b_2)\in B^+\times B^-\mid \pi_H^+(b_1)\pi^-_H(b_2)=id\}\subseteq B^+\times B^-,
\end{align}
where $\pi_H^\pm: B^\pm\rightarrow H$ denote the canonical projections. It carries a natural Poisson structure $\Pi_{G^*}$ (cf. \cite[Section 2.1]{Lu14}).

Let $K^\perp$ be the identity component of the subgroup
\[
\{(g,\theta(g))\in G^*\mid g\in B^+\}\subseteq G^*.
\] 
By \cite{So24}, $K^\perp$ is a coisotropic subgroup of $G^*$, and hence the affine quotient $$\mathcal{X}=K^\perp \backslash G^*$$ carries a natural Poisson structure $\Pi_\X$, making $\X$ into a Poisson homogeneous space of $G^*$. The algebra $\C[\X]$ of regular functions on $\X$ is identified with the Poisson subalgebra of $\mathbb{C}[G^*]$ consisting of left $K^\perp$-invariant functions.

By \cite[Theorem~12.1]{DCP93} and \cite[Theorem~1]{So24}, one has canonical Poisson algebra isomorphisms $$\U_1\overset{\sim}{\longrightarrow} \C[G^*]\qquad \text{ and }\qquad \Ui_1\overset{\sim}{\longrightarrow} \C[\X],$$ 
which fit into the commutative diagram
\begin{equation}
        \begin{tikzcd}
            & \U_1 \arrow[r,"\sim"] & \mathbb{C}[G^*] \\
            & \Ui_1 \arrow[u,hook] \arrow[r,"\sim"] & \mathbb{C}[\X] \arrow[u,hook]
        \end{tikzcd}
\end{equation}



\subsection{Quantum groups at roots of unity}

Let $\ell$ be a positive odd integer which is relatively prime to $\epsilon_i$ for $i\in\I$. Fix a primitive $\ell$th root of unity $\widetilde{v}\in \C$. Then $v=\widetilde{v}^2$ is again a primitive $\ell$th root of 1. Let $\C_v$ be the $\A'$-algebra, where $\mathbb{C}_v$ is the field of complex numbers and the $\A'$-algebra structure is given by the map $q^{1/2}\mapsto \widetilde{v}$.

Define $\U_v=\C_v\otimes_{\A'}\U_{\A'}$ and denote $\pi_v:\U_{\A'}\rightarrow \U_v, x\mapsto 1\otimes x$. We next recall the Frobenius center of $\U_v$ following the work of De Concini--Kac--Procesi (cf. \cite{DCK90,DCP93}).

It was shown {\em loc. cit.} that $E_i^\ell, F_i^\ell,K_\mu^\ell$ for $i\in \I,\mu\in P$ are central in $\U_v$. Here by abuse of notations, we use the same notations to denote the elements in $\U_v$ after base changes.

Following \cite{DCK90,DCP93}, let $Z_0$ be the smallest subalgebra of $\U_v$ which contains $E_i^\ell, F_i^\ell,K_\mu^\ell$ and is invariant under Lusztig braid group $\Br(W)$-action. By definition, $Z_0$ is a central subalgebra of $\U_v$. The algebra $Z_0$ is equipped with a Poisson structure as follows (see \cite{DCK90,DCKP93b}): for any $x,y\in Z_0$, take $\widetilde{x},\widetilde{y}\in \U_{\A'}$ such that $\pi_v(\widetilde{x})=x,\pi_v(\widetilde{y})=y$; then the Poisson bracket $\{x,y \}$ is defined by
\begin{align}\label{eq:Poissonv}
\{x,y \} = \pi_v\Big(\frac{[\widetilde{x},\widetilde{y}]}{\ell^2 (q v^{-1}-1)}\Big).
\end{align}
By \cite[\S 19]{DCP93}, $Z_0$ is a Hopf subalgebra of $\U_v$.


\begin{proposition}{\em \cite{DCK90,DCP93}}
\label{prop:DCKv}
\begin{itemize}
\item[(1)] $Z_0$ is a polynomials algebra over $\C[K_\mu^\ell|\mu\in P]$ generated by elements $F_{\beta_k}^\ell,E_{\beta_k}^\ell,0\le k\le l$. 

\item[(2)] The set of monomials $F^{\ba}K_\mu E^{\bc}$ for $\ba=(a_1,\ldots,a_l),\bc=(c_1,\ldots,c_l)$ with $0\le a_i,c_i<\ell$ and $\mu$ ranges in a set of representatives of $P \slash \ell P$ form a basis of $\U_v$ over $Z_0$.

\item[(3)] There is a canonical isomorphism $\Fr:\U_1\rightarrow Z_0$ of Poisson Hopf algebras such that $\un{E_i}\mapsto -E_i^\ell,\un{F_i}\mapsto F_i^\ell,\big(\un{K_\mu}\big)^{\pm1}\mapsto K_\mu^{\pm\ell}$.

\item[(4)] For any $i\in \I$, we have $\Fr \circ T_i =T_i \circ \Fr$.
\end{itemize}

\end{proposition}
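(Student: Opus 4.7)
The plan is to establish the four statements in order, with (1) being the technical core and the rest following by fairly direct arguments.

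First I would verify that $K_\mu^\ell$, $E_i^\ell$, $F_i^\ell$ are central in $\U_v$. Centrality of $K_\mu^\ell$ is immediate from $K_\mu E_i = q^{(\mu,\alpha_i)} E_i K_\mu$, since $v^{\ell(\mu,\alpha_i)}=1$. For $E_i^\ell$, the standard identity
\[
[E_i^n,F_j]=\delta_{ij}[n]_i E_i^{n-1}(q_i^{-(n-1)} K_i^{-1}-q_i^{n-1} K_i)/(q_i^{-1}-q_i)
\]
vanishes at $n=\ell$ because $[\ell]_i=0$ at $v$; combined with commutation with $K_\mu$ and the quantum Serre relations, this gives centrality of $E_i^\ell$, and similarly for $F_i^\ell$. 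Extension to all root vectors $E_{\beta_k}^\ell, F_{\beta_k}^\ell$ is then automatic because they are obtained by applying braid operators to Chevalley generators, and braid operators preserve the center.

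For (1), the crucial ingredient is the Levendorskii--Soibelman relations: for $k<j$,
\[
F_{\beta_k}F_{\beta_j}-q^{-(\beta_k,\beta_j)}F_{\beta_j}F_{\beta_k}=\sum_{\mathbf{m}} c_{\mathbf{m}}\,F_{\beta_{k+1}}^{m_{k+1}}\cdots F_{\beta_{j-1}}^{m_{j-1}}, \qquad c_\mathbf{m}\in\A.
\]
Raising to the $\ell$-th power at $v$ and using divided-power identities together with the vanishing of $q$-binomials $\qbinom{\ell}{r}_i$ for $0<r<\ell$, one shows that $F_{\beta_k}^\ell$ and $F_{\beta_j}^\ell$ commute in $\U_v$; the same reasoning handles $E$-$E$, $E$-$F$, and $K$-mixed brackets. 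Thus the subalgebra generated by the $\ell$-th powers is commutative, so the surjection from the polynomial algebra onto $Z_0$ is well defined, and algebraic independence follows from the PBW basis of $\U_\A$ which survives base change to $\C_v$. Statement (2) is then a direct consequence: every PBW monomial $K_\mu F^{\mathbf{a}} E^{\mathbf{c}}$ in $\U_v$ uniquely decomposes as a product of a $Z_0$-element (absorbing $K_\mu^\ell$ and all full $\ell$-th powers) times a reduced monomial with exponents in $\{0,\ldots,\ell-1\}$ and $\mu$ ranging over representatives of $P/\ell P$, and $Z_0$-freeness is inherited from the $\A$-freeness of the PBW basis.

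For (3), I would define $\Fr$ on generators by the stated formulas and verify that the images satisfy the relations of $\U_1\cong\C[G^*]$. Since the images are central and $\U_1$ is commutative, the algebra relations reduce to the polynomial-freeness established in (1). For the Poisson structure, apply the limit formula \eqref{eq:Poissonv} to pairs of generators and compare with \eqref{def:SCPoisson}; the factor $\ell^2$ accounts for the leading-order expansion $[\widetilde x^\ell,\widetilde y^\ell]\sim \ell^2 \widetilde x^{\ell-1}\widetilde y^{\ell-1}[\widetilde x,\widetilde y]$, which matches via the Leibniz rule. The Hopf-algebra compatibility reduces to computing $\Delta(E_i^\ell), \Delta(F_i^\ell), \Delta(K_\mu^\ell)$ in $Z_0\otimes Z_0$; the off-diagonal contributions in the quantum binomial expansion of $\Delta(E_i)^\ell$ all vanish at $v$ (again by $\qbinom{\ell}{r}_i=0$), so one recovers the primitive-type comultiplication on $\U_1$. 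Statement (4) follows because both $T_i\circ\Fr$ and $\Fr\circ T_i$ are algebra homomorphisms agreeing on the generators $\un{E_j},\un{F_j},\un{K_\mu}$ of $\U_1$, which can be checked by direct computation of $T_i$ on Chevalley generators and comparing with the action on $\ell$-th powers.

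The main obstacle is part (1): establishing mutual commutation of the $\ell$-th powers of PBW root vectors requires careful manipulation of the Levendorskii--Soibelman relations and delicate use of vanishing of $q$-integers at roots of unity when passing to $\ell$-th powers. Once the commutative polynomial structure of $Z_0$ is secured, parts (2)--(4) are essentially bookkeeping plus the semi-classical limit computations already packaged in Section \ref{sec:limit}.
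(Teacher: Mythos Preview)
The paper does not supply its own proof of this proposition: it is stated as a cited result from \cite{DCK90,DCP93}, and no argument is given beyond the attribution. So there is no ``paper's proof'' to compare against; your outline is essentially a sketch of the original De Concini--Kac--Procesi arguments.

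That said, one remark on your sketch of (1): the detour through raising the Levendorskii--Soibelman relations to the $\ell$-th power is unnecessary. You have already observed that $E_i^\ell, F_i^\ell, K_\mu^\ell$ are central and that braid operators, being algebra automorphisms, preserve the center. Since $E_{\beta_k}^\ell = T_{i_1}\cdots T_{i_{k-1}}(E_{i_k}^\ell)$ and similarly for $F$, all the root-vector $\ell$-th powers are central, hence automatically commute with one another. The polynomial structure then follows directly from the PBW basis, with no need to manipulate the LS straightening relations. Your identification of (1) as the ``main obstacle'' therefore overstates the difficulty once centrality is in hand; the genuinely delicate parts in the original references are the Poisson compatibility in (3) and the verification of (4) on non-simple generators.
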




\subsection{Rank one elements}\label{sec:iDP}

Let $i\in\I_\circ$. If $i=\tau i=\bw i$, then the idivided power $B_{i,\ov{p}}^{(m)}$ introduced in \cite{BW18a,BeW18,CLW21} depends on a parity $\ov{p}\in \Z\slash 2\Z$ and we use $\bB_i^{(m)}$ to denote $B_{i,\ov{m+1}}^{(m)}$. If $\tau i\neq i$, then we use $\bB_i^{(m)}$ to denote the usual divided power $\frac{\bB_i^m}{[m]_i!}$. If $\tau i=i\neq \bw i$, then we use $\bB_i^{(m)}$ to denote the idivided power $b_i^{(m)}$ introduced in \cite[Section 5]{BW21}.

In all of above three cases, we define 
\begin{align}
B_i^{[m]}=[m]_i! q_i^{m/2} (q_i-q_i^{-1})^m \bB_i^{(m)}.
\end{align}
The elements $B_i^{[m]}$ can be described explicitly as below.

\begin{itemize}
\item[(i)] Let $i=\tau i=\bw i$. The elements $B_i^{[m]}$ are given by
\begin{align}
\label{def:idv}
\begin{split}
&B_i^{[2k]}=
  \prod_{r=1}^{k}  (B_i^2+ (q_i-q_i^{-1})^2 [2r-1]_i^2), 
\\
&B_i^{[2k+1]}=B_i\prod_{r=1}^{k} (B_i^2+ (q_i-q_i^{-1})^2 [2r]_i^2).
\end{split}
\end{align}

\item[(ii)] 
Let $i\neq  \tau i$. The elements $B_i^{[m]}$ are given by
\begin{align}
\label{eq:diagiDP}
    B_i^{[m]}=B_i^m.
\end{align} 

\item[(iii)]
Let $i= \tau i \neq \bw i$. Write $Y_i=-c_iq_i^{-\langle \alpha^\vee_i,w_\bullet\alpha_{\tau i}\rangle /2}T_{w_\bullet}(E_{\tau i})K_i^{-1}$. The elements $B_i^{[m]}$ are given by
\begin{align}
\label{eq:qsiDP}
    B_i^{[m]}=\sum_{a=0}^m q_i^{-a(m-a)}\qbinom{m}{a}_i Y_i^a F_i^{m-a}.
\end{align} 
By \cite[Lemma 5.9]{BW21}, $B_i^{[m]}\in \Ui$.
\end{itemize}

It is clear that $B_i^{[m]}\in \U_{\A'}$. Hence one has $B_i^{[m]}\in \Ui_{\A'}$ for any $i\in \wI,m\in \N$. When specializing $q$ at an $\ell$th root of unity, elements $B_i^{[\ell]}$ will serve as Poisson generators of the Frobenius center of iquantum groups; see the proof of Theorem \ref{thm:Fri}.

\subsection{Relative braid group symmetries}\label{sec:rbg}

Let $\wItau\subset\wI$ be a fixed set of representatives for $\tau$-orbits. The real rank of a Satake diagram $(\I=\bI \cup \wI,\tau)$ is defined to be the cardinality of $\wItau$.

We recall the definition of relative Weyl group from \cite{WZ23}; see also \cite{Lus03,DK19}. Recall from Section~\ref{sec:qsp} that $\bw$ denotes the longest element in the Weyl group associated to $\bI$. For $i\in \wI$, we set $\I_{\bullet,i}:=\bI\cup \{i,\tau i\}$ and there is a real rank 1 Satake subdiagram $(\I_{\bullet,i}=\{i,\tau i\}\cup\bI,\tau\big|_{\I_{\bullet,i}})$.
Let $\cR_{\bullet,i}$ be the root system with the simple system $\{\alpha_j\mid j\in \I_{\bullet,i}\}$. Let $W_{\bullet,i}$ be the parabolic subgroup of $W$ generated by $s_i,i\in \I_{\bullet,i}$ and $w_{\bullet,i}$ be the longest element of $W_{\bullet,i}$. Define $\tau_i$ such that $w_{\bullet,i}(\alpha_j)=-\alpha_{\tau_{i} j}$ for $j\in \I_{\bullet,i}$. 

For $i\in \wI$, define $\bs_i\in W_{\bullet,i}$ such that
\begin{align}\label{def:bsi}
\bwi= \bs_i w_\bullet \, (=w_\bullet \bs_i).
\end{align}
It is clear from the definition that $\bs_i=\bs_{\tau i}$. The relative Weyl group $W^\circ$ associated to $(\I=\bI \cup \wI,\tau)$ can be identified with the subgroup of $W$ generated by $\bs_i,i\in \wItau$. The group $W^\circ$ is itself a Coxeter group with Coxeter generators $\bs_i$. Let $\bbw_0$ be the longest element in $W^\circ$. It is known that $w_0=\bbw_0 w_\bullet=w_\bullet\bbw_0 $ and $l(w_0)=l(\bbw_0)+l(\bw)$.

\begin{proposition} {\em\cite[Theorems 6.1 and 9.9.]{WZ23}}
\label{prop:WZ} 
For $i\in\wItau$, there exist algebra automorphisms $\TT_{i}$ on $\Ui$ which satisfy braid relations in $W^\circ$.
\end{proposition}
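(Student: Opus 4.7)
The plan is to construct $\TT_i$ by transporting Lusztig's braid group symmetries along the embedding $\Ui\hookrightarrow \U$, with carefully chosen rescalings on the short generators $B_i$ to land back inside the coideal subalgebra. Concretely, for each $i\in \wItau$, I would pick the specific Weyl group element $\bs_i\in W$ defined in \eqref{def:bsi} and use a product of Lusztig's rank-one symmetries $T_{\bs_i}$ (or rather, a normalized variant that is well-behaved on the De~Concini--Kac form) as the building block. One then defines $\TT_i$ on each type of generator of $\Ui$ separately:
\begin{itemize}
    \item for $K_\mu$ with $\mu\in P^\theta$, take $\TT_i(K_\mu)=K_{\bs_i(\mu)}$ (using that $\bs_i$ preserves $P^\theta$);
    \item for $E_j, B_j=F_j$ with $j\in \bI$, set $\TT_i(x)=T_{\bs_i}(x)$, noting that $\bs_i$ normalizes $\bI$ suitably so that the result still lies in the generators attached to $\bI$;
    \item for $B_j$ with $j\in \wI\setminus\{i,\tau i\}$, again try $\TT_i(B_j)=T_{\bs_i}(B_j)$ up to a scalar in $\C(q^{1/2})^\times$;
    \item for the ``moving'' generators $B_i$ and $B_{\tau i}$, no simple conjugation suffices; instead one must write down an explicit polynomial in the $B_j$'s and $K_\mu$'s whose image under the natural embedding into $\U$ agrees with $T_{\bs_i}$ applied to a suitable completion. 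This is the piece that uses the rank-one $\imath$divided powers of Section~\ref{sec:iDP}.
\end{itemize}

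With the candidate maps in hand, the next task is to verify that $\TT_i$ actually defines an algebra homomorphism on $\Ui$. The key observation is that, by construction, $\TT_i$ agrees on generators with the image of an ambient (partial) Lusztig operator, so the quantum Serre relations are automatic; what needs real verification are the $\imath$Serre relations unique to $\Ui$. I would prove these by reducing everything to rank-two Satake subdiagrams $\I_{\bullet,i}\cup\{j\}$: for each such diagram, the $\imath$Serre polynomial is a specific universal expression, and one computes $\TT_i$ applied to it term-by-term and matches with the known braid relations in the ambient $\U$. Invertibility of $\TT_i$ then follows from constructing $\TT_i^{-1}$ by the mirror-image recipe (using $T_{\bs_i}^{-1}$), and checking composition on generators.

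Finally, to establish the braid relations $\TT_i\TT_j\TT_i\cdots = \TT_j\TT_i\TT_j\cdots$ in $W^\circ$, I would use the classical reduction: braid relations among elements of a Coxeter group only involve pairs, so it suffices to verify the appropriate $m_{ij}$-term relation for each pair $(i,j)\subset \wItau$. Since $\TT_i$ coincides with conjugation by the $W$-element $\bs_i$ up to the corrections on $B_i, B_{\tau i}$, the relation holds automatically on generators $K_\mu$ (using $\bs_i\bs_j\bs_i\cdots = \bs_j\bs_i\bs_j\cdots$ in $W$) and on generators indexed by $\bI$ or by $\wItau\setminus\{i,j\}$. The remaining checks reduce to rank-two quantum symmetric subpairs, where explicit formulas on $B_i, B_j, B_{\tau i}, B_{\tau j}$ can be verified by direct computation. \emph{The hard part is precisely this last step}: in higher-rank-two cases the $\TT_i$-action on $B_i$ and on its $\imath$divided powers is given by long, case-by-case polynomial expressions in \cite{WZ23}, and matching the two sides of the braid relation requires delicate bookkeeping, especially in the quasi-split versus non-quasi-split situations and in cases where $\tau$ identifies the two nodes. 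Modulo these computations, the automorphism property and braid relations follow, completing the proof.
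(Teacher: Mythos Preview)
The paper does not prove this proposition at all: it is stated with an attribution to \cite[Theorems~6.1 and~9.9]{WZ23} and used as a black box throughout. There is therefore no ``paper's own proof'' to compare your proposal against.

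That said, your sketch has a genuine conceptual gap that would prevent it from going through. Your premise is that $\TT_i$ can be obtained by restricting or transporting the ambient Lusztig symmetry $T_{\bs_i}$ along the embedding $\Ui\hookrightarrow\U$, possibly after rescaling. This is not the case: $T_{\bs_i}$ does \emph{not} preserve the coideal subalgebra $\Ui$, so there is nothing to restrict, and no scalar tweak on generators repairs this. The relation between $\TT_i$ and Lusztig's $T_{\bs_i}$ is instead mediated by conjugation with (a rank-one factor of) the quasi $K$-matrix, which is an infinite sum and lives only in a completion; the whole point of the ``intrinsic'' construction in \cite{WZ23} is to produce closed formulas for $\TT_i$ directly in $\Ui$ that avoid this completion. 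Concretely, for $j\neq i,\tau i$ the action $\TT_i(B_j)$ is \emph{not} $T_{\bs_i}(B_j)$ up to a scalar but a sum of iterated $q$-commutators of $B_j$ with the rank-one elements $B_i^{[r]}$; see the explicit formulas used in Section~\ref{sec:comp} of the present paper (e.g.\ \eqref{eq:TTell}). Your high-level architecture---define on generators, verify the $\imath$Serre relations, reduce braid relations to rank-two Satake subdiagrams---is indeed the shape of the argument in \cite{WZ23}, but the specific mechanism you propose in each bullet would have to be replaced by those intrinsic formulas before any of the subsequent verifications make sense.
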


\begin{proposition}\label{prop:intT} 
\begin{itemize}
\item[(1)] {\em \cite{BW18b}} The symmetries $T_j$ for $j\in \bI$ preserve $\Ui_{\A'}$.

\item[(2)] {\em \cite{SZ25}} The relative braid group symmetries $\TT_i$ for $i\in \wItau$ preserve $\Ui_{\A'}$.
\end{itemize}
\end{proposition}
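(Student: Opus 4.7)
The plan is to reduce each integrality claim to an explicit computation on a generating set of $\Ui_{\A'}$, exploiting that $\U_{\A'}$ is already known to be $T_j$-stable for $j\in\bI$ and that the generators of $\Ui_{\A'}$ are built from controllable ingredients: the elements $B_i$ $(i\in\wI)$, the Cartan part $K_\mu$ $(\mu\in P^\theta)$, the Levi generators $E_j,F_j$ $(j\in\bI)$, together with the rank 1 elements $B_i^{[m]}$ defined in Section~\ref{sec:iDP}.

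For part (1), the main step is to show $T_j(B_i)\in\Ui_{\A'}$ for $j\in\bI$ and $i\in\wI$. The key mechanism is the compatibility $T_j T_{\bw}=T_{\bw}T_{\bw^{-1}s_j\bw}$ inside the braid group, so that $T_j$ on the summand $T_{\bw}(E_{\tau i})K_i^{-1}$ is again an expression involving $T_{\bw}$ applied to an element of the Levi quantum group with index set $\bI$. I would then verify that after combining with $T_j(F_i)$, the resulting element lies in $\Ui_{\A'}$; for the Cartan part $T_j(K_i^{-1})=K_{s_j\alpha_i}^{-1}$, one uses that $\alpha_i + \alpha_{\tau i}$ lies in $P^\theta\oplus \Z\bI$ and that $K$ factors can be split into a $P^\theta$-part and a $\bI$-part which is absorbed into the Levi subalgebra. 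For the remaining generators, $T_j$ acting on $E_k,F_k$ $(k\in\bI)$ stays in the Levi quantum group (known to be $T_j$-stable over $\A'$), and $T_j(K_\mu)=K_{s_j\mu}\in \U_{\A'}^{P^\theta\oplus\Z\bI}$ lies in the Cartan part of $\Ui_{\A'}$.

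For part (2), the strategy is to work rank by rank. By the Matsumoto-type braid relations of Proposition~\ref{prop:WZ}, it is enough to test $\TT_i$ on a generating set of $\Ui_{\A'}$. For generators $B_j$ and $E_j$ associated to indices $j\notin \I_{\bullet,i}\cup\{\tau i\}$, or for $K_\mu$ with $\mu\in P^\theta$, the action of $\TT_i$ is given by essentially Levi-type formulas which are transparently integral. The nontrivial step is the action of $\TT_i$ on $B_i$ and on the rank 1 elements $B_i^{[m]}$: one needs closed formulas of the form $\TT_i(B_i^{[m]})=$ (a polynomial in $B_{\tau i}^{[k]}$, Levi generators and $K_\mu^{\pm 1}$, $\mu\in P^\theta$) with coefficients in $\A'$. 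These are organized according to the three cases (i)–(iii) in Section~\ref{sec:iDP}: real rank one of type AI/AIII/CII etc. In each case one proceeds by induction on $m$, using the recursive relation between $B_i^{[m]}$ and $B_i^{[m-1]}$ together with the rank one commutation relations derived from the defining relations of $\Ui$.

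The main obstacle is the third step: controlling $\TT_i$ on the idivided powers $B_i^{[m]}$ in the quasi-split case $i=\tau i=\bw i$, where the closed formula for $B_i^{[2k]}$ and $B_i^{[2k+1]}$ involves nontrivial products of the form $\prod_r (B_i^2+(q_i-q_i^{-1})^2[2r\pm 1]_i^2)$. Here the denominators $[m]_i!$ and $(q_i-q_i^{-1})^m$ appearing implicitly in $\bB_i^{(m)}$ could a priori obstruct integrality, and one must use the localization at $[\epsilon_i]!^{-1}$ and $(1+q)^{-1}$ in $\A'$ to absorb them. I expect the proof to follow the case analysis of \cite{SZ25}, checking that the explicit $\TT_i$-formulas from \cite{WZ25} on rank 1 elements indeed send the integral $B_i^{[m]}$ to $\A'$-linear combinations of PBW-type monomials in $\Ui_{\A'}$, thereby extending integrality from rank 1 to the full algebra via the established braid relations.
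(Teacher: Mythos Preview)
The paper does not prove this proposition; both parts are stated as citations of results from \cite{BW18b} and \cite{SZ25}, with no argument supplied. Your proposal is therefore an attempt to reconstruct proofs that live entirely in those references.

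That said, your sketch for part (2) misidentifies where the work is. You assert that for $j\notin\I_{\bullet,i}\cup\{i,\tau i\}$ the action of $\TT_i$ on $B_j$ is given by ``essentially Levi-type formulas which are transparently integral,'' and that the nontrivial step is controlling $\TT_i$ on $B_i$ and on the idivided powers $B_i^{[m]}$. It is the reverse. The formula $\TT_i(B_i)=q_i^{a_{i,\tau i}/2-1}B_{\tau_i\tau(i)}k_{\tau_i(i)}^{-1}$ (cf.\ the proof of Theorem~\ref{thm:tfcom}) is manifestly in $\Ui_{\A'}$, and since $B_i^{[m]}$ is a polynomial in $B_i$ with coefficients in $\A'$, its image follows. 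By contrast, $\TT_i(B_j)$ for $j\neq i,\tau i$ is a $q$-Serre-type sum involving denominators of the shape $[r]_i![s]_i!$ and powers of $(q_i-q_i^{-1})$ (compare \eqref{eq:TTell} and the formulas from \cite{WZ25} used throughout Section~\ref{sec:comp}); showing these expressions land in $\Ui_{\A'}$ is precisely the content of \cite[Theorem~3.7]{SZ25} and is not transparent. Moreover, the $B_i^{[m]}$ are not part of a generating set of $\Ui_{\A'}$ and play no role in this integrality statement; they enter the paper later, in the construction of the Frobenius center. Your outline for part (1) is closer to the mark, but again the substance is in \cite{BW18b}, not here.
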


\subsection{Integrally closed domains}\label{sec:icd}

We gather backgrounds on representations of algebras which will be needed in this paper. We mainly follow \cite{DCK90} and references therein. 

Let $R$ be an associative $\C$-algebra, not necessarily commutative. We call $R$ a \emph{domain} if $R$ has no zero divisors. Let $R$ be an domain and $Z$ be the center of $R$. Let $Q(Z)$ be the fraction field of $Z$ and $Q(R):=Q(Z)\otimes_Z R$. $R$ is called {\em integrally closed} if for any subalgebra $R'$ of $Q(R)$ such that $R\subset R'\subset z^{-1}R$ for some $z\in Z,z\neq 0$, we have $R'=R$.

Note that for an integrally closed domain $R$ and any central subalgebra $Z_0$ of $R$, the commutative algebra $Z_0$ is an integrally closed domain in the sense of the usual definition for commutative algebras.

Let $R$ be an integral closed domain and $Z$ be the center of $R$. Let $\overline{Q(Z)}$ be the algebraic closure of $Q(Z)$. Assume that $R$ is a finite module over $Z$. Then it is known that $\overline{Q(Z)}\otimes _Z R$ is isomorphic to the ring $M_d(\overline{Q(Z)})$ of $d\times d$ matrices over $\overline{Q(Z)}$. The unique number $d$ is called the \emph{degree} of $R$. In particular, one has 
\begin{equation}
    \dim_{Q(Z)}Q(R)=d^2.
\end{equation}

Since $R$ is integrally closed, the usual trace of matrices induces a well-defined trace map $\tr: R\rightarrow Z$, called the \emph{reduced trace}. A finite-dimensional representation $\varphi:R\rightarrow \text{End}(V)$ of $R$ is called \emph{compatible with the trace map} if $\tr\circ \varphi=\varphi\circ \tr$, where $\tr$ on the left hand side is the usual trace of matrices.

Let $Z_0\subset Z$ be a finitely generated central subalgebra of $R$ such that $R$ is a finite module over $Z_0$. Let $\Irr R$ be the set of irreducible representations of $R$. There is a canonical map 
$$
\chi:\Irr R\rightarrow \MaxSpec Z_0
$$
sending an irreducible representation $V$ to its annihilator $\text{Ann}_{Z_0}V$ in $Z_0$. For $x\in \MaxSpec Z_0$, write $R_x=R/xR$ which is a finite-dimensional algebra. Note that the fibre $\chi^{-1}(x)$ consists of exactly irreducible representations of $R_x$.

Since $R$ has no zero divisors, $R$ is a finitely generated torsion-free module over $Z_0$ and then $R_{x}\neq 0$ for any $x\in \MaxSpec Z_0$. In particular, the map $\chi$ is surjective.

\begin{proposition}[\text{cf. \cite[Theorem 1.1]{DCKP93a}\cite[Theorem 4.5]{DCP93}}]
\label{prop:repalg}
    Let $R$ be a finitely generated $\C$-algebra and $Z$ be the center of $R$. Suppose that $R$ is an integrally closed domain, and a finite module over $Z$. Suppose that the degree of $R$ is $d$. The following holds.  

    (1) $Z$ is finitely generated. For any $x\in \MaxSpec Z$, there is a unique (up to equivalence) semisimple representation $V(x)$ of $R$, such that $\dim V(x)=d$ and $V(x)$ is compatible with the trace map. Moreover, any $W\in \Irr R_x$ is a direct summand of $V(x)$.

    (2) Any irreducible representation of $R$ has dimension at most $d$.

    (3) Let $Z_0\subset Z$ be a finitely generated central subalgebra such that $R$ is a finite module over $Z_0$. Then the set 
    \[
    \Omega_R(Z_0)=\{x\in \MaxSpec Z_0\mid \dim V=d \text{ for any }V\in \Irr R_x\}
    \]
    is a non-empty Zarisky open subset of $\MaxSpec Z_0$.
\end{proposition}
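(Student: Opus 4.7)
The plan is to treat this as a packaging of the classical theory of prime PI algebras, following closely the arguments of \cite{DCK90,DCKP93a,DCP93}. Since $R$ is an integrally closed domain that is a finite module over its center $Z$ of degree $d$, it is a prime PI algebra of PI-degree $d$, and the reduced trace $\tr: R \to Z$ is well-defined. The proof will verify each item by appealing to this structural picture.

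For part (1), finite generation of $Z$ follows from the Artin-Tate lemma: $R$ is finitely generated over $\C$ and module-finite over the central subalgebra $Z$, so $Z$ itself must be finitely generated as a $\C$-algebra. To construct $V(x)$ for $x \in \MaxSpec Z$, I would use the reduced trace form. Concretely, for any embedding $\iota: R \hookrightarrow M_d(\overline{Q(Z)})$ coming from the splitting $\overline{Q(Z)} \otimes_Z R \cong M_d(\overline{Q(Z)})$, the reduced trace $\tr$ is characterized by the property $\tr(r) = \tr_{\text{matrix}}(\iota(r))$, and takes values in $Z$ by the integrally closed hypothesis. Then $V(x)$ is constructed by choosing a maximal ideal of $R$ lying over $x$; equivalently, one picks a simple module of $R_x$ that realizes the trace. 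Uniqueness up to equivalence is then forced by the requirement $\tr_{V(x)} = \tr \bmod x$, since the trace function of a semisimple module over a finite-dimensional $\C$-algebra determines its isomorphism class, and the fact that every $W \in \Irr R_x$ is a summand of $V(x)$ follows because $V(x)$ is constructed from a faithful trace on the semisimplification of $R_x$.

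For part (2), let $W \in \Irr R$ with character $\chi_W$. Its annihilator $\ker(R \to \End W)$ is a maximal ideal $\fm$ of $R$, and the image $R/\fm \hookrightarrow \End(W) = M_{\dim W}(\C)$ is a simple finite-dimensional algebra; by Kaplansky's theorem for prime PI algebras, its PI-degree is at most $d$, forcing $\dim W \leq d$.

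For part (3), the Azumaya locus $\mathcal{A}(R) \subseteq \MaxSpec Z$, i.e.\ the set of points where $R_x \cong M_d(\C)$, is a nonempty Zariski open subset of $\MaxSpec Z$; this is the standard characterization of the Azumaya locus of a prime PI algebra and is exactly where the representation $V(x)$ is simple of dimension $d$. Pulling back along the finite surjective map $\MaxSpec Z \to \MaxSpec Z_0$, one obtains that $\Omega_R(Z_0)$ is the image of $\mathcal{A}(R)$ (complement of the image of its complement need not be used since finite maps are closed, so the preimage description works directly). Openness and nonemptiness then follow because finite maps of affine varieties are closed, hence the complement of $\Omega_R(Z_0)$ is the image of the closed complement of $\mathcal{A}(R)$.

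The only nontrivial step is the existence and $Z$-valuedness of the reduced trace, which requires the integrally closed hypothesis; everything else is a direct application of the PI theory of prime rings. Since all four assertions are already contained in \cite[Theorem 1.1]{DCKP93a} and \cite[Theorem 4.5]{DCP93}, the proof can be written as a short verification that the hypotheses there apply, together with brief reminders of the arguments above.
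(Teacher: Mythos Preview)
The paper does not give its own proof of this proposition; it is stated with a citation to \cite[Theorem~1.1]{DCKP93a} and \cite[Theorem~4.5]{DCP93} and used as a black box. Your proposal is therefore not being compared against anything in the paper, but rather against the cited literature, and as a sketch of that standard PI-theoretic argument it is essentially correct.

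One genuine slip worth flagging in part~(3): you first write that ``$\Omega_R(Z_0)$ is the image of $\mathcal{A}(R)$'' under $\pi:\MaxSpec Z\to\MaxSpec Z_0$, and claim the complement-of-image-of-complement description is unnecessary. This is wrong: $x\in\Omega_R(Z_0)$ requires that \emph{every} point of $\pi^{-1}(x)$ lie in the Azumaya locus, so $\Omega_R(Z_0)=\MaxSpec Z_0\setminus\pi\bigl(\MaxSpec Z\setminus\mathcal{A}(R)\bigr)$, which is exactly the complement-of-image-of-complement. You do arrive at the correct description in your final sentence, but the intermediate claim contradicts it and should be deleted. Nonemptiness then follows because $\MaxSpec Z\setminus\mathcal{A}(R)$ is a proper closed subset of the irreducible variety $\MaxSpec Z$ and $\pi$ is finite, so its image has strictly smaller dimension than $\MaxSpec Z_0$.

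A smaller imprecision in part~(1): saying ``$V(x)$ is constructed by choosing a maximal ideal of $R$ lying over $x$'' suggests $V(x)$ is a single simple module, which need not have dimension $d$. The actual construction (as in \cite{DCP93}) specializes the reduced characteristic polynomial at $x$ and uses the resulting Cayley--Hamilton identity on $R_x$ to produce a $d$-dimensional trace-compatible representation, whose semisimplification is $V(x)$. Your description of uniqueness and of why every $W\in\Irr R_x$ occurs as a summand is fine.
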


By definition, for any $x\in \Omega_R(Z)$, the semisimple representation $V(x)$ is simple.

\begin{remark}
    The theory can be applied when $R$ belongs to a more general family, called the \emph{maximal order} (cf. \cite[\S 4.6]{DCP93}). For our suppose, it suffices to assume that $R$ is an integrally closed domain.
\end{remark}


\section{The Frobenius center of iquantum groups}\label{sec:Fcent}
Let $\ell$ be an odd integer relatively prime to $\epsilon_i$ for all $i\in\I$. Fix $\widetilde{v}\in \C^*$ a primitive $\ell$th roots of unity, and let $v=\widetilde{v}^2$. Let $\Uiv=\Ui_{\A'}\otimes \C,q^{1/2}\mapsto \widetilde{v}$ be the iquantum group at roots of unity. For $i\in \I$, we set $v_i=v^{\epsilon_i}$.

In this section, we construct in Theorem~\ref{thm:Fri} the Frobenius center for the iquantum group $\Uiv$ at roots of unity and establish basic properties. We establish in Section~\ref{sec:iPBW} a PBW basis for the Frobenius center, as well as a basis of $\Uiv$ over its Frobenius center. We introduce in Section~\ref{sec:smQSP} the small quantum symmetric pairs and study their basic properties.

\subsection{A central subalgebra $\Zi$ and its Poisson structure}

Recall from Section~\ref{sec:qsp} that the map $\iota_v:\Ui_v\rightarrow \U_v$ obtained by the base change of $\iota: \Ui_{\A'}\hookrightarrow \U_{\A'}$ is an embedding. Recall the commutative $\C$-algebra $\Ui_1$ from Section \ref{sec:limit}. 

For $i\in\I_\circ$, let us write 
\begin{equation}\label{eq:yi}
Y_i=-c_iq_i^{-\langle \alpha_i^\vee,w_\bullet\alpha_{\tau i}\rangle /2}T_{w_\bullet}(E_{\tau i})K_i^{-1}\in \U_{\A}.    
\end{equation}
Then $B_i=F_i+Y_i$. Recall the element $B_i^{[m]}$ in $\Ui_{\A'}$, for $i\in\I_\circ$ and $n\in\N$, from Section \ref{sec:iDP}. 

\begin{lemma}\label{le:bkl}\emph{(\cite[Lemma 5.2.1 \& Lemma 5.3.1 \& Lemma 5.4.2 \& Lemma 5.4.3]{Sa20})} For $i\in\I_\circ$ and $k\in\N$, one has 
\[
B_i^{[k\ell]}=(F_i^\ell+Y_i^\ell)^k\qquad \text{in $\U_v$}.
\]
    
\end{lemma}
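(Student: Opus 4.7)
The plan is to verify the identity case-by-case using the trichotomy in Section~\ref{sec:iDP}: (i) $i=\tau i=w_\bullet i$, (ii) $\tau i\ne i$, and (iii) $i=\tau i\ne w_\bullet i$. In each case one specializes the explicit formula for $B_i^{[k\ell]}$ at $q^{1/2}=\widetilde v$ and exploits the vanishing $\qbinom{\ell}{a}_{v_i}=0$ for $0<a<\ell$ together with $v_i^{\ell}=1$.

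Cases (ii) and (iii) both reduce to the $q$-binomial theorem. In case (ii), $B_i^{[m]}=B_i^m=(F_i+Y_i)^m$ by~\eqref{eq:diagiDP}; a direct weight-and-commutator computation (no Cartan correction appears since $\tau i\ne i$ forces $[F_i,E_{\tau i}]=0$) yields a quasi-commutation $F_iY_i=q_i^{s}Y_iF_i$ for some integer $s$. The $q$-binomial theorem then collapses $(F_i+Y_i)^{\ell}$ to $F_i^{\ell}+Y_i^{\ell}$ in $\U_v$, and raising to the $k$-th power uses that $F_i^\ell$ and $Y_i^\ell$ commute in $\U_v$ (since $v_i^{s\ell^2}=1$). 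Case (iii) starts from~\eqref{eq:qsiDP}: at $v$ the Lucas-type evaluation gives $\qbinom{k\ell}{a}_{v_i}=0$ unless $\ell\mid a$ and $\qbinom{k\ell}{j\ell}_{v_i}=\binom{k}{j}$, while $v_i^{-j\ell(k\ell-j\ell)}=1$, so the sum collapses to $\sum_{j=0}^k\binom{k}{j}Y_i^{j\ell}F_i^{(k-j)\ell}=(F_i^\ell+Y_i^\ell)^k$.

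Case (i) is the main obstacle, because $[F_i,Y_i]\in\C[K_i^{\pm1}]$ is nonzero (for instance, when $w_\bullet=e$, $Y_i$ is a scalar multiple of $E_iK_i^{-1}$ and $[F_i,E_iK_i^{-1}]\ne 0$), so the raw $q$-binomial theorem does not apply. Here $B_i^{[k\ell]}$ is the Chebyshev-like product~\eqref{def:idv}; rewriting $(q_i-q_i^{-1})^2[r]_i^2=(q_i^r-q_i^{-r})^2$, one observes that at $v$ each factor $B_i^2+(v_i^r-v_i^{-r})^2$ collapses to $B_i^2$ precisely when $\ell\mid r$. The plan is to first establish the base case $k=1$ by a direct rank-one computation inside the subalgebra generated by $F_i,Y_i,K_i^{\pm 1}$, which is a quantum $\mathfrak{sl}_2$ (or the analogous rank-one iquantum group when $\I_\bullet\ne\emptyset$) at a root of unity, matching the explicit length-$(\ell-1)/2$ product against $F_i^\ell+Y_i^\ell$. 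The inductive step $k\mapsto k+1$ then follows from the recursion inherent to~\eqref{def:idv} together with the commutativity of $F_i^\ell+Y_i^\ell$ with the remaining nonvanishing factors at $v$. I expect this rank-one base case to be the only substantive calculation.
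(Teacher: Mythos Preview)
Your case-by-case plan is essentially the right one, and in fact the paper does not prove this lemma at all: it only cites Sale's thesis \cite{Sa20}, whose four cited lemma numbers (5.2.1, 5.3.1, 5.4.2, 5.4.3) indicate that Sale likewise proceeds along the trichotomy (i)--(iii).  Your arguments for cases (ii) and (iii) are correct.  Two small comments: in case (ii) the commutativity of $F_i^{\ell}$ and $Y_i^{\ell}$ is not actually needed, since $(F_i+Y_i)^{k\ell}=\big((F_i+Y_i)^{\ell}\big)^k$ holds tautologically; and in case (iii) the step $\sum_j\binom{k}{j}Y_i^{j\ell}F_i^{(k-j)\ell}=(F_i^{\ell}+Y_i^{\ell})^k$ does use that $F_i^{\ell},Y_i^{\ell}$ commute, which is clear since both lie in the Frobenius center $Z_0$.

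For case (i) your strategy is correct, but the inductive step as written is imprecise.  The clean way to reduce to $k=1$ is not via a ``recursion'' but via periodicity: the constants $(v_i^{m}-v_i^{-m})^2$ in the factors of \eqref{def:idv} depend only on $m\bmod \ell$, and since $\gcd(2,\ell)=1$ each residue class is hit the appropriate number of times as $r$ ranges over the product.  A direct count (all factors commute, being polynomials in $B_i$) then gives $B_i^{[k\ell]}=(B_i^{[\ell]})^k$ in $\U_v$ for both parities of $k$, so everything reduces to your base case.  Also, your parenthetical about $\I_\bullet\ne\emptyset$ is unnecessary: the hypothesis $w_\bullet i=i$ forces $a_{ji}=0$ for all $j\in\I_\bullet$, so $T_{w_\bullet}(E_i)=E_i$ and the base case is always a computation inside a genuine copy of $\U_v(\mathfrak{sl}_2)$, namely showing
\[
B_i\prod_{r=1}^{(\ell-1)/2}\bigl(B_i^2+(v_i^{2r}-v_i^{-2r})^2\bigr)=F_i^{\ell}+E_i^{\ell}K_i^{-\ell}
\]
with $B_i=F_i+v_i^{-1}E_iK_i^{-1}$.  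This is indeed the substantive calculation, and it is what \cite{Sa20} carries out.
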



We are now ready to state and prove the main theorem in this section. 
\begin{theorem}\label{thm:Fri}
    There is a unique $\C$-algebra embedding $\Fri:\U_1^\imath\rightarrow \U_v^\imath$, such that the diagram 
    \begin{equation}
        \begin{tikzcd}
            & \U_1^\imath \arrow[r,hook,"\Fri"] \arrow[d,hook] & \U_v^\imath \arrow[d,hook] \\ & \U_1 \arrow[r,hook,"\Fr"] & \U_v
        \end{tikzcd}
    \end{equation}
    commutes.
\end{theorem}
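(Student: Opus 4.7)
The strategy is to show that the quantum Frobenius map $\Fr$ of Proposition~\ref{prop:DCKv} already restricts to the desired embedding. Uniqueness is immediate: both vertical arrows in the square are injective (the embedding $\Ui_1\hookrightarrow\U_1$ follows from the Poisson identification $\Ui_1\cong\C[\X]\hookrightarrow\C[G^*]\cong\U_1$ discussed in Section~\ref{sec:limit}, while $\Uiv\hookrightarrow\U_v$ is the specialization of the integral embedding $\Ui_{\A'}\hookrightarrow\U_{\A'}$ from Section~\ref{sec:qsp}), so any $\Fri$ making the square commute must equal the restriction $\Fr|_{\Ui_1}$, and injectivity of $\Fri$ is inherited from that of $\Fr$. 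The content of the theorem is therefore the single inclusion $\Fr(\Ui_1)\subseteq\Uiv$.

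I would verify this containment on a $\C$-algebra generating set of $\Ui_1$, namely the semi-classical images of $B_i$ for $i\in\wI$, of $K_\mu^{\pm 1}$ for $\mu\in P^\theta$, and of $E_j,\,F_j$ for $j\in\bI$. The cases $j\in\bI$ and $\mu\in P^\theta$ are immediate from Proposition~\ref{prop:DCKv}(3), since $\Fr(\un{E_j})=-E_j^\ell$, $\Fr(\un{F_j})=F_j^\ell$, and $\Fr(\un{K_\mu})=K_\mu^\ell$ are $\ell$-th powers of elements already present in $\Uiv$ by the defining formulas \eqref{def:iQG}.

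The substantive case is $i\in\wI$. Writing $B_i=F_i+Y_i$ with $Y_i$ as in \eqref{eq:yi}, I would compute $\Fr(\un{B_i})$ by applying Proposition~\ref{prop:DCKv}(3) to $\un{F_i}$ and $\un{E_{\tau i}}$, using the braid compatibility $\Fr\circ T_j=T_j\circ\Fr$ of Proposition~\ref{prop:DCKv}(4) to move $\Fr$ past the operator $T_{\bw}$ appearing inside $Y_i$, and exploiting $\widetilde v^{\ell}=1$ together with the oddness of $\ell$ to see that the scalar $q_i^{-\ell\langle\alpha_i^\vee,\bw\alpha_{\tau i}\rangle/2}$ and the commutator factor $q^{-(\alpha_i,\bw\alpha_{\tau i})\binom{\ell}{2}}$ accumulated from sliding $K_i^{-1}$ past $T_{\bw}(E_{\tau i})$ an $\ell$-fold number of times both collapse to $1$. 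The resulting expression for $\Fr(\un{B_i})$ matches (up to an overall sign) the idivided power $B_i^{[\ell]}$ via Lemma~\ref{le:bkl}, which equates $B_i^{[\ell]}=F_i^\ell+Y_i^\ell$ at $q=\widetilde v$, and hence lies in $\Ui_{\A'}\subseteq\Uiv$.

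The main anticipated obstacle is the sign and scalar bookkeeping in the $i\in\wI$ case, since the idivided power $B_i^{[\ell]}$ is defined by three distinct formulas in the subcases (i)--(iii) of Section~\ref{sec:iDP} (treating $i=\tau i=\bw i$, $i\neq \tau i$, and $i=\tau i\neq \bw i$ separately). Each subcase requires matching the computed image $\Fr(\un{B_i})$ with the correct formula for $B_i^{[\ell]}$. That $\Fri$ is a genuine $\C$-algebra homomorphism (rather than only a $\C$-linear map defined on generators) is automatic, as $\Fr$ is itself an algebra homomorphism whose restriction to a subalgebra is again one.
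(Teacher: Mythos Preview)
Your overall strategy and the computation of $\Fr(\un{B_i})=F_i^\ell+Y_i^\ell=B_i^{[\ell]}$ via Lemma~\ref{le:bkl} are correct and coincide with the paper's. However, there is a genuine gap in the reduction step: the elements $\un{B_i}\ (i\in\wI)$, $\un{E_j},\un{F_j}\ (j\in\bI)$, $\un{K_\mu}\ (\mu\in P^\theta)$ do \emph{not} generate $\Ui_1$ as a $\C$-algebra. The algebra $\Ui_1$ is commutative (it is the coordinate ring $\C[\X]$), and, as stated later in the paper (cf.\ the proof of Theorem~\ref{thm:genrep}, citing \cite[Proposition~4.1]{SZ25}), it is a polynomial algebra in \emph{all} the root vectors $\un{B_{\beta_k}}$, $\un{E_{\gamma_t}}$, not only the simple ones. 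So checking the containment $\Fr(\Ui_1)\subseteq\Uiv$ on your list of elements does not suffice.

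The paper closes this gap by observing that these elements are \emph{Poisson} generators of $\Ui_1$ (this is the content of \cite[Lemma~3.4]{So24}, recalled in \eqref{eq:genk}), and that $\Fr^{-1}(\Uiv)=\Fr^{-1}(\Uiv\cap Z_0)$ is a \emph{Poisson} subalgebra of $\U_1$. The latter holds because $\Fr:\U_1\to Z_0$ is a Poisson isomorphism and $\Uiv\cap Z_0$ carries the Poisson bracket \eqref{eq:iPoissonv} inherited from $Z_0$ (lifting $x,y\in\Uiv\cap Z_0$ to $\Ui_{\A'}$ keeps $[\tx,\ty]/(\ell^2(qv^{-1}-1))$ in $\Ui_{\A'}$). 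Once this is in place, your verification on the simple generators is enough, since a Poisson subalgebra containing a set of Poisson generators contains everything they generate. Without this Poisson step, you would need to verify $\Fr(\un{B_{\beta_k}})\in\Uiv$ for every positive root $\beta_k$, which the paper only deduces \emph{after} establishing the theorem.
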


\begin{proof}
    It suffices to show that $\Fr(\U_1^\imath)\subset \U^\imath_v$. Then the map $\Fri$ is defined to be the restriction.

    Let $\pi_v^{\imath}:\Ui_{\A'}\rightarrow \Ui_v,x\mapsto x\otimes 1$. Similar to \eqref{eq:Poissonv}, there is a Poisson structure on $\Ui_v\cap Z_0$ given by
    \begin{align}\label{eq:iPoissonv}
    \{x,y \} = \pi_v^\imath\Big(\frac{[\tx,\ty]}{\ell^2 (qv^{-1}-1)}\Big),
    \end{align}
    where $\tx,\ty$ are preimages of $x,y$ in $\Ui_{\A'}$ under the base change $\pi_v^{\imath}:\Ui_{\A'}\rightarrow \Uiv$. This Poisson structure is well-defined since $\pi_v \circ\iota=\iota_v \circ\pi_v^\imath$.
    
    By definition, $\Ui_v\cap Z_0$ is naturally a Poisson subalgebra of $Z_0$. Since $\Fr:\U_1\rightarrow Z_0$ is an isomorphism of Poisson algebras, the preimage $\Fr^{-1}(\U^\imath_v)=\Fr^{-1}(\U^\imath_v\cap Z_0)$ is a Poisson subalgebra of $\U_1$. Recall from \cite{So24} that  $\U^\imath_1\cong\C[K^\perp\backslash G^*]$ is generated by elements
    \begin{equation}\label{eq:genk}
    \un{B_i},\; (i\in \I_\circ); \; \un{E_j},\; \un{F_j},\; (j\in \I_\bullet),
    \; \un{K_\mu}, \; (\mu\in P^\theta),
    \end{equation}
    as a Poisson algebra. Thus, it suffices to show that elements in \eqref{eq:genk} belong to $\Fr^{-1}(\U^\imath_v)$. 

    For $x\in\{E_j,F_j,K_\mu^{\pm1} \mid j\in\I_\bullet,\mu\in P^\theta\}$, one has 
    \[
    \Fr(\un{x})=x^\ell \qquad \text{in $\U_v$}.
    \]
    Hence $x\in \Fr^{-1}(\U^\imath_v)$. It remains to show that $\Fr(\un{B_i})\in \U^\imath_v$, for $i\in\I_\circ$. Recall the element $Y_i$ from \eqref{eq:yi}. It is clear that
    \[
    \Fr(\underline{B_i})=F_i^\ell+Y_i^\ell \qquad \text{in $\U_v$}.
    \]
    On the other hand, by Lemma \ref{le:bkl}, one has $F_i^\ell+Y_i^\ell=B_i^{[\ell]}\in \Uiv$. Hence, $\Fr(\underline{B_i})\in\Uiv$. Therefore, we have showed that all elements in \eqref{eq:genk} belong to $\Fr^{-1}(\U^\imath_v)$. The proof is completed.
    \end{proof}

\begin{definition}\label{def:Zi}
The \emph{Frobenius center} $\Zi$ of $\Uiv$ is defined to be the image of $\Fri$. 
\end{definition} 

By definition, $\Zi$ is a central subalgebra of $\Uiv$. The algebra $\Zi$ is equipped with a Poisson structure given by \eqref{eq:iPoissonv}. Under this Poisson structure, $\Zi$ is identified with a Poisson subalgebra of $Z_0$ such that the following diagram commute
\begin{equation}\label{eq:ZiZ0}
\begin{tikzcd}
            & \C[\X] \arrow[r,"\Fri","\sim" swap] \arrow[d,hook] & \Zi \arrow[d,hook] \\ & \C[G^*] \arrow[r,"\Fr","\sim" swap]  & Z_0
\end{tikzcd}
\end{equation}

\begin{proposition}\label{prop:coideal}
$\Zi$ is a coideal Poisson subalgebra of $Z_0$.
\end{proposition}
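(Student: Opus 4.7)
The plan is to transfer the coideal Poisson subalgebra structure from $\Ui_1 \subset \U_1$ across the isomorphism $\Fr$ of Proposition~\ref{prop:DCKv}(3), using the commutative diagram \eqref{eq:ZiZ0} and the identification $\Fri = \Fr|_{\Ui_1}$ established in Theorem~\ref{thm:Fri}. The work reduces to verifying that $\Ui_1$ is a right coideal Poisson subalgebra of $\U_1$, and then pushing this forward along a Hopf Poisson isomorphism.

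First I would handle the Poisson part. Section~\ref{sec:limit} already records that $\Ui_1 \subset \U_1$ is a Poisson subalgebra (equivalently, $\C[\X] \hookrightarrow \C[G^*]$ is Poisson, since $K^\perp$ is coisotropic in $G^*$). Because $\Fr$ is Poisson and $\Fri = \Fr|_{\Ui_1}$, for $x,y \in \Ui_1$ one computes
\[
\{\Fri(x),\Fri(y)\}_{Z_0} = \Fr\big(\{x,y\}_{\U_1}\big) \in \Fr(\Ui_1) = \Zi,
\]
so $\Zi$ is closed under the Poisson bracket inherited from $Z_0$.

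Next I would verify the coideal property. The claim is that $\Delta_{\U_1}(\Ui_1) \subset \Ui_1 \otimes \U_1$; this follows from specializing at $q^{1/2} = 1$ the right coideal inclusion $\Delta(\Ui_{\A'}) \subset \Ui_{\A'} \otimes \U_{\A'}$ (which is Letzter's defining property of $\Ui$, integrally preserved since $\Delta$ is defined over $\A$), and can alternatively be read off geometrically from the right $G^*$-action on $\X = K^\perp\backslash G^*$. Since $\Fr\colon \U_1 \to Z_0$ is an isomorphism of Hopf algebras, applying $\Fr \otimes \Fr$ to this inclusion yields
\[
\Delta_{Z_0}(\Zi) \;=\; (\Fr \otimes \Fr)\big(\Delta_{\U_1}(\Ui_1)\big) \;\subset\; \Fri(\Ui_1) \otimes \Fr(\U_1) \;=\; \Zi \otimes Z_0,
\]
which is the right coideal property. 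Combining with the previous paragraph completes the argument.

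The only step that requires any care is the coideal property of $\Ui_1$ in $\U_1$, and this is a routine specialization of Letzter's coideal subalgebra definition; nothing deep is involved. The whole proposition is essentially a formal consequence of Theorem~\ref{thm:Fri} together with the fact that $\Fr$ is a Poisson Hopf isomorphism.
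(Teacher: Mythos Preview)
Your proof is correct and follows essentially the same approach as the paper. The paper frames the coideal step geometrically via the right $G^*$-action on $\X$ (which you also mention as an alternative), while you primarily phrase it as specialization of Letzter's coideal property $\Delta(\Ui_{\A'}) \subset \Ui_{\A'} \otimes \U_{\A'}$; both amount to transferring the inclusion $\Delta(\Ui_1)\subset \Ui_1\otimes \U_1$ across the Hopf Poisson isomorphism $\Fr$.
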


\begin{proof}
It suffices to show that $\Delta(\Zi)\subset \Zi\otimes Z_0$. Recall that the multiplication of $G^*$ induces a right action of $G^*$ on $\X$ which satisfies the following commutative diagram
\begin{equation}\label{eq:GXaction}
\begin{tikzcd}
            & G^*\times G^* \arrow[r,"m"] \arrow[d] & G^* \arrow[d] \\ 
            & \X \times G^* \arrow[r,"m"]  & \X
\end{tikzcd}
\end{equation}
Hence, the comorphism $m^*:\C[G^*]\rightarrow\C[G^*]\otimes \C[G^*]$ sends $\C[\X]$ to $\C[\X]\otimes \C[G^*]$. By \cite{DCP93}, under the isomorphism $\Fr$, the comorphism $m^*$ is identified with the comultiplication $\Delta$ on $Z_0$. By \eqref{eq:ZiZ0}, the comorphism $\C[\X]\rightarrow \C[G^*]$ is identified with the embedding $\Zi\hookrightarrow Z_0$. Thus, we have proved $\Delta(\Zi)\subset \Zi\otimes Z_0$.
\end{proof}


\subsection{PBW bases}\label{sec:iPBW}

Set $N=l(w_0)$, $M=l(w_\bullet)$ and $L=N-M$. Recall from Section \ref{sec:rbg} that $\bbw_0=w_\bullet w_0$ is the longest element in the relative Weyl group $W^\circ$. Fix a reduced expression $\bbw_0=\bs_{i_1}\bs_{i_2}\cdots\bs_{i_s}$ in $W^\circ$, and fix a reduced expression $w_\bullet=s_{j_1}s_{j_2}\cdots s_{j_M}$ for $w_\bullet$ in $W_\bullet$. By choosing the reduced expressions for $\bs_{i}$ in $W$ as in \cite[Table 1]{WZ23}, we obtain a reduced expression $\bbw_0=s_{i_1}\cdots s_{i_L}$ in $W$. For $1\leq t\leq L$, set $\beta_t=s_{i_1}\cdots s_{i_{t-1}}(\alpha_{i_t})$. For any $j\in \I_\bullet$, set $j'\in \I_\bullet$ to be the element such that $\alpha_{j'}=-w_0(\alpha_{\tau j})$. By Section \ref{sec:qsp} (3), one has $\alpha_{j'}=w_0w_\bullet(\alpha_j)$ for $j\in \I_\bullet$. Then $w_\bullet=s_{j'_1}\cdots s_{j'_M}$ is also a reduced expression in $W$. For $1\leq t\leq M$, set $\gamma_t=s_{j'_1}\cdots s_{j_{t-1}'}(\alpha_{j_t'})$. Now the ordered set $\{\beta_1,\cdots, \beta_L,\gamma_1,\cdots,\gamma_M\}$ induces the convex ordering on $\mathcal{R}^+$ associated with the reduced expression $w_0=s_{i_1}\cdots s_{i_L}s_{j_1}\cdots s_{j_M}$ in $W$.

For $w\in W$, set $\cR^+(w)=\cR^+\cap w(\cR^-)$ where $\cR^-=-\cR^+$. Then one has 
$\cR^+(\bbw_0)=\{\beta_1,\cdots, \beta_L\}$ and $\cR^+(\bw)=\{\gamma_1,\cdots,\gamma_M\}$

For $1\leq t\leq L$, recall the root vector $B_{\beta_t}$ in $\Ui$ defined in \cite[\S3.4]{SZ25}. For $1\leq t\leq M$, set $F_{\gamma_t}=T_{j_1'}\cdots T_{j_{t-1}'}(F_{j'_t})$ and $E_{\gamma_t}=T_{j_1'}\cdots T_{j_{t-1}'}(E_{j_t'})$. For tuples $\ba=(a_1,\cdots,a_L)\in\N^L$, $\bc=(c_1,\cdots,c_M)\in\N^M$, $\bd=(d_1,\cdots,d_M)\in\N^M$, and $\mu\in P^\theta$, set
\begin{align}\label{eq:Mabc}
M_{\mu,\ba,\bc,\bd}=K_\mu F_{\gamma_M}^{c_M}\cdots F_{\gamma_1}^{c_1}B_{\beta_L}^{a_L}\cdots B_{\beta_1}^{a_1}E_{\gamma_M}^{d_M}\cdots E_{\gamma_1}^{d_1}
\end{align}
in $\Ui$.

\begin{proposition}[\text{\cite[Theorem 3.11]{SZ25}}]\label{prop:intPBW}
The set of ordered monomials 
\begin{align}\label{eq:intPBW}
\big\{M_{\mu,\ba,\mathbf{c},\mathbf{d}}\mid
\ba\in \N^L,\bc\in \N^M,\bd\in \N^M,\mu\in P^\theta
\big\}
\end{align} 
forms an $\A'$-basis of $\Ui_{\A'}$. In particular, the images of these monomials under base changes form a $\C$-basis of $\Ui_1$ and a $\C$-basis of $\Ui_v$.
\end{proposition}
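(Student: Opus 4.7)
The plan is to work inside the embedding $\Ui_{\A'}\hookrightarrow\U_{\A'}$ and compare the monomials $M_{\mu,\ba,\bc,\bd}$ against the classical De Concini--Kac--Procesi PBW basis of $\U_{\A'}$ attached to the fixed reduced expression $w_0=s_{i_1}\cdots s_{i_L}s_{j_1}\cdots s_{j_M}$ from Section~\ref{sec:qg}. The argument separates into $\A'$-linear independence and $\A'$-spanning; once the $\A'$-basis claim is proved for $\Ui_{\A'}$, the specialization assertions for $\Ui_1$ and $\Uiv$ follow by flat base change along $\A'\to\mathbb{C}_1$ and $\A'\to\mathbb{C}_v$, respectively.

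For linear independence, the key input is the leading-term structure of the iroot vectors referenced in the paper from \cite{LYZ25}: for each $\beta_t\in\cR^+(\bbw_0)$, the iroot vector $B_{\beta_t}$ equals $F_{\beta_t}$ plus strictly lower terms in a suitable filtration on $\U_{\A'}$ refining the PBW order. Combined with the fact that the $\bI_\bullet$-part generators $E_{\gamma_s},F_{\gamma_s},K_\mu$ literally coincide with their $\U_{\A'}$-counterparts, one expands $M_{\mu,\ba,\bc,\bd}$ in the classical PBW basis of $\U_{\A'}$; the expansion is triangular with an $\A'$-unit on the diagonal indexed by the corresponding classical PBW monomial $K_\mu F^{\bc}F^{\ba}E^{\bd}$. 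Since the classical PBW basis is $\A'$-free, this triangularity yields $\A'$-linear independence of the $M_{\mu,\ba,\bc,\bd}$ in $\U_{\A'}$, hence in $\Ui_{\A'}$.

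For spanning, let $M_{\A'}\subseteq\Ui_{\A'}$ denote the $\A'$-span of the ordered monomials. It is immediate that the generators $K_\mu, E_j, F_j$ (for $j\in\bI$, $\mu\in P^\theta$) and $B_i=B_{\alpha_i}$ (for $i\in\wI$) all lie in $M_{\A'}$, so it suffices to show that $M_{\A'}$ is closed under left multiplication by each of them. This reduces to three kinds of straightening lemmas: a $q$-commutation formula between two iroot vectors $B_{\beta_s}$ and $B_{\beta_t}$ with $s<t$ (a relative Levendorskii--Soibelman identity); commutation between iroot vectors and the $\bI$-root vectors $F_{\gamma},E_{\gamma}$, obtained by invoking that $T_j$ for $j\in\bI$ preserves $\Ui_{\A'}$ (Proposition~\ref{prop:intT}) and interacts predictably with the $\TT_i$; and weight-shift relations with the $K_\mu$. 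Integrality of the structure constants is guaranteed by the denominator set built into $\A'$, which inverts $[\epsilon_i]!$ and $1+q$.

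The main obstacle is establishing the relative Levendorskii--Soibelman commutation relation among the iroot vectors $B_{\beta_t}$ in a sufficiently explicit form that integrality over $\A'$ is manifest, because the relative braid group symmetries entangle the $\bI$-Levi factors with the $\wI$-strands and the $\bI$-components of the iroot vectors are not produced by a single braid action on a simple root. A clean workaround is to bootstrap: first deduce a generic basis statement over $\C(q^{1/2})$ via the leading-term triangularity above (equivalently, by counting dimensions against $\U$ using the existing real-rank-one calculations), then upgrade to an $\A'$-basis statement by inverting the triangular change-of-basis matrix against the classical PBW basis, whose diagonal entries are already units in $\A'$.
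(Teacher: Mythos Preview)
The paper does not prove this proposition in-text; it is quoted from \cite[Theorem~3.11]{SZ25}, where the argument proceeds along your first (direct) route: one establishes integrality of the relative braid group symmetries on $\Ui_{\A'}$ (this is Proposition~\ref{prop:intT}(2)) and proves the requisite straightening relations over $\A'$. Your linear-independence argument via leading-term triangularity against the De Concini--Kac PBW basis of $\U_{\A'}$ is correct, and is precisely the mechanism the present paper exploits later (see the proofs of Theorem~\ref{thm:findim} and Lemma~\ref{le:tsl}).

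The workaround for spanning has a gap. Leading-term triangularity by itself yields only linear independence, not a generic $\C(q^{1/2})$-basis; the appeal to ``counting dimensions against $\U$ using real-rank-one calculations'' does not close this, since computing $\dim_{\C(q^{1/2})}\Ui_{\leqslant n}$ in each filtration degree already presupposes the PBW description of $\Ui$ you are trying to establish. To make the workaround honest you must import the generic PBW basis of $\Ui$ from an independent source (e.g.\ Kolb \cite{Ko14} or Letzter \cite{Let02}). Once that is in hand, your upgrade-to-$\A'$ step does work, but not quite by ``inverting the triangular change-of-basis matrix'': that matrix is not square, because the lower-order terms of $M_{\mu,\ba,\bc,\bd}$ involve classical PBW monomials of $\U$ lying outside the restricted index set $P^\theta\times\N^L\times\N^M\times\N^M$. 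The correct upgrade is an induction on the $h^\imath$-filtration: for $x\in\Ui_{\A',\leqslant n}$ write $x=\sum_m c_m M_m$ over $\C(q^{1/2})$; the image of $x$ in $\gr_n\U_{\A'}$ is the corresponding sum of the leading classical monomials with the same coefficients $c_m$ (for $m$ of top degree $n$), and since those leading monomials extend to an $\A'$-basis of $\gr_n(\U_{\A'})$ one gets $c_m\in\A'$ for those $m$; then subtract and descend.
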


We still write $M_{\mu,\mathbf{a},\mathbf{c},\mathbf{d}}$ to denote its image in $\Uiv$ under the base change. 


Let $\beta\in \cR^{+}(\bbw_0)$ and $m\in \N$. We denote 
\begin{equation}
B_\beta^{[m\ell]}:=\Fri(\un{B_\beta^m}).
\end{equation}
By definition, $B_\beta^{[m\ell]}=\big(B_\beta^{[\ell]}\big)^m$ and $B_\beta^{[m\ell]}$ are central elements in $\Uiv$. Moreover, thanks to Lemma \ref{le:bkl}, the element $B_{\alpha_i}^{[m\ell]}$ for $i\in \I_\circ$ coincides with the specialization of the element $B_i^{[m\ell]}$ defined in Section \ref{sec:iDP}.

The next proposition is a immediate consequence of Theorem~\ref{thm:Fri} and Proposition~\ref{prop:intPBW}.

\begin{proposition}\label{prop:PBWZi}
The set of ordered monomials 
\[
K_\mu^{\ell} \orProd_{1\leq t \leq M} F_{\gamma_t}^{\ell c_t } \orProd_{1\leq t\leq L}B_{\beta_t}^{[\ell a_t ]} \orProd_{1\leq t \leq M} E_{\gamma_t}^{\ell d_t},
\]
for all $\ba=(a_1,\cdots,a_L)\in\N^L$, $\bc=(c_1,\cdots,c_M)\in\N^M$, $\bd=(d_1,\cdots,d_M)\in\N^M$ and $\mu\in P^\theta$, forms a $\C$-basis of $\Zi$. 
\end{proposition}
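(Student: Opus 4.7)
The plan is to transport the PBW basis of $\Ui_1$ provided by Proposition \ref{prop:intPBW} through the algebra embedding $\Fri: \Ui_1 \hookrightarrow \Uiv$ of Theorem \ref{thm:Fri}. Since $\Zi$ is by definition the image of $\Fri$ and $\Fri$ is injective, it is enough to compute $\Fri(\un{M_{\mu,\ba,\bc,\bd}})$ and show that it equals, up to a nonzero scalar, the monomial stated in the proposition.

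First I would record how $\Fri$ acts on each letter appearing in $\un{M_{\mu,\ba,\bc,\bd}}$. Using the commutative diagram of Theorem \ref{thm:Fri} together with Proposition \ref{prop:DCKv}(3), we get $\Fri(\un{K_\mu}) = K_\mu^\ell$ for $\mu \in P^\theta$, as well as $\Fri(\un{F_j}) = F_j^\ell$ and $\Fri(\un{E_j}) = -E_j^\ell$ for $j \in \bI$. Because Lusztig's symmetries $T_j$ ($j \in \bI$) preserve $\Ui_{\A'}$ (Proposition \ref{prop:intT}(1)) and $\Fr$ commutes with each $T_j$ (Proposition \ref{prop:DCKv}(4)), these formulas propagate through the expression $E_{\gamma_t} = T_{j_1'} \cdots T_{j_{t-1}'}(E_{j_t'})$ to give $\Fri(\un{F_{\gamma_t}}) = F_{\gamma_t}^\ell$ and $\Fri(\un{E_{\gamma_t}}) = -E_{\gamma_t}^\ell$ for $1 \le t \le M$. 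For the $\imath$-root vectors, the equality $\Fri(\un{B_{\beta_t}^{a_t}}) = B_{\beta_t}^{[\ell a_t]}$ is tautological from the definition $B_\beta^{[m\ell]} := \Fri(\un{B_\beta^m})$ adopted in Section \ref{sec:iPBW}.

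Since $\Ui_1$ is commutative, the basis element $\un{M_{\mu,\ba,\bc,\bd}}$ equals the product of its letter-wise factors regardless of order; applying the algebra homomorphism $\Fri$ factor by factor therefore gives
\begin{align*}
\Fri\bigl(\un{M_{\mu,\ba,\bc,\bd}}\bigr) = (-1)^{d_1 + \cdots + d_M}\, K_\mu^{\ell} \orProd_{1\leq t \leq M} F_{\gamma_t}^{\ell c_t} \orProd_{1\leq t\leq L} B_{\beta_t}^{[\ell a_t]} \orProd_{1\leq t \leq M} E_{\gamma_t}^{\ell d_t},
\end{align*}
where the orderings on the right are immaterial because all factors lie in the central subalgebra $\Zi$ and thus commute. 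Since Proposition \ref{prop:intPBW} furnishes a $\C$-basis of $\Ui_1$ and $\Fri$ is an injection with image $\Zi$, the images form a $\C$-basis of $\Zi$; rescaling each element by the nonzero sign $(-1)^{d_1 + \cdots + d_M}$ then yields the basis stated in the proposition. I do not anticipate any substantive obstacle: the argument is essentially bookkeeping, the two minor points of care being the sign in $\Fr(\un{E_i}) = -E_i^\ell$ (which is absorbed harmlessly) and the promotion from generators to root vectors, which follows at once from the compatibility of $\Fr$ with Lusztig's braid symmetries in Proposition \ref{prop:DCKv}(4).
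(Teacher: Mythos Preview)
Your proposal is correct and follows exactly the same approach as the paper, which simply states that the proposition is an immediate consequence of Theorem~\ref{thm:Fri} and Proposition~\ref{prop:intPBW}. You have carefully unpacked what ``immediate'' means here: transporting the $\C$-basis of $\Ui_1$ through the injection $\Fri$ and verifying, via Proposition~\ref{prop:DCKv}(3)--(4) and the definition $B_\beta^{[m\ell]}:=\Fri(\un{B_\beta^m})$, that the image of each basis monomial agrees (up to a harmless sign) with the stated monomial.
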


For $\beta=\sum_{i\in \I} a_i \alpha_i \in \cR$, we set its {\em relative height} $\het^\imath(\beta)$ to be 
\begin{align}\label{def:hti}
\het^\imath(\beta)=\sum_{i\in \wI} a_i.
\end{align}
We linearly extend $\het^\imath$ to the root lattice $Q$. By definition, $\het^\imath(\gamma)=0$ for any $\gamma\in \cR_\bullet$. 


We define an $\N$-filtration on $\U=\bigcup_{n\in \N} \U_{\leqslant n}$ by $\U_{\leqslant n}=\bigoplus_{-\het^\imath(\mu)\le n} \U_\mu$. We write $h^\imath(x)=n$ if $x\in \U_{\leqslant n}\setminus\U_{\leqslant (n-1)}$. 
In particular, 
\begin{align}\label{def:hi}
h^\imath(F_i)=1,\qquad 
h^\imath( E_i)=h^\imath(K_\mu)=h^\imath(x)=0,
\end{align}
for $i\in \wI,x\in \U_{\I_\bullet},\mu\in P$, where $\U_{\I_\bullet}$ is the subalgebra generated by $E_j,F_j$ for $j\in\I_\bullet$. This filtration induces an $\N$-filtration on $\Ui$ such that $\Ui_{\leqslant n}=\Ui\cap \U_{\leqslant n}$. Let $\Ui_{\A',\leqslant n}= \Ui_{\A'} \cap \U_{\leqslant n}$.


Let $1\le t\le L$ and note that $h^\imath(F_{\beta_t})=\het^\imath(\beta_t)$. By \cite[Corollary 4.3]{LYZ25}, $\iota(B_{\beta_t})$ has a leading term $F_{\beta_t}$ in $\U_{\A'}$ with the maximal $h^\imath$-value $\het^\imath({\beta_t})$. Hence, by Proposition~\ref{prop:intPBW}, the set of monomials $M_{\mu,\ba,\bc,\bd}$ in \eqref{eq:Mabc} with $\sum_{t=1}^L a_t \het^\imath(\beta_t)\leq n$ form a basis for $\Ui_{\A',\leqslant n}$.

Let $R$ be an $\A'$-algebra and set $\Ui_{R,\leqslant n}=\Ui_{\A',\leqslant n}\otimes R$. Then we obtain a filtration $\Ui_R=\bigcup_{n\in \N} \Ui_{R,\leqslant n}$ such that $ B_i\in \Ui_{R,\leqslant 1}\setminus \Ui_{R,\leqslant 0}$ for $i\in \wI$. Similarly, there is a filtration on $\U_R=\U_{\A'}\otimes R$. In particular, we have a filtration on $\Ui_1$ and a filtration on $\Uiv$, and the embeddings $\iota_1:\Ui_1\rightarrow \U_1,\iota_v:\Uiv\rightarrow \U_v$ preserve the filtrations. Moreover one has 
\begin{equation}\label{eq:filFr}
\text{Fr}(\U_{1,\leqslant n})\subset \U_{v,\leqslant \ell n},\qquad \text{for $n\in\mathbb{N}$. }   
\end{equation}

\begin{lemma}\label{le:frbb}
    For $\beta\in \mathcal{R}^+(\bbw_0)$, one has 
    \[
    \Fri(\underline{B_\beta})=F_\beta^\ell+\text{l.o.t.}.
    \]
\end{lemma}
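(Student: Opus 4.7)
The plan is to reduce the identity to two ingredients: the leading term computation for $B_\beta$ in the integral form (already recorded from \cite[Corollary 4.3]{LYZ25}) and compatibility of $\Fr$ with both Lusztig's braid group symmetries and with the filtration $h^\imath$.

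First I would use the leading-term result for $B_\beta$. By \cite[Corollary 4.3]{LYZ25} (cited in the paragraph before this lemma), for $\beta\in \mathcal{R}^+(\bbw_0)$ one has
\[
\iota(B_\beta)=F_\beta+R_\beta\qquad \text{in }\U_{\A'},
\]
where $R_\beta$ is a sum of monomials each of $h^\imath$-value strictly less than $\het^\imath(\beta)$. Specializing $q^{1/2}\mapsto 1$ we obtain
\[
\iota_1(\underline{B_\beta})=\underline{F_\beta}+\underline{R_\beta}\qquad \text{in }\U_1,
\]
and the lower order terms remain strictly below $\het^\imath(\beta)$ under $h^\imath$.

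Next, by the commutative diagram in Theorem~\ref{thm:Fri}, $\Fri(\underline{B_\beta})$ is computed by applying $\Fr$ to the right hand side and viewing the result inside $\U_v$. For the leading piece, write $F_\beta=T_{i_1}\cdots T_{i_{k-1}}(F_{i_k})$. Since each $T_i$ preserves $\U_{\A'}$ (so it descends to $\U_1$), commutes with $\Fr$ by Proposition~\ref{prop:DCKv}(4), and since $\Fr(\un{F_{i_k}})=F_{i_k}^\ell$ by Proposition~\ref{prop:DCKv}(3), I obtain
\[
\Fr(\underline{F_\beta})=T_{i_1}\cdots T_{i_{k-1}}\bigl(F_{i_k}^\ell\bigr)=\bigl(T_{i_1}\cdots T_{i_{k-1}}(F_{i_k})\bigr)^\ell=F_\beta^\ell,
\]
where the middle equality uses that each $T_i$ is an algebra automorphism.

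Finally I would control $\Fr(\underline{R_\beta})$ using the filtration compatibility \eqref{eq:filFr}: since $\underline{R_\beta}\in \U_{1,\leqslant \het^\imath(\beta)-1}$, we have $\Fr(\underline{R_\beta})\in \U_{v,\leqslant \ell(\het^\imath(\beta)-1)}$, which is strictly below $h^\imath(F_\beta^\ell)=\ell\,\het^\imath(\beta)$. Combining the three steps yields $\Fri(\underline{B_\beta})=F_\beta^\ell+\text{l.o.t.}$ in $\U_v$. The only nonroutine point is the commutativity of $\Fr$ with the products of $T_{i_j}$ needed to reach $F_\beta$, but this is immediate from the integrality of $T_i$ on $\U_{\A'}$ together with Proposition~\ref{prop:DCKv}(4).
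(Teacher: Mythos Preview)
Your proof is correct and follows essentially the same approach as the paper's proof, which simply writes $\Fri(\un{B_\beta})=\Fr(\un{F_\beta}+\text{l.o.t.})=F_\beta^\ell+\text{l.o.t.}$ and cites Proposition~\ref{prop:DCKv}(4) and \eqref{eq:filFr}. You have merely unpacked these citations explicitly: writing $F_\beta$ as a product of $T_i$'s applied to a generator, using the commutation $\Fr\circ T_i=T_i\circ\Fr$, and invoking the filtration compatibility for the tail.
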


\begin{proof}
    It follows from the construction that 
    \[
    \text{Fr}^\imath(\un{B_\beta})=\text{Fr}(\un{F_\beta}+\text{l.o.t.})=F_\beta^\ell+\text{l.o.t.},
    \]
    where the last equality follows from Proposition \ref{prop:DCKv} (4) and \eqref{eq:filFr}.
\end{proof}
\begin{lemma}\label{lem:leading1}
Let $m\in \N$ and write $m=k\ell +r$ for $k\in \N,0\le r<\ell$. Let $\beta\in \cR^+(\bbw_0)=\{\beta_1,\ldots,\beta_L\}$, Then we have in $\Uiv$
\begin{align}
B_\beta^m = B_\beta^{[k\ell]} B_{\beta}^r + \text{l.o.t.}.
\end{align}
\end{lemma}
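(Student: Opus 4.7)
The plan is to work in $\U_v$ via the embedding $\iota_v$ and compare leading terms with respect to the $h^\imath$-filtration. Set $h:=\het^\imath(\beta)$. By \cite[Corollary 4.3]{LYZ25}, as recalled in Section~\ref{sec:iPBW}, $\iota_v(B_\beta)=F_\beta+X$ with $h^\imath(X)<h$, while Lemma~\ref{le:frbb} gives $\iota_v(B_\beta^{[\ell]})=F_\beta^\ell+L$ with $h^\imath(L)<\ell h$. The key extra observation is that $B_\beta^{[\ell]}\in\Zi$ sits inside $Z_0$ by the commutative diagram of Theorem~\ref{thm:Fri}, so $B_\beta^{[\ell]}$ is in fact central in all of $\U_v$; in particular $F_\beta^\ell$ and $L$ commute.

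The first step uses this centrality to expand $B_\beta^{[k\ell]}=(B_\beta^{[\ell]})^k$ binomially:
\[
B_\beta^{[k\ell]}=\sum_{j=0}^k \binom{k}{j} F_\beta^{(k-j)\ell}L^j.
\]
For $j\ge 1$, the summand $F_\beta^{(k-j)\ell}L^j$ has $h^\imath$-value at most $(k-j)\ell h+j(\ell h-1)=k\ell h-j\le k\ell h-1$, so $B_\beta^{[k\ell]}=F_\beta^{k\ell}+\text{l.o.t.}$, with remainder in $\U_{v,\le k\ell h-1}$.

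The second step expands $B_\beta^n=(F_\beta+X)^n$ directly, without trying to move factors past each other; since the $h^\imath$-filtration is an algebra filtration, every monomial containing $s\ge 1$ factors of $X$ has $h^\imath$-value at most $(n-s)h+s(h-1)=nh-s$. Hence $B_\beta^n=F_\beta^n+\text{l.o.t.}$ for every $n\ge 0$. Applying this to $n=r$ and $n=m$, and combining with the first step, we obtain
\[
B_\beta^{[k\ell]}B_\beta^r=\bigl(F_\beta^{k\ell}+\text{l.o.t.}\bigr)\bigl(F_\beta^r+\text{l.o.t.}\bigr)=F_\beta^{k\ell+r}+\text{l.o.t.}=F_\beta^m+\text{l.o.t.},
\]
where the centrality of $F_\beta^{k\ell}\in Z_0$ ensures that the cross terms remain in $\U_{v,\le mh-1}$. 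Subtracting from $B_\beta^m=F_\beta^m+\text{l.o.t.}$ shows $B_\beta^m-B_\beta^{[k\ell]}B_\beta^r\in\U_{v,\le mh-1}$; since this difference lies in $\Uiv$, it belongs to $\Ui_{v,\le mh-1}$, which is the claim.

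The argument is routine leading-term bookkeeping; the one non-trivial input beyond Lemma~\ref{le:frbb} is that $\Zi$ is central not merely in $\Uiv$ but already in the ambient algebra $\U_v$, which is what legitimates the binomial expansion of $(B_\beta^{[\ell]})^k$. No serious obstacle arises.
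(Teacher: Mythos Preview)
Your proof is correct and follows the same approach as the paper: both show that $B_\beta^m$ and $B_\beta^{[k\ell]}B_\beta^r$ share the leading term $F_\beta^m$ with respect to the $h^\imath$-filtration, using Lemma~\ref{le:frbb} and the fact that $B_\beta$ has leading term $F_\beta$. The centrality detour for the binomial expansion is unnecessary, since the algebra-filtration argument you already use in the second step for $(F_\beta+X)^n$ works equally well for $(F_\beta^\ell+L)^k$.
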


\begin{proof}
By our construction, the leading term of $B_\beta^k$ is $F_\beta^k$ for $k\in\N$.
Thus, it suffices to show that the leading term of $\iota_v(B_\beta^{[\ell]})$ is $F_\beta^\ell$ for any $\beta\in \cR^+(\bbw_0)$, which follows from Lemma \ref{le:frbb}.
\end{proof}

\begin{theorem}\label{thm:findim}
The set of monomials 
\begin{align}\label{eq:PBWv}
M_{\mu,\ba,\bc,\bd},
\end{align}
where all $a_r,c_t,d_t$ run over $0,1\ldots,\ell-1$ and $\mu$ runs over a set of representatives of $P^\theta\slash \ell P^\theta$, forms a basis of $\Uiv$ over $\Zi$. In particular, $\Ui_v$ is a free $\Zi$-module of rank  $\ell^{\dim\k}$ where $\k=\g^\theta$.
\end{theorem}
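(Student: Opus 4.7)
The plan is to combine the global PBW basis of $\Uiv$ from Proposition~\ref{prop:intPBW} with the PBW basis of $\Zi$ from Proposition~\ref{prop:PBWZi} via the leading-term calculation in Lemma~\ref{lem:leading1}. The argument naturally splits into a spanning step and a linear independence step, both organized by induction on the filtration $\Uiv=\bigcup_n\Ui_{v,\leqslant n}$; the rank identity then follows from a direct cardinality count.

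For spanning, the key preliminary observation is that the ``extra'' generators of $\Zi$ beyond the idivided powers, namely $F_{\gamma_t}^\ell$, $E_{\gamma_t}^\ell$, and $K_{\mu'}^\ell$, all lie in $\Zi$ (they are $\Fri$-images of $\underline{F_{\gamma_t}},\underline{E_{\gamma_t}},\underline{K_{\mu'}}\in\Ui_1$) and are, by Proposition~\ref{prop:DCKv}, central even in the larger algebra $\U_v$. Given a global PBW monomial $M_{\mu,\ba,\bc,\bd}$, I would apply the division algorithm to every exponent and to $\mu\in P^\theta$, then use Lemma~\ref{lem:leading1} to rewrite each $B_{\beta_r}^{a_r}=B_{\beta_r}^{[\ell q_r]}B_{\beta_r}^{r_r}+\mathrm{l.o.t.}$ while letting the other factors commute freely. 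Collecting the central factors into a single element $z\in\Zi$ expresses $M_{\mu,\ba,\bc,\bd}$ as $z\cdot M_{\mu'',\mathbf{r},\mathbf{u},\mathbf{u}'}+\mathrm{l.o.t.}$, where the second factor has all exponents in $\{0,\ldots,\ell-1\}$ and the remainder has strictly smaller filtration degree; the spanning assertion then follows by induction on the filtration index.

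For linear independence, I would take a hypothetical relation $\sum_{\lambda''}z_{\lambda''}M_{\lambda''}=0$, expand each $z_{\lambda''}$ in the basis of Proposition~\ref{prop:PBWZi}, and rewrite the relation as $\sum c_{\lambda',\lambda''}P_{\lambda'}M_{\lambda''}=0$. Because every factor of $P_{\lambda'}$ is central either in $\U_v$ (the $\ell$-th powers $F_\gamma^\ell,E_\gamma^\ell,K_{\mu'}^\ell$) or in $\Uiv$ (the idivided powers $B_\beta^{[\ell a']}$), these factors commute freely past those of $M_{\lambda''}$; combining powers and applying Lemma~\ref{lem:leading1} once for each $B_\beta$-factor yields
\[
P_{\lambda'}M_{\lambda''}=M_{\ell\lambda'+\lambda''}+\mathrm{l.o.t.},
\]
where $\ell\lambda'+\lambda''$ is a valid global PBW index. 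Since $(\lambda',\lambda'')\mapsto\ell\lambda'+\lambda''$ is injective, inspecting the top filtration degree in the relation and invoking $\C$-linear independence of the global PBW basis forces all $c_{\lambda',\lambda''}$, and hence all $z_{\lambda''}$, to vanish.

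The rank of $\Uiv$ as a free $\Zi$-module is therefore the cardinality of the small index set, namely $\ell^{L+2M+\rank P^\theta}$. To identify this with $\ell^{\dim\k}$, I would use the standard dimension count for the symmetric pair: decomposing $\p=\g^{-\theta}$ gives $\dim\p=(\rank\g-\rank\k)+L$, since $\dim\h^{-\theta}=\rank\g-\rank\k$ and the $\theta$-orbit pairs in $\cR\setminus\cR_\bullet$ each contribute a one-dimensional anti-invariant line, with $L$ such pairs. Hence $\dim\k=\dim\g-\dim\p=L+2M+\rank\k=L+2M+\rank P^\theta$. The main subtlety I expect is checking that Lemma~\ref{lem:leading1} together with the centrality statements truly pin down $P_{\lambda'}M_{\lambda''}$ to the advertised leading term $M_{\ell\lambda'+\lambda''}$; this amounts to careful bookkeeping with the ordered products, and should be routine given the preparation in the previous subsection.
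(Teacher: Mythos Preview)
Your spanning argument is the same as the paper's. For linear independence you take a genuinely different route. The paper does not unpack $P_{\lambda'}M_{\lambda''}$ at all; instead it passes to $\U_v$ via $\iota_v$, observes that the leading term (in the $h^\imath$-filtration) of each restricted monomial $M_{\mu,\ba,\bc,\bd}$ is the ordinary PBW monomial
\[
K_\mu \orProd_{t} F_{\gamma_t}^{c_t}\orProd_{r} F_{\beta_r}^{a_r}\orProd_{t} E_{\gamma_t}^{d_t}\in\U_v,
\]
and then invokes Proposition~\ref{prop:DCKv}(2): these $F$-monomials are already $Z_0$-linearly independent, hence $\Zi$-linearly independent since $\iota_v(\Zi)\subset Z_0$. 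This is a three-line argument that leverages the known structure of $\U_v$ over $Z_0$. Your route stays entirely inside $\Uiv$, reducing instead to the $\C$-linear independence of the global PBW basis of $\Uiv$ (Proposition~\ref{prop:intPBW}); it is more self-contained but requires the leading-term bookkeeping you flag at the end, which does go through once one notes that all factors of $P_{\lambda'}$ are central in $\Uiv$ and that the filtration is multiplicative.

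One slip in your rank identification: the identity $\dim\k=L+2M+\rank P^\theta$ is correct, but your justification conflates $\dim\h^\theta$ with $\rank\k$ (and then asserts $\rank\k=\rank P^\theta$). In general $\h^\theta$ is \emph{not} a Cartan subalgebra of $\k$; for instance in type AI one has $\h^\theta=0$ while $\rank\k>0$, so both intermediate equalities fail even though the final answer is right. The clean count is: $\theta$ acts as the identity on $\g_\bullet$, so the $2M$ root spaces $\g_\alpha$ with $\alpha\in\cR_\bullet$ lie in $\k$; the $2L$ root spaces with $\alpha\in\cR\setminus\cR_\bullet$ pair into $L$ two-element $\theta$-orbits $\{\alpha,\theta\alpha\}$ (necessarily of size two since $\theta$ reverses the sign of such roots), each contributing one invariant dimension; and $\dim\h^\theta=\rank P^\theta$ since $\theta$ acts compatibly on $\h$ and on $P$. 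Summing gives $\dim\k=L+2M+\rank P^\theta$.
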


\begin{proof}
Let $\cm$ be a monomial of the form \eqref{eq:intPBW}. By Proposition~\ref{prop:intPBW}, the images of all such monomials under $\pi_v$ form a $\C$-basis of $\Uiv$. By Lemma~\ref{lem:leading1} and an induction on the degree, one can write $\pi_v(\cm)$ as a $\Zi$-linear combination of elements in \eqref{eq:PBWv}. Thus, elements in \eqref{eq:PBWv} form a spanning set of $\Uiv$ over $\Zi$.

It remains to show that elements in \eqref{eq:PBWv} are linearly independent over $\Zi$. Recall from \cite{LYZ25} that the leading term of $\iota(B_\beta)$ is $F_\beta$ and hence the leading term of $\iota_v(B_\beta)$ is $F_\beta$ in $\U_v$. This implies that the leading term of the element \eqref{eq:PBWv} in $\Uiv$ is 
\begin{align}\label{eq:PBWFv}
K_\mu \orProd_{1\leq t\leq M} F_{\gamma_t}^{c_t}  \orProd_{1\leq r\leq L} F_{\beta_r}^{a_r}\orProd_{1\leq t\leq M} E_{\gamma_t}^{d_t}.
\end{align}
By Proposition~\ref{prop:DCKv}, when $a_r,c_t,d_t$ run over $0,1\ldots,\ell-1$ and $\mu$ runs over $P^\theta\slash \ell P^\theta$, these elements in \eqref{eq:PBWFv} are linearly independent over $Z_0$. Noting that $\iota_v(\Zi)\subset Z_0$, we conclude that elements in \eqref{eq:PBWv} are linearly independent over $\Zi$.

The proof is completed.
\end{proof}

\begin{lemma}\label{le:tsl}
    Let $t=\ell^{\dim \g}$ and $s=\ell^{\dim \k}$. There exists a $\Zi$-basis $\{x_1,x_2,\dots,x_s\}$ of $\Uiv$, which extends to a $Z_0$-basis $\{x_1,x_2,\dots, x_t\}$ of $\U_v$.
\end{lemma}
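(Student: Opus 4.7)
The plan is to take the explicit $\Zi$-basis of $\Uiv$ from Theorem \ref{thm:findim} and enlarge it to a $Z_0$-basis of $\U_v$ by right-multiplying by PBW monomials that complement the $\imath$-part. Let $\{x_1,\dots,x_s\}$ denote the monomials $M_{\mu,\ba,\bc,\bd}$ in Theorem \ref{thm:findim}, indexed by $\mu$ running over representatives of $P^\theta/\ell P^\theta$. Viewed inside $\U_v$ via $\iota_v$, their $h^\imath$-leading terms are exactly the PBW monomials \eqref{eq:PBWFv}, which are genuine elements of the $Z_0$-PBW basis of $\U_v$ from Proposition \ref{prop:DCKv}: namely those with zero $E_{\beta_r}$-exponents ($r=1,\ldots,L$) and with $\mu\in P^\theta/\ell P^\theta$.

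The natural map $P^\theta/\ell P^\theta \to P/\ell P$ is injective: if $\mu\in P^\theta$ and $\mu=\ell\nu$ with $\nu\in P$, then $\theta(\nu)=\nu$ since $P$ has no $\ell$-torsion. Hence one may choose representatives $\nu_1=0, \nu_2,\ldots,\nu_p$ of the quotient $P/(\ell P + P^\theta)$ so that $\{\mu_i+\nu_j\}$ forms a full set of representatives of $P/\ell P$. For each $i\in\{1,\ldots,s\}$, each $j\in\{1,\ldots,p\}$, and each $\mathbf{e}=(e_1,\ldots,e_L)\in\{0,\ldots,\ell-1\}^L$, define
\[
y_{i,j,\mathbf{e}} := x_i \cdot K_{\nu_j} E_{\beta_L}^{e_L}\cdots E_{\beta_1}^{e_1}.
\]
Since $y_{i,1,\mathbf{0}}=x_i$, the original set $\{x_i\}$ is literally contained in $\{y_{i,j,\mathbf{e}}\}$, and a direct count yields $s\cdot p\cdot \ell^L = \ell^{\dim \g}=t$, which matches the $Z_0$-rank of $\U_v$.

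It remains to establish $Z_0$-linear independence of the family $\{y_{i,j,\mathbf{e}}\}$, which together with the count forces it to be a $Z_0$-basis. For this we extract top-$h^\imath$ parts: $K_{\nu_j}$ commutes with every $F$ and $E$ up to a nonzero scalar, while $E_{\beta_r}$ is strictly smaller than every $E_{\gamma_t}$ in the convex order on $\cR^+$ fixed in Section~\ref{sec:iPBW}, so the $E$-block of $y_{i,j,\mathbf{e}}$ is already in PBW form after the multiplication. Consequently the $h^\imath$-leading term of $y_{i,j,\mathbf{e}}$ is a nonzero scalar multiple of the PBW monomial
\[
K_{\mu_i+\nu_j} F_{\gamma_M}^{c_M^i}\cdots F_{\gamma_1}^{c_1^i} F_{\beta_L}^{a_L^i}\cdots F_{\beta_1}^{a_1^i} E_{\gamma_M}^{d_M^i}\cdots E_{\gamma_1}^{d_1^i} E_{\beta_L}^{e_L}\cdots E_{\beta_1}^{e_1}.
\]
As $(i,j,\mathbf{e})$ varies, these monomials exhaust without repetition the entire $Z_0$-PBW basis of $\U_v$ from Proposition \ref{prop:DCKv}. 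A standard triangular argument (induction on the top $h^\imath$-degree, using the $Z_0$-independence of PBW monomials) then upgrades this to $Z_0$-independence of $\{y_{i,j,\mathbf{e}}\}$. The main obstacle is bookkeeping rather than substance: one must verify that the chosen convex ordering, with all $\beta$-roots preceding all $\gamma$-roots, makes the concatenation of $E$-blocks into the full PBW $E$-block immediate, and that the coset counts combine to give exactly $\ell^{\dim\g}$ indices.
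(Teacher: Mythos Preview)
Your proof is correct and essentially the same as the paper's: both start from the $\Zi$-basis of Theorem~\ref{thm:findim}, enlarge it by adjoining $E_{\beta_r}$-monomials ($1\le r\le L$) and extra $K$-factors, and then use the $h^\imath$-filtration to reduce $Z_0$-independence to that of the standard PBW monomials from Proposition~\ref{prop:DCKv}. The only cosmetic difference is the choice of complementary $K$-cosets: the paper uses the splitting $P/\ell P\cong P^\theta/\ell P^\theta\oplus P^{-\theta}/\ell P^{-\theta}$ (valid since $\ell$ is odd) and places $K_{\mu_2}$ on the left, whereas you take arbitrary representatives of $P/(\ell P+P^\theta)$ and insert $K_{\nu_j}$ in the middle; since $K$'s $q$-commute with everything, either works.
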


\begin{proof}
    Take a reduced expression $w_0=s_{i_1}\cdots s_{i_L}s_{j_1}\dots s_{j_M}$ and the root vectors $B_{\beta_i}$, $E_{\gamma_j}$, $F_{\gamma_j}$ in $\Ui_v$, for $1\leq i\leq L$ and $1\leq j\leq M$, as in Section \ref{sec:iPBW}. Let $ P^\theta_\ell \subset P^\theta$ be a set of representatives for $P^\theta/\ell P^\theta$. By Theorem~\ref{thm:findim}, the monomials 
    \begin{equation}\label{eq:kbf}
    K_{\mu_1}\orProd_{1\leq j\leq M}
    F_{\gamma_j}^{c_j}\orProd_{1\leq i\leq L} B_{\beta_i}^{a_i}\orProd_{1\leq j\leq M} E_{\gamma_j}^{d_j},
    \end{equation}
    where $\mu\in P^\theta_\ell$, $0\leq a_i,c_j,d_j< \ell$, form a $\Zi$-basis of $\Ui_v$.

    To obtain a $Z_0$-basis of $\U_v$, we let $E_{\beta_t}=T_{s_{i_1}}\cdots T_{s_{i_{t-1}}}(E_{i_t})$ in $\U_v$ for $1\leq t\leq L$. Write $P^{-\theta}=\{\mu\in P\mid \theta\mu=-\mu\}$. Fix a set of representatives $ P^{-\theta}_\ell\subset P^{-\theta}$ for $P^{-\theta}/\ell P^{-\theta}$. Since $\ell$ is odd, we have the isomorphism $P/\ell P\cong P^\theta/\ell P^\theta\oplus P^{-\theta}/\ell P^{-\theta}$ as $\Z/\ell\Z$-modules, and then $P^\theta_\ell+P^{-\theta}_\ell$ form a set of representatives for $P/\ell P$.
    
    We \emph{claim} that the monomials
    \begin{equation}\label{eq:bsbm}
        K_{\mu_1}K_{\mu_2}\orProd_{1\leq j\leq M} F_{\gamma_j}^{c_j}\orProd_{1\leq i\leq L} B_{\beta_i}^{a_i}\orProd_{1\leq j\leq M} E_{\gamma_j}^{d_j}\orProd_{1\leq i\leq L} E_{\beta_i}^{e_i},
    \end{equation}
    where $\mu_1\in P^\theta_\ell, \mu_2\in P^{-\theta}_\ell$, $0\leq a_i,c_j,e_i,d_j<\ell$, form a $Z_0$-basis of $\U_v$. Indeed, under the filtration $\U_{v,\leq n}$ in Section \ref{sec:iPBW}, the monomial in \eqref{eq:bsbm} admits the leading term
    \begin{equation}\label{eq:kff}
    K_{\mu_1}K_{\mu_2}\orProd_{1\leq j\leq M} F_{\gamma_j}^{c_j}\orProd_{1\leq i\leq L} F_{\beta_i}^{a_i}\orProd_{1\leq j\leq M} E_{\gamma_j}^{d_j}\orProd_{1\leq i\leq L} E_{\beta_i}^{e_i}.
    \end{equation}
    By Proposition~\ref{prop:DCKv}, the monomials in \eqref{eq:kff} form a $Z_0$-basis of $\U_v$, and this implies the claim. Hence, there is a $Z_0$-basis of $\U_v$ given by the monomials in \eqref{eq:bsbm}, which contains the subset \eqref{eq:kbf} as a $\Zi$-basis of $\Uiv$. We complete the proof.
\end{proof}

\begin{proposition}
    We have $\Zi=Z_0\cap \Uiv$.
\end{proposition}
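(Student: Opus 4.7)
The inclusion $\Zi \subseteq Z_0 \cap \Uiv$ is immediate: by construction $\Zi = \mathrm{Im}(\Fri) \subseteq \Uiv$, and the commutative diagram in Theorem~\ref{thm:Fri} shows that under the embedding $\Uiv \hookrightarrow \U_v$ we have $\Zi \subseteq Z_0$. So the real content is the reverse inclusion $Z_0 \cap \Uiv \subseteq \Zi$.

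For this, my plan is to exploit the compatibility of bases provided by Lemma~\ref{le:tsl}. The first step is to observe that the element $1$ lies in the $\Zi$-basis of $\Uiv$ from Theorem~\ref{thm:findim}: it is the monomial $M_{\mu,\ba,\bc,\bd}$ with $\mu=0$ and $\ba=\bc=\bd=0$. Inspecting the explicit construction in the proof of Lemma~\ref{le:tsl}, this element is also obtained from the $Z_0$-basis of $\U_v$ by taking $\mu_1=\mu_2=0$ and all exponent tuples zero. Hence one may choose the bases $\{x_1,\dots,x_s\}$ of $\Uiv$ over $\Zi$ and $\{x_1,\dots,x_t\}$ of $\U_v$ over $Z_0$ produced by Lemma~\ref{le:tsl} so that $x_1=1$.

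Now given $x \in Z_0 \cap \Uiv$, expand $x$ in the $\Zi$-basis of $\Uiv$ as $x=\sum_{i=1}^{s} z_i x_i$ with $z_i \in \Zi$. Since $\Zi \subseteq Z_0$ under $\iota_v$, this same expression is a $Z_0$-linear combination in the extended basis $\{x_1,\dots,x_t\}$. On the other hand, $x \in Z_0$ and $x_1=1$, so $x = x\cdot x_1$ is already a $Z_0$-expansion. Uniqueness of $Z_0$-coordinates in the basis $\{x_1,\dots,x_t\}$ forces $z_1 = x$ and $z_i=0$ for $i\geq 2$; in particular $x=z_1 \in \Zi$, which closes the argument.

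I do not anticipate any serious obstacle here: the result is essentially a bookkeeping consequence of the dual PBW bases already built in Theorem~\ref{thm:findim} and Lemma~\ref{le:tsl}. The only point that requires a moment's care is verifying that one can choose the compatible bases so that $1$ appears as a common basis vector; this is transparent from the explicit monomial form of the PBW bases.
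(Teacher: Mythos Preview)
Your proposal is correct and follows essentially the same argument as the paper: both use the compatible bases of Lemma~\ref{le:tsl} with $1$ as a common basis element, expand an arbitrary $x\in Z_0\cap\Uiv$ over the $\Zi$-basis of $\Uiv$, and then invoke uniqueness of the $Z_0$-expansion of $x=x\cdot 1$ to force all other coefficients to vanish. The only cosmetic difference is that the paper phrases the final step as ``$M^\imath$ is $Z_0$-linearly independent'' rather than explicitly writing out the competing expansion $x=x\cdot x_1$.
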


\begin{proof}
It is clear that $\Zi\subseteq Z_0\cap \Uiv $. We show the opposite inclusion.

By Lemma \ref{le:tsl}, we can take a $Z_0$-basis $M$ of $\U_v$ which contains a subset $M^\imath$ such that $M^\imath$ is a $\Zi$-basis of $\Ui_v$. It follows from the proof of Lemma \ref{le:tsl} that we may assume that $1\in M^\imath$. 
Let $x\in Z_0\cap \Ui_v$. Since $x$ belongs to $\U^\imath_v$, we can write uniquely
\[
x=\sum_{m\in M^\imath}x_m m,\qquad \text{where $x_m\in Z_0^\imath$}.
\]

Since $M^\imath$ is also $Z_0$-linearly independent and $x\in Z_0$, we conclude that $x_m=0$ unless $m=1$. Therefore $x=x_1\in \Zi$. We complete the proof.
\end{proof}

\subsection{Small quantum symmetric pairs}\label{sec:smQSP}

For $x\in G^*$, we write $\U_{v,x}=\U_v/\mathfrak{m}_x\U_v$ where $\mathfrak{m}_x\subset Z_0$ is the maximal ideal corresponding to $x$. Similarly, for $y\in K^\perp\backslash G^*$ define $\U^\imath_{v,y}=\U^\imath_v/\mathfrak{m}^\imath_y\U^\imath_v$ where $\mathfrak{m}^\imath_y\subset \Zi$ is the maximal ideal corresponding to $y$. For $x\in G^*$, we write $\overline{x}=K^\perp\cdot x\in K^\perp\backslash G^*$.
\begin{proposition}\label{prop:cops}
    For $x\in G^*$, we have $\U_{v,\overline{x}}^\imath\subset \U_{v,x}$. For $x\in G^*$, $y\in K^\perp\backslash G^*$, the coproduct induces an algebra homomorphism
    \[
    \Delta_{x,y}:\U_{v,y\cdot x}^\imath\longrightarrow \U^\imath_{v,y}\otimes \U_{v,x}.
    \]
\end{proposition}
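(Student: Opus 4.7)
The plan is to construct both maps directly from the inclusion $\U_v^\imath\hookrightarrow\U_v$ and the coproduct $\Delta$ on $\U_v$, then verify they descend through the central ideals.

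For the first inclusion, I would take the composition $\U_v^\imath\hookrightarrow\U_v\twoheadrightarrow\U_{v,x}$. Under the identification $\Zi\hookrightarrow Z_0$ encoded in diagram~\eqref{eq:ZiZ0}, this is the pullback along the quotient $G^*\to K^\perp\backslash G^*=\X$, so any function on $\X$ vanishing at $\overline{x}$ pulls back to one vanishing at $x$. Consequently $\mathfrak{m}_{\overline{x}}^\imath\subset\mathfrak{m}_x\cap\Zi$ inside $Z_0$, which gives $\mathfrak{m}_{\overline{x}}^\imath \U_v^\imath\subset\mathfrak{m}_x\U_v$ and hence the composition factors through a homomorphism $\U_{v,\overline{x}}^\imath\to\U_{v,x}$. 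For the injectivity, I would invoke Lemma~\ref{le:tsl}: choose a $\Zi$-basis $\{x_1,\ldots,x_s\}$ of $\U_v^\imath$ extending to a $Z_0$-basis $\{x_1,\ldots,x_t\}$ of $\U_v$. Reducing modulo the respective maximal ideals, the classes of $x_1,\ldots,x_s$ form a $\C$-basis of $\U_{v,\overline{x}}^\imath$ and continue to sit inside the $\C$-basis of $\U_{v,x}$, so linear independence is preserved.

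For the second statement, the candidate map is the composition
\[
\U_v^\imath\xrightarrow{\Delta}\U_v^\imath\otimes\U_v\twoheadrightarrow\U_{v,y}^\imath\otimes\U_{v,x},
\]
which is defined because $\U_v^\imath$ is a coideal subalgebra of $\U_v$. The key input is Proposition~\ref{prop:coideal}, giving $\Delta(\Zi)\subset\Zi\otimes Z_0$; under the identifications in \eqref{eq:ZiZ0}, this restriction $\Delta|_{\Zi}$ is the comorphism of the right action $\alpha\colon\X\times G^*\to\X$ appearing in \eqref{eq:GXaction}. For $z\in\mathfrak{m}_{y\cdot x}^\imath$, write $\Delta(z)=\sum_i z_i'\otimes z_i''$ with $z_i'\in\Zi$ and $z_i''\in Z_0$. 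Since these lie in the respective centres, the image of $z_i'$ in $\U_{v,y}^\imath$ is the scalar $z_i'(y)$ and that of $z_i''$ in $\U_{v,x}$ is $z_i''(x)$, so the image of $\Delta(z)$ in $\U_{v,y}^\imath\otimes\U_{v,x}$ equals $\bigl(\sum_i z_i'(y)z_i''(x)\bigr)(1\otimes 1)=z(y\cdot x)(1\otimes 1)=0$. Because $\mathfrak{m}_y^\imath\U_v^\imath$ and $\mathfrak{m}_x\U_v$ are two-sided ideals (their generators being central), $\Delta(z)\Delta(u)$ also has zero image for every $u\in\U_v^\imath$, and $\Delta$ descends to the required homomorphism $\Delta_{x,y}$.

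The main conceptual step is the geometric translation in the second part, identifying $\Delta|_{\Zi}$ with $\alpha^*$ so that the central coproduct of $z\in\Zi$ evaluates at $(y,x)$ to $z(y\cdot x)$; once this is in place, both assertions reduce to routine verifications using centrality and the compatible bases from Lemma~\ref{le:tsl}.
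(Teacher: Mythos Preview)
Your proof is correct and follows essentially the same approach as the paper. Both use Lemma~\ref{le:tsl} for the injectivity in the first part (the paper phrases it as the ideal equality $\mathfrak{m}_{\overline{x}}^\imath\Uiv=\Ui_v\cap\mathfrak{m}_x\U_v$, which is equivalent to your basis argument), and both reduce the second part to the fact that $\Delta|_{\Zi}$ is the comorphism of the $G^*$-action on $\X$; you carry out the evaluation $\sum_i z_i'(y)z_i''(x)=z(y\cdot x)$ explicitly, whereas the paper just records the resulting ideal inclusion $\Delta(\mathfrak{m}^\imath_{y\cdot x})\subset \mathfrak{m}_y^\imath\otimes Z_0+\Zi\otimes \mathfrak{m}_x$.
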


\begin{proof}
    Let $x\in G^*$. To show that the natural embedding $\Uiv\hookrightarrow \U_v$ induces an algebra embedding $\Ui_{v,x}\hookrightarrow\U_{v,\ov{x}}$, we need to show that $\mathfrak{m}_{\overline{x}}^\imath\Uiv=\Ui_v\cap\mathfrak{m}_x\U_v$. Since $\mathfrak{m}_{\overline{x}}^\imath=\mathfrak{m}_x\cap \Zi$, it is clear that $\mathfrak{m}_{\overline{x}}^\imath\Uiv\subset\Ui_v\cap\mathfrak{m}_x\U_v$. To show another inclusion, we take an arbitrary element $X\in \Uiv\cap\mathfrak{m}_x\U_v$. Thanks to Lemma \ref{le:tsl}, there is a $Z_0$-basis $M$ of $\U_v$ which contains a $\Zi$-basis $M'$ of $\Uiv$. We can write the element $X$ in term of this basis
        \[
        X=\sum_{m\in M'}X_mm,\qquad \text{where $X_m\in \Zi$}.
        \]
    Since $X\in \mathfrak{m}_x\U_v$ we have $X_m\in \mathfrak{\mathfrak{m}}_x$, for any $m\in M'$. Hence $X_m\in \mathfrak{m}_x\cap \Zi=\mathfrak{m}_{\overline{x}}^\imath$, which implies that $X\in \mathfrak{m}^\imath_{\overline{x}}\Uiv$.

    Take $x\in G^*$ and $y\in K^\perp\backslash G^*$. The existence of the coproduct follows from the fact that 
        \[
        \Delta(\mathfrak{m}^\imath_{y\cdot x})\subset \mathfrak{m}_y^\imath\otimes Z_v+\Zi\otimes \mathfrak{m}_x,\qquad \text{ and }\qquad \Delta(\Ui_{v})\subset\Uiv\otimes \U_v.  
        \]
    We complete the proof.
\end{proof} 

Let $e\in G^*$ be the identity element. The finite-dimensional Hopf algebra $\su=\U_{v,e}$ is known as the \emph{small quantum group} introduced by Lusztig \cite{Lus90}; see also \cite[(1)]{GK93}.

\begin{definition}\label{def:smQSP}
Given a Satake diagram $(\I=\wI\cup\bI,\tau)$, the algebra $\su^\imath=\U^\imath_{v,\overline{e}}$ is called the associated \emph{small iquantum group}, and $(\su,\su^\imath)$ is called the associated \emph{small quantum symmetric pair}.
\end{definition}

\begin{proposition}\label{prop:coid}
    The algebra $\su^\imath$ is a coideal subalgebra of $\su$, which satisfy the following commutative diagram
    \begin{equation}\label{eq:smallQSP}
    \begin{tikzcd}
            & \Uiv \arrow[r] \arrow[d,hook] & \su^\imath \arrow[d,hook] \\ & \U_v \arrow[r]  & \su
    \end{tikzcd}
   \end{equation}
\end{proposition}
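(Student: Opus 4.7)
The plan is to deduce this proposition directly from Proposition~\ref{prop:cops}, which has already done the substantive work. I will verify (i) the embedding $\su^\imath \hookrightarrow \su$, (ii) the commutativity of the diagram \eqref{eq:smallQSP}, and (iii) the coideal property, in this order.

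For step (i), I would simply specialize the first assertion of Proposition~\ref{prop:cops} to the identity element $x = e \in G^*$. Since $\overline{e} = K^\perp \cdot e$, this gives an algebra embedding $\su^\imath = \U^\imath_{v,\overline{e}} \hookrightarrow \U_{v,e} = \su$, which is the right-hand vertical arrow in \eqref{eq:smallQSP}. For step (ii), the horizontal arrows in \eqref{eq:smallQSP} are the canonical quotient maps $\Uiv \twoheadrightarrow \Uiv/\mathfrak{m}^\imath_{\overline{e}}\Uiv$ and $\U_v \twoheadrightarrow \U_v/\mathfrak{m}_e \U_v$, and the left vertical arrow is the fixed embedding $\iota_v : \Uiv \hookrightarrow \U_v$ from Section~\ref{sec:qsp}. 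Commutativity of \eqref{eq:smallQSP} amounts to the observation that $\iota_v$ sends $\mathfrak{m}^\imath_{\overline{e}}\Uiv$ into $\mathfrak{m}_e \U_v$, which was in fact verified in the proof of Proposition~\ref{prop:cops} (via the identity $\mathfrak{m}^\imath_{\overline{e}} = \mathfrak{m}_e \cap \Zi$ together with $\iota_v(\Zi)\subset Z_0$).

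For step (iii), I apply the second assertion of Proposition~\ref{prop:cops} with $y = \overline{e}$ and $x = e$. Because the right action of $G^*$ on $K^\perp \backslash G^*$ satisfies $\overline{e}\cdot e = \overline{e}$, we obtain an induced algebra homomorphism
\[
\Delta_{e,\overline{e}} : \su^\imath = \U^\imath_{v,\overline{e}\cdot e} \longrightarrow \U^\imath_{v,\overline{e}} \otimes \U_{v,e} = \su^\imath \otimes \su.
\]
By construction $\Delta_{e,\overline{e}}$ is the map descended from the comultiplication $\Delta\colon \Uiv \to \Uiv \otimes \U_v$, so it coincides with the restriction of the comultiplication of $\su$ to $\su^\imath \subset \su$; this is exactly the statement that $\su^\imath$ is a coideal subalgebra of $\su$.

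The argument is essentially a formal specialization, so no real obstacle is expected; the only point requiring attention is bookkeeping to ensure that the quotient by $\mathfrak{m}^\imath_{\overline{e}}$ on $\Uiv$ is compatible with the quotient by $\mathfrak{m}_e$ on $\U_v$, which is precisely what Proposition~\ref{prop:cops} guarantees.
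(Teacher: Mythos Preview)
Your proposal is correct and follows exactly the paper's approach: the paper's own proof is the single sentence ``It follows from Proposition~\ref{prop:cops} and its proof that $\su^\imath$ is a coideal subalgebra of $\su$ and the commutative diagram holds,'' and you have simply unpacked that citation into its three constituent parts (embedding, commutativity, coideal property) by specializing $x=e$, $y=\overline{e}$.
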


\begin{proof}
It follows from Proposition \ref{prop:cops} and its proof that $\su^\imath$ is a coideal subalgebra of $\su$ and the commutative diagram holds. 
\end{proof}
    
\begin{proposition}\label{prop:sPBW}
The set of ordered monomials 
\begin{align}
K_\mu \orProd_{1\leq t\leq M} F_{\gamma_t}^{c_t} \orProd_{1\leq r\leq L} B_{\beta_r}^{a_r} \orProd_{1\leq t\leq M} E_{\gamma_t}^{d_t},
\end{align}
where all $a_r,c_t,d_t$ run over $0,1\ldots,\ell-1$ and $\mu$ runs over a set of representatives of $P^\theta\slash \ell P^\theta$, forms a $\C$-basis of $\su^\imath$. In particular, $\dim \su^\imath=\ell^{\dim \mathfrak{k}}$.
\end{proposition}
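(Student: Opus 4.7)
The plan is to deduce Proposition \ref{prop:sPBW} directly from Theorem \ref{thm:findim} together with the definition $\su^\imath = \Ui_{v,\overline{e}} = \Uiv/\mathfrak{m}^\imath_{\overline{e}}\Uiv$, so the work reduces to a base-change argument over the Frobenius center.

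First, I would identify $\su^\imath$ with the tensor product $\Uiv \otimes_{\Zi} (\Zi/\mathfrak{m}^\imath_{\overline{e}})$. Since $\mathfrak{m}^\imath_{\overline{e}}$ is a maximal ideal of the finitely generated commutative $\C$-algebra $\Zi$, one has $\Zi/\mathfrak{m}^\imath_{\overline{e}} \cong \C$, so $\su^\imath \cong \Uiv \otimes_{\Zi} \C$ as $\C$-vector spaces. By Theorem \ref{thm:findim}, $\Uiv$ is a free $\Zi$-module with explicit basis given by the ordered monomials $M_{\mu,\ba,\bc,\bd}$ of \eqref{eq:PBWv}, where $a_r, c_t, d_t$ range over $0, 1, \ldots, \ell-1$ and $\mu$ ranges over a set of representatives of $P^\theta/\ell P^\theta$. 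Therefore the images of these monomials under the canonical surjection $\Uiv \twoheadrightarrow \su^\imath$ form a $\C$-basis of $\su^\imath$. Comparing \eqref{eq:Mabc} with the convention $\orProd_{1 \leq t \leq k} A_t = A_k A_{k-1} \cdots A_1$, this basis is precisely the one displayed in the proposition.

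The dimensional assertion $\dim \su^\imath = \ell^{\dim \mathfrak{k}}$ is then immediate from the rank statement in Theorem \ref{thm:findim}: the free $\Zi$-module $\Uiv$ has rank $\ell^{\dim \mathfrak{k}}$, so after tensoring with the residue field $\C$ at $\overline{e}$, the dimension is preserved. No substantial obstacle is anticipated; the only item to verify is the bookkeeping that the ordered monomial notation in the statement coincides with $M_{\mu,\ba,\bc,\bd}$ from \eqref{eq:Mabc}, which is routine. All nontrivial structural content (the freeness over $\Zi$, the explicit basis, and the rank computation) has already been established in Theorem \ref{thm:findim}.
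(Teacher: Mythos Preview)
Your proposal is correct and takes essentially the same approach as the paper: the paper's proof is the single line ``This dimension formula and PBW basis follow from Theorem \ref{thm:findim},'' and your argument simply spells out the base-change step (free $\Zi$-module tensored with the residue field $\C$) that this sentence encodes.
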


\begin{proof}
This dimension formula and PBW basis follow from Theorem \ref{thm:findim}.
\end{proof}

\begin{remark}
    In \cite{BS21}, Bao and Sale defined the modified version of small quantum symmetric pairs. Our definition is different since we obtain an actual coideal subalgebra of $\su$.
\end{remark}



\section{Degenerations of iquantum groups}\label{sec:deg}

In this section, we construct a filtration on $\Uiv$ such that the associated graded algebra $\Gr \Uiv$ is a localized twisted polynomial algebra. We determine the degree and the center of $\Gr \Uiv$.

\subsection{Twisted polynomial algebras}\label{sec:tpa}

Let $\F$ be a ring and $q\in \F$ be an invertible element. Given an $n\times n$ skew-symmetric integer matrix $H=(h_{ij})$, the \emph{twisted polynomial algebra} $\tT_H$ is the associative $\F$-algebra generated by $x_i$, $1\leq i\leq n$, which subjects to the relations
\[
x_ix_j=q^{h_{ij}}x_jx_i,\quad \text{for }1\leq i,j\leq n.
\]
For any subset $J\subset\{1,\cdots,n\}$, the \emph{localized twisted polynomial algebra} $\tT_{H,J}$ is defined to be $\tT_{H,J}=\tT_H[x_j^{-1}\mid j\in J]$. For any $\ba=(a_1,\cdots,a_n)\in\Z^n$, we denote by $x^{\ba}=x_1^{a_1}\cdots x_n^{a_n}$ the element in the field of fractions of $\tT_H$.

We will be mostly interested in the case when $\F=\C$ and $q=v$, a primitive $\ell$th root of 1. Assume that we are in this case in the remaining of this subsection. Let $H_\ell$ be the linear map $H_\ell:\Z^n\rightarrow (\Z/\ell\Z)^n$ which is induced from the matrix $H$. Let $K$ be the kernel of $H_\ell$ and $h$ the cardinality of the image of $H_\ell$.

\begin{proposition}[\text{\cite[Proposition~2.2]{DCKP93a}\cite[Proposition~7.1]{DCP93}}]
\label{prop:twpolycenter}
Let $J\subset \{1,\cdots,n\}$ be any subset.
\begin{itemize}
\item[(1)] The $\C$-algebra $\tT_{H,J}$ is an integrally closed domain.

\item[(2)] The set  
\[
\{x^{\mathbf{a}}\mid \mathbf{a}\in K,\;a_i\geq0\;\text{ for }i\not\in J\}
\]
forms a $\C$-basis of the center of $\tT_{H,J}$.     

\item[(3)] The degree of $\tT_{H,J}$ is $\sqrt{h}$.
\end{itemize}
\end{proposition}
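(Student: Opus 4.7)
The plan is to extract everything from the distinguished $\C$-basis of ordered monomials $x^{\mathbf{a}}$, $\mathbf{a}\in \N^{\overline{J}}\times \Z^{J}$, together with the elementary commutation rule
\[
x_i x^{\mathbf{a}} = v^{\sum_j h_{ij}a_j}\, x^{\mathbf{a}} x_i,\qquad x^{\mathbf{a}} x^{\mathbf{b}} = v^{\sum_{i<j} h_{ji} a_i b_j}\, x^{\mathbf{a}+\mathbf{b}}.
\]
All three parts then reduce to linear algebra over $\mathbb{F}_{\ell}$ associated with the skew matrix $H$ and its reduction $H_\ell$.

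For part (1), the domain property follows from a leading-monomial argument: a product of two nonzero elements has a unique maximal monomial with an invertible scalar coefficient, so $\tT_{H,J}$ has no zero divisors. For ``integrally closed'' in the sense of Section 2.8, I would exhibit $\tT_{H,J}$ as a free module of finite rank over the central Laurent/polynomial subalgebra $Z_{0} := \C[x^{\mathbf{a}}\mid \mathbf{a}\in K\cap (\N^{\overline{J}}\times \Z^{J})]$, which is itself an integrally closed commutative domain. Any subalgebra $R'$ with $\tT_{H,J}\subset R'\subset z^{-1}\tT_{H,J}$ is automatically a $Z_{0}$-submodule; decomposing along the finite quotient of monomials and combining with the commutative case (where $Z_{0}$ is already integrally closed) forces $R'=\tT_{H,J}$.

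For part (2), the commutation rule shows directly that a sum $z=\sum c_{\mathbf{a}}x^{\mathbf{a}}$ commutes with each $x_i$ if and only if $(H\mathbf{a})_{i}\equiv 0\pmod\ell$ whenever $c_{\mathbf{a}}\neq 0$, i.e.\ $\mathbf{a}\in K$. Commutation with $x_{j}^{-1}$ for $j\in J$ imposes the same condition, so the central monomials are exactly $\{x^{\mathbf{a}}\mid \mathbf{a}\in K,\ a_i\geq 0 \text{ for } i\notin J\}$.

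For part (3), the strategy is to pass to the field of fractions of the center. Localizing at $Z\setminus\{0\}$, the algebra $Q(\tT_{H,J})=Q(Z)\otimes_{Z}\tT_{H,J}$ becomes a twisted group algebra over $Q(Z)$ of the finite abelian group $G=(\N^{\overline{J}}\times \Z^{J})/\bigl(K\cap(\N^{\overline{J}}\times \Z^{J})\bigr)$, whose order equals $h=|\Im H_{\ell}|$. By standard Wedderburn theory for twisted group algebras of finite abelian groups over an algebraically closed field, the algebra is simple precisely when the induced bimultiplicative form on $G$ is non-degenerate. This non-degeneracy is the main technical point, and it holds tautologically here: the radical of the $\mathbb{F}_{\ell}$-valued alternating form induced by $H$ on $G$ is trivial, because we quotiented out exactly the kernel $K$. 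Hence $Q(\tT_{H,J})\cong M_{\sqrt{h}}(Q(Z))$, giving $\deg \tT_{H,J}=\sqrt{h}$. The main obstacle is confirming the simplicity/non-degeneracy statement, but once the algebra is presented in this twisted-group-algebra form this is essentially automatic from the construction of $K$.
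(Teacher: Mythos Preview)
The paper does not supply a proof of this proposition; it is quoted from \cite{DCKP93a} and \cite{DCP93} without argument, so there is no in-paper proof to compare your sketch against.

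Your sketches for parts (2) and (3) are correct. In (3), the relevant group is $\Z^n/K$ rather than the monoid quotient you wrote, but this costs nothing: inverting the center (which contains every $x_i^{\ell}$) makes each $x_i$ a unit, so the degree computation reduces to the fully Laurent case $J=\{1,\dots,n\}$, where $Q(\tT_{H,J})$ is the twisted group algebra of $\Z^n/K$ over $Q(Z)$ with the alternating form induced by $H$; nondegeneracy holds precisely because $K=\ker H_\ell$, and one gets a matrix algebra of size $\sqrt{h}$.

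Part (1) has a genuine gap: your assertion that $\tT_{H,J}$ is \emph{free} of finite rank over its center $Z_0$ is false in general. Take $n=3$, $J=\emptyset$, $\ell=3$, and $H=\left(\begin{smallmatrix}0&1&-1\\-1&0&1\\1&-1&0\end{smallmatrix}\right)$, so that $K=\{(a,b,c)\in\Z^3: a\equiv b\equiv c\pmod 3\}$, $h=9$, and $Z_0\cong\C[x_1x_2x_3,\,x_1^3,\,x_2^3,\,x_3^3]$ is normal but singular. The fibre of $\tT_{H,\emptyset}$ over the origin of $\Spec Z_0$ has $\C$-dimension $3^3-2^3=19$, whereas the generic rank is $9$; hence $\tT_{H,\emptyset}$ is not even flat over $Z_0$, and your proposed reduction to the commutative integrally closed case collapses. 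The arguments in the cited references proceed differently: $\tT_{H,J}$ is realised as an iterated skew polynomial (or skew Laurent) extension of $\C$ by automorphisms, and one invokes the fact that such Ore extensions preserve the maximal-order property. If you want to stay closer to your outline, replace $Z_0$ by the smaller \emph{regular} central subring $\C[x_i^{\ell}:i\notin J][x_j^{\pm\ell}:j\in J]$, over which $\tT_{H,J}$ genuinely is free of rank $\ell^n$; the residual difficulty is then to show that any intermediate subalgebra $R'$ is automatically graded for the $(\Z/\ell\Z)^n$-grading, which is not immediate from your sketch.
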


\subsection{An associated graded algebra}\label{sec:asa}

In the remaining part of this section, we retain the notations in Section~\ref{sec:iPBW}. In particular, we fix a reduced expression $w_0=s_{i_1}\cdots s_{i_L}s_{j_1}\cdots s_{j_M}$ such that $w_\bullet=s_{j_1}\cdots s_{j_M}$. Let $\beta_1<\cdots <\beta_L<\gamma_1<\cdots<\gamma_M$ be the associated convex order on $\mathcal{R}$. For $1\leq t\leq M$, write $i_{L+t}=j_t$ and $\beta_{L+t}=\gamma_t$.


For $\ba\in \N^N$, define $\ba^\circ\in\N^L$ and $\ba^\bullet\in\N^M$ by $a_k^\circ=a_k$ for $1\leq k\leq L$ and $a_k^\bullet=a_{L+k}$ for $1\leq k\leq M$. Define two functions $\het_\bullet,h^\imath:\N^N\times \N^M\rightarrow \N$ by 
\begin{equation}
    \het_\bullet((\ba,\bd))=\sum_{t=1}^M \big( a^\bullet_t \het(\gamma_t)+ d_t \het(\gamma_t)\big),\quad \text{ and }\quad h^\imath((\ba,\bd))=\sum_{i=1}^L a_i \het^\imath(\beta_i).
\end{equation}
Recall the functions $\het,\het^\imath:Q\rightarrow \Z$ from \eqref{eq:recs} and \eqref{def:hti}.

For $\mu\in P^\theta$, $\ba\in\N^L$, $\bc,\bd\in\N^M$, recall from \eqref{eq:Mabc} the monomial $M_{\mu,\ba,\bc,\bd}$. When $\ba\in\N^N$ we also write $M_{\mu,\ba,\bd}=M_{\mu,\ba^\circ,\ba^\bullet,\bd}$ for simplicity. By our construction in Section~\ref{sec:iPBW}, the degree of the monomial $M_{\mu,\ba,\bd}$ is given by
\[
h^\imath(M_{\mu,\ba,\bd})=h^\imath((\ba,\bd)).
\]
For $\mu\in P^\theta$, $\ba\in \N^N$ and $\bd\in \N^M$, define the \emph{total degree} $d(M_{\mu,\ba,\bd})$ of the monomial $M_{\mu,\ba,\bd}$ to be the following element in $\N^{N+M+2}$
\begin{align}\label{eq:totaldeg}
d(M_{\mu,\ba,\bd})=
\big(a_N,\ldots, a_1, d_M,\ldots, d_1, &\het_\bullet((\ba,\bd)), h^\imath((\ba,\bd)\big).
\end{align}
 
We equip $\N^{N+M+2}$ with the opposite lexicographical order. For $\gamma\in \cR^+_\bullet$, we sometimes write $B_\gamma$ for $F_\gamma$. 

\begin{lemma}\label{lem:Bconvex}
Let $1 \le t< k\le L$. We have
\begin{align}\label{eq:Bconvex}
B_{\beta_k}  B_{\beta_t} - q^{-( \beta_k,\beta_t)}B_{\beta_t} B_{\beta_k} 
=\sum_{\mu,\ba,\bc,\bd} \rho_{\mu,\ba,\bc,\bd} M_{\mu,\ba,\bc,\bd},
\end{align}
where $\rho_{\mu,\ba,\bc,\bd}=0$ unless $d(M_{\mu,\ba,\bc,\bd})<d(B_{\beta_t} B_{\beta_k})$.
\end{lemma}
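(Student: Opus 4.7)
The plan is to bootstrap from the classical Levendorskii--Soibelman relation in $\U_v$, using the leading-term identification of iroot vectors from \cite{LYZ25}. By \cite[Corollary~4.3]{LYZ25}, for each $\beta\in\cR^+(\bbw_0)$ one can write $\iota_v(B_\beta)=F_\beta+R_\beta$ in $\U_v$, where $R_\beta$ is a $\C$-linear combination of $\U_v$-PBW monomials whose $h^\imath$-value is strictly below $\het^\imath(\beta)$. Substituting into the commutator gives
\[
B_{\beta_k}B_{\beta_t}-q^{-(\beta_k,\beta_t)}B_{\beta_t}B_{\beta_k}=\bigl(F_{\beta_k}F_{\beta_t}-q^{-(\beta_k,\beta_t)}F_{\beta_t}F_{\beta_k}\bigr)+S,
\]
where $S$ collects the cross terms involving at least one $R_\beta$. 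Since the $h^\imath$-filtration is multiplicative, every $\U_v$-PBW monomial appearing in $S$ has $h^\imath$-value strictly less than $\het^\imath(\beta_k)+\het^\imath(\beta_t)$.

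Next, I will invoke the classical Levendorskii--Soibelman formula (cf.\ the PBW theorem for $\U_v$ of De Concini--Procesi) to expand the main bracket as $\sum_{\bc}c_\bc F^\bc$, where $\bc\in\N^N$ is supported strictly between positions $t$ and $k$, i.e.\ $c_j=0$ unless $t<j<k$.

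Since the left-hand side lies in $\Uiv$, Proposition~\ref{prop:intPBW} gives a unique expansion in the basis $\{M_{\mu,\ba,\bc,\bd}\}$. For each monomial appearing, I compare $d(M_{\mu,\ba,\bc,\bd})$ with $d(B_{\beta_t}B_{\beta_k})$ in the opposite lex order of $\N^{N+M+2}$ (reading from the rightmost coordinate, which is $h^\imath$). The key observation is that the leading term of $\iota_v(M_{\mu,\ba,\bc,\bd})$ in the $h^\imath$-filtration is $K_\mu F^\bc F^{\ba^\circ}E^\bd$, with $h^\imath$-value $h^\imath((\ba,\bd))$. I split into two cases: (i) if $h^\imath((\ba,\bd))<\het^\imath(\beta_k)+\het^\imath(\beta_t)$, then comparison at the rightmost coordinate already forces the monomial to be strictly smaller, which handles all contributions coming from $S$; (ii) if $h^\imath((\ba,\bd))=\het^\imath(\beta_k)+\het^\imath(\beta_t)$, then the leading $F$-pattern must match some $F^\bc$ produced by the classical Levendorskii--Soibelman formula, which forces $\ba$ supported in $\{t+1,\ldots,k-1\}$ and $\bc=\bd=0$, $\mu=0$; comparing opposite-lex with the $h^\imath$, $\het_\bullet$, and $d$-blocks all tied, ties in the $a$-block persist through positions $a_1,\ldots,a_{t-1}$, but at $a_t$ the monomial has value $0$ while $d(B_{\beta_t}B_{\beta_k})$ has value $1$, giving the strict inequality.

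The main obstacle is that the $F^\bc$ produced by the classical formula are $\U_v$-PBW monomials rather than $\Uiv$-PBW monomials, so the re-expansion in the $\Uiv$-PBW basis needs care. The triangularity $F_\beta=B_\beta-R_\beta$ in the $h^\imath$-filtration ensures that the top $h^\imath$-part of $F^\bc$ matches $M_{0,\bc,0,0}$, while the remaining corrections drop in $h^\imath$-value and are absorbed by case (i). Verifying that no rewriting step re-introduces a positive $a_k$ or $a_t$ entry (which would break the strict inequality) is the delicate bookkeeping, but it is guaranteed by the triangularity in the $h^\imath$-filtration together with the convexity of the support of each $R_\beta$ relative to $\beta$.
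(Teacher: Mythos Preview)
Your approach is essentially the same as the paper's: both pass to the $h^\imath$-associated graded (the paper via the identification $\gr(\Ui_{\A'})\cong \U_{P,\A'}$, you via the explicit splitting $B_\beta=F_\beta+R_\beta$), invoke the classical Levendorskii--Soibelman relation for the $F$-commutator, and then observe that everything else drops in $h^\imath$ and hence in the total degree $d$. Two small points: the lemma and its surrounding filtration live over $\A'$ (not at the root of unity $v$), so you should write $\iota$ rather than $\iota_v$ throughout; and your closing appeal to ``convexity of the support of each $R_\beta$'' is neither needed nor supplied by \cite{LYZ25}---the $h^\imath$-drop of $R_\beta$ alone already suffices, exactly as in your case~(i).
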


\begin{proof}
Recall from Section~\ref{sec:iPBW} the filtration $\U=\bigcup_{n\in \N} \U_{\leqslant n}$ and that $B_\beta$ has a leading term $F_\beta$ with respect to this filtration. Hence, under the natural isomorphism $\gr(\Ui_{\A'} )\cong \U_{P,\A'}$, the image of the left-hand side of \eqref{eq:Bconvex} in the associated graded algebra $\gr(\Ui_{\A'} )$ is 
$F_{\beta_k}  F_{\beta_t} - q^{-( \beta_k,\beta_t)}F_{\beta_t} F_{\beta_k}$. 
For $\ba=(a_1,\ldots,a_L)\in \N^L$, we denote 
\[
F^{\ba}=F_{\beta_L}^{a_L}\cdots F_{\beta_1}^{a_1},
\qquad 
B^{\ba}=B_{\beta_L}^{a_L}\cdots B_{\beta_1}^{a_1}.
\]
By \cite[lemma 1.7]{DCK90} and references therein, the subspace spanned by $\{F^{\ba},\ba\in \N^L\}$ is a subalgebra of $\U$ and we have
\begin{align}\label{eq:Fconvex}
F_{\beta_k}  F_{\beta_t} - q^{-( \beta_k,\beta_t)}F_{\beta_t} F_{\beta_k} 
=\sum_{\ba\in \N^L} \rho'_\ba F^{\ba}
\end{align}
for some $\rho'_\ba\in \A'$ such that $\rho'_\ba=0$ unless $d_1(F^{\ba})<d_1(F_{\beta_t} F_{\beta_k} )$. Here we equip $\N^L$ with the opposite lexicographical order and $d_1(F^{\ba})$ is defined by
\begin{align}
d_1(F^{\ba}) = (a_L,\ldots,a_1)\in \N^L;
\end{align}
cf. \cite[Section 1.7]{DCK90}.

Let $\ba\in \N^L$ such that $\rho'_\ba\neq 0$. The corresponding $F^{\ba}$ in the right-hand side of \eqref{eq:Fconvex} must have the same weight as $F_{\beta_k}  F_{\beta_t}$, and hence $h^\imath(B^{\ba})=h^\imath(B_{\beta_t} B_{\beta_k})$. Thus, by \eqref{eq:totaldeg}, $d_1(F^{\ba})<d_1(F_{\beta_t} F_{\beta_k} )$ forces that $d(B^{\ba})<d(B_{\beta_t} B_{\beta_k} )$. i.e.,
\begin{align}\label{eq:lot1}
\rho'_\ba= 0, \qquad \text{ unless } d(B^{\ba})<d(B_{\beta_t} B_{\beta_k} ).
\end{align}

On the other hand, recall that $h^\imath(B_\beta)=\het^\imath(\beta)$. Using the isomorphism $\gr(\Ui_{\A'} )\cong \U_{P,\A'}$, \eqref{eq:Fconvex} implies that the leading term of the following element
\begin{align}\label{eq:Bconvexlow}
B_{\beta_k}  B_{\beta_t} - q^{-(\beta_k,\beta_t)}B_{\beta_t} B_{\beta_k} 
-\sum_{\ba} \rho'_\ba B^{\ba}
\end{align}
has a degree strictly less than $h^\imath(B_{\beta_t} B_{\beta_k})=\het^\imath(\beta_t+\beta_k)$. 
Thus, if we rewrite the element \eqref{eq:Bconvexlow} in terms of the PBW basis $\{M_{\mu,\ba,\bc,\bd}\mid \mu\in P^\theta,\ba\in \N^L,\bc\in \N^M, \bd\in\N^M\}$ of $\Ui$
\begin{align} \label{eq:lot2}
B_{\beta_k}  B_{\beta_t} - q^{-( \beta_k,\beta_t)}B_{\beta_t} B_{\beta_k} 
-\sum_{\ba} \rho'_\ba B^{\ba}=\sum_{\mu,\ba,\bc,\bd} \rho_{\mu,\ba,\bc,\bd}'' M_{\mu,\ba,\bc,\bd},
\end{align}
then $\rho_{\mu,\ba,\bc,\bd}''=0$ unless $h^\imath(M_{\mu,\ba,\bc,\bd})<h^\imath(B_{\beta_t} B_{\beta_k})$. 

Therefore, by \eqref{eq:lot1} and \eqref{eq:lot2}, it is clear that $B_{\beta_k}  B_{\beta_t} - q^{-( \beta_k,\beta_t)}B_{\beta_t} B_{\beta_k}$ is a linear combination of monomials whose total degrees are strictly less that $d(B_{\beta_t} B_{\beta_k})$, as desired.
\end{proof}  

\begin{lemma}\label{lem:BE}
Let $1\leq t \leq L, 1\le k \le M$. We have
\begin{align}\label{eq:BE}
E_{\gamma_k}  B_{\beta_t} -  B_{\beta_t} E_{\gamma_k}
=\sum_{\mu,\ba,\bc,\bd} \rho_{\mu,\ba,\bc,\bd} M_{\mu,\ba,\bc,\bd},
\end{align}
where $\rho_{\mu,\ba,\bc,\bd}=0$ unless $d(M_{\mu,\ba,\bc,\bd})<d(B_{\beta_t} E_{\gamma_k})$.
\end{lemma}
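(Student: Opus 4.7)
The proof will follow the template of Lemma~\ref{lem:Bconvex}. First, recall that $B_{\beta_t}$ has leading term $F_{\beta_t}$ under the $h^\imath$-filtration with $h^\imath(B_{\beta_t}) = \het^\imath(\beta_t)$, while $E_{\gamma_k}$ lies in the Levi subalgebra $\U_{\I_\bullet}$ and satisfies $h^\imath(E_{\gamma_k}) = 0$. Under the natural isomorphism $\gr(\Ui_{\A'}) \cong \U_{P,\A'}$ used in Lemma~\ref{lem:Bconvex}'s proof, the image of $E_{\gamma_k} B_{\beta_t} - B_{\beta_t} E_{\gamma_k}$ in the associated graded corresponds to the commutator $[E_{\gamma_k}, F_{\beta_t}]$ computed in $\U$, which has weight $\gamma_k - \beta_t$.

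Next I would analyze this $\U$-commutator via the parabolic structure. Since $E_{\gamma_k}\in \U_{\I_\bullet}$ and $F_{\beta_t}$ lies in the subalgebra generated by $\{F_{\beta_r} : \beta_r \in \cR^+(\bbw_0)\}$, a parabolic decomposition argument places $[E_{\gamma_k}, F_{\beta_t}]$ in this latter subalgebra multiplied on the right by $\U_{\I_\bullet}$. The central subclaim is that in the PBW expansion of $[E_{\gamma_k}, F_{\beta_t}]$ in $\U$, every appearing $\ba\in\N^L$ satisfies $a_s = 0$ for all $s \geq t$. The base case $\gamma_k = \alpha_j$ with $j\in\bI$ is immediate from the rank-one identity $[E_j, F_{\beta_t}] = c\, F_{\beta_t - \alpha_j}$ (or zero): since every $\alpha_j \in \cR^+_\bullet$ comes after every $\beta_r$ in the convex order, convexity of the decomposition $\beta_t = (\beta_t-\alpha_j)+\alpha_j$ forces $\beta_t - \alpha_j = \beta_{t'}$ with $t' < t$ whenever it is a positive root. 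The inductive step writes $E_{\gamma_k}$ as an iterated $q$-commutator of such $E_j$'s and propagates the bound through PBW straightening.

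Lifting back to $\Ui_{\A'}$, every PBW monomial $M_{\mu,\ba,\bc,\bd}$ appearing in the expansion of the original commutator $[E_{\gamma_k}, B_{\beta_t}]$ has weight $\gamma_k - \beta_t$, hence $h^\imath(M) = \het^\imath(\beta_t)$, matching the $h^\imath$-value of $B_{\beta_t} E_{\gamma_k}$. Combined with the subclaim $a_s = 0$ for $s \geq t$, a direct inspection of the total-degree tuple shows $d(M) < d(B_{\beta_t} E_{\gamma_k})$ in the opposite lexicographical order: the first $N-t$ entries of both tuples are zero, while at position $N-t+1$ the entry $a_t$ of $d(M)$ is zero versus $1$ for $d(B_{\beta_t} E_{\gamma_k})$. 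Hence the bound on $d$ holds for every term, as desired.

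The main obstacle is a rigorous verification of the subclaim. While the parabolic structure makes the qualitative form of the answer transparent, tracking the iterated PBW straightening when $\gamma_k$ has large height, and ensuring that corrections arising from commuting through the $\U_{\I_\bullet}$-factor do not spoil the bound on $\ba$, will require a careful combinatorial argument with respect to the convex order on $\cR^+(\bbw_0)$.
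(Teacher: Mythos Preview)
Your proposal diverges from the paper's proof and contains a genuine gap in the final comparison step.

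Both you and the paper begin by using the $h^\imath$-filtration to replace $B_{\beta_t}$ by its leading term $F_{\beta_t}$, reducing (modulo terms of strictly smaller $h^\imath$) to the commutator $[E_{\gamma_k}, F_{\beta_t}]$ in $\U$. After this shared step the two approaches differ substantially.

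The paper does \emph{not} attempt to control the individual coordinates $a_s$. Instead it observes that $\het_\bullet$ is the \emph{second}-most-significant coordinate of the total degree: the opposite lexicographical order on $(a_N,\ldots,a_1,d_M,\ldots,d_1,\het_\bullet,h^\imath)$ compares the rightmost entry $h^\imath$ first, then $\het_\bullet$, and so on --- this reading is forced by the way the $h^\imath$-drop alone is used to conclude a total-degree drop in equation~\eqref{eq:lot2} of Lemma~\ref{lem:Bconvex}. A direct appeal to relation~\eqref{Q4} and the triangular decomposition then shows that every PBW term arising from $[E_{\gamma_k},F_{\beta_t}]$ has $\het_\bullet$ strictly below $\het(\gamma_k)=\het_\bullet(B_{\beta_t}E_{\gamma_k})$, and the bound $d(M)<d(B_{\beta_t}E_{\gamma_k})$ follows at once. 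No convex-order analysis on $\cR^+(\bbw_0)$, and in particular no version of your ``subclaim'', is needed.

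Your argument, by contrast, reads the opposite lexicographical order as comparing the leftmost entry $a_N$ first, and tries to conclude from $a_t(M)=0<1$ at ``position $N-t+1$''. In the paper's order, however, the coordinates $h^\imath,\het_\bullet,d_1,\ldots,d_M,a_1,\ldots,a_{t-1}$ are all compared \emph{before} $a_t$; you do not verify that these agree, and in fact they do not (for instance $\het_\bullet$ strictly drops). So even granting your subclaim that $a_s=0$ for all $s\ge t$, the deduction you give does not establish $d(M)<d(B_{\beta_t}E_{\gamma_k})$. Separately, the assertion that every $\Ui$-PBW monomial in the expansion of $[E_{\gamma_k},B_{\beta_t}]$ ``has weight $\gamma_k-\beta_t$'' is incorrect: $B_{\beta_t}$ is not weight-homogeneous in $\U$, so neither is the commutator. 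Your hard subclaim is therefore both unnecessary and, as stated, insufficient for the conclusion you draw from it.
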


\begin{proof}
Recall from \eqref{def:hi} the filtration and the function $h^\imath$ on $\U$. By \cite{LYZ25}, the leading term of $B_{\beta_t}$ is $F_{\beta_t}$. Hence, we have
\begin{align}
\label{eq:BEleading}
E_{\gamma_k}  B_{\beta_t} -  B_{\beta_t} E_{\gamma_k}=E_{\gamma_k}  F_{\beta_t} -  F_{\beta_t} E_{\gamma_k}+X,
\end{align}
where $h^\imath(X)<h^\imath(B_{\beta_t} E_{\gamma_k})$. Note that $h^\imath(E_{\gamma_k}  F_{\beta_t} -  F_{\beta_t} E_{\gamma_k})=h^\imath( B_{\beta_t} E_{\gamma_k})$. However, using the relation \eqref{Q4}, we have
\[
E_{\gamma_k}  F_{\beta_t} -  F_{\beta_t} E_{\gamma_k}=\sum_{\mu,\ba,\bc,\bd} \rho'_{\mu,\ba,\bc,\bd} M_{\mu,\ba,\bc,\bd},
\]
where $\rho'_{\mu,\ba,\bc,\bd}=0$ unless $\het_\bullet(M_{\mu,\ba,\bc,\bd})< \het_\bullet (B_{\beta_t} E_{\gamma_k}).$ Thus, the total degree for the right-hand side of \eqref{eq:BEleading} is strictly less than $d(B_{\beta_t} E_{\gamma_k})$ as desired.
\end{proof}

For $\bc\in \N^{N+M+2}$, write $\Ui_{\A'}({\leq \bc})$ (resp., $\Ui_{\A'}({< \bc})$) to be the $\A'$-subspace of $\Ui_{\A'}$ spanned by $M_{\mu,\ba,\bd}$, for $d(M_{\mu,\ba,\bd})\leq \bc$ (resp., $d(M_{\mu,\ba,\bd})< \bc$). Then $\Ui_{\A'}$ becomes a $\N^{N+M+2}$-filtered algebra. Let $\Gr \Ui_{\A'}$ be the associated graded algebra, that is, 
\[
\Gr\Ui_{\A'}=\bigoplus_{\bc\in\N^{N+M+2}}\Ui_{\A'}(\leq \bc)/\Ui_{\A'}(<\bc).
\]

\begin{theorem}\label{thm:assoc}
The associated graded algebra $\Gr \Ui_{\A'}$ is an $\A'$-algebra generated by elements 
\[
B_{\beta_k},  E_{\gamma_t}, K_\mu
\]
for $1\le k\le N,1\le t\le M,\mu\in P^\theta$, subject to the following relations:
\begin{align*}
& K_{\mu_1}K_{\mu_2}=K_{\mu_2}K_{\mu_1}=K_{\mu_1+\mu_2}, 
\\
& E_{\gamma_t} B_{\beta_k} = B_{\beta_k} E_{\gamma_t},
\\
& K_\mu B_{\beta_k}=q^{-(\mu,\beta_k)}B_{\beta_k} K_\mu,
\qquad \quad K_\mu E_{\gamma_t}=q^{(\mu,\gamma_t)}E_{\gamma_t} K_\mu,
\\
&  E_{\gamma_{t_1}} E_{\gamma_{t_2}} = q^{-( \gamma_{t_1},\gamma_{t_2} )} E_{\gamma_{t_2}} E_{\gamma_{t_1}},
\qquad
B_{\beta_{k_1}} B_{\beta_{k_2}} = q^{-( \beta_{k_1},\beta_{k_2} )} B_{\beta_{k_2}} B_{\beta_{k_1}}, 
 \qquad
 \text{ if } t_1>t_2,k_1>k_2.
\end{align*}
\end{theorem}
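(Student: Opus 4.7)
The plan is to verify each relation in the statement holds in the associated graded algebra $\Gr \Ui_{\A'}$, and then invoke Proposition~\ref{prop:intPBW} to confirm that these exhaust the defining relations. First I would handle the Cartan-type identities. Each generator $B_{\beta_k}$ carries Cartan weight $-\beta_k$ and each $E_{\gamma_t}$ carries weight $\gamma_t$, so the relations involving $K_\mu$ (including $K_{\mu_1}K_{\mu_2}=K_{\mu_1+\mu_2}$) already hold in $\Ui_{\A'}$ and descend to $\Gr \Ui_{\A'}$ unchanged.

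Next I would address the $B$-$B$ relations. For $k_1,k_2\in\{1,\dots,L\}$ this is exactly Lemma~\ref{lem:Bconvex}. For $k_1,k_2\in\{L+1,\dots,N\}$, where $B_{\gamma_t}=F_{\gamma_t}$ lies in the subalgebra generated by $\I_\bullet$, this follows from the classical Levendorskii-Soibelman convexity formula for $\U$ applied to the reduced expression $w_\bullet=s_{j_1'}\cdots s_{j_M'}$, with the correction expanding in $F$-monomials of strictly smaller $\het_\bullet$-degree and zero $h^\imath$-degree. For the mixed case $k_1>L\ge k_2$, I would adapt the two-step argument in Lemma~\ref{lem:Bconvex}: replace $B_{\beta_{k_2}}$ by its leading term $F_{\beta_{k_2}}$ modulo terms of strictly smaller $h^\imath$-degree, apply Levendorskii-Soibelman in $\U_{\A'}$ to $F_{\gamma_{k_1-L}} F_{\beta_{k_2}}$, and rewrite the resulting $F$-monomials back in the PBW basis of $\Ui_{\A'}$; this last step only adds monomials with strictly smaller $h^\imath$-degree, so the total degree strictly drops. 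The $E$-$E$ relations live entirely inside the Levi subalgebra generated by $\I_\bullet$ and reduce to Levendorskii-Soibelman in $\U$, with correction terms of strictly smaller $\het_\bullet$-degree. Finally, the commuting identity $E_{\gamma_t} B_{\beta_k}=B_{\beta_k} E_{\gamma_t}$ in the graded algebra is precisely Lemma~\ref{lem:BE}.

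To see these relations are defining, note that they suffice to rewrite any word in the generators $B_{\beta_k},E_{\gamma_t},K_\mu$ as a $\C[K_\mu\mid \mu\in P^\theta]$-linear combination of ordered monomials $M_{\mu,\ba,\bd}$. Hence the algebra presented by the stated relations admits a spanning set indexed the same way as the PBW basis of $\Ui_{\A'}$ from Proposition~\ref{prop:intPBW}. Since the images of $M_{\mu,\ba,\bd}$ in $\Gr\Ui_{\A'}$ sit in distinct graded components $\Ui_{\A'}(\le\bc)/\Ui_{\A'}(<\bc)$, they are $\A'$-linearly independent in the graded algebra, so the surjection from the presented algebra onto $\Gr\Ui_{\A'}$ is also injective.

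The main obstacle I expect is the mixed $B$-$F$ convexity (case $k_1>L\ge k_2$): one must simultaneously control two independent refinements of degree, namely $h^\imath$ (which drops through the leading-term replacement supplied by \cite{LYZ25}) and $\het_\bullet$ together with the lexicographic data in \eqref{eq:totaldeg} (which drop through the classical Levendorskii-Soibelman formula). Packaging these two sources of lower-order corrections into a single strict inequality $d(\,\cdot\,)<d(B_{\beta_{k_2}} F_{\gamma_{k_1-L}})$ will be the most delicate bookkeeping, but it proceeds along the same lines as the proofs of Lemmas~\ref{lem:Bconvex} and \ref{lem:BE}.
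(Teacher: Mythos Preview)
Your proposal is correct and follows essentially the same approach as the paper: verify each relation in $\Gr\Ui_{\A'}$ using Lemmas~\ref{lem:Bconvex} and~\ref{lem:BE} for the cases involving $B_{\beta_k}$ with $k\le L$, invoke the classical Levendorskii--Soibelman/De Concini--Kac argument inside the $\I_\bullet$-subalgebra for the remaining ones, and then compare PBW bases to conclude the presentation is complete.

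One small point to tighten: Lemma~\ref{lem:BE} only treats $E_{\gamma_t}B_{\beta_k}$ for $1\le k\le L$, so the case $k>L$ (i.e., $E_{\gamma_t}F_{\gamma_{k-L}}$) is not ``precisely'' that lemma. This case lives entirely in the $\I_\bullet$-part and is handled by the same \cite{DCK90} argument you already use for the $E$--$E$ and $F$--$F$ relations, since the commutator $[E_{\gamma_t},F_{\gamma_s}]$ has strictly smaller $\het_\bullet$-degree. The paper makes this explicit by observing that when $\ba^\circ=0$ the total degree \eqref{eq:totaldeg} agrees with the one in \cite{DCK90}, so all relations among $E_\gamma,F_\gamma,K_\mu$ follow at once. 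Also, your linear-independence argument is slightly imprecise (monomials with the same $(\ba,\bd)$ but different $\mu$ share a graded component), but the conclusion is correct since each filtered piece $\Ui_{\A'}(\le\bc)$ is spanned by a subset of the PBW basis.
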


\begin{proof}
Let $\mathscr{A}$ be the algebra defined by this presentation. Then it is clear that $\mathscr{A}$ is a localized twisted polynomial algebra, and the set of monomials 
$\{ K_\mu B^{\ba} F^{\bc} E^{\bd}|\mu\in P, \ba\in \N^{L},\bc,\bd\in \N^{M}\}$ form a basis of $\mathscr{A}$. On the other hand, the PBW basis of $\Ui_{\A'}$ descends to a basis of $\Gr\Ui_{\A'}$. Thus, it suffices to show that the defining relations of $\mathscr{A}$ hold in $\Gr \Ui_{\A'}$.

It is straightforward to check the relation between $B_\beta,K_\mu$ and the relation between $E_\gamma, K_\mu$.

Recall that $\cR^+=\{\beta_1,\cdots, \beta_L,\gamma_1,\cdots,\gamma_M\}$. Note that if $\ba=0\in\N^L$, then $h^\imath(M_{\mu,0,\bc,\bd})=0$ and the total degree defined in \eqref{eq:totaldeg} is compatible with the total degree in \cite{DCK90}. Thus, by \cite[Proposition 1.7]{DCK90}, we have the following relations
in $\Gr \Ui_{\A'}$ for $i>j$
\[
E_{\gamma_i} E_{\gamma_j} = q^{-( \gamma_i,\gamma_j )} E_{\gamma_j} E_{\gamma_i},
\qquad
F_{\gamma_i} F_{\gamma_j} = q^{-( \gamma_i, \gamma_j )} F_{ \gamma_j} F_{ \gamma_i}, 
 \]

By Lemma~\ref{lem:Bconvex}, we have $B_{\beta_i} B_{\beta_j} = q^{-( \beta_i,\beta_j )} B_{\beta_i} B_{\beta_j}$ for $1\leq j < i \leq L$. Using similar arguments as in the proof of Lemma~\ref{lem:Bconvex}, we can also show $F_{\gamma_k} B_{\beta_t} = q^{-(\gamma_k ,\beta_t)} B_{\beta_t} F_{\gamma_k}$ in $\Gr \Ui_{\A'}$.
The remaining relation $E_\gamma B_\beta = B_{\beta} E_\gamma$ follows from Lemma~\ref{lem:BE}.

Therefore, we have showed that all defining relations of $\mathscr{A}$ hold in $\Gr \Ui_{\A'}$ as desired. 
\end{proof}

\begin{remark}\label{rmk:deg}
    Similar to the observations in \cite[Remark 10.1]{DCP93} and \cite[Remark 4.2 (a)]{DCKP93a}, the algebra $\Gr\Ui_{\A'}$ can be viewed as the associated graded algebra with respect to iterated $\N$ filtration. 
\end{remark}

We would like to apply the base change of the filtration on $\Ui_{\A'}$. The following lemma is direct.

\begin{lemma}\label{le:basc}
    Let $A$ be a ring, $R=\bigcup_{\bc}R(\leq \bc)$ be a filtrated $A$-algebra, and $R$ be the associated graded algebra. Suppose that $R$ admits a basis $B$ as an $A$-module, such that each subspace $R(\leq \bc)$ is spanned by a subset of $B$. Then for any $A$-algebra $A'$, the $A'$-algebra $R'=A'\otimes_{A}R=\bigcup_{\bc}R'(\leq \bc)$ is a filtrated $A'$-algebra where $R'(\leq \bc)=R'\otimes_{R}R(\leq \bc)$. Moreover the associated graded algebra $\Gr A'$ is isomorphic to $R'\otimes_{R}\Gr A$.
\end{lemma}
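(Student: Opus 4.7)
The plan is to reduce the statement to the observation that a basis compatible with the filtration makes every filtered piece an $A$-module direct summand of $R$, so that base change to $A'$ preserves all the structural short exact sequences on the nose.

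First I would unpack the hypothesis: writing $B_{\leq \bc}\subseteq B$ for the subset spanning $R(\leq \bc)$, we obtain an $A$-module decomposition $R = R(\leq \bc)\oplus\langle B\setminus B_{\leq \bc}\rangle_A$, and similarly $R(\leq \bc) = R(< \bc)\oplus \langle B_{\leq \bc}\setminus B_{<\bc}\rangle_A$. In particular, the inclusions $R(<\bc)\hookrightarrow R(\leq \bc)\hookrightarrow R$ are split injections of $A$-modules with $A$-free cokernels, and the successive quotients $R(\leq \bc)/R(<\bc)$ are free $A$-modules on the basis images of $B_{\leq \bc}\setminus B_{<\bc}$. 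I would then define $R'(\leq \bc)$ to be the image of $A'\otimes_A R(\leq \bc)$ inside $R' = A'\otimes_A R$; splitness guarantees this map is injective, so the ``$\otimes$'' and ``$\subseteq$'' descriptions of $R'(\leq \bc)$ agree.

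Next I would verify the filtration axioms for $R'$. Monotonicity $R'(\leq \bc)\subseteq R'(\leq \bc')$ for $\bc\leq \bc'$ is immediate from base change of the corresponding inclusion in $R$. The exhaustion $R' = \bigcup_{\bc} R'(\leq \bc)$ follows because $R = \varinjlim_{\bc} R(\leq \bc)$ and tensor products commute with filtered colimits. Multiplicativity $R'(\leq \bc)\cdot R'(\leq \bc')\subseteq R'(\leq \bc+\bc')$ follows by applying $A'\otimes_A(-)$ to the multiplication map $R(\leq \bc)\otimes_A R(\leq \bc')\to R(\leq \bc+\bc')$ that exists in $R$.

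Finally I would identify the associated graded. Applying the exact functor $A'\otimes_A(-)$ (exact here because the sequences are split) to
\begin{equation*}
0\longrightarrow R(<\bc)\longrightarrow R(\leq\bc)\longrightarrow R(\leq\bc)/R(<\bc)\longrightarrow 0
\end{equation*}
yields a split short exact sequence
\begin{equation*}
0\longrightarrow R'(<\bc)\longrightarrow R'(\leq\bc)\longrightarrow A'\otimes_A\bigl(R(\leq\bc)/R(<\bc)\bigr)\longrightarrow 0,
\end{equation*}
so $R'(\leq\bc)/R'(<\bc)\cong A'\otimes_A\bigl(R(\leq\bc)/R(<\bc)\bigr)$ as $A'$-modules. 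Taking the direct sum over $\bc$ and noting that the multiplication on $\Gr R'$ is induced by the multiplication on $R'$, which is itself the base change of the multiplication on $R$, one obtains the isomorphism of graded $A'$-algebras $\Gr R'\cong A'\otimes_A \Gr R$.

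The only subtle point is the injectivity of $A'\otimes_A R(\leq\bc)\to R'$ and of $A'\otimes_A R(<\bc)\to A'\otimes_A R(\leq\bc)$, which would fail for arbitrary filtered algebras under base change; here it is precisely the compatibility of $B$ with the filtration that guarantees the needed splittings. Once this is recorded, every remaining step is a formal consequence of right exactness of $\otimes$ and of the multiplicativity transported from $R$.
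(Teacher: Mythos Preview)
Your argument is correct and is exactly the standard justification the paper has in mind: the paper does not give a proof at all, stating only that ``the following lemma is direct,'' and your write-up supplies precisely the routine details (splitness of the filtered pieces via the compatible basis, transport of the filtration under $A'\otimes_A(-)$, and identification of the graded pieces from the resulting split short exact sequences).
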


By Lemma \ref{le:basc}, we obtain an $\N^{N+M+2}$-filtration on the $\C$-algebra $\Ui_v$. Let $\Gr \Uiv$ denote the associated graded algebra. The following proposition is immediate from Theorem \ref{thm:assoc} and Lemma \ref{le:basc}

\begin{proposition}\label{prop:assoc}
    The associated graded algebra $\Gr \Uiv$ is a $\C$-algebra generated by elements 
\[
B_{\beta_k},  E_{\gamma_t}, K_\mu
\]
for $1\le k\le N,1\le t\le M,\mu\in P^\theta$, which are subject to the same relations as in Theorem \ref{thm:assoc} with $q$ replaced by $v$.
\end{proposition}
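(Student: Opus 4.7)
The plan is a pure base-change argument that packages Theorem~\ref{thm:assoc} together with Lemma~\ref{le:basc}. The first task is to verify the hypothesis of Lemma~\ref{le:basc} for the filtered $\A'$-algebra $R=\Ui_{\A'}$: by Proposition~\ref{prop:intPBW}, the monomials $M_{\mu,\ba,\bc,\bd}$ form a free $\A'$-basis of $\Ui_{\A'}$, and by the very definition of the $\N^{N+M+2}$-filtration via the total degree $d$ in \eqref{eq:totaldeg}, each filtered piece $\Ui_{\A'}(\le \bc)$ is exactly the $\A'$-span of those PBW monomials with $d(M_{\mu,\ba,\bc,\bd})\le \bc$. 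Hence the PBW basis is compatible with the filtration in the sense required by Lemma~\ref{le:basc}.

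Next, I would apply Lemma~\ref{le:basc} to the base change $\A'\to \C_v$ given by $q^{1/2}\mapsto \widetilde{v}$ (so that $q\mapsto v$). This yields canonical isomorphisms of filtered $\C$-algebras $\Uiv\cong \C_v\otimes_{\A'}\Ui_{\A'}$ and
\[
\Gr \Uiv \;\cong\; \C_v \otimes_{\A'} \Gr \Ui_{\A'}.
\]
Freeness of the PBW basis guarantees that this base change is flat, so no defining relation is lost or created in passing to the specialization.

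Finally, Theorem~\ref{thm:assoc} provides an explicit presentation of $\Gr \Ui_{\A'}$ by the generators $B_{\beta_k}, E_{\gamma_t}, K_\mu$ subject to relations involving monomials in $q$. Presentations by generators and relations are preserved by base change of scalars, so specializing $q\mapsto v$ in each of those relations produces a presentation of $\C_v\otimes_{\A'}\Gr \Ui_{\A'}\cong \Gr \Uiv$ with the same generators and the expected relations. There is no substantive obstacle: the main points to keep track of are the compatibility of the filtration with the PBW basis (already built into the construction of the total degree) and the flatness of the specialization (furnished by the freeness in Proposition~\ref{prop:intPBW}), both of which are immediate from the preceding sections.
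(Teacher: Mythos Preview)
Your proposal is correct and matches the paper's approach: the paper simply states that the proposition is immediate from Theorem~\ref{thm:assoc} and Lemma~\ref{le:basc}, which is precisely the base-change argument you spell out. Your additional remarks about verifying the hypothesis of Lemma~\ref{le:basc} via the PBW basis of Proposition~\ref{prop:intPBW} and the compatibility of the filtration with that basis are exactly the implicit ingredients the paper is relying on.
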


Define the skew symmetric matrices 
\begin{equation}\label{eq:AA'}
    A=(\sgn(j-i)(\beta_i,\beta_j))_{1\leq i,j\leq N}\quad \text{ and }\quad A'=(\sgn(j-i)(\gamma_i,\gamma_j))_{1\leq i,j\leq M},
\end{equation}
where $\sgn(k)=1,-1$ or $0$ when $k>0$, $k<0$ or $k=0$, respectively. 


Let $\cT_{S_+}$ be the twisted polynomial algebra with generators $y_{\beta_1},\cdots, y_{\beta_N},x_{\gamma_1},\cdots,x_{\gamma_M}$ associated to the skew-symmetric matrix
\begin{equation}
    S_+=\begin{pmatrix}
         A & 0 \\
         0 & A' 
    \end{pmatrix}.
\end{equation}


Let $\C[K_\mu|\mu\in P^\theta]$ be the commutative subalgebra of $\Uiv$. By Proposition~\ref{prop:assoc}, we have the algebra isomorphism 
\begin{align}
\Gr\Uiv&\cong \cT_{S_+} \otimes_\C \C[K_\mu|\mu\in P^\theta]\label{eq:gru},
\\
B_{\beta_i} &\mapsto y_{\beta_i},E_{\gamma_j}\mapsto x_{\gamma_j}\notag,
\end{align}
where the multiplication on the right-hand side is given by $K_\mu y_{\beta_i} = v^{-(\mu,\beta_i)}y_{\beta_i} K_\mu $ and $K_\mu x_{\gamma_j} = v^{(\mu,\gamma_j)}x_{\gamma_j} K_\mu$. 

\begin{lemma}[\text{cf. \cite[Proposition 1.8]{DCK90}}]
\label{lem:intclose}
Let $A$ be a commutative integrally closed domain, which is generated by $r_1,\cdots,r_s$ as a $\C$-algebra. Let $\tT$ be a twisted polynomial algebra generated by $x_1,\cdots,x_t$. Let $C$ be a $\N^{K}$-filtered algebra such that $C_0=A$, and $\Gr C=\cT\otimes_\C A$ where the multiplication is given by $r_ix_j=\mu_{ij}x_jr_i$, $\mu_{ij}\in \C^\times$. Assume that each generator $x$ of $\cT$ has a preimage $\widetilde{x}$ in $C$ such that 
\begin{equation}\label{eq:intclose}
\widetilde{x}^\ell=z_x+\text{lower degree terms}
\end{equation}
for some $z_x$ in the center $Z_C$ of $C$. Then the algebra $C$ is integrally closed.
\end{lemma}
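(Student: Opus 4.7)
The plan is to adapt the strategy of \cite[Proposition 1.8]{DCK90}: first establish that $\Gr C$ is an integrally closed domain, then transport this property back to $C$ by a leading-term descent along the $\N^K$-filtration.

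First, I would verify that $\Gr C \cong \cT \otimes_\C A$ is an integrally closed domain. Since $\cT$ is a twisted polynomial algebra over $\C$, the tensor product with the given twisted multiplication $r_i x_j = \mu_{ij} x_j r_i$ is itself a twisted polynomial algebra over the integrally closed base $A$. The arguments of Proposition \ref{prop:twpolycenter} for the case $\F = \C$ extend verbatim to this relative setting: the center is a free $A$-module spanned by an explicit sublattice of monomials in the $x_i$, and the algebra becomes a matrix algebra after passing to the fraction field of its center. Combined with the integral closedness of $A$, this yields integral closedness of $\Gr C$.

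Next, suppose $C \subseteq C' \subseteq z^{-1} C$ in $Q(C)$ for some nonzero $z \in Z_C$. The hypothesis $\widetilde{x}^\ell = z_x + \text{l.o.t.}$ ensures that the symbol $\sigma(z)$ is a nonzero central element of $\Gr C$, so the filtration on $C$ extends to a filtration on $z^{-1}C$ by shifting degrees by the filtration degree of $z$, and restricts to a filtration on $C'$. The inclusions $\Gr C \subseteq \Gr C' \subseteq \sigma(z)^{-1} \Gr C$, combined with the integral closedness established in the previous step, force $\Gr C' = \Gr C$. Given $y \in C'$, the equality $\sigma(y) = \sigma(c_0)$ for some $c_0 \in C$ lets me replace $y$ by $y - c_0 \in C'$ of strictly lower filtration degree; since the filtration degrees of elements of $C'$ are bounded below by the filtration degree of $zy \in C$ minus that of $z$, the descent terminates with $y \in C$.

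The main technical obstacle is the correct handling of the extended filtration on the Ore localization $z^{-1}C$, in particular verifying that $\Gr C'$ genuinely embeds into $\sigma(z)^{-1}\Gr C$ via the leading-symbol map, and ensuring that the descent on $\N^K$-valued multi-degrees is well-founded. The role of the $\ell$-th power condition is crucial throughout: it guarantees that enough of the filtration is controlled by central elements, so that $\sigma(z)$ lies in the center of $\Gr C$ and both the extension of the filtration to $z^{-1}C$ and the termination of the descent are well-defined.
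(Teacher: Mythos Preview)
Your approach is correct and is exactly what the paper intends: its proof is a one-line reference to \cite[Proposition~1.8]{DCK90} and \cite[Theorem~6.5]{DCP93}, and your two-step outline (integral closedness of $\Gr C$, then leading-term descent along the filtration) is that argument.

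One clarification on where hypothesis~\eqref{eq:intclose} enters. The facts that $\sigma(z)$ is central in $\Gr C$ and that the $\N^K$-descent terminates are automatic: the symbol of a central element in a filtered algebra whose associated graded is a domain is always central, and a shifted copy of $\N^K$ under lexicographic order is well-founded. The $\ell$-th power condition is instead used in your first step. Passing~\eqref{eq:intclose} to the associated graded gives $x_i^\ell=\sigma(z_{x_i})$ central in $\Gr C$, so $\Gr C$ is a finite free module over the commutative integrally closed subring $A[x_1^\ell,\dots,x_t^\ell]$; this finite-over-center structure is the input to the maximal-order argument of \cite{DCP93} that yields integral closedness of $\Gr C$.
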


\begin{proof}
The proof of this lemma is parallel to the proof of \cite[Proposition 1.8]{DCK90} (see also \cite[Theorem 6.5]{DCP93}).
\end{proof}

\begin{theorem}
The algebra $\Uiv$ is an integrally closed domain.
\end{theorem}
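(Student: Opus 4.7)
The plan is to invoke Lemma~\ref{lem:intclose} with $C=\Uiv$ equipped with the $\N^{N+M+2}$-filtration constructed in Section~\ref{sec:asa}, taking $A=\C[K_\mu\mid \mu\in P^\theta]$ and $\cT=\cT_{S_+}$.

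First I would identify the degree-zero subalgebra $C_0$. By the definition of the total degree in \eqref{eq:totaldeg}, a monomial $M_{\mu,\ba,\bd}$ has total degree $0$ iff $\ba=0$ and $\bd=0$, so $C_0=\C[K_\mu\mid\mu\in P^\theta]$. Since $P^\theta$ is a free abelian group of finite rank, $C_0$ is a Laurent polynomial ring over $\C$, hence a finitely generated commutative integrally closed domain. The isomorphism \eqref{eq:gru} supplied by Proposition~\ref{prop:assoc} then gives $\Gr C\cong \cT_{S_+}\otimes_\C C_0$ with precisely the skew commutation relations between the $K_\mu$ and the twisted polynomial generators that the hypothesis of Lemma~\ref{lem:intclose} requires.

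Next I would produce, for each generator of $\cT_{S_+}$, a preimage in $\Uiv$ whose $\ell$-th power is central modulo strictly lower filtration degree. For the generators $x_{\gamma_j}$ (corresponding to $E_{\gamma_j}$ with $\gamma_j\in\cR^+_\bullet$), the element $E_{\gamma_j}$ belongs to $\Uiv$: indeed $E_j\in\Uiv$ for $j\in\bI$, and by Proposition~\ref{prop:intT}(1) the braid operators $T_j$ for $j\in\bI$ preserve $\Ui_{\A'}$. Since $E_{\gamma_j}^\ell\in Z_0$ by Proposition~\ref{prop:DCKv}(1), it follows that $E_{\gamma_j}^\ell\in Z_0\cap\Uiv=\Zi$, which is central in $\Uiv$. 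The same argument applies to the preimages $F_{\gamma_j}$ of the remaining generators $y_{\beta_{L+t}}$ coming from $\bI$. For the generators $y_{\beta_i}$ with $1\leq i\leq L$, I take the preimage $B_{\beta_i}\in\Uiv$; Lemma~\ref{lem:leading1} (applied with $m=\ell$, i.e.\ $k=1,r=0$) gives
\[
B_{\beta_i}^{\ell}=B_{\beta_i}^{[\ell]}+\text{l.o.t.},
\]
and $B_{\beta_i}^{[\ell]}=\Fri(\un{B_{\beta_i}})\in\Zi\subset Z(\Uiv)$ by Theorem~\ref{thm:Fri}. Thus the relation \eqref{eq:intclose} holds for every generator of $\cT_{S_+}$.

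With these verifications in place, Lemma~\ref{lem:intclose} applies directly and yields the result. There is no real obstacle at this stage, since all the heavy lifting has already been done: the construction of the filtration together with Proposition~\ref{prop:assoc} supplies the graded structure, and Lemma~\ref{lem:leading1} together with the identification $\Zi=Z_0\cap\Uiv$ supplies the central lifts. The one point requiring care is to match conventions: one must confirm that the central element $B_{\beta_i}^{[\ell]}$ has, as an element of $\Uiv$ under the total degree \eqref{eq:totaldeg}, exactly the same leading term as $B_{\beta_i}^{\ell}$, which is exactly the content of Lemma~\ref{lem:leading1}.
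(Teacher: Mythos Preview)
Your proposal is correct and follows essentially the same route as the paper: identify $C_0=\C[K_\mu\mid\mu\in P^\theta]$, use the isomorphism \eqref{eq:gru}, and feed the central lifts $B_{\beta_i}^{[\ell]}$, $E_{\gamma_j}^\ell$, $F_{\gamma_j}^\ell$ into Lemma~\ref{lem:intclose}. The only omission is that you do not verify $\Uiv$ is a domain; the paper does this in one line via the embedding $\Uiv\hookrightarrow\U_v$ and \cite[Corollary~1.8]{DCK90}, and you should add it (alternatively, it follows from $\Gr\Uiv$ being a domain).
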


\begin{proof}
Recall that $\Uiv$ is naturally a subalgebra of $\U_v$ (cf. Section \ref{sec:qsp}). By \cite[Corollary~1.8]{DCK90}, $\U_v$ has no zero divisors. Hence $\Uiv$ has no zero divisors. We next show $\Uiv$ is integrally closed.

Clearly, $\C[K_\mu|\mu\in P^\theta]$ is integrally closed with no zero divisors. 
Recall that $\Uiv$ is $\N^{L+2M+2}$-filtered by \eqref{eq:totaldeg} and the total degree $0$ part of $\Uiv$ is exactly $\C[K_\mu|\mu\in P^\theta]$. Note that the leading term of $B_i^{[\ell]}$ is $B_i^\ell$ and hence the leading term of $B_{\beta_i}^{[\ell]}$ is $B_{\beta_i}^{\ell}$. By Theorem~\ref{thm:Fri} and its proof, $B_{\beta_i}^{[\ell]}$ are central and hence elements $B_{\beta_i},E_{\gamma_j}$ for $1\le i\le N,1\le j\le M$ satisfy the condition \eqref{eq:intclose}. Now the desired statement follows by \eqref{eq:gru} and Lemma~\ref{lem:intclose}.
\end{proof}

\subsection{The degree of $\Gr\Uiv$}\label{sec:degui}
In this subsection, we determine the degree of $\Gr \Uiv$.

Recall that $\wItau\subset \I_\circ$ is a set of representatives of $\tau$-orbits. The sublattice $P^\theta\subset P$ is a free $\mathbb{Z}$-module of rank $R=|\I\backslash \wItau|$. Take a $\mathbb{Z}$-basis $\{\omega_s^0\mid 1\leq s \leq r\}$ of $P^\theta$. Define 
\[
B=(-(\omega_s^0,\beta_j))_{1\leq s \leq r,1\leq j\leq N}\quad \text{ and }\quad B'=(-(\omega_s^0,\gamma_j))_{1\leq s \leq r,1\leq j\leq M}.
\]
Recall the skew-symmetric matrices $A$ and $A'$ from \eqref{eq:AA'}.

By Proposition~\ref{prop:assoc}, the algebra $\Gr\Uiv$ can be identified with the localized twisted polynomial algebra $\tT_{S,J(S)}=\langle y_{\beta_1},\ldots, y_{\beta_N},x_{\gamma_1},\ldots,x_{\gamma_M}, t_{\omega_1^0},\ldots, t_{\omega_r^0}\rangle$, where $\F=\C,q=v$, $S$ is the following skew-symmetric matrix
\begin{equation}\label{eq:S}
    S=\begin{pmatrix}
         A & 0 & -{^tB} \\
         0 & A' & ^tB' \\
         B & -B' & 0
    \end{pmatrix}
\end{equation}
and $J(S)=\{t_{\omega_s^0}| 1\le s \le r\}$; see Section~\ref{sec:tpa} for the convention.

Let $V$ be a free $\mathbb{Z}[2^{-1}]$-module with basis $\{u_i\mid 1\leq i\leq N\}$, and let $V'$ be a free $\mathbb{Z}[2^{-1}]$-module with basis $\{u_i'\mid 1\leq i\leq M\}$. We identify the matrix $A=(a_{ij})$ with the linear map $V\rightarrow V$ such that $u_i\mapsto \sum_{j=1}^N a_{ji}u_j $, and similarly identify $A'$ with the linear map $V'\rightarrow V'$. 

Let $'P=\mathbb{Z}[2^{-1}]\otimes_\mathbb{Z} P$. Then $'P={'P}^\theta\oplus{'P}^{-\theta}$, where $'P^\theta$ (resp., $'P^{-\theta}$) is the subgroup of $\theta$-invariants (resp., $\theta$-anti-invariants). Let $\widetilde{B}:V\rightarrow {'P}$ be the linear map such that $u_i\mapsto \beta_i$ for $1\leq i\leq N$, and $\widetilde{B}':V'\rightarrow {'P}$ be the linear map such that $u_i'\mapsto \gamma_i$ for $1\leq i\leq M$. Then $^t\widetilde{B}$ is identified with the linear map $'P\rightarrow V$ such that $\mu\mapsto \sum_{t=1}^N(\mu,\beta_t)u_t$, and $^t{\widetilde{B}'}$ is identified with $'P\rightarrow V'$ such that $\mu\mapsto \sum_{t=1}^M(\mu,\gamma_t)u_t'$. Finally, note that $B=pr_0 \circ \widetilde{B}:V\rightarrow {'P}^\theta$ where $pr_0:{'P}\rightarrow {'P}^\theta$ is the projection map, and $^tB={^t}\widetilde{B}\mid_{'P^\theta}:{'P}^\theta\rightarrow V$. Similarly, $B'=pr_0 \circ \widetilde{B}':V'\rightarrow {'P}^\theta$ and $^tB'={^t}\widetilde{B}'\mid_{'P^\theta}:{'P}^\theta\rightarrow V'$.

Recall the reduced expression $w_0=s_{i_1}\cdots s_{i_L}s_{j_1}\cdots s_{j_M}$ where $w_\bullet=s_{j_1}\cdots s_{j_M}$. Write $i_{L+t}=j_t$ for $1\leq t\leq M$. Note that the convex order $\gamma_1<\cdots<\gamma_M$ in $\mathcal{R}^+_\bullet$ is associated with the reduced expression $w_\bullet=s_{j_1'}\cdots s_{j_M'}$, where $s_{j_k'}=w_0w_\bullet s_{j_k}w_\bullet w_0$ for $1\leq k\leq M$. 

For $\nu\in P$, define $\bc(\nu)=(c_k(\nu))\in\Z^N$ and $\bc'(\nu)=c'_k(\nu)\in \Z^M$ by
\begin{equation}\label{eq:ckn}
    c_k(\nu)=\langle \alpha_{i_k}^\vee,\nu\rangle,\qquad \text{for }1\leq k\leq N;
\end{equation}
and 
\begin{equation}\label{eq:ckn'}
    c'_k(\nu)=\langle \alpha_{j_k'}^\vee,\nu\rangle\qquad \text{for }1\leq k\leq M.
\end{equation}
Define 
\begin{equation}\label{eq:uvv}
u(\nu)=\sum_{k=1}^Nc_k(\nu)u_k\in V\qquad \text{and}\qquad u'(\nu)=\sum_{k=1}^Mc'_k(\nu)u_k'\in V'.
\end{equation}


Consider the matrices
\begin{equation}
    M=\begin{pmatrix}
        A & -^t\widetilde{B}
    \end{pmatrix}\quad \text{ and }\quad M'=\begin{pmatrix}
        A' & ^t\widetilde{B}'
    \end{pmatrix}.
\end{equation}
Denote by $M_\ell:V\oplus {'P}\rightarrow V/\ell V$ and $M'_\ell:V'\oplus {'P}\rightarrow V'/\ell V'$ the linear maps induced by $M$ and $M'$, respectively.

\begin{lemma}[\text{\cite[Lemma~10.4]{DCP93}}]\label{le:lmm}
    Suppose $\ell$ is an odd integer which is coprime to $\epsilon_i$ for $i\in\I$. Then we have:
    
    (1) For any $\mu\in P$, one has $M(u(\mu)+(1+w_0)\mu)=0$. Moreover, the kernel of the linear map $M_\ell$ is spanned by vectors $u(\omega_i)+(1+w_0)\omega_i$ ($i\in \I$) and $\ell (V\oplus {'P})$. 
    
    (2) For any $\mu\in P$, one has $M'(u'(\mu)-(1+w_\bullet)\mu)=0$. Moreover, the kernel of the linear map $M'_\ell$ is spanned by vectors $u'(\omega_i)-(1+w_\bullet)\omega_i$ ($i\in \I$) and $\ell (V'\oplus {'P})$.

    (3) For any $\mu\in P$, we have $\widetilde{B}(u(\mu))=(1-w_0)\mu$ and $\widetilde{B}'(u'(\mu))=(1-w_\bullet)\mu$.
\end{lemma}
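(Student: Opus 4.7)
My plan is to follow the strategy of \cite[Lemma 10.4]{DCP93}: first establish (3) as a standard $W$-telescoping identity, then derive the vanishing in (1) and (2) coordinate-wise, and finally pin down the kernel structure by combining an independence observation with a surjectivity argument for $M_\ell$.

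For (3), fix any reduced expression $w = s_{k_1}\cdots s_{k_r}$ with positive roots $\delta_t = s_{k_1}\cdots s_{k_{t-1}}(\alpha_{k_t})$. The telescoping
\[
\mu - w\mu \;=\; \sum_{t=1}^r s_{k_1}\cdots s_{k_{t-1}}(\mu - s_{k_t}\mu) \;=\; \sum_{t=1}^r \langle \alpha_{k_t}^\vee, \mu\rangle\, \delta_t
\]
gives both claimed identities when applied to $w_0 = s_{i_1}\cdots s_{i_N}$ and $w_\bullet = s_{j'_1}\cdots s_{j'_M}$.

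For the vanishing in (1), I will compare the $u_j$-coefficient of each side for $1 \le j \le N$. That of $Au(\mu)$ is $\sum_k c_k(\mu)\sgn(k-j)(\beta_j,\beta_k)$, while that of ${^t}\widetilde{B}((1+w_0)\mu)$ is $((1+w_0)\mu, \beta_j)$. Writing $\mu_r = s_{i_1}\cdots s_{i_r}\mu$, $W$-invariance of the inner product gives $(\mu_{j-1}, \beta_j) = (\mu, \alpha_{i_j}) = \tfrac12 c_j(\mu)(\beta_j,\beta_j)$; combined with the decomposition $\mu = \mu_{j-1} + \sum_{k<j}c_k(\mu)\beta_k$ from (3), this expresses $(\mu, \beta_j)$ as a sum over indices $k \le j$. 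The symmetric computation starting from $\mu_N = w_0\mu$ and moving backwards through the reduced expression expresses $(w_0\mu, \beta_j)$ as a sum over $k \geq j$. Adding the two produces $((1+w_0)\mu, \beta_j)$ as the antisymmetric combination that matches the $u_j$-coefficient of $Au(\mu)$. Part (2) is formally identical after replacing $w_0,\beta_k,u_k$ by $w_\bullet,\gamma_k,u'_k$ throughout, using the reduced expression $s_{j'_1}\cdots s_{j'_M}$.

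For the kernel structure, the vectors $u(\omega_i) + (1+w_0)\omega_i$ lie in $\ker M$ by the vanishing above. Since $u(\omega_i) = \sum_{k: i_k = i} u_k$, the $|\I|$ vectors $u(\omega_i)$ are supported on pairwise disjoint subsets of $\{u_1,\ldots,u_N\}$, so they are $\Z$-linearly independent, and the independence persists modulo $\ell(V \oplus {'P})$. Therefore the subgroup they generate together with $\ell(V \oplus {'P})$ has index exactly $\ell^N$ in $V \oplus {'P}$, and it suffices to show that $M_\ell$ is surjective onto $V/\ell V$. The main obstacle is this surjectivity: the plan is to select $N$ columns of $M$ (mixing columns from the $A$-block with columns $-{^t}\widetilde{B}(\omega_i)$ if needed) and exhibit an $N\times N$ submatrix whose determinant factors as $\pm 2^a \prod \epsilon_i^{b_i} \cdot \det(\text{Cartan matrix})^c$, each factor a unit modulo $\ell$ by the hypothesis that $\ell$ is odd and coprime to the $\epsilon_i$. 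This is the same pattern as in \cite[Lemma 10.4]{DCP93}, and the corresponding statement for $M'_\ell$ in (2) follows by the parallel computation with the reduced expression of $w_\bullet$.
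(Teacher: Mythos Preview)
The paper does not supply its own proof of this lemma: it is stated with the attribution \cite[Lemma~10.4]{DCP93} and used as a black box. Your write-up therefore cannot be compared against a proof in the paper; what you have produced is a sketch of the argument in the cited reference. Parts (3) and the vanishing in (1)--(2) are handled correctly by the telescoping identity and the coordinate computation, and your independence observation for the $u(\omega_i)$ (disjoint supports) is right.

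The one genuine gap is the surjectivity of $M_\ell$. You describe a \emph{plan} (``select $N$ columns \ldots\ exhibit an $N\times N$ submatrix whose determinant factors as $\pm 2^a \prod \epsilon_i^{b_i}\cdot\det(\text{Cartan})^c$'') but do not carry it out, and this is precisely the part of \cite[Lemma~10.4]{DCP93} that requires actual work. In De Concini--Procesi the surjectivity is obtained not by a single minor computation but by an inductive/row-reduction argument on the matrix $A$ tied to the convex order on roots; the appearance of the Cartan determinant is not automatic and must be produced. As written, your proposal establishes that $\ker M_\ell$ \emph{contains} the span of the listed vectors and has the right lower bound on index, but the matching upper bound (equivalently, $|\mathrm{im}\,M_\ell| = \ell^N$) is asserted rather than proved. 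If you want a self-contained argument, you should either reproduce the induction from \cite{DCP93} or give an explicit minor and verify its determinant.
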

Let $X=V\oplus V'\oplus {'P}^\theta$, and write $S_\ell$ to be the linear $X\rightarrow X/\ell X$ induced by $S$. We next determine the kernel of $S_{\ell}$. We need some preparations.

Following \cite{KL08}, define 
\begin{equation}\label{eq:Pi}
P^\imath=\{\mu\in {P}\mid \theta(\mu)=\mu+w_0\mu-w_\bullet\mu\},
\end{equation}
and $'P^\imath=\Z[2^{-1}]\otimes_{\Z}P^\imath$. Recall that $\tau_0:\I\rightarrow\I$ is the diagram involution such that $w_0s_iw_0=s_{\tau_0 i}$ for any $i\in \I$. We collect basic properties of $P^\imath$ following \cite[Proposition~9.2]{KL08} and \cite[Lemma~4.5]{Ko20}.

\begin{lemma}\label{le:pi}
    \begin{itemize}
        \item [(1)] $P^\imath$ is a free abelian group, whose rank equals to $\text{rank }\mathfrak{k}$.
        \item[(2)] One has $P^\imath=\{\mu\in P\mid \tau\tau_0\mu=\mu\}$. 
        \item[(3)] One has
        \[
        P^\imath=\{\nu\in P\mid \theta((1+w_0)\nu)=(1+w_0)\nu, \quad \theta((w_0-w_\bullet)\nu)=(w_\bullet-w_0)\nu\}.
        \]
    \end{itemize}
\end{lemma}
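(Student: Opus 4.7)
The plan is to derive all three claims from the description of $\theta$ on $\mathfrak{h}^*$ together with standard properties of Satake diagrams; I will first establish part~(2), then deduce part~(1), and finally verify part~(3) by a direct calculation.

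For part~(2), the key inputs are the two identities on $\mathfrak{h}^*$ (and hence on $P$): $\theta = -w_\bullet \circ \tau$ with $\tau$ the diagram involution extended linearly, and $w_0 = -\tau_0$. Note that $\theta^2 = 1$ forces $w_\bullet$ and $\tau$ to commute on $\mathfrak{h}^*$. Substituting these identities into the defining condition $\theta(\mu) = \mu + w_0\mu - w_\bullet\mu$ and rearranging yields the equivalent equation
\[
w_\bullet(\mu - \tau\mu) = \mu - \tau_0\mu.
\]
Combining this with the Satake axiom $w_\bullet(\alpha_j) = -\alpha_{\tau j}$ for $j \in \bI$ and with the commutation $w_0 w_\bullet = w_\bullet w_0$ (which follows from $\tau_0(\bI) = \bI$, a consequence of the Satake diagram axioms), the condition reduces to $\tau\mu = \tau_0\mu$, i.e.\ $\tau\tau_0\mu = \mu$. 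One way to make this last reduction transparent is to evaluate both sides on fundamental weights $\omega_i$ and split according to whether $i\in\bI$ or $i\in\wI$.

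Given part~(2), part~(1) is immediate: $P^\imath$ is the fixed-point lattice of the involution $\tau\tau_0$ on the free abelian group $P$, hence is itself free abelian, of rank equal to the number of $\tau\tau_0$-orbits on $\I$. To match this count with $\mathrm{rank}\,\mathfrak{k}$, I use the fact that a Cartan subalgebra of $\mathfrak{k} = \mathfrak{g}^\theta$ is $\mathfrak{h}^\theta$, so $\mathrm{rank}\,\mathfrak{k} = \dim_\mathbb{R} (P\otimes\mathbb{R})^\theta$; using $\theta = -w_\bullet\tau$ together with the factorization $w_0 = w_\bullet \bbw_0$ one identifies the $\theta$-fixed vectors in $P\otimes\mathbb{R}$ with the $\tau\tau_0$-fixed ones, matching the counts. (Alternatively, the identity $\mathrm{rank}\,\mathfrak{k} = \mathrm{rank}\,P^{\tau\tau_0}$ is classical and can be verified case-by-case from the list of Satake diagrams.)

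For part~(3), I verify both implications directly. Given $\nu$ with $\theta\nu = \nu + w_0\nu - w_\bullet\nu$, applying $\theta$ to $(1+w_0)\nu$ and using the commutation relations $[\theta, w_0] = [\theta, w_\bullet] = 0$ — derived from $w_0 w_\bullet = w_\bullet w_0$ and $\tau w_0 = w_0 \tau$ — and then substituting the defining equation yields $\theta((1+w_0)\nu) = (1+w_0)\nu$; a parallel calculation gives $\theta((w_0 - w_\bullet)\nu) = -(w_0 - w_\bullet)\nu$. Conversely, assuming both eigenvalue conditions, one reconstructs $\theta\nu$ by using them together to determine both the $\theta$-fixed and $\theta$-antifixed components of $\nu$ and recombining; the algebra works because $(1+w_0)\nu$ and $(w_0-w_\bullet)\nu$ together span enough to recover $\theta\nu$ modulo the defining relation. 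The main obstacle throughout is the careful bookkeeping of the commutation identities among $w_0$, $w_\bullet$, $\tau$, and $\tau_0$; all of these ultimately rest on $\tau_0(\bI) = \bI$, a nontrivial consequence of the Satake diagram axioms that I will need to verify or cite carefully.
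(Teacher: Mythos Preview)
The paper does not give its own proof of this lemma; it simply cites \cite[Proposition~9.2]{KL08} and \cite[Lemma~4.5]{Ko20}. Your attempt at a direct proof therefore goes beyond what the paper does, which is fine, but there is a genuine error in your argument for part~(1).

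You claim that $\mathfrak{h}^\theta$ is a Cartan subalgebra of $\mathfrak{k}$, so that $\operatorname{rank}\mathfrak{k} = \dim_{\mathbb{R}}(P\otimes\mathbb{R})^\theta$. This is false: in the Satake-diagram setup the Cartan $\mathfrak{h}$ is maximally \emph{split}, not maximally compact, so $\mathfrak{h}^\theta$ is in general too small to be a Cartan of $\mathfrak{k}$. For instance, in type AI ($\mathfrak{g}=\mathfrak{sl}_n$, $\mathfrak{k}=\mathfrak{so}_n$, $\bI=\emptyset$, $\tau=\mathrm{id}$) one has $\theta=-1$ on $\mathfrak{h}^*$ and hence $(P\otimes\mathbb{R})^\theta=0$, whereas $\operatorname{rank}\mathfrak{so}_n=\lfloor n/2\rfloor$. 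In the same example the $\tau\tau_0$-fixed lattice has rank equal to the number of $\tau_0$-orbits, which is $\lfloor n/2\rfloor$, so your proposed identification of the $\theta$-fixed and $\tau\tau_0$-fixed subspaces of $P\otimes\mathbb{R}$ also fails. Your fallback to a case-by-case check is of course valid, but this is exactly what a citation to \cite{KL08} accomplishes.

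Your sketches for (2) and (3) are on firmer ground but still incomplete. For (2), the reduction to $w_\bullet(\mu-\tau\mu)=\mu-\tau_0\mu$ is correct, and it does collapse to $\tau\mu=\tau_0\mu$, but not for the reason you suggest: the point is that $\mu-\tau\mu$ is \emph{always} $w_\bullet$-fixed (for any $\mu$), which one verifies by relating the global fundamental weights $\omega_j$ to the $\bI$-fundamental weights $\omega_j^\bullet$ and using $w_\bullet\omega_j^\bullet=-\omega_{\tau j}^\bullet$. For (3), the forward inclusion follows once you extract the hidden consequence $(1+w_0)(1-w_\bullet)\nu=0$ by applying $\theta$ to the defining equation twice; the converse inclusion, however, is more delicate than your sketch indicates, since it reduces to showing that any $\alpha$ in the $(-1)$-eigenspace of $\theta$ with $w_0\alpha=-\alpha$ automatically satisfies $w_\bullet\alpha=\alpha$, and this does not follow formally from the commutation relations you list.
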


Let $\{i_1,i_2,\dots,i_m\}\subset \I$ be a set of representatives of $\tau\tau_0$-orbits. For $1\leq k\leq m$, take 
\begin{equation}\label{eq:nui}
\nu_k=\begin{cases}
     \omega_{i_k}+\omega_{\tau_0\tau i_k} & \text{if $\tau_0\tau i_k\neq i_k$,}
    \\
     \omega_{i_k} & \text{if $\tau_0\tau i_k=i_k$.}
\end{cases}
\end{equation}
Set $P_+^\imath=P_+\cap P^\imath$. Then, by \cite[Proposition 9.1]{KL08}, we have 
\begin{equation}\label{eq:pip}
    P^\imath=\bigoplus_{i=1}^m\Z\nu_i\qquad \text{ and }\qquad  P^\imath_+=\bigoplus_{i=1}^m\mathbb{N}\nu_i.
\end{equation}
In particular, one has $m=\text{rank }\mathfrak{k}$, thanks to Lemma \ref{le:pi} (1). 




Recall $u(\nu),u'(\nu)$ from \eqref{eq:uvv}. For $1\leq i\leq m$, define 
\[
x({\nu_i})=u(\nu_i)+u'(-w_\bullet\nu_i)+(1+w_0)\nu_{i}.
\]
By Lemma \ref{le:pi} (3), we have $(1+w_0)\nu_{i}\in P^\theta$. In particular $x({\nu_i})\in X$.

\begin{lemma}\label{le:ker}

     (1) The kernel of $S_\ell:X\rightarrow X/\ell X$ is spanned by vectors $x({\nu_i})$ ($1\leq i\leq m$) and $\ell X$. 
     
     (2) The image of $S_\ell$ is isomorphic to $(\mathbb{Z}/\ell\mathbb{Z})^{2N_0}$ where $2N_0=\text{dim }\mathfrak{k}-\text{rank }\mathfrak{k}$. 
\end{lemma}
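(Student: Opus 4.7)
My plan for Lemma~\ref{le:ker} is a containment-plus-counting strategy for part~(1), after which part~(2) follows from the induced dimension count.

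For the containment $x(\nu_i)\in\ker S_\ell$, I will verify the stronger statement $S\cdot x(\nu_i)=0$ by a block-by-block calculation on the image in $V\oplus V'\oplus {'P}^\theta$. The first block reduces, via Lemma~\ref{le:lmm}(1), to $({^t\widetilde B}-{^tB})(1+w_0)\nu_i$; this vanishes because Lemma~\ref{le:pi}(3) places $(1+w_0)\nu_i$ in ${'P}^\theta$, where $^tB$ agrees with $^t\widetilde B$. For the second block I apply Lemma~\ref{le:lmm}(2) with input $-\bw\nu_i$ and use $(1+\bw)(-\bw\nu_i)=-(1+\bw)\nu_i$ to simplify the vanishing to ${^t\widetilde B'}\big((w_0-\bw)\nu_i\big)=0$; since $(w_0-\bw)\nu_i\in{'P}^{-\theta}$ by Lemma~\ref{le:pi}(3) while every $\gamma_k\in\cR^+\cap\cR_\bullet$ is $\theta$-fixed, the $\theta$-isometry of the bilinear form kills each pairing $(\gamma_k,(w_0-\bw)\nu_i)$. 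The third block equals $pr_0\big((\bw-w_0)\nu_i\big)$ by Lemma~\ref{le:lmm}(3), which vanishes for the same reason.

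To show the $x(\nu_i)$ span $\ker S_\ell$ modulo $\ell X$, I first establish $\Z/\ell$-linear independence by projecting to $V$: the relation $u\big(\sum a_i\nu_i\big)\in\ell V$ forces $\langle\alpha_i^\vee,\sum a_j\nu_j\rangle\equiv 0\pmod\ell$ for every $i\in\I$ (because each simple index appears in the fixed reduced expression $s_{i_1}\cdots s_{i_N}$ of $w_0$), so $\sum a_j\nu_j\in\ell P\cap P^\imath=\ell P^\imath$, and every $a_i$ is divisible by $\ell$. For the spanning direction---the main obstacle---I take $(v,v',\mu)\in\ker S_\ell$ and decode the three kernel equations via Lemma~\ref{le:lmm}: parts (1) and (2) produce $c,d\in P$ with $v\equiv u(c)$, $v'\equiv u'(d)$, and two expressions $\mu\equiv(1+w_0)c\equiv-(1+\bw)d\pmod\ell$. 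The $\theta$-invariance of $\mu$ yields $(\theta-1)(1+w_0)c\equiv 0\pmod\ell$, which is the first defining relation of $P^\imath$ in Lemma~\ref{le:pi}(3). Combining the two expressions for $\mu$ with the third kernel equation (expanded via Lemma~\ref{le:lmm}(3)), and using $\theta\bw=\bw\theta$, forces $d\equiv-\bw c\pmod\ell$ together with the second $P^\imath$-relation, so $c\in P^\imath+\ell P$. Writing $c\equiv\sum a_i\nu_i\pmod{\ell P^\imath}$, the triple $(v,v',\mu)$ becomes congruent to $\sum a_i x(\nu_i)$ modulo $\ell X$.

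For part~(2), part~(1) implies that $\ker S_\ell/\ell X$ has $\Z/\ell$-dimension $m$, hence $\dim_{\Z/\ell}\mathrm{im}\,S_\ell=|X|-m=N+M+R-m$. To identify this with $2N_0=\dim\k-\mathrm{rank}\,\k=\dim\k-m$, I will verify the equality $N+M+R=\dim\k$ by a direct count using the Satake diagram: the Cartan $\h^\theta$ contributes $R$, the compact root spaces $\g_\alpha\subset\k$ for $\alpha\in\cR_\bullet$ contribute $2M$, and each of the $L=N-M$ size-$2$ $\theta$-orbits on $\cR\setminus\cR_\bullet$ contributes one $\theta$-fixed dimension, giving $R+2M+(N-M)=N+M+R$. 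The main obstacle throughout is the spanning step in part~(1), which requires careful $\Z/\ell$-level tracking of the interplay between $\theta$, $w_0$, $\bw$, and $\tau$ in order to extract both defining relations of $P^\imath$ from the three kernel conditions on $(v,v',\mu)$; once this linear algebra is in hand the dimension count for part~(2) is routine.
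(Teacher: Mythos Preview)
Your proposal is correct and follows essentially the same route as the paper's proof: both verify $S\cdot x(\nu_i)=0$ block by block via Lemma~\ref{le:lmm} and Lemma~\ref{le:pi}(3), and both unfold the three kernel equations through Lemma~\ref{le:lmm} to force the two defining relations of $P^\imath$ on the parameter $c$. Your treatment is a bit more explicit in two places where the paper is terse: you spell out the $\Z/\ell$-linear independence of the $x(\nu_i)$ via projection to $V$ (the paper packages this into the single remark that $P^\imath$ is a direct summand of $P$, which is exactly what makes your step $\ell P\cap P^\imath=\ell P^\imath$ work), and you give the root-space count for $N+M+R=\dim\k$ rather than leaving it implicit. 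One small caution on the spanning step: you will not get the full congruence $d\equiv -\bw c\pmod{\ell P}$, only agreement on the ``black'' coordinates $\langle\alpha_j^\vee,-\rangle$ for $j\in\I_\bullet$ together with $(1+\bw)d\equiv -(1+w_0)c$; this is all that is needed to match $u'(d)$ with $u'(-\bw c')$ and $\mu$ with $(1+w_0)c'$, so the conclusion stands, but phrase it accordingly when you write out the details.
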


\begin{proof}
We first prove (1). Suppose that $S_\ell(v+v'+\lambda)=0$ where $v\in V$, $v'\in V'$ and $\lambda\in {'P}^\theta$. Then one has $M_\ell(v+\lambda)=0$. By Lemma \ref{le:lmm} (1), we have $\lambda=(1+w_0)\mu+\ell\lambda_1$ and $v=u(\mu)+\ell v_1$, for $\mu,\lambda_1\in {'P}$ and $v_1\in V$. Since $\lambda\in {'P^\theta}$, by replacing $\mu$ with some element in $\mu+\ell {'P}$ we may assume that $(1+w_0)\mu\in {'P^\theta}$. Similarly, by solving $M'_\ell(v'+\lambda)=0$ and applying Lemma \ref{le:lmm} (2), we deduce that $v'=u'(\mu')+\ell v'_1$ and $\lambda=-(1+w_\bullet)\mu'+\ell \lambda_2$, where $(1+w_\bullet)\mu'\in {'P^\theta}$, $v_1'\in V'$ and $\lambda_2\in {'P}$. 

For any $\gamma\in {'P}$, write $\gamma=\gamma_0+\gamma_1$ where $\gamma_0\in {'P^\theta}$ and $\gamma_1\in {'P^{-\theta}}$.

By comparing two expressions for $\lambda$ we conclude that 
\begin{equation}\label{eq:1pw}
    (1+w_0)\mu+(1+w_\bullet)\mu'=(1+w_0)\mu_0+(1+w_\bullet)\mu'_0\in \ell {'P}.
\end{equation}

By Lemma \ref{le:lmm} (3) we have 
\[
Bv-B'v'=(\widetilde{B}v-\widetilde{B}'v')_0=(1-w_0)\mu_0-(1-w_\bullet)\mu'_0.
\]
Then $S_{\ell}(v+v'+\lambda)=0$ implies that 
\begin{equation}\label{eq:1w0}
    (1-w_0)\mu_0-(1-w_\bullet)\mu_0'\in \ell ({'P}^\theta).
\end{equation}

Combing \eqref{eq:1pw} with \eqref{eq:1w0} we have 
\begin{equation}
    \mu_0+w_\bullet\mu_0'\in \ell ({'P^\theta})\quad \text{ and }\quad (w_0-w_\bullet)\mu_0\in \ell('P^\theta).
\end{equation}
Hence $\mu_0\in\ker (w_0-w_\bullet)+\ell ('P^\theta)$. Therefore we may assume that $(w_0-w_\bullet)\mu_0=0$ and $\mu_0'=-w_\bullet\mu_0$. We conclude that $(1+w_0)\mu\in {'P^\theta}$ and $(w_0-w_\bullet)\mu\in {'P^{-\theta}}$, which implies that $\mu\in P^\imath$ thanks to Lemma \ref{le:pi}. Then it is direct to see $v+v'+\lambda=u(\mu)+u'(-w_\bullet\mu)+(1+w_0)\mu+\ell x$ for some $x\in X$, so $v+v'+\lambda$ belongs to the subspace spanned by the vectors in (1). 

Conversely, we show $S_\ell(x(\nu_i))=0$ for $1\le i\le m$. It is clear that 
$$A(u(\nu_i))-{^tB}((1+w_0)\nu_i)=0.$$
By Lemma \ref{le:lmm} (2), we have $A'(u'(-w_\bullet\nu_i))+{^t\widetilde{B}'((1+w_\bullet)\nu_i)}=0$. Note that for any $\mu\in {'P^{-\theta}}$ and $j\in \I_\bullet$, one has $\langle \alpha_j^\vee,\mu\rangle=0$. This implies that $u'(\mu)=0$ by \eqref{eq:uvv}, and $^t\widetilde{B}'(\mu)=0$ by the definition of $^t\widetilde{B}$. In particular, by Lemma \ref{le:pi} (3), one has $(w_0-w_\bullet)\nu_i\in {'P^{-\theta}}$, and hence $^t\widetilde{B}'((w_0-w_\bullet)\nu_i)=0$. 
Therefore, we have 
\[
A'(u'(-w_\bullet\nu_i))-{^tB'}((1+w_0)\nu_i)
=A'(u'(-w_\bullet\nu_i))-{^t\widetilde{B}'}((1+w_\bullet)\nu_i)=0.
\]
Finally by Lemma \ref{le:lmm} (3) and Lemma \ref{le:pi} (3), we have
$$
B(u(\nu_i))-B'(u'(-w_\bullet\nu_i))=pr_0((1-w_0)\nu_i-(1-w_\bullet)\nu_i)=pr_0((w_\bullet-w_0)\nu_i)=0.
$$
We complete the proof of (1). 




The proof of (2) follows by noticing that $P^\imath$ is a direct summand of $P$.
\end{proof}

\begin{theorem}\label{prop:degd}
    Suppose that $\ell$ is an odd positive integer which is coprime to $\epsilon_i$ for $i\in \I$. The algebra $\Gr \Uiv$ has degree $\ell^{N_0}$, where $N_0=(\text{dim }\mathfrak{k}-\text{rank }\mathfrak{k})/{2}$ is the number of positive roots of the reductive Lie algebra $\mathfrak{k}$.
\end{theorem}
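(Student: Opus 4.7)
The plan is to read off the degree from the identification $\Gr \Uiv \cong \cT_{S,J(S)}$ established in Proposition~\ref{prop:assoc}, and then invoke the general formula for degrees of localized twisted polynomial algebras. This reduces the theorem to a purely linear-algebraic computation, most of which has already been carried out in Lemma~\ref{le:ker}.

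Concretely, I would proceed in three steps. First, by Proposition~\ref{prop:assoc} together with the discussion culminating in \eqref{eq:S}, the associated graded algebra $\Gr \Uiv$ is identified with the localized twisted polynomial algebra $\cT_{S,J(S)}$ on $N+M+r$ generators, where $J(S)=\{t_{\omega_s^0}\mid 1\le s\le r\}$ consists of the invertible Cartan-type generators and the skew-symmetric matrix $S$ is given in \eqref{eq:S}. Second, Proposition~\ref{prop:twpolycenter}(3) expresses the degree of $\cT_{S,J(S)}$ as $\sqrt{h}$, where $h=|\Im S_\ell|$ is the cardinality of the image of the induced map $S_\ell:X\to X/\ell X$ on $X=V\oplus V'\oplus {'P}^\theta$. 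Third, Lemma~\ref{le:ker}(2) has already computed $\Im S_\ell\cong (\Z/\ell\Z)^{2N_0}$ with $2N_0=\dim\mathfrak{k}-\rank \mathfrak{k}$, whence $h=\ell^{2N_0}$ and so the degree of $\Gr \Uiv$ is $\ell^{N_0}$.

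There is essentially no further obstacle at this stage, since the conceptual work has been absorbed into Lemma~\ref{le:ker}: the identification of $\ker S_\ell$ with the span of the vectors $x(\nu_i)$ attached to a $\Z$-basis $\{\nu_i\mid 1\le i\le m\}$ of the lattice $P^\imath$ (together with the subgroup $\ell X$) relies on Lemma~\ref{le:pi} to compare $P^\imath$ with the $\theta$-fixed and anti-fixed parts of $P$, and on Lemma~\ref{le:lmm} to handle separately the contributions of $w_0$ and $w_\bullet$ to the two block rows of $S$. Given this combinatorial input, the matching of the rank $m$ of $P^\imath$ with $\rank \mathfrak{k}$ (Lemma~\ref{le:pi}(1)) and the identity $2N_0=\dim\mathfrak{k}-\rank\mathfrak{k}$ (the number of roots of $\mathfrak{k}$) immediately yield the exponent $N_0$. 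Thus the present theorem is a short corollary of the preceding structural and lattice-theoretic results, and no new calculation is required beyond citing them.
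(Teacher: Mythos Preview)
Your proposal is correct and follows essentially the same approach as the paper, which simply cites Lemma~\ref{le:ker}(2) and Proposition~\ref{prop:twpolycenter}(3). You have spelled out the intermediate identification $\Gr\Uiv\cong\cT_{S,J(S)}$ explicitly, but the argument is the same.
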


\begin{proof}
    It follows from Lemma~\ref{le:ker} (2) and Proposition \ref{prop:twpolycenter}~(3).
\end{proof}

\subsection{The center of $\Gr\Uiv$}\label{sec:Grcenter}
In this subsection, we determine the center of $\Gr \Uiv$.




Recall elements $\nu_i\in P^\imath_+$, $1\leq i\leq m$, from \eqref{eq:nui}. Recall \eqref{eq:ckn} and \eqref{eq:ckn'}. In particular, one has $c_k(\nu_i)\geq 0$, for $1\leq i\leq m$, $1\leq k\leq N$. For $\nu\in P_+$ and $1\leq k\leq M$, notice that
\begin{equation}\label{eq:ck'}
    c_k'(-w_\bullet\nu)=\langle \alpha_{j'_k}^\vee,-w_\bullet\nu\rangle=\langle \alpha_{j_k}^\vee,-(w_\bullet w_0)w_\bullet\nu\rangle=\langle \alpha_{j_k}^\vee,-w_0\nu\rangle.
    \end{equation}
    In particular, one has $c_k'(-w_\bullet\nu_i)\geq 0$.
\begin{proposition}\label{prop:cendeg}
    The center of $\Gr \Uiv$ is generated by the elements
    \begin{equation}\label{eq:bek}
    K_{(1+w_0)\nu_i}\prod_{t=1}^NB_{\beta_t}^{\langle \alpha_{i_t}^\vee, \nu_i\rangle}\prod_{ t=1}^ME_{\gamma_t}^{\langle \alpha_{j_t}^\vee,-w_0\nu_i\rangle},\quad 1\leq i\leq m=\text{rank }\mathfrak{k};
    \end{equation}
    \begin{equation}\label{eq:bbt}
    B^\ell_{\beta_t},\quad 1\leq t\leq N;\quad E^\ell_{\gamma_t},\quad 1\leq t\leq M; \quad K_{\ell\mu},\quad \mu\in P^\theta;
    \end{equation}
    as a $\mathbb{C}$-algebra.
\end{proposition}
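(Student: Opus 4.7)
My plan is to identify $\Gr\Uiv$ with a localized twisted polynomial algebra, determine its center via Proposition~\ref{prop:twpolycenter}(2), and then translate the generators of the kernel given by Lemma~\ref{le:ker}(1) back into the listed elements.

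\textbf{Step 1 (reduction to $\cT_{S,J(S)}$).} By Proposition~\ref{prop:assoc} together with the identification \eqref{eq:gru}, $\Gr\Uiv$ is isomorphic to the localized twisted polynomial algebra $\cT_{S,J(S)}$ with generators $y_{\beta_1},\dots,y_{\beta_N}$, $x_{\gamma_1},\dots,x_{\gamma_M}$, and $t_{\omega^0_1}^{\pm1},\dots,t_{\omega^0_r}^{\pm1}$, where the skew-symmetric matrix $S$ is given in \eqref{eq:S} and the localized indices correspond to the $K$-direction. Proposition~\ref{prop:twpolycenter}(2) then furnishes a $\mathbb{C}$-basis of $Z(\Gr\Uiv)$ consisting of monomials $y^{\mathbf{a}_V}x^{\mathbf{a}_{V'}}t^{\mathbf{a}_t}$ indexed by $\mathbf{a}=(\mathbf{a}_V,\mathbf{a}_{V'},\mathbf{a}_t)\in\ker S_\ell$ with $\mathbf{a}_V\in V_{\ge 0}$ and $\mathbf{a}_{V'}\in V'_{\ge 0}$.

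\textbf{Step 2 (identifying generators).} Lemma~\ref{le:ker}(1) states that $\ker S_\ell$ equals the $\mathbb{Z}$-span of the $m=\mathrm{rank}\,\mathfrak{k}$ vectors
\[
x(\nu_i)=u(\nu_i)+u'(-w_\bullet\nu_i)+(1+w_0)\nu_i,\qquad 1\le i\le m,
\]
together with the sublattice $\ell X$. Using \eqref{eq:ckn}, \eqref{eq:ckn'}, and the identity \eqref{eq:ck'}, the monomial in $\Gr\Uiv$ attached to $x(\nu_i)$ (up to a non-zero scalar from the twisted commutation rules) is exactly the element in \eqref{eq:bek}. The dominance $\nu_i\in P^\imath_+\subset P_+$ ensures $\langle\alpha^\vee_{i_t},\nu_i\rangle\ge 0$, and $-w_0\nu_i\in P_+$ ensures $\langle\alpha^\vee_{j_t},-w_0\nu_i\rangle\ge 0$, so all $B$- and $E$-exponents are non-negative. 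The standard basis of $\ell X$, split into $V$-, $V'$-, and $'P^\theta$-parts, corresponds to the monomials $B_{\beta_t}^\ell$, $E_{\gamma_t}^\ell$, and $K^\ell_{\omega^0_s}$; since $\{\omega^0_s\}_{1\le s\le r}$ is a $\mathbb{Z}$-basis of $P^\theta$, this yields $K_{\ell\mu}$ for all $\mu\in P^\theta$, matching \eqref{eq:bbt}.

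\textbf{Step 3 (generation).} Let $C$ denote the subalgebra of $\Gr\Uiv$ generated by the elements \eqref{eq:bek} and \eqref{eq:bbt}; by Step~2 every generator is central, so $C\subseteq Z(\Gr\Uiv)$. For the reverse inclusion, I need to write each basis element of Step~1 as a product of the listed generators. Concretely, given $\mathbf{a}\in\ker S_\ell$ with non-negative $V$- and $V'$-entries, the task is to produce non-negative integers $n_1,\dots,n_m$ and $\mathbf{b}\in X$ with $\mathbf{b}_V\in V_{\ge 0}$, $\mathbf{b}_{V'}\in V'_{\ge 0}$ (the $'P^\theta$-component of $\mathbf{b}$ is unconstrained since $K_\mu$ is invertible) such that $\mathbf{a}=\sum_{i=1}^m n_i\,x(\nu_i)+\ell\,\mathbf{b}$; the counting $|\ker S_\ell/\ell X|=\ell^m$ from Lemma~\ref{le:ker}(2) together with Lemma~\ref{le:ker}(1) shows the classes $\{\bar{x}(\nu_i)\}$ are $\mathbb{F}_\ell$-linearly independent in $X/\ell X$, so the congruence class of $\mathbf{a}$ modulo $\ell X$ uniquely determines the residues $n_i\!\!\mod\ell$, and the remaining freedom lies in adjusting $n_i$ by multiples of $\ell$.

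\textbf{Main obstacle.} The essential non-formal point is precisely the semigroup-generation step: one must show that, for a suitable choice of non-negative representatives $n_i$, the residue $\mathbf{a}-\sum n_i\,x(\nu_i)$ is a non-negative element of $\ell X$ in the $V$- and $V'$-directions. This is the analogue in the iquantum setting of the positivity reduction used by De Concini--Kac \cite{DCK90} for ordinary quantum groups, and it requires a careful exploitation of the fact that each $x(\nu_i)$ has non-negative $V$- and $V'$-coordinates (Step~2) together with the combinatorics of the dominant weights $\nu_i$ within the convex order on $\mathcal{R}^+$. Once this combinatorial assertion is established, every central basis monomial lies in $C$, completing the proof.
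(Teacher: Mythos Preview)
Your approach is exactly the paper's: invoke Proposition~\ref{prop:twpolycenter}(2) for the basis of the center of a localized twisted polynomial algebra, and feed in the description of $\ker S_\ell$ from Lemma~\ref{le:ker}(1). The paper's proof is literally one line (``It follows from Proposition~\ref{prop:twpolycenter} and Lemma~\ref{le:ker}''), so Steps~1 and~2 of your plan are precisely what the authors have in mind.

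The ``main obstacle'' you flag in Step~3 is a legitimate concern, but it dissolves cheaply once you look at the actual coordinates of $x(\nu_i)$. By construction \eqref{eq:nui}, each $\nu_i$ is a sum of one or two \emph{distinct} fundamental weights, and together the $\nu_i$ partition the set $\{\omega_j:j\in\I\}$. Hence every $V$-coordinate $c_k(\nu_i)=\langle\alpha_{i_k}^\vee,\nu_i\rangle$ lies in $\{0,1\}$, and for each fixed $k$ there is a \emph{unique} index $i=i(k)$ with $c_k(\nu_i)=1$. The same holds for the $V'$-coordinates $c'_k(-w_\bullet\nu_i)=\langle\alpha_{j_k}^\vee,\tau_0\nu_i\rangle$, since $\tau_0$ permutes the set $\{\nu_1,\dots,\nu_m\}$. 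Now take any $\mathbf a\in\ker S_\ell$ with $\mathbf a_V,\mathbf a_{V'}\ge 0$ and write it (uniquely) as $\mathbf a=\sum_i n_i\,x(\nu_i)+\ell\mathbf b$ with $0\le n_i<\ell$. The $k$-th $V$-coordinate reads $a_k=n_{i(k)}+\ell b_k$, so $n_{i(k)}$ is the residue of $a_k$ modulo $\ell$; since $a_k\ge 0$ this forces $a_k\ge n_{i(k)}$ and hence $b_k\ge 0$. The $V'$-direction is identical. Thus the positivity is automatic and no De~Concini--Kac-style reduction is needed; the paper simply suppresses this verification.
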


\begin{proof}
It follows from Proposition \ref{prop:twpolycenter} and Lemma \ref{le:ker}.    
\end{proof}

\section{The degree and the full center}\label{sec:inii}

In this section, we first recall Kolb's construction \cite{Ko20} of central elements for iquantum groups, and show in Proposition~\ref{le:intn} that these elements lies in the integral form. We determine in Section~\ref{sec:lead} leading terms of these central elements, which coincide with elements in Proposition~\ref{prop:cendeg}. This allows us to further determine the degree and the full center of $\Uiv$.

\subsection{The Kolb-Letzter center}

The center of the iquantum group $\Ui$ for generic $q$ was studied by Kolb and Letzter \cite{KL08}, where a conceptual construction was given by Kolb \cite{Ko20} in terms of the universal $K$-matrix. In this subsection we recall the construction of Kolb.

Let $\D$ be the minimal positive integer such that $(\mu,\nu)\in \frac{1}{\D}\Z$ for all $\mu,\nu \in P$. To describe the center of $\Ui$, we extend the base field to $\C(q^{1/(2D)})$ and extend $\A$ to $\C[q^{\pm 1/(2D)}]$. By slightly abuse of notation, we still use $\Ui$ to denote $\C(q^{1/(2D)})\otimes _{\C(q^{1/2})}\Ui$ and use $\U_{\A}$ to denote the $\C[q^{\pm1/(2D)}]$-form of $\Ui$ obtained by taking the base change of De Concini-Kac form. Similar setting apply to $\Ui$ and $\Ui_{\A'}$.

Firstly, we recall the construction of the universal $R$-matrix. Let $\cOf$ be the category of finite-dimensional $\U$-modules of type 1, and let $\mathcal{U}\subset \prod _{V\in Ob(\cOf)}\text{End }(V)$ be the completion of $\U$ considered in \cite[\S~3.1]{BK19}. For $n\in\N$, define $\mathcal{U}^{(n)}\subset\prod_{V_1,\dots, V_n\in Ob(\cOf)}\text{End }(V_1\otimes \cdots \otimes V_n)$ to be the completion of the $n$-folded tensor of $\U$ considered in \emph{loc. cit.}. 
 
Let $\mathbf{f}=\oplus_{\mu\in Q^+}\mathbf{f}_\mu$ be the algebra defined in \cite[\S1.2.5]{Lus93} with generators $\theta_i,i\in \I$. Let $r:\mathbf{f}\rightarrow \mathbf{f}\otimes\mathbf{f}$ be the algebra homomorphism such that $r(\theta_i)=1\otimes \theta_i+\theta_i\otimes 1$; here $\mathbf{f}\otimes\mathbf{f}$ is equipped with the twisted product (see \cite[\S 1.2.2]{Lus93}). Let $(\cdot,\cdot)$ be the non-degenerate symmetric bilinear form on $\mathbf{f}$ introduced in \cite[Proposition~1.2.3]{Lus93} such that 
\begin{align*}
(1,1)=1,\qquad (\theta_i,\theta_j)=\frac{\delta_{ij}}{1-q_i^{-2}},
\qquad
(x,y_1y_2)=(r(x),y_1\otimes y_2).
\end{align*} 
There are canonical isomorphisms $(\cdot)^{\pm}:\mathbf{f}\rightarrow \U^{\pm}$ such that $E_i=\theta_i^+, F_i=\theta_i^-$. 

For $\nu\in P_+$, recall that $V(\nu)$ is the irreducible $\U$-module with the highest weight $\nu$ and a highest weight vector $\eta_\nu$. Let $\bB(\nu)$ be the canonical basis of $V(\nu)$ (cf.\cite[Theorem 14.4.11]{Lus93}).

Let $B=\sqcup_{\mu\in Q^+}B_\mu$ be any linear basis of $\mathbf{f}$, where $B_\mu$ is a linear basis of $\mathbf{f}_\mu$. Let $\widetilde{B}=\{\widetilde{b}\mid b\in B\}$ be the dual basis of $B$ with respect to $(\cdot,\cdot)$; that is, one has $(\widetilde{b},b')=\delta_{b,b'}$ for any $b,b'\in B$. Following \cite[Theorem 4.1.2]{Lus93}, the \emph{quasi $R$-matrix} $R$ for $\U$ admits the form 
\begin{equation}\label{eq:r}
R=\sum_{b\in B}c_bb^-\otimes \widetilde{b}^{+}\in \prod_{\mu\in  Q^+} \U_\mu^-\otimes \U_\mu^+,
\end{equation}
where $c_b\in \pm q^\Z$.

Let $\kappa\in\mathcal{U}^{(2)}$ be an element, such that $\kappa(v_\mu\otimes v_\lambda)=q^{-(\mu,\lambda)}v_\mu\otimes v_\lambda$ for weight vectors $v_\mu$ and $v_\lambda$ in any representations in $\cOf$, with weight $\mu$ and $\lambda$, respectively. Then the element 
\begin{equation}\label{eq:ur}
\mathcal{R}=R_{21}\cdot \kappa\in \mathcal{U}^{(2)}
\end{equation}
is called the \emph{universal $R$-matrix} for $\U$.

Next we recall the universal $K$-matrices and central elements of $\Ui$ constructed in \cite{Ko20}. Let $\Upsilon=\sum_{\mu\in Q^+}\Upsilon_\mu\in\prod_{\mu\in Q^+} \U^+_\mu$ be the \emph{quasi $K$-matrix} associated with the iquantum group $\Ui$ (cf. \cite{BW18a,BK19,WZ23}). 

Following \cite{BW18b, BK19}, there is a function $\xi:P\rightarrow \C(q^{1/(2\D)})^\times$ such that 
\begin{align}
\label{def:xi}
\xi(\mu+\nu) = \xi(\mu) \xi(\nu) q^{-(\mu+\theta\mu,\nu)},\qquad \forall \mu,\nu \in P.
\end{align}
Let us regard the function $\xi$ as an element in $\mathcal{U}$ such that $\xi(v_\mu)=\xi(\mu) v_\mu$ for any weight vector $v_\mu$ with weight $\mu\in P$.

Recall $P^\imath, P^\imath_+$ from \eqref{eq:Pi}-\eqref{eq:pip}. For any $\mu\in P^\imath_+$, there is a compatible involution $\tau_0\tau$ on $V(\mu)$ such that $\tau_0\tau(x\eta_\mu)=\tau_0\tau(x)\eta_\mu$ for all $x\in\U$. Following \cite[(3.22)]{Ko20} (cf. \cite[Corollary 7.7]{BK19}), the \emph{1-tensor universal $K$-matrix} for $\Ui$ is defined to be
\begin{align}
\mathcal{K}=\Upsilon\xi T_{w_\bullet}^{-1}T_{w_0}^{-1}\tau_0\tau.
\end{align} 
The element $\mathcal{K}$ induces a well-defined linear operator on any $V(\mu)$ with $\mu\in P_+^\imath$.

Following \cite[(3.36)]{Ko20}, the \emph{2-tensor universal $K$-matrix} is defined to be 
\[
\mathscr{K}=\mathcal{R}_{21}\cdot (1\otimes\mathcal{K})\cdot \cR.
\]
The element $\mathscr{K}$ induces a well-defined linear operator on any $V(\lambda)\otimes V(\mu)$ with $\lambda\in P_+, \mu\in P_+^\imath$.

The \emph{quantum trace} $\text{tr}_\mu:\U\rightarrow \mathbb{C}(q)$ is defined as $\text{tr}_\mu(u)=\text{tr}_{V(\mu)}(uK_{-\rho})$ where $\text{tr}_{V(\lambda)}:\U\rightarrow \mathbb{C}(q)$ is the usual trace map and $\U$ is viewed as operators on $V(\mu)$. Note that $\text{tr}_\mu$ naturally extends to $\mathcal{U}$. 

Let $\mu\in P_+^\imath$. By \cite[Theorem~4.10]{Ko20}, the following expression 
\begin{equation}\label{eq:centeri}
    d_\mu=(\Id\otimes \text{tr}_\mu)(\mathscr{K})
\end{equation}
defines a central element $d_\mu$ of $\Ui$. We remark that, in Kolb's notations, the element $d_\mu$ was denoted by $q^{-(\mu,\mu+2\rho)}\tilde{l}_{1\otimes v\cdot\mathscr{K}}(\text{tr}_{V(\mu)_+,\sigma,q})$ \emph{loc. cit.}.

In the next subsections, we give a more concrete expression of $d_\mu$ in terms of PBW basis.

\subsection{Integrality of the central elements}

Starting from this subsection, we retain the notations in Section~\ref{sec:iPBW}. In particular, let us fix a reduced expression $w_0=s_{i_1}\cdots s_{i_L}s_{j_1}\cdots s_{j_M}$ where $w_\bullet=s_{j_1}\cdots s_{j_M}$. Let $\beta_1<\cdots <\beta_L<\gamma_1<\cdots<\gamma_M$ be the associated convex order on $\mathcal{R}^+$. Set $i_{L+t}=j_t$ and $\beta_{L+t}=\gamma_{t}$, for $1\leq t\leq M$. Set $N=M+L$.

For any $\mathbf{c}=(c_i)\in \N^N$, define $L(\mathbf{c})\in \mathbf{f}$ by
\begin{equation}\label{eq:lc+}
L(\bc)^+=T_{s_{i_1}\cdots s_{i_{N-1}}}(\mathbf{E}_{i_N}^{(c_N)})\cdots T_{i_1}(\mathbf{E}_{i_2}^{(c_2)})\mathbf{E}_{i_1}^{(c_1)}.
\end{equation}
Thanks to \cite[37.2.4]{Lus93} we have
\[
L(\mathbf{c})^-=\lambda_{\bc} T_{s_{i_1}\cdots s_{i_{N-1}}}(\mathbf{F}_{i_N}^{(c_N)})\cdots T_{i_1}(\mathbf{F}_{i_2}^{(c_2)})\mathbf{F}_{i_1}^{(c_1)} ,\qquad \text{where $\lambda_{\bc}\in\pm q^{\Z}$}.
\]

Here the elements $\mathbf{E}_i^{(c)}$, $\mathbf{F}_i^{(c)}$ are divided powers $\frac{\bE_i^c}{[c]_i!}, \frac{\bF_i^c}{[c]_i!}$, respectively. Then $\{L(\mathbf{c})\mid \mathbf{c}\in\N^N\}$ form a basis of $\mathbf{f}$ (cf. \cite[Proposition~40.2.1]{Lus93}). 
Recall that $\{\widetilde{L(\bc)}\mid \bc\in \N^N\}$ denotes the dual basis.
By \cite[Proposition 38.2.3]{Lus93}, one has 
\[
\widetilde{L(\mathbf{c}})=\lambda'_{\bc} \prod_{k=1}^N(q_{i_k}-q_{i_k}^{-1})^{c_k}[c_k]_{i_k}!L(\mathbf{c}),\qquad \text{for some } \lambda'_{\bc}\in\pm q^\Z.
\]

Combining with \eqref{eq:recs}, we have
\begin{equation}\label{eq:lc}
\begin{split}
    \widetilde{L(\mathbf{c})}^+=\lambda^+_\bc 
     T_{s_{i_1}\cdots s_{i_{N-1}}}(E_{i_N}^{c_N})\cdots T_{i_1}(E_{i_2}^{c_2}) E_{i_1}^{c_1} ,
    \\
    \widetilde{L(\mathbf{c})}^-=\lambda^-_{\bc}
     T_{s_{i_1}\cdots s_{i_{N-1}}}(F_{i_N}^{c_N})\cdots T_{i_1}(F_{i_2}^{c_2}) F_{i_1}^{c_1},
\end{split}
\end{equation}
for $\bc\in\N^N$ and $\lambda^+_{\bc},\lambda^-_{\bc}\in \pm q^{\Z}$.

On the other hand, by \eqref{eq:r} one has 
\begin{equation}\label{eq:rcc}
    R=\sum_{\mathbf{c}\in \N^N}c_{\bc} L(\bc)^-\otimes \widetilde{L(\bc)}^+=\sum_{\bc\in\N^N} c_{\bc}\widetilde{L(\bc)}^-\otimes L(\bc)^+,
\end{equation}
where $c_\bc\in \pm q^\Z$. Therefore, by \eqref{eq:ur} one has 
\begin{equation}\label{eq:unir}
    \mathcal{R}=\big(\sum_{\bc\in \N^N}c_{\bc}\widetilde{L(\bc)}^+\otimes L(\bc)^-\big)\cdot \kappa,\quad\text{ and }\quad \mathcal{R}_{21}=\big(\sum_{\bc\in\N^N}c_{\bc}\widetilde{L(\bc)}^-\otimes L(\bc)^+\big)\cdot \kappa.
\end{equation}
Here the first equality follows from the first equality in \eqref{eq:rcc}, and the second equality follows from the second equality in \eqref{eq:rcc}.

Let $\nu\in P^\imath_+$. For $\bc=(c_1,\ldots,c_N)\in \N^N$, set $|\bc|=\sum_{k=1}^N c_k \beta_k\in Q^+$.
By \eqref{eq:unir}, \eqref{eq:centeri} and direct computations, we have 
\begin{equation}\label{eq:dn}
    d_\nu=\sum_{\substack{b\in\mathbf{B}(\nu),\; \mu\in Q^+\\ \mathbf{c},\mathbf{c}'\in\N^N}}c_{b,\mathbf{c},\mathbf{c}',\mu}b^*(\mathscr{K}_{\mathbf{c},\mathbf{c}',\mu}\cdot b)K_{\nu_{b,\mathbf{c}',\mu}}\widetilde{L(\mathbf{c})}^-\widetilde{L(\mathbf{c}')}^+,
\end{equation}
where $b^*\in V(\nu)^*$ satisfies $b^*(b')=\delta_{b,b'}$ for $b'\in \mathbf{B}(\nu)$, $c_{b,\mathbf{c},\mathbf{c}'}\in \pm q^{\Z/(2D)}$, 
\begin{equation}\label{eq:nub}
    \nu_{b,\bc',\mu}=-\wt(b)-w_\bullet w_0\tau_0\tau(\wt(b)-|\bc'|)-\mu\in P,
\end{equation}
 and 
\begin{equation}\label{eq:kc}
\mathscr{K}_{\mathbf{c},\mathbf{c}',\mu}=L(\mathbf{c})^+\Upsilon_\mu T_{w_\bullet}^{-1}T_{w_0}^{-1}\tau_0\tau L(\mathbf{c}')^-
\end{equation}
is a linear operator on $V(\nu)$. The summation in \eqref{eq:dn} is finite by the weight consideration.


\begin{proposition}\label{le:intn}
    For any $\nu\in P^\imath_+$, the element $d_\nu$ belongs to $\U_{\A}$. Hence $d_\nu\in \Ui_{\A'}$.
\end{proposition}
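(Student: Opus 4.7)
The plan is to use the explicit expansion \eqref{eq:dn} of $d_\nu$ and to show that each ingredient lies in $\U_\A$. The factors $K_{\nu_{b,\bc',\mu}}$ are in $\U_\A$ by definition, and by the explicit formula \eqref{eq:lc} together with the $T_i$-closedness of $\U_\A$, also $\widetilde{L(\bc)}^\pm\in\U_\A$. In the enlarged setting of this subsection one has $c_{b,\bc,\bc',\mu}\in\pm q^{\Z/(2D)}\subset\A$. Thus the proof reduces to verifying that the matrix coefficient $b^*\bigl(\mathscr{K}_{\bc,\bc',\mu}\cdot b\bigr)$ lies in $\A$ for every $b\in\mathbf{B}(\nu)$, $\bc,\bc'\in\N^N$, and $\mu\in Q^+$.

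To check this, I would work inside Lusztig's integral form $V(\nu)_\A$, which admits the canonical basis $\mathbf{B}(\nu)$ as an $\A$-basis. It then suffices to show that the operator $\mathscr{K}_{\bc,\bc',\mu}$ defined in \eqref{eq:kc} preserves $V(\nu)_\A$. Factor by factor: the divided-power products $L(\bc)^+$ and $L(\bc')^-$ lie in Lusztig's integral form of $\U$ and hence preserve $V(\nu)_\A$; Lusztig's braid symmetries $T_{w_\bullet}^{-1} T_{w_0}^{-1}$ preserve $V(\nu)_\A$ with entries in $\A$ by \cite[Ch.~41]{Lus93}; and since $\nu\in P^\imath_+$, Lemma \ref{le:pi}~(2) gives $\tau_0\tau(\nu)=\nu$, so at the module level $\tau_0\tau$ permutes $\mathbf{B}(\nu)$ up to signs.

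The most delicate step, and the main obstacle, is the action of the quasi-$K$-matrix $\Upsilon_\mu$ on $V(\nu)_\A$. For this I would invoke the $\imath$canonical basis theory of Bao-Wang \cite{BW18b}: the operator $\Upsilon$ transforms the canonical basis $\mathbf{B}(\nu)$ into the $\imath$canonical basis $\mathbf{B}^\imath(\nu)$ via a unitriangular change-of-basis matrix with off-diagonal entries in $q\Z[q]\subset\A$, so in particular each $\Upsilon_\mu$ preserves $V(\nu)_\A$ with $\A$-coefficients in $\mathbf{B}(\nu)$. Assembling the four ingredients shows that $\mathscr{K}_{\bc,\bc',\mu}\cdot b\in V(\nu)_\A$, hence $b^*(\mathscr{K}_{\bc,\bc',\mu}\cdot b)\in\A$. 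Summing the (finitely many) contributions in \eqref{eq:dn} then gives $d_\nu\in\U_\A$. Since $d_\nu\in\Ui$ by \cite[Theorem~4.10]{Ko20} and $\U_\A\subset\U_{\A'}$, we conclude $d_\nu\in\Ui\cap\U_{\A'}=\Ui_{\A'}$, completing the proof.
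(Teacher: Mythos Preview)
Your proposal is correct and follows essentially the same approach as the paper: reduce via \eqref{eq:dn} to showing that each operator $\mathscr{K}_{\bc,\bc',\mu}$ preserves the Lusztig lattice $V(\nu)_\A$, and verify this factor by factor using the integrality of divided powers, braid symmetries, the diagram involution $\tau_0\tau$, and the quasi-$K$-matrix (via \cite{BW18b}). The paper cites \cite[\S 5.2.1]{Lus93} for the braid symmetries and \cite[Theorem~5.3]{BW18b} for $\Upsilon_\mu$, but otherwise the argument is the same.
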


\begin{proof}

Consider the equation \eqref{eq:dn}. Thanks to \eqref{eq:lc}, $\widetilde{L(\mathbf{c})}^-$, $\widetilde{L(\mathbf{c}')}^+$, for $\mathbf{c},\mathbf{c}'\in \N^N$, belong to $\U_{\A}$. It suffices to show that $b^*(\mathscr{K}_{\mathbf{c},\mathbf{c}',\mu}\cdot b)\in \A$, for $b\in \mathbf{B}(\nu)$, $\mathbf{c},\mathbf{c'}\in \N^N$, $\mu\in Q^+$. Let $V(\nu)_{\A}\subset V(\nu)$ be the $\A$-submodule spanned by $\mathbf{B}(\nu)$. Then it suffices to show that the linear operators $\mathscr{K}_{\mathbf{c},\mathbf{c}',\mu}$ preserve $V(\nu)_{\A}$. 

Recall from \eqref{eq:kc} the formula of $\mathscr{K}_{\mathbf{c},\mathbf{c}',\mu}$. By \cite[Theorem 41.1.3]{Lus93}, the actions of $L(\mathbf{c})^+$, $L(\mathbf{c}')^-$ preserve $V(\lambda)_\A$. The involution $\tau_0\tau$ clearly preserves $V(\lambda)_\A$. Thanks to \cite[Theorem~5.3]{BW18b}, the action of $\Upsilon_\mu$ preserves $V(\lambda)_{\A}$. By \cite[\S~5.2.1]{Lus93}, the braid group symmetries $T_{w_\bullet}^{-1}$ and $T_{w_0}^{-1}$ preserve $V(\lambda)_{\A}$. We conclude that $\mathscr{K}_{\mathbf{c},\mathbf{c}',\mu}$ preserves $V(\lambda)_{\A}$. We complete the proof.
\end{proof}

\subsection{The leading terms of central elements}\label{sec:lead}
Recall from Section \ref{sec:asa} the $\N^{N+M+2}$-filtration of $\Ui_{\A'}$ and the associated graded algebra $\Gr \Ui_{\A'}$. We next determine the image $\overline{d_\nu}$ in $\Gr \Ui_{\A'}$ of the central element $d_\nu$ for $\nu\in P^\imath_+$. 

For $w\in W$, write $\eta_{w\nu}$ to be the unique element in $\mathbf{B}(\nu)$ of weight $w\nu$. For $\nu\in P_+$, recall that the element $\mathbf{c}(\nu)=(c_k(\nu))\in\N^N$ is defined by
\begin{equation}\label{eq:cnu}
    c_k(\nu)=\langle \alpha_{i_k}^\vee,\nu\rangle,\qquad \text{ for }1\leq k\leq N.
\end{equation} 
For $\mu\in P_+$, recall that $\tau_0\mu=-w_0\mu\in P_+$.

\begin{lemma}\label{le:nm}
    For $\nu\in P_+$, the element $\bc(\tau_0\nu)$ is the maximal element in the set
    \[
    \{\bc\in\mathbb{N}^N\mid \eta_\nu^*(L(\mathbf{c})^+\eta_{w_0\nu})\neq 0\},
    \]
    where $\N^N$ is endowed with the lexicographical order. Moreover $\eta_\nu^*(L(\mathbf{c}(\tau_0\nu))^+\eta_{w_0\nu})\in\pm q^\Z$.
\end{lemma}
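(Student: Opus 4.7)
The strategy is to analyze $L(\bc)^+\eta_{w_0\nu}$ inductively via the braid group action on $V(\nu)$. Writing $L(\bc)^+ = A_N A_{N-1}\cdots A_1$ with $A_k = T_{s_{i_1}\cdots s_{i_{k-1}}}(\mathbf{E}_{i_k}^{(c_k)})$, set $v_0 = \eta_{w_0\nu}$ and $v_k = A_k v_{k-1}$ so that $L(\bc)^+ \eta_{w_0\nu} = v_N$. Since $L(\bc)^+$ carries weight $\sum_k c_k \beta_k$, the nonvanishing of $\eta_\nu^*(L(\bc)^+\eta_{w_0\nu})$ forces $\sum_k c_k \beta_k = \nu - w_0\nu$; this finite constraint makes the set in the lemma finite, so a lex-maximum exists.

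The key tool is the braid group compatibility $T_w(x)\cdot T_w(u) = T_w(xu)$ for $x \in \U$ and $u\in V(\nu)$, which rewrites each step as
\begin{align*}
A_k v_{k-1} = T_{s_{i_1}\cdots s_{i_{k-1}}}\!\bigl(\mathbf{E}_{i_k}^{(c_k)}\cdot T_{s_{i_1}\cdots s_{i_{k-1}}}^{-1} v_{k-1}\bigr).
\end{align*}
The plan is to prove by induction on $k$ that, provided $c_j = c_j(\tau_0\nu)$ for all $j\leq k$, one has $v_k = \rho_k\, \eta_{s_{i_1}\cdots s_{i_k}w_0\nu}$ with $\rho_k \in \pm q^{\Z}$. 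The inductive step combines three well-known inputs: (i) the braid group action sends extremal weight vectors of $V(\nu)$ to extremal weight vectors up to $\pm q^{\Z}$ factors, so $T_{s_{i_1}\cdots s_{i_{k-1}}}^{-1} v_{k-1}$ is a $\pm q^{\Z}$-multiple of $\eta_{w_0\nu}$; (ii) a rank-one ($\mathfrak{sl}_2$-string) computation shows $\mathbf{E}_{i_k}^{(c_k(\tau_0\nu))}\eta_{w_0\nu} = \lambda_k\, \eta_{s_{i_k}w_0\nu}$ with $\lambda_k \in \pm q^{\Z}$, since $c_k(\tau_0\nu) = \langle\alpha_{i_k}^\vee, -w_0\nu\rangle$ is precisely the string length through $\eta_{w_0\nu}$; (iii) applying the outer braid yields a $\pm q^{\Z}$-multiple of $\eta_{s_{i_1}\cdots s_{i_k}w_0\nu}$. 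At $k=N$, we have $s_{i_1}\cdots s_{i_N}w_0 = e$, so $v_N = \rho_N \eta_\nu$ with $\rho_N \in \pm q^{\Z}$, yielding the desired matrix coefficient.

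For lex-maximality, take $\bc >_{\mathrm{lex}} \bc(\tau_0\nu)$ and let $k$ be the first coordinate where $c_k > c_k(\tau_0\nu)$. Running the induction through step $k-1$ still produces a nonzero extremal vector $v_{k-1}$, and the factorization gives $v_k = (\text{scalar})\cdot T_{s_{i_1}\cdots s_{i_{k-1}}}(\mathbf{E}_{i_k}^{(c_k)}\eta_{w_0\nu})$. Since $c_k$ strictly exceeds the string length $\langle\alpha_{i_k}^\vee, -w_0\nu\rangle$, $\mathbf{E}_{i_k}^{(c_k)}\eta_{w_0\nu} = 0$, forcing $v_k = 0$ and hence $v_N = 0$.

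The main technical burden is the bookkeeping of the $\pm q^{\Z}$ factors. Each individual scalar --- the braid action on an extremal weight vector and the rank-one divided-power calculation --- is a standard computation in \cite[Ch.~39, 41]{Lus93}, but they must be composed carefully relative to the fixed normalization of the extremal canonical basis vectors $\eta_{w\nu}\in \mathbf{B}(\nu)$ so that all intermediate scalars lie in $\pm q^{\Z}$ rather than a larger set like $\pm q^{\Z/2}$.
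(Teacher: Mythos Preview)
Your proposal is correct and follows essentially the same approach as the paper: both use the compatibility $T_w(x)\cdot T_w(u)=T_w(xu)$ to reduce each step to the rank-one computation $\mathbf{E}_{i_k}^{(c_k)}\eta_{w_0\nu}$, then argue inductively on $k$ that the partial products land on extremal weight vectors up to $\pm q^{\Z}$, with vanishing forced at the first coordinate where $c_k$ exceeds the string length. The only cosmetic difference is that the paper tracks $T_{i_1}\cdots T_{i_k}(\eta_{w_0\nu})$ directly rather than converting back and forth to the canonical extremal vectors $\eta_{s_{i_1}\cdots s_{i_k}w_0\nu}$; this sidesteps the normalization bookkeeping you flag in your last paragraph, since the key identity $\mathbf{E}_{i_k}^{(c_k(\tau_0\nu))}\eta_{w_0\nu}\in\pm q^{\Z}\,T_{i_k}(\eta_{w_0\nu})$ is immediate from \cite[\S 5.2.2]{Lus93} without ever naming $\eta_{s_{i_k}w_0\nu}$.
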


\begin{proof}
We first show that $\eta_\nu^*(L(\mathbf{c}(\tau_0\nu))^+\eta_{w_0\nu})\in\pm q^\Z$.

For $1\leq k \leq N$, we have
\begin{equation}\label{eq:ti1}
\begin{split}
    T_{i_1}\cdots T_{i_{k-1}}\Big(\mathbf{E}_{i_k}^{(c_k(\tau_0\nu))}\Big)\cdot T_{i_1}\cdots T_{i_{k-1}}(\eta_{w_0\nu})&=T_{i_1}\cdots T_{i_{k-1}}(\mathbf{E}_{i_k}^{(c_k(\tau_0\nu))}\cdot\eta_{w_0\nu})\\&\in \pm q^\Z  T_{i_1}\cdots T_{i_{k-1}}T_{i_k}(\eta_{w_0\nu}).
\end{split}
\end{equation}

Recall $L(\bc(\tau_0\nu))^+$ from \eqref{eq:lc+}. By applying \eqref{eq:ti1} repeatedly, one has
\[
L(\bc(\tau_0\nu))^+\eta_{w_0\nu}\in \pm q^{\Z}T_{i_1}\cdots T_{i_N}(\eta_{w_0\nu})\in \pm q^{\Z}\eta_\nu.
\]
Hence we have $\eta_\nu^*(L(\bc(\tau_0\nu))^+\eta_{w_0\nu})\in\pm q^\Z$.

We next show that for $\bc'\in\N^N$ where $\mathbf{c}'>\mathbf{c}(\tau_0\nu)$ in the lexicographical order, one has $\eta_\nu^*(L(\mathbf{c}')^+\eta_{w_0\nu})= 0$. Since $\mathbf{c}'>\mathbf{c}(\nu)$, there exists $1\leq k'\leq N$, such that $c'_{k'}>c_{k'}$ and $c'_t=c_t$ for $1\leq t<k'$. By applying \eqref{eq:ti1} we have
\[
L(\mathbf{c'})^+\eta_{w_0\nu}\in \pm q^\Z\cdot T_{i_1}\cdots T_{i_{N-1}}(\mathbf{E}_{i_N}^{(c'_N)})\cdots T_{i_1}\cdots T_{i_{k'-1}}(\mathbf{E}_{i_{k'}}^{(c'_{k'})})\cdot T_{i_1}\cdots T_{i_{k'-1}}(\eta_{w_0\nu}).
\]
Since $\mathbf{E}_{i_{k'}}^{(c_{k'}')}\cdot \eta _{w_0\nu}=0$ by the weight consideration, we conclude that $L(\bc')^+\eta_{w_0\nu}=0$. We complete the proof. 
\end{proof}

For $\mu\in P_+$, define the element $\bc^\bullet(\nu)=(c_k^\bullet(\nu))\in \N^M$ by 
\begin{equation}\label{eq:cga}
    c^\bullet_k(\nu)=\langle\alpha_{j'_k}^\vee,-w_0\nu\rangle
    =\langle \alpha_{j_k}^\vee, -w_\bullet\nu \rangle 
    \quad \text{ for }1\leq k\leq M.
\end{equation}
We define $\widetilde{\bc}^\bullet(\nu)=(\widetilde{c}_k^\bullet(\nu))\in \N^N$ by $\tilde{c}_k^\bullet(\nu)=0$ for $1\leq k\leq L$ and $\tilde{c}_{L+k}^\bullet(\nu)=c_k^\bullet(\nu)$ for $1\leq k\leq M$.

\begin{lemma}\label{le:nmb}
    For $\nu\in P_+$, the element $\tilde{\mathbf{c}}^\bullet(\tau_0\nu)$ is the maximal element in the set
    \[
    \{\bc\in\N^N\mid \eta_{w_\bullet\nu}^*(L(\mathbf{c})^-\eta_\nu)\neq 0\},
    \]
    where $\N^N$ is endowed with the lexicographical order. Moreover $\eta_{w_\bullet\nu}^*(L(\tilde{\mathbf{c}}^\bullet(\tau_0\nu))^-\eta_\nu)\in\pm q^\Z$.
\end{lemma}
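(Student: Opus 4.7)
The plan is to mimic the argument of Lemma \ref{le:nm}, with two adaptations specific to the iquantum setup: a reduction to the $\bI$-Levi part via weight considerations, and a structural observation identifying the extremal weight vector $T_\sigma^{-1}\eta_\nu$ as a highest weight vector for the Levi subalgebra associated with $\bI$.

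First, if $\eta_{w_\bullet\nu}^*(L(\bc)^-\eta_\nu)\neq 0$ then $\sum_k c_k\beta_k = \nu - w_\bullet\nu\in \N[\alpha_j : j\in\bI]$. Using the Satake property that $\bbw_0$ stabilizes $\bI$ and permutes its simple roots by a diagram involution, the standard decomposition $\cR^+(w_0) = \cR^+(\bbw_0)\sqcup \bbw_0\cR^+(w_\bullet)$ specializes to $\{\beta_1,\ldots,\beta_L\} = \cR^+\setminus \cR^+_\bullet$. Since each $\beta_k$ with $k\leq L$ has a strictly positive coefficient on some simple root in $\wI$, this forces $c_k = 0$ for $1\leq k\leq L$, which matches the first $L$ coordinates of $\widetilde{\bc}^\bullet(\tau_0\nu)$.

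Next, I would establish the key structural step: with $\sigma = s_{i_1}\cdots s_{i_L}$ a reduced expression for $\bbw_0$, the vector $T_\sigma^{-1}\eta_\nu$ lies in $\pm q^\Z\eta_{\bbw_0^{-1}\nu}$ and is annihilated by $E_j$ for every $j\in\bI$. Indeed, the Satake axioms $w_\bullet\alpha_j = -\alpha_{\tau j}$ and $w_0\alpha_i = -\alpha_{\tau_0 i}$ yield $\bbw_0\alpha_j = w_0w_\bullet\alpha_j = \alpha_{\tau_0\tau j}$, which is itself a simple root in $\bI$. Hence $T_\sigma(E_j)$ is a non-zero scalar multiple of $E_{\tau_0\tau j}$, so $E_j\cdot T_\sigma^{-1}\eta_\nu = T_\sigma^{-1}\bigl(T_\sigma(E_j)\cdot\eta_\nu\bigr)$ is a scalar multiple of $T_\sigma^{-1}(E_{\tau_0\tau j}\eta_\nu) = 0$. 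This is the main obstacle in the argument; it rests on the Satake commutativity $\bbw_0 w_\bullet = w_\bullet\bbw_0$ and the compatibility $\bbw_0(\alpha_j)\in\{\alpha_i : i\in\bI\}$.

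Given $c_k = 0$ for $k\leq L$, conjugating by $T_\sigma$ gives $L(\bc)^-\eta_\nu \in \pm q^\Z\cdot T_\sigma\bigl(\widetilde L(\bc')^-\eta_{\bbw_0^{-1}\nu}\bigr)$, where $\bc' = (c_{L+1},\ldots,c_N)$ and $\widetilde L(\bc')^- := T_{s_{j_1}\cdots s_{j_{M-1}}}(\mathbf{F}_{j_M}^{(c_N)})\cdots T_{j_1}(\mathbf{F}_{j_2}^{(c_{L+2})})\mathbf{F}_{j_1}^{(c_{L+1})}$. Iterating the identity $T_w(X)T_w(v) = T_w(Xv)$ exactly as in the proof of Lemma \ref{le:nm}, the $s$-th step reduces to evaluating $\mathbf{F}_{j_s}^{(c_{L+s})}\eta_{\bbw_0^{-1}\nu}$. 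Since $\eta_{\bbw_0^{-1}\nu}$ is $\bI$-highest weight, the $\mathfrak{sl}_2$-theory for the pair $E_{j_s}, F_{j_s}$ yields $\mathbf{F}_{j_s}^{(c)}\eta_{\bbw_0^{-1}\nu}\in\pm q^\Z T_{j_s}\eta_{\bbw_0^{-1}\nu}$ when $c = \langle\alpha_{j_s}^\vee,\bbw_0^{-1}\nu\rangle = c^\bullet_s(\tau_0\nu)$, and vanishes when $c$ exceeds this bound. Choosing $\bc = \widetilde{\bc}^\bullet(\tau_0\nu)$, the iteration produces a vector in $\pm q^\Z\cdot T_\sigma T_{w_\bullet}\eta_{\bbw_0^{-1}\nu} = \pm q^\Z\cdot T_{w_0}\eta_{\bbw_0^{-1}\nu}$, of extremal weight $w_0\bbw_0^{-1}\nu = w_\bullet\nu$, hence in $\pm q^\Z\eta_{w_\bullet\nu}$, establishing the scalar assertion. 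Maximality follows by the same iteration: at the first index $L+s_0$ where $\bc$ exceeds $\widetilde{\bc}^\bullet(\tau_0\nu)$, the factor $\mathbf{F}_{j_{s_0}}^{(c_{L+s_0})}\eta_{\bbw_0^{-1}\nu}$ already vanishes by the $\mathfrak{sl}_2$-bound, killing the entire product.
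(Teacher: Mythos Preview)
Your proof is correct and follows essentially the same route as the paper: reduce to $c_k=0$ for $k\le L$ by weight considerations, conjugate by $T_{\bbw_0}$ to pull the problem into the Levi subalgebra $\U_{\bI}$, and then rerun the iteration of Lemma~\ref{le:nm}. The paper writes the conjugation as $T_{w_0w_\bullet}$ and records the resulting pairing as $\eta_{w_0\nu}^*(\widetilde L(\bc')^-\eta_{w_\bullet w_0\nu})$ before invoking ``similar arguments as in the proof of Lemma~\ref{le:nm}''; your explicit verification that $T_\sigma^{-1}\eta_\nu$ is annihilated by $E_j$ for $j\in\bI$ (via $\bbw_0\alpha_j=\alpha_{\tau_0\tau j}$) is precisely the ingredient hidden in that phrase, and makes the appeal to the $\mathfrak{sl}_2$-bound at each step legitimate.
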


\begin{proof}
    Take $\bc=(c_t)\in \N^N$, such that $\eta_{w_\bullet\nu}^*(L(\mathbf{c})^-\eta_\nu)\neq 0$. By the weight consideration, we have 
    \begin{align}\label{eq:bwnu}
    \sum_{1\leq t\leq N}c_t\beta_t=\nu-w_\bullet\nu,
    \end{align}
    which is a nonnegative linear combination of positive roots in $\mathcal{R}_\bullet$. For $1\leq t\leq L$, $\beta_t\in \mathcal{R}^+\backslash\mathcal{R}_\bullet^+$, and hence \eqref{eq:bwnu} forces that $c_t=0$. Note that $T_{w_0w_\bullet}T_j=T_{j'}T_{w_0w_\bullet}$ and $T_{w_0w_\bullet}(F_j)=F_{j'}$ for $j\in \bI$. Thus, we have 
    \[
    L(\bc)^-=T_{w_0w_\bullet}\big(T_{j_1}\cdots T_{j_{M-1}}(\mathbf{F}_{j_M}^{(c_N)})\cdots T_{j_1}(\mathbf{F}_{j_2}^{(c_{L+2})})\mathbf{F}_{j_1}^{(c_{L+1})}\big),
    \]
    and 
    \[
    \eta_{w_\bullet\nu}^*(L(\mathbf{c})^-\eta_\nu)\in \pm q^{\Z}\eta_{w_0\nu}^*
    \Big(T_{j_1}\cdots T_{j_{M-1}}(\mathbf{F}_{j_M}^{(c_N)})\cdots T_{j_1}(\mathbf{F}_{j_2}^{(c_{L+2})})\mathbf{F}_{j_1}^{(c_{L+1})}\eta_{w_\bullet w_0\nu}\Big).
    \]
    Then the remaining proof follows from the similar arguments as in the proof of Lemma~\ref{le:nm}.
\end{proof}

    Recall the total degree $d$ from \eqref{eq:totaldeg}.

    \begin{lemma}\label{le:pbwt}
        For any $x\in\Ui$, let us write $x$ in terms of the PBW bases of $\Ui$ and $\U$ respectively, that is,
        \begin{align*}
            x&=\sum_{\mu\in P^\theta,\bc\in \N^N,\bc'\in\N^M} c_{\mu,\bc,\bc'}K_\mu\orProd_{1\leq t\leq N}B_{\beta_t}^{c_t}\orProd_{1\leq t\leq M}E_{\gamma_t}^{c'_t}=\sum_{\mu\in P,\bd,\bd'\in\N^N}d_{\mu,\bd,\bd'}K_\mu\orProd_{1\leq t\leq N}F_{\beta_t}^{d_t}\orProd_{1\leq t\leq N}E_{\beta_t}^{d'_t}.
        \end{align*}
        Then for any $(\mu_1,\bc_1,\bc'_1)\in P^\theta\times\N^N\times \N^M$ such that $c_{\mu_1,\bc_1,\bc'_1}\neq 0$ and $d(M_{\mu_1,\bc_1,\bc'_1})$ is maximal among all $d(M_{\mu,\bc,\bc'})$ for $(\mu,\bc,\bc')\in P^\theta\times \N^N\times \N^M$ with $c_{\mu,\bc,\bc'}\neq 0$, one has 
        $$
        c_{\mu_1,\bc_1,\bc_1'}=d_{\mu_1,\bc_1,\tilde{\bc}_1'}.
        $$ 
        Here $\tilde{\bc}_1'=(\tilde{c}'_{1,k})\in\N^N$ is defined by $\tilde{c}_{1,k}'=0$ for $1\leq k\leq L$ and $\tilde{c}'_{1,L+k}=c_{1,k}'$ for $1\leq k\leq M$.
    \end{lemma}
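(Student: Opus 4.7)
The plan is to apply the embedding $\iota:\Ui_{\A'}\hookrightarrow\U_{\A'}$ and compare the $\U$-PBW coefficient of a distinguished monomial. By \cite[Corollary~4.3]{LYZ25}, each root vector decomposes as $\iota(B_{\beta_t})=F_{\beta_t}+X_t$ with $X_t\in\U_{\A',\leqslant \het^\imath(\beta_t)-1}$. Since the $h^\imath$-filtration is multiplicative, when one expands $\iota(M_{\mu,\bc,\bc'})$ in the $\U$-PBW basis, the \emph{main term}---obtained by selecting $F_{\beta_t}$ from each factor $\iota(B_{\beta_t})$---is exactly the ordered PBW monomial $K_\mu\orProd_{1\leq t\leq N}F_{\beta_t}^{c_t}\orProd_{1\leq t\leq N}E_{\beta_t}^{\tilde{c}'_t}$ (using $E_{\gamma_k}=E_{\beta_{L+k}}$ and $\tilde{c}'_t=0$ for $t\leq L$), while every other contribution to its PBW expansion lies in $\U_{\A',\leqslant h^\imath((\bc,\bc'))-1}$.

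Next, I will invoke the observation that under the opposite lex order of \eqref{eq:totaldeg}, the coordinate $h^\imath$ is the most significant one (this convention is consistent with the arguments given in the proofs of Lemmas~\ref{lem:Bconvex} and \ref{lem:BE}). Consequently, the maximality assumption on $d(M_{\mu_1,\bc_1,\bc'_1})$ forces $h^\imath((\bc_1,\bc'_1))\geq h^\imath((\bc,\bc'))$ for every $(\mu,\bc,\bc')$ with $c_{\mu,\bc,\bc'}\neq 0$.

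Finally, I compute the $\U$-PBW coefficient of the target monomial $K_{\mu_1}\orProd_{t}F_{\beta_t}^{c_{1,t}}\orProd_{t}E_{\beta_t}^{\tilde{c}'_{1,t}}$ in $\iota(x)$, whose $h^\imath$-value equals $h^\imath((\bc_1,\bc'_1))$. There are three possible sources of contribution: (a) the main term of $\iota(M_{\mu_1,\bc_1,\bc'_1})$, producing exactly $c_{\mu_1,\bc_1,\bc'_1}$; (b) the main term of $\iota(M_{\mu,\bc,\bc'})$ for any other triple, which yields a distinct $\U$-PBW basis element and so contributes nothing; and (c) the non-main part of any $\iota(M_{\mu,\bc,\bc'})$ with $c_{\mu,\bc,\bc'}\neq 0$, which after PBW reduction has $h^\imath$ strictly less than $h^\imath((\bc,\bc'))\leq h^\imath((\bc_1,\bc'_1))$ and hence cannot match the target. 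Summing these, one obtains $d_{\mu_1,\bc_1,\tilde{\bc}'_1}=c_{\mu_1,\bc_1,\bc'_1}$. The only subtle point is verifying that $h^\imath$ dominates the opposite lex order on $\N^{N+M+2}$; once this is settled, the argument is a direct leading-term comparison.
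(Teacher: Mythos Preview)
Your proposal is correct and follows essentially the same route as the paper's proof: both use \cite[Corollary~4.3]{LYZ25} to write $\iota(M_{\mu,\bc,\bc'})$ as the ``main'' $\U$-PBW monomial plus a remainder of strictly smaller $h^\imath$-value, then invoke the maximality of $d(M_{\mu_1,\bc_1,\bc'_1})$ (with $h^\imath$ the most significant coordinate) to rule out any contribution to the target coefficient except the one coming from $(\mu_1,\bc_1,\bc'_1)$ itself. The paper presents the same reasoning slightly differently, splitting the remainder $\widetilde{R}$ into a part $\widetilde{R}_2$ containing a factor $E_{\beta_t}$ with $t\le L$ (hence never matching the target, whose $\tilde{c}'_{1,t}=0$ there) and a part $\widetilde{R}_1$ with strictly smaller $\sum_{t\le L} d_t\,\het^\imath(\beta_t)$; your uniform $h^\imath$-comparison subsumes both cases at once.
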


    \begin{proof}
        Write $\beta_{L+t}=\gamma_t$ for $1\le t\le M$. Given $\mu\in P^\theta,\bc\in \N^N,\bc'\in\N^M$, by \cite[Corollary 4.3]{LYZ25}, we can write
        \[
        K_\mu\orProd_{1\leq t\leq N}B_{\beta_t}^{c_t}\orProd_{1\leq t\leq M}E_{\beta_{L+t}}^{c'_t}=K_\mu\orProd_{1\leq t\leq N}F_{\beta_t}^{c_t}\orProd_{1\leq t\leq M}E_{\beta_{L+t}}^{c'_t}+\widetilde{R},
        \]
        where $\widetilde{R}$ is a linear combination of elements  
        \begin{equation}\label{eq:kfe}
        K_{\mu_1}\orProd_{1\leq t\leq N}F_{\beta_t}^{d_t}\orProd_{1\leq t\leq N}E_{\beta_t}^{d'_t}
        \end{equation}
        for $\mu_1 \in P$, $\bd,\bd'\in\N^N$ with
        \[
        \sum_{t=1}^L(d_t-d_t')\het^\imath(\beta_t)<\sum_{t=1}^Lc_t\het^\imath(\beta_t)=h^\imath(M_{\mu,\bc,\bc'}).
        \]
        Hence we can write $\widetilde{R}=\widetilde{R}_1+\widetilde{R}_2$, such that $\widetilde{R}_1$ is a linear combination of elements in the form of \eqref{eq:kfe} with $d_t'=0$ for $1\leq t\leq L$ and $\sum_{t=1}^Ld_t\het^\imath(\beta_t)<\sum_{t=1}^Lc_t\het^\imath(\beta_t)$, and $\widetilde{R}_2$ is a linear combination of elements in the form of \eqref{eq:kfe} with $d_t'\neq 0$ for some $1\leq t\leq L$. The lemma follows immediately.
    \end{proof}

We are now ready to compute the image $\overline{d_\nu}$ in $\Gr \Ui_{\A'}$.

\begin{proposition}\label{prop:dv}
    Take $\nu\in P_+^\imath$. The image of $d_{\nu}$ in the associated graded algebra $\Gr \Ui_{\A'}$ is 
    \[
    \overline{d_{\nu}}=c_\nu K_{-(1+w_0)\nu}\prod_{k=1}^NB_{\beta_k}^{\langle \alpha_{i_k}^\vee,-w_0\nu\rangle }\prod_{k=1}^ME_{\gamma_k}^{\langle \alpha_{j_k}^\vee,\nu\rangle},
    \]
    where $c_\nu\in \pm q^{\Z/D}$. 
\end{proposition}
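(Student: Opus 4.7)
The plan is to read off the image $\overline{d_\nu}$ directly from the expansion \eqref{eq:dn}. By \eqref{eq:lc}, each summand $K_{\nu_{b,\bc',\mu}}\widetilde{L(\bc)}^-\widetilde{L(\bc')}^+$ equals, up to a factor in $\pm q^{\Z/2D}$, the De Concini-Kac PBW monomial $K_{\nu_{b,\bc',\mu}}F^\bc E^{\bc'}$ of $\U$. Hence \eqref{eq:dn} already expresses $d_\nu$ in the $\U$-PBW basis, and by Lemma~\ref{le:pbwt} the leading $\Ui$-PBW term of $d_\nu$ can be located by finding the lex-maximal total-degree $\U$-PBW monomial in \eqref{eq:dn} whose $E$-exponents vanish on positions $1,\ldots,L$.

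I will locate this maximum by lex priority. First, to maximize the $F$-exponent $\bc$: for $b^*(\mathscr{K}_{\bc,\bc',\mu}b)\neq 0$, the operator $L(\bc)^+$ must raise an intermediate vector of weight $\nu-|\bc|$ to a direction of $b$. Pushing $|\bc|$ toward its maximum value $\nu-w_0\nu$ forces $b=\eta_\nu$ and the intermediate vector to contain a nonzero $\eta_{w_0\nu}$-component, and Lemma~\ref{le:nm} then pins down $\bc=\bc(\tau_0\nu)=\bc(-w_0\nu)$, yielding the $F_{\beta_k}$-exponents $\langle\alpha^\vee_{i_k},-w_0\nu\rangle$ as in the claim, with matrix-element scalar $\eta_\nu^*(L(\bc(\tau_0\nu))^+\eta_{w_0\nu})\in \pm q^\Z$.

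With $b=\eta_\nu$ and $\bc$ fixed, the remaining requirement is that $\Upsilon_\mu T_{w_\bullet}^{-1}T_{w_0}^{-1}\tau_0\tau L(\bc')^-\eta_\nu$ be a nonzero multiple of $\eta_{w_0\nu}$. A weight-matching argument forces $\mu=0$ (so $\Upsilon_\mu=1$), and then the condition reduces (via the $\tau_0\tau$-invariance of $\nu\in P^\imath$ from Lemma~\ref{le:pi} together with the commutation $w_\bullet w_0=w_0 w_\bullet$ coming from $w_0=\bbw_0w_\bullet=w_\bullet\bbw_0$) to $L(\bc')^-\eta_\nu\propto\eta_{w_\bullet\nu}$. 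Lemma~\ref{le:nmb} then identifies the lex-maximal such $\bc'$ as $\widetilde{\bc}^\bullet(\tau_0\nu)$, with $E_{\gamma_k}$-exponents $c^\bullet_k(\tau_0\nu)=\langle\alpha^\vee_{j'_k},\nu\rangle=\langle\alpha^\vee_{j_k},\nu\rangle$ (using $j'_k=\tau_0\tau j_k$ and $\tau_0\tau\nu=\nu$), matching the claim.

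Finally, substituting $b=\eta_\nu$, $\mu=0$, and $|\bc'|=\nu-w_\bullet\nu$ into \eqref{eq:nub} and simplifying via the same identities reduces the $K$-exponent to $-(1+w_0)\nu$. Collecting the scalars $c_{b,\bc,\bc',\mu}$, $\lambda^\pm_\bc$, and the matrix-element constant from Lemma~\ref{le:nm} produces $c_\nu\in\pm q^{\Z/D}$. The main obstacle is the second step: carefully tracking the actions of $\tau_0\tau$ and the Lusztig symmetries $T_{w_0},T_{w_\bullet}$ on $V(\nu)$ to justify the reduction to $L(\bc')^-\eta_\nu\propto\eta_{w_\bullet\nu}$ and the subsequent identification of $\bc'$ via Lemma~\ref{le:nmb}, and ensuring that the uniquely specified configuration $(b,\mu,\bc,\bc')=(\eta_\nu,0,\bc(-w_0\nu),\widetilde{\bc}^\bullet(\tau_0\nu))$ is the sole contributor to the leading monomial, so that no cancellation occurs.
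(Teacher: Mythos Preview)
Your proposal is correct and follows essentially the same approach as the paper's proof: expand $d_\nu$ via \eqref{eq:dn}, use weight considerations to force $|\bc|=\nu-w_0\nu$, $\mu=0$, $b=\eta_\nu$, $|\bc'|=\nu-w_\bullet\nu$ (this is the paper's ``Claim''), then invoke Lemmas~\ref{le:nm} and~\ref{le:nmb} to pin down $\bc,\bc'$, compute the $K$-exponent from \eqref{eq:nub}, and transfer to the $\Ui$-PBW basis via Lemma~\ref{le:pbwt}. Your identification of the $E_\gamma$-exponents via $j'_k=\tau_0\tau j_k$ and $\tau_0\tau\nu=\nu$ is slightly cleaner than the paper's route through the $\theta$-antiinvariance of $\nu-w_0w_\bullet\nu$, but both arguments are equivalent; the technical point you flag as the ``main obstacle'' is exactly where the paper's proof is also terse.
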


\begin{proof}
    For $\mathbf{c}\in\N^N$, write $|\mathbf{c}|=\sum_{k=1}^N c_k\beta_k\in Q^+$. Then $\wt\; L(\mathbf{c})^+=\wt\;\widetilde{L(\mathbf{c})}^+=|\mathbf{c}|$ and $\wt\; L'(\mathbf{c})^-=\wt\;\widetilde{L(\mathbf{c})}^-=-|\mathbf{c}|$. Recall \eqref{eq:kc}. Firstly, we have the following claim.

    \textbf{Claim:} \emph{ $b^*(\mathscr{K}_{\mathbf{c},\mathbf{c}',\mu}\cdot b)=0$ for any $b\in \mathbf{B}(\nu),\mu\in Q^+$, unless either $|\mathbf{c}|<\nu-w_0\nu$, or $|\mathbf{c}|=\nu-w_0\nu$ and $|\mathbf{c}'|=\nu-w_\bullet \nu$. In the second case, one has $b^*(\mathscr{K}_{\mathbf{c},\mathbf{c}',\mu}\cdot b)=0$ if $(\mu,b)\neq (0,\eta_\nu)$. }

    We prove this claim. If $|\mathbf{c}|>\nu-w_0\nu$, then by the weight consideration one has $L(\mathbf{c})^+\mid_{V(\nu)}\equiv 0$, so $\mathscr{K}_{\mathbf{c},\mathbf{c}',\mu}\mid _{V(\nu)}\equiv 0$. 
    Hence, it suffices to assume $|\mathbf{c}|=\nu-w_0\nu$. Then $L(\mathbf{c})^+x\mid _{V(\nu)}\equiv 0$ for any $x\in \U$ with $\wt \;x\in Q^+$. Hence $L(\mathbf{c})^+\Upsilon_\mu\mid _{V(\nu)}\equiv 0$ if $\mu\neq 0$. Also note that $L(\mathbf{c})^+\cdot v=0$ unless $\wt\; v=w_0\nu$ and $\wt\;L(\mathbf{c})^+\cdot v=\nu$. Hence $b^*(\mathscr{K}_{\mathbf{c},\mathbf{c}',\mu})=0$ unless $(\mu,b)=(0,\eta_\nu)$ and 
    \begin{equation}\label{eq:tt}
        T^{-1}_{w_\bullet}T^{-1}_{w_0}\tau_0\tau L'(\mathbf{c}')^-\eta_\nu=c\eta_{w_0\nu},\quad \text{for }c\in \C(q).
    \end{equation}
    Note that $\tau\tau_0w_\bullet\nu=w_\bullet\tau\tau_0\nu=w_\bullet\nu$. Hence \eqref{eq:tt} is equivalent to $L(\mathbf{c}')^-\eta_\nu=c'\eta_{w_\bullet\nu}$ for $c'\in \C(q)$, which implies that $|\mathbf{c}'|=\nu-w_\bullet \nu$. We complete the proof of the claim.

    Now suppose that $|\mathbf{c}|=\nu-w_0\nu$, $|\mathbf{c}'|=\nu-w_\bullet\nu$, $\mu=0$ and $b=\eta_\nu$. The coefficient $c$ in \eqref{eq:tt} actually belongs to $\pm q^\Z$ thanks to (a) in the proof of Lemma \ref{le:nm}. By the above arguments we have 
    \begin{equation}\label{eq:bkc}
        b^*(\mathscr{K}_{\mathbf{c},\mathbf{c}',\mu}\cdot b)=\eta_\nu^*(L(\mathbf{c})^+T_{w_\bullet}^{-1}T_{w_0}^{-1}\tau_0\tau L'(\mathbf{c})^-\eta_\nu)=c\eta_\nu^*(L(\mathbf{c})^+\eta_{w_0\nu})\eta_{w_\bullet\nu}^*(L'(\mathbf{c}')^-\eta_\nu),
    \end{equation}
    where $c\in \pm q^\Z$. Recall $\nu_{b,\bc',\mu}$ from \eqref{eq:nub}. It follows from the direct computation that under our assumption one has $\nu_{b,\bc',\mu}=-(1+w_0)\nu$. 


    Let us write $d_\nu$ in terms of PBW basis of $\Ui$, that is,
    \begin{equation}
        d_\nu=\sum_{\mu\in P^\theta,\bc\in\N^N,\bc'\in\N^M}c_{\mu,\bc,\bc'}K_\mu\orProd_{1\leq t\leq N}B_{\beta_t}^{c_t}\orProd_{1\leq t\leq M}E_{\gamma_t}^{c'_t},
    \end{equation}
    where $c_{\mu,\bc,\bc'}\in\A'$. 
    
   By \eqref{eq:lc} and \eqref{eq:dn}, we can write
    \begin{equation}
        d_{\nu}=\sum_{\substack{b\in\mathbf{B}(\nu),\; \mu\in Q^+\\ \mathbf{c},\mathbf{c}'\in\N^N}}c'_{b,\mathbf{c},\mathbf{c}',\mu}b^*(\mathscr{K}_{\mathbf{c},\mathbf{c}',\mu}\cdot b)K_{\nu_{b,\mathbf{c}',\mu}}\orProd_{1\leq t\leq N} F_{\beta_t}^{c_t}\orProd_{1\leq t\leq N}E_{\beta_t}^{c'_t},
    \end{equation}
    where $c'_{b,\mathbf{c},\mathbf{c}',\mu}\in \pm q^{\Z/(2D)}$. Moreover, combining the claim, \eqref{eq:bkc}, Lemma \ref{le:nm} and Lemma \ref{le:nmb}, we conclude that 
    \begin{equation}\label{eq:dbu}
    d_\nu=c_\nu'K_{-(1+w_0)\nu}\orProd_{1\leq t\leq N}F_{\beta_t}^{c_t(\tau_0\nu)}\orProd_{1\leq t\leq M}E_{\gamma_t}^{c^\bullet_t(\tau_0\nu)}+\text{l.o.t.},
    \end{equation}
    where $c_\nu'\in \pm q^{\Z/D}$ and l.o.t. is a linear combination of elements $$K_{\mu}\orProd_{1\leq t\leq N} F_{\beta_t}^{c_t}\orProd_{1\leq t\leq N}E_{\beta_t}^{c'_t}$$ for $\mu\in P$, $c_t,c'_t\in \N$, and 
    \[
    (c_N,\cdots,c_1,c'_{N},\cdots c'_{L+1}, \het_\bullet(\bc,\bc'),h^\imath(\bc,\bc'))<d(M_{-(1+w_0)\nu,\bc(\tau_0\nu),\bc^\bullet(\tau_0\nu)})=\bd_\nu
    \]
    in $\N^{N+M+2}$.

    By Lemma \ref{le:pbwt} and \eqref{eq:dbu}, we conclude that 
    \[
    d_\nu\in \Ui({\leq \bd_\nu})\quad \text{ and }\quad d_\nu-c_\nu'K_{-(1+w_0)\nu}\prod_{t=1}^NB_{\beta_t}^{c_t(\tau_0\nu)}\prod_{t=1}^ME_{\gamma_t}^{c^\bullet_t(\tau_0\nu)}\in \Ui{(<\bd_\nu)}.
    \]
    By definitions we have $c_t(\tau_0\nu)=\langle \alpha_{i_t}^\vee,\nu\rangle$ for $1\leq t\leq N$, and $c_t^\bullet(\tau_0\nu)=\langle \alpha_{j_t'}^\vee,\nu\rangle=\langle \alpha_{j_t}^\vee,w_0w_\bullet\nu\rangle$ for $1\leq t\leq M$. Since $\nu\in P^\imath$, by Lemma \ref{le:pi} (3), we deduce that $\nu-w_0w_\bullet\nu$ is $\theta$-antiinvariant. Combining with $\theta(\alpha_{j_t}^\vee)=\alpha_{j_t}^\vee$, we conclude that $\langle \alpha^\vee_{j_t},w_0w_\bullet\nu\rangle=\langle \alpha^\vee_{j_t},\nu\rangle$, for $1\leq t\leq M$.
    
    We complete the proof.
\end{proof}

The following lemma was obtained from \cite[Theorem~8.3]{KL08}. It is now a direct consequence of Proposition \ref{prop:dv}.

\begin{lemma}\label{lem:dmuleading}
    For $\nu\in P^\imath_+$, the element $d_{\nu}\in\Ui$ can be written uniquely as
    \[
    d_{\nu}=d_{\nu,\overline{\nu}}+\text{l.o.t.},
    \]
    where $0\neq d_{\nu,\overline{\nu}}\in \U_{\overline{\nu}}$, $\overline{\nu}=w_0\nu-w_\bullet \nu$, and $\text{l.o.t.}\in\sum_{\text{ht}^\imath(\mu)<\text{ht}^\imath(\overline{\nu})}\U_{-\mu}$.
\end{lemma}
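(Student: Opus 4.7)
My plan is to derive Lemma~\ref{lem:dmuleading} as a direct consequence of Proposition~\ref{prop:dv} via a $Q$-weight computation in $\U$.  By Proposition~\ref{prop:dv}, one has $d_\nu = c_\nu L_\nu + R$ in $\Ui_{\A'}$, where
$L_\nu = K_{-(1+w_0)\nu}\orProd_{t=1}^N B_{\beta_t}^{c_t(\tau_0\nu)}\orProd_{t=1}^M E_{\gamma_t}^{c_t^\bullet(\tau_0\nu)}$ with $c_t(\tau_0\nu) = \langle \alpha_{i_t}^\vee, -w_0\nu\rangle$ and $c_t^\bullet(\tau_0\nu) = \langle \alpha_{j_t'}^\vee, \nu\rangle$ (as derived in the proof of that proposition), $c_\nu\in \pm q^{\Z/D}\setminus\{0\}$, and $R$ has strictly smaller $\N^{N+M+2}$-filtered degree.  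Since the $h^\imath$-coordinate is the most significant in the opposite lexicographical order, the top-$h^\imath$ layer of $d_\nu$ in $\U$ is determined entirely by $c_\nu L_\nu$.

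Next, I would pass to $\U$ via the embedding $\iota$.  By \cite[Corollary~4.3]{LYZ25}, each $\iota(B_{\beta_k})$ has leading term $F_{\beta_k}\in \U_{-\beta_k}$ in the $h^\imath$-filtration, with further corrections lying in strictly smaller $h^\imath$-layers.  The top-$h^\imath$ weight component of $L_\nu$ in $\U$ is thus obtained by replacing each $B_{\beta_k}$ by $F_{\beta_k}$, and it lies in a single weight space $\U_{\mu^*}$, where
\[
\mu^* = -\sum_{t=1}^N c_t(\tau_0\nu)\beta_t + \sum_{t=1}^M c_t^\bullet(\tau_0\nu)\gamma_t.
\]
Applying the Weyl-group telescoping identity $\sum_t \langle \alpha_{i_t}^\vee,\lambda\rangle \beta_t = \lambda - w_0\lambda$ for $w_0 = s_{i_1}\cdots s_{i_N}$ with $\lambda = -w_0\nu$ gives $\sum_t c_t(\tau_0\nu)\beta_t = \nu - w_0\nu$; the analogous identity $\sum_t \langle \alpha_{j_t'}^\vee,\lambda\rangle \gamma_t = \lambda - w_\bullet\lambda$ for the reduced expression $w_\bullet = s_{j_1'}\cdots s_{j_M'}$ with $\lambda = \nu$ gives $\sum_t c_t^\bullet(\tau_0\nu)\gamma_t = \nu - w_\bullet\nu$.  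Hence $\mu^* = w_0\nu - w_\bullet\nu = \overline{\nu}$, and the non-vanishing of $c_\nu$ yields $d_{\nu,\overline{\nu}}\neq 0$.

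Every remaining weight contribution to $d_\nu$ in $\U$ -- arising either from the lower-order corrections to each $\iota(B_{\beta_k})$ in $L_\nu$ or from the strictly-lower-degree piece $R$ -- lies in strictly smaller $h^\imath$-layers, which after substituting $\mu = -\eta$ for the corresponding weight $\eta$ produces the stated l.o.t.\ condition.  The main delicate step is the careful book-keeping of primed versus unprimed indices $j_k'$ in the $\gamma_k$-sum; the identification $c_k^\bullet(\tau_0\nu) = \langle \alpha_{j_k'}^\vee,\nu\rangle$, natural since $\gamma_k = s_{j_1'}\cdots s_{j_{k-1}'}(\alpha_{j_k'})$, is what makes the second Weyl identity apply cleanly and delivers the predicted leading weight $\overline{\nu} = w_0\nu - w_\bullet\nu$.
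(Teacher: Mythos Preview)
Your proof is correct and follows essentially the same approach as the paper: both deduce the lemma from Proposition~\ref{prop:dv} by computing the $Q$-weight of the leading $F$-monomial. The only minor difference is that you apply the Weyl telescoping identity $\sum_k \langle \alpha_{i_k}^\vee,\lambda\rangle\,\beta_k=\lambda-w\lambda$ directly, whereas the paper cites Lemmas~\ref{le:nm} and~\ref{le:nmb} (which encode the same computation) to obtain the weights $w_0\nu-\nu$ and $\nu-w_\bullet\nu$.
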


\begin{proof}
    It suffices to show that 
    \[
    K_{-(1+w_0)\nu}\prod_{k=1}^NF_{\beta_k}^{\langle \alpha_{i_k}^\vee,-w_0\nu\rangle }\prod_{k=1}^ME_{\gamma_k}^{\langle \alpha_{j_k}^\vee,\nu\rangle}\in \U_{\overline{\nu}}.
    \]

    By Lemma \ref{le:nm} and Lemma \ref{le:nmb} we have 
    \[
    \deg \prod_{k=1}^NF_{\beta_k}^{\langle \alpha_{i_k}^\vee,-w_0\nu\rangle }=w_0\nu-\nu,\quad \text{and}\quad \deg \prod_{k=1}^ME_{\gamma_k}^{\langle \alpha_{j_k}^\vee,\nu\rangle}=\nu-w_\bullet\nu.
    \]
    Hence 
    \[
    \deg \big(K_{-(1+w_0)\nu}\prod_{k=1}^NF_{\beta_k}^{\langle \alpha_{i_k}^\vee,-w_0\nu\rangle }\prod_{k=1}^ME_{\gamma_k}^{\langle \alpha_{j_k}^\vee,\nu\rangle}\big)=w_0\nu-w_\bullet\nu=\overline{\nu},
    \]
    which completes the proof.
\end{proof}

\subsection{The degree of $\Uiv$}

Recall from Section~\ref{sec:iPBW} that $\Zi$ is a (localized) polynomial algebra and $\Uiv$ is a free module over $\Zi$. Thus, the submodule $\Ziv\subset \Uiv$ is free over $\Zi$.

Thanks to Proposition \ref{le:intn}, the central element $d_\nu$ belongs to $\Ui_{\A'}$ for any $\nu\in P^\imath_+$. 
For $\nu\in P^\imath_+$, we still use the notation $d_\nu$ to denote the image of $d_\nu$ in $\Uiv$. In particular, $d_{\nu_i}$, $1\leq i\leq m$, are central elements in $\Uiv$. 

\begin{proposition}\label{prop:dzl}
We have $\dim_{\Zi} \Ziv \geqslant \ell^{m}$ where $m=\rank \mathfrak{k}$.
\end{proposition}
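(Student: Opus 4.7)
The plan is to exhibit $\ell^m$ elements of $\Ziv$ that are $\Zi$-linearly independent, which will immediately yield $\dim_\Zi \Ziv \geq \ell^m$. The candidates are the monomials
\[
d^\ba := d_{\nu_1}^{a_1} d_{\nu_2}^{a_2} \cdots d_{\nu_m}^{a_m}, \qquad \ba=(a_i)\in \{0,1,\ldots,\ell-1\}^m,
\]
where the Kolb central elements $d_{\nu_i}$ belong to $\Ui_{\A'}$ by Proposition~\ref{le:intn} and are central in $\Uiv$ after specialization. To prove $\Zi$-linear independence, I will pass to the associated graded algebra $\Gr\Uiv$ along the $\N^{N+M+2}$-filtration of Section~\ref{sec:asa} and use the description of the center given in Proposition~\ref{prop:cendeg}.

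The first step is to identify leading terms: by Proposition~\ref{prop:dv}, the image of $d_{\nu_i}$ in $\Gr\Uiv$ is a nonzero scalar multiple $\lambda_i e_{\pi(i)}$ of the central generator in \eqref{eq:bek} indexed by $\tau_0\nu_i=-w_0\nu_i$. Using $w_0=-\tau_0$ on $P$ and the explicit formula \eqref{eq:nui}, together with the commutation $\tau\tau_0=\tau_0\tau$ on $\I$, one checks directly that $\tau_0$ restricts to a permutation $\pi$ of the set $\{\nu_1,\ldots,\nu_m\}$. The second step is to establish that $\{e_1^{a_1}\cdots e_m^{a_m}\mid 0\leq a_i<\ell\}$ is a $\Gr\Zi$-basis of $Z(\Gr\Uiv)$. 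Since $\Gr\Uiv$ is free of rank $\ell^{\dim\mathfrak{k}}$ over $\Gr\Zi$ (from the graded analogue of Proposition~\ref{prop:intPBW}) and has degree $\ell^{N_0}$ over its center by Proposition~\ref{prop:degd}, we obtain
\[
\operatorname{rank}_{\Gr\Zi}Z(\Gr\Uiv)=\ell^{\dim\mathfrak{k}-2N_0}=\ell^m.
\]
By Lemma~\ref{le:pi}(3), $(1+w_0)\nu_i\in P^\theta$, and a short twisted-polynomial computation (using that $\ell$ is odd, so the factor $v^{\ell(\ell-1)/2\cdot(\cdot)}$ arising from collecting monomials equals $1$) shows $e_i^\ell\in\Gr\Zi$. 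Hence the $\ell^m$ monomials span, and by the rank count they form a basis.

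Finally, suppose there exists a nontrivial relation $\sum_\ba c_\ba d^\ba=0$ in $\Uiv$ with $c_\ba\in \Zi$. Each summand has a well-defined total degree $D_\ba\in\N^{N+M+2}$; let $D$ be the maximum of $D_\ba$ over the support. Projecting onto the degree-$D$ component of $\Gr\Uiv$ yields
\[
\sum_{D_\ba=D}\overline{c_\ba}\Big(\prod_i \lambda_i^{a_i}\Big)\prod_i e_{\pi(i)}^{a_i}=0 \qquad\text{in }\Gr\Uiv.
\]
Since $\pi$ is a bijection, the monomials $\prod_i e_{\pi(i)}^{a_i}$ with $0\leq a_i<\ell$ coincide (after relabeling the exponents) with the $\Gr\Zi$-basis of $Z(\Gr\Uiv)$ established above, so every coefficient must vanish in $\Gr\Zi$. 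Because $\lambda_i\in\C^\times$, this forces $\overline{c_\ba}=0$ for every $\ba$ attaining the maximum, contradicting the assumption that those $c_\ba$ are nonzero elements of $\Zi$. The main obstacle in executing this plan is the second step: confirming that $Z(\Gr\Uiv)$ has rank exactly $\ell^m$ over $\Gr\Zi$ with the explicit monomial basis in the $e_i$, which requires carefully combining the PBW theorem of Proposition~\ref{prop:intPBW}, the twisted polynomial algebra structure of $\Gr\Uiv$, and the degree computation of Proposition~\ref{prop:degd}.
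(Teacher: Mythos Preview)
Your argument is correct and takes a genuinely different route from the paper's. Both proofs exhibit the same $\ell^m$ candidate elements $d^{\ba}=\prod_i d_{\nu_i}^{a_i}$ with $0\le a_i<\ell$, but establish $\Zi$-linear independence differently. The paper works with the weight grading alone: it invokes Lemma~\ref{lem:dmuleading} to isolate the leading weight $\bar\mu=w_0\mu-w_\bullet\mu$ of $d^\mu$, asserts that $\bar\mu_1-\bar\mu_2\notin\ell P$ for distinct $\mu_1,\mu_2\in P^\imath_\ell$, and derives a contradiction by comparing weight components of maximal relative height. You instead pass to the full $\N^{N+M+2}$-graded algebra $\Gr\Uiv$, where Proposition~\ref{prop:dv} pins down the leading term of each $d_{\nu_i}$ as an explicit monomial (not merely its weight), and then use Proposition~\ref{prop:cendeg} together with the degree count of Theorem~\ref{prop:degd} to conclude that these monomials are $\Gr\Zi$-independent.

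What your approach buys is robustness. The paper's weight claim can fail: in the diagonal case with $\g_0=\mathfrak{sl}_3$ one has $\nu_1=\omega_1+\omega_{2'}$, $\nu_2=\omega_2+\omega_{1'}$, and a direct computation gives $\bar\nu_1=\bar\nu_2=-(\omega_1+\omega_2+\omega_{1'}+\omega_{2'})$, so $\bar\nu_1-\bar\nu_2=0\in\ell P$. Your use of the full total degree avoids this, since the monomials $e_{\pi(i)}$ remain distinct in $\Gr\Uiv$ even when their weights coincide. Your second step goes through as written: $\Gr\Uiv$ is free of rank $\ell^{\dim\mathfrak{k}}$ over the $\ell$th-power subalgebra $\Gr\Zi$, and $\dim_{Q(Z(\Gr\Uiv))}Q(\Gr\Uiv)=\ell^{2N_0}$ by Theorem~\ref{prop:degd} and \S\ref{sec:icd}; passing to fraction fields gives $\dim_{Q(\Gr\Zi)}Q(Z(\Gr\Uiv))=\ell^m$, whence the $\ell^m$ spanning monomials (they span since $e_i^\ell\in\Gr\Zi$ and the $e_i$ generate $Z(\Gr\Uiv)$ over $\Gr\Zi$ by Proposition~\ref{prop:cendeg}) are $Q(\Gr\Zi)$-linearly independent, hence $\Gr\Zi$-linearly independent.
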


\begin{proof}
Note that $P^\imath_{\ell}=\{\mu= \sum_{i=1}^m k_i\nu_i| 0\leq k_i < \ell \}$ forms a set of representatives for $P^\imath\slash \ell P^\imath$. For $\mu=\sum_{i=1}^m k_i\nu_i$, denote $d^{\mu}=d_{\nu_1}^{k_1}\cdots d_{\nu_m}^{k_m}$.

It suffices to show that the set $\{d^\mu | \mu \in P^\imath_{\ell}\}$ is linearly independent over $\Zi$. Write $\ov{\mu}= \theta\mu-\mu=w_0\mu-w_\bullet\mu$ for $\mu\in P^\imath$. Note that by our construction of $P^\imath_{\ell}$, we have $\ov{\mu_1}-\ov{\mu_2}\not\in \ell P$ for any $\mu_1\neq\mu_2,\mu_1,\mu_2\in P^\imath_{\ell}$.

Suppose otherwise that there exist $c_\mu\in \Zi$ not all equal $0$ such that
\begin{align}\label{eq:sumdv}
\sum_{\mu\in P^\imath_{\ell}} c_\mu d^\mu=0.
\end{align}
Let $\widetilde{c}_\mu$ be a lift of $c_\mu$ in $\Ui_{\A'}$. Recall that $\Ui_{\A'}$ is a free $\A'$-module. Then \eqref{eq:sumdv} implies that in $\Ui_{\A'}$
\begin{align}\label{eq:sumdmu}
\sum_{\mu\in P^\imath_{\ell}} \widetilde{c}_\mu d^\mu\in(q-v)\Ui_{\A'}.
\end{align}
Without loss of generality, we assume that when writing in terms of the PBW basis of $\Ui_{\A'}$, $\widetilde{c}_\mu$ does not have any nonzero summand lying in $(q-v)\Ui_{\A'}$. In particular, due to the definition of $\Zi$, any weight component of $\widetilde{c}_\mu$ has a weight in $\ell P$.

We consider the set $\mathscr{P}$ of weights $\lambda$ such that the weight $\lambda$ component of $\widetilde{c}_\mu d^\mu$ is nonzero for some $\mu\in P^\imath_{\ell}$.  Let $\lambda_0\in \mathscr{P}$ such that its relative height $\het^\imath(\lambda_0)$ is maximal. By the assumption and Lemma~\ref{lem:dmuleading}, $\lambda_0$ must have the form $\lambda_0=\ov{\mu_0} + \ell \nu_0$ for some $\mu_0\in P^\imath_{\ell},\nu_0\in P$.

Write $\mu_0=p_1 \nu_1+\cdots+p_m \nu_m$. By Lemma~\ref{lem:dmuleading}, we have 
\[
d^{\mu_0}=d_{\nu_1,\ov{\nu_1}}^{p_1}\cdots d_{\nu_m,\ov{\nu_m}}^{p_m} + o_{\mu_0}, \qquad \text{ in } \U_{\A'}
\]
where the leading terms $d_{\nu_1,\ov{\nu_1}}^{p_1}\cdots d_{\nu_m,\ov{\nu_m}}^{p_m}$ is nonzero when descending to $\Uiv$ and the relative height of weights of $o_{\mu_0}$ are strictly smaller than $\het^\imath(\ov{\mu_0})$. 

Due to \eqref{eq:sumdmu}, the remaining summands $\sum\limits_{\mu\in P^\imath_{\ell},\mu\neq \mu_0} \widetilde{c}_\mu d^\mu$ must have a nonzero component of weight $\lambda_0$. Since $\ov{\mu_0}-\ov{\mu}\not\in  \ell P$ for any $\mu\in P^\imath_{\ell}, \mu\neq \mu_0$, the weights of the leading terms of $\widetilde{c}_\mu d^\mu$ cannot be $\lambda_0$ for any $\mu\neq \mu_0$. Therefore, there exists $\mu\in P^\imath_{\ell}$ such that the lower term $\widetilde{c}_\mu o_\mu$ of $\widetilde{c}_\mu d^\mu$ has a nonzero component of weight $\lambda_0$. However, by Lemma~\ref{lem:dmuleading}, this forces that $\widetilde{c}_\mu d^\mu$ has a weight component of weight $\lambda_1$ such that $\het^\imath(\lambda_1)>\het^\imath(\lambda_0)$. This contradicts the choice of $\lambda_0$.

Therefore, we proved that the set $\{d^\mu | \mu \in P^\imath_{\ell}\}$ is linearly independent over $\Zi$. This implies that $\dim_{\Zi} \Ziv \geqslant \ell^{m}$.
\end{proof}

\begin{theorem}\label{thm:deg}
  The degree of $\Ui_v$ is $\ell^{N_0}$, where $N_0=(\text{dim }\mathfrak{k}-\text{rank }\mathfrak{k})/2$ is the number of positive roots of $\mathfrak{k}$. Therefore $\dim_{\Zi} \Ziv = \ell^{m}$ where $m=\rank \k$.
\end{theorem}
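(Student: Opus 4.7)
The plan is to first pin down $\dim_{\Zi} \Ziv = \ell^{m}$ and then deduce the degree from the tower formula. Since $\Uiv$ is free of rank $\ell^{\dim \k}$ over $\Zi$ by Theorem~\ref{thm:findim}, the chain $Q(\Zi) \subseteq Q(\Ziv) \subseteq Q(\Uiv)$ together with $d^2 = \dim_{Q(\Ziv)} Q(\Uiv)$ yields $d^2 \cdot \dim_{\Zi} \Ziv = \ell^{\dim \k}$; combined with $\dim \k = 2N_0 + m$, once $\dim_{\Zi}\Ziv = \ell^m$ is in hand, we immediately get $d = \ell^{N_0}$.

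The lower bound $\dim_{\Zi} \Ziv \geqslant \ell^m$ is supplied by Proposition~\ref{prop:dzl}, so the heart of the proof is the matching upper bound, which I would extract from the filtration of Section~\ref{sec:asa}. The crucial observation is that upon substituting $\nu = -w_0 \nu_i$ in Proposition~\ref{prop:dv} (noting $-w_0 \nu_i \in P^\imath_+$ since $P^\imath$ is $\tau_0$-stable by Lemma~\ref{le:pi}) and using $w_0^2 = 1$, the leading term $\overline{d_{-w_0 \nu_i}}$ agrees, up to a nonzero scalar, with the $i$-th exotic generator $z_i$ of $Z(\Gr \Uiv)$ from Proposition~\ref{prop:cendeg}. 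Since $\binom{\ell}{2} \equiv 0 \pmod{\ell}$ for $\ell$ odd, each $z_i^\ell$ is a plain product of $\ell$-th powers of the $\C$-generators of $\Gr \Uiv$, hence lies in the image of $\Zi$. Together with Proposition~\ref{prop:cendeg}, this shows that $Z(\Gr \Uiv)$ is a free $\Zi$-module of rank $\ell^m$ with basis $\{z_1^{a_1}\cdots z_m^{a_m} \mid 0 \leqslant a_i < \ell\}$.

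A filtration-lifting induction then closes the argument: given $z \in \Ziv$, expand its leading term $\overline{z} \in Z(\Gr\Uiv)$ in the $\Zi$-basis above, subtract a matching $\Zi$-linear combination of the ordered monomials $d^{\ba} := d_{-w_0 \nu_1}^{a_1}\cdots d_{-w_0 \nu_m}^{a_m}$ (whose leading terms factor as products of the individual leading terms because $\Gr \Uiv$ is a domain), and iterate on the now strictly smaller filtration degree. A parallel reduction using $\overline{d_{-w_0 \nu_i}}^{\,\ell} \in \Zi$ replaces any exponent $a_i \geqslant \ell$ by its residue modulo $\ell$ plus a correction of lower filtration degree. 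Since $\N^{N+M+2}$ is well-ordered, both inductions terminate, showing that the $\ell^m$ monomials $d^{\ba}$ with $0 \leqslant a_i < \ell$ span $\Ziv$ over $\Zi$, whence $\dim_{\Zi}\Ziv \leqslant \ell^m$, as required.

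The main obstacle I anticipate is bookkeeping in these nested filtration inductions: one must verify that each reduction step strictly decreases the filtration degree of the residual and that lowering the total degree and lowering exponents below $\ell$ can be interleaved consistently. With the explicit leading-term data of Propositions~\ref{prop:dv} and~\ref{prop:cendeg} on hand, however, this reduces to a routine induction given that $\Gr \Uiv$ is a domain and the filtration is well-ordered.
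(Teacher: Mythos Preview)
Your approach is correct but takes a different route from the paper. The paper combines the \emph{lower} bound $\dim_{\Zi}\Ziv\geqslant\ell^m$ from Proposition~\ref{prop:dzl} with the general inequality $\deg\Uiv\geqslant\deg\Gr\Uiv$ for filtered algebras (cited from \cite[(1.3.2)]{DCKP93a}) and Theorem~\ref{prop:degd} to sandwich the degree: the tower formula gives $\deg\Uiv\leqslant\ell^{N_0}$, while passage to $\Gr$ gives $\deg\Uiv\geqslant\ell^{N_0}$; equality then forces $\dim_{\Zi}\Ziv=\ell^m$ a posteriori. You instead prove the \emph{upper} bound $\dim_{\Zi}\Ziv\leqslant\ell^m$ directly, by lifting the description of $Z(\Gr\Uiv)$ in Proposition~\ref{prop:cendeg} through the filtration. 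This is essentially the argument the paper later uses for Theorem~\ref{thm:icenter} (via Lemma~\ref{le:degc}), so your proof effectively merges Theorems~\ref{thm:deg} and~\ref{thm:icenter} into a single step. The paper's route is shorter and isolates a clean general principle; yours is more self-contained and makes the spanning set $\{d^{\ba}:0\leqslant a_i<\ell\}$ for $\Ziv$ over $\Zi$ explicit from the outset, so Theorem~\ref{thm:icenter} would become an immediate corollary rather than a separate argument.
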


\begin{proof}
    Write $Q(\Uiv)=Q(\Ziv)\otimes _{\Ziv}\Uiv$. By Theorem \ref{thm:findim} and Proposition \ref{prop:dzl}, we have
    \[
    \dim _{Q(\Ziv)}Q(\Uiv)=\dim_{Q(\Zi)}Q(\Uiv)/\dim _{Q(\Zi)}Q(\Ziv)\leq \ell ^{2N_0}.
    \]
    Hence $\deg \Uiv\leq \ell ^{N_0}$.

    On the other hand, by \cite[(1.3.2)]{DCKP93a} and Theorem~\ref{prop:degd}, we have 
    \[
    \deg \Uiv\geq \deg \Gr \Uiv=\ell ^{N_0}.
    \]
    Hence we conclude that $\deg \Uiv=\ell^{N_0}$. In particular, the inequality in Proposition \ref{prop:dzl} is actually an equality.
\end{proof}

Combining with Proposition \ref{prop:repalg}, we immediately obtain the following upper-bound on the dimensions of irreducible representations of $\Uiv$.

\begin{corollary}\label{cor:dimv}
    Set $N_0$ to be the number of positive roots for $\k$ as before. One has the follows.

    \begin{itemize}
        \item [(1)]For any $V\in\Irr\Uiv $, one has $\dim V\leq \ell^{N_0}$.
        \item [(2)] The set
        \[
        \X^0=\{x\in\X\mid \dim V=\ell^{N_0} \text{ for any }V\in \Irr \Ui_{v,x}\}
        \]
        is a non-empty Zariski open subset of $\X$.
    \end{itemize}
\end{corollary}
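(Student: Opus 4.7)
The plan is to deduce this corollary directly from Proposition~\ref{prop:repalg} applied to the algebra $R=\Uiv$ with the finitely generated central subalgebra $Z_0=\Zi$. All the hypotheses needed to invoke Proposition~\ref{prop:repalg} have already been verified in the earlier development of the paper, so the proof reduces to assembling these inputs in the right order.

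First I would check the three hypotheses of Proposition~\ref{prop:repalg} for $R=\Uiv$. The algebra $\Uiv$ is a finitely generated $\C$-algebra by the PBW basis of Proposition~\ref{prop:intPBW}, it is an integrally closed domain by the result established at the end of Section~\ref{sec:asa} (via the filtration and Lemma~\ref{lem:intclose}), and by Theorem~\ref{thm:findim} it is a free, hence finite, module over $\Zi$. Moreover, Theorem~\ref{thm:deg} identifies the degree of $\Uiv$ as $d=\ell^{N_0}$. Finally, under the quantum Frobenius map $\Fri$ (Theorem~\ref{thm:Fri}) we have the identification $\MaxSpec\Zi\cong\X$.

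For part (1), I would simply apply Proposition~\ref{prop:repalg}(2): any irreducible representation of $\Uiv$ has dimension at most the degree, which equals $\ell^{N_0}$. For part (2), I would apply Proposition~\ref{prop:repalg}(3) with $Z_0=\Zi$; this yields that the set
\[
\Omega_{\Uiv}(\Zi)=\{x\in\MaxSpec\Zi\mid \dim V=\ell^{N_0}\text{ for all }V\in\Irr\Ui_{v,x}\}
\]
is a non-empty Zariski open subset of $\MaxSpec\Zi$. Transporting through the isomorphism $\MaxSpec\Zi\cong\X$ gives exactly $\X^0$, which is therefore a non-empty Zariski open subset of $\X$.

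There is essentially no obstacle here since all the nontrivial work, namely the integrally closed domain property, the finiteness over $\Zi$, and the determination of the degree, has already been carried out in Sections~\ref{sec:Fcent}--\ref{sec:inii}. The only point to remember is that Proposition~\ref{prop:repalg}(3) allows one to replace the full center $Z_v^\imath$ by the smaller subalgebra $\Zi$, which is crucial because at this stage of the paper $\Zi$ (and not yet $Z_v^\imath$) is the central subalgebra whose maximal spectrum we have a concrete geometric description of.
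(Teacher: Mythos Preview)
Your proposal is correct and matches the paper's approach exactly: the paper states the corollary as an immediate consequence of Theorem~\ref{thm:deg} combined with Proposition~\ref{prop:repalg}, and your write-up simply makes explicit the verification of the hypotheses (finitely generated, integrally closed domain, finite over $\Zi$, degree $\ell^{N_0}$) that the paper leaves implicit.
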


\begin{remark}
When the underlying symmetric pair is of type AI, a class of finite-dimensional modules with dimension $\ell^{N_0}$ for $\Uiv$ were constructed in \cite[Theorem 6.2]{Wen20}; these modules are analogs of baby Verma modules for quantum groups. It is interesting to study when such modules are irreducible.
\end{remark}

\subsection{The full center of $\Uiv$}
Recall that $d_\nu\in \Ui_{\A'}$ from Proposition \ref{le:intn}. We still write $d_\nu$ to denote its image in $\Uiv$. Let $Z^\imath_1$ denote the subspace of $\Ziv$ spanned by $d_{\nu}$ for $\nu\in P^\imath_+$. 

\begin{lemma}\label{le:poly}
    The subspace $Z_1^\imath$ is a polynomial algebra generated by $d_{\nu_i}$ for $1\leq i\leq m$.
\end{lemma}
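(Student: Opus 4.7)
The strategy is to analyze the leading terms of the central elements $d_\nu$ and of the products $D_\nu:=d_{\nu_1}^{k_1}\cdots d_{\nu_m}^{k_m}$ (for $\nu=\sum_i k_i\nu_i\in P^\imath_+$) in the associated graded algebra $\Gr\Uiv$ from Proposition~\ref{prop:assoc}, and then transfer the Kolb--Letzter description of the generic center \cite{KL08} back to $\Uiv$.

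First I would compute the leading term of $D_\nu$. By Proposition~\ref{prop:dv}, each $\overline{d_{\nu_i}}$ is a nonzero scalar multiple of an explicit monomial in $\Gr\Uiv$. Because the exponents of $B_{\beta_k},E_{\gamma_k}$ and the weight of the $K$-factor in that formula are each linear in $\nu_i$, and $\Gr\Uiv$ is $v$-commutative, the product of the $m$ factors $\overline{d_{\nu_i}}^{k_i}$ collapses, up to a nonzero scalar coming from the $v$-commutation relations, to
\[
\overline{D_\nu}\;=\;c'_\nu\cdot K_{-(1+w_0)\nu}\prod_{k=1}^N B_{\beta_k}^{\langle\alpha_{i_k}^\vee,-w_0\nu\rangle}\prod_{k=1}^M E_{\gamma_k}^{\langle\alpha_{j_k}^\vee,\nu\rangle},
\]
which is exactly a nonzero scalar multiple of the leading monomial $m(\nu)$ of $\overline{d_\nu}$. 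Since distinct $\nu\in P^\imath_+$ give rise to distinct monomials $m(\nu)$ (the $K$-weight together with the exponents recover $\nu$), both families $\{d_\nu\}_{\nu\in P^\imath_+}$ and $\{D_\nu\}_{\nu\in P^\imath_+}$ have pairwise distinct leading terms in $\Gr\Uiv$, so both are $\C$-linearly independent in $\Uiv$. In particular, $d_{\nu_1},\dots,d_{\nu_m}$ are algebraically independent, and $R:=\C[d_{\nu_1},\dots,d_{\nu_m}]\subseteq\Uiv$ is a polynomial ring on $m$ generators.

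To identify $Z^\imath_1$ with $R$ I would invoke Kolb--Letzter \cite{KL08}: over $\C(q)$ the center of $\Ui$ is the polynomial algebra generated by $d_{\nu_1},\dots,d_{\nu_m}$, and $\{d_\nu:\nu\in P^\imath_+\}$ is a $\C(q)$-basis of it. Hence there are unique $P_\nu\in\C(q)[x_1,\dots,x_m]$ and $Q_{\nu,\mu}\in\C(q)$ such that $d_\nu=P_\nu(d_{\nu_1},\dots,d_{\nu_m})$ and $D_\nu=\sum_\mu Q_{\nu,\mu}d_\mu$ in $\Ui_{\C(q)}$. I would order $P^\imath_+$ via the well-order $\nu\mapsto d(\overline{d_\nu})\in\N^{N+M+2}$ from Section~\ref{sec:asa}. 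The leading-term computation above shows that, in this order, the transition matrices between $\{d_\nu\}$ and $\{D_\nu\}$ are unitriangular with diagonal entries in $\pm q^{\Z/D}\subseteq\A'$. Combined with the integrality $d_\nu,d_{\nu_i}\in\Ui_{\A'}$ of Proposition~\ref{le:intn} and the freeness of $\Ui_{\A'}$ over $\A'$ (Proposition~\ref{prop:intPBW}), an induction on $d(\overline{d_\nu})$ forces all off-diagonal entries of both transition matrices to lie in $\A'$ as well. Specializing at $q=v$ then yields $d_\nu\in R$ and $D_\nu\in Z^\imath_1$ in $\Uiv$, whence $Z^\imath_1=R$ is a polynomial algebra on $d_{\nu_1},\dots,d_{\nu_m}$.

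The main obstacle is this integrality step: upgrading the Kolb--Letzter identities from $\C(q)$ to $\A'$ so that they survive specialization at the root of unity $v$. The unitriangular shape of the change-of-basis matrix, with diagonal entries units in $\A'$ up to sign, is what unlocks this passage; without it one would need an independent argument showing that every central element of $\Uiv$ with leading degree below $d(\overline{d_\nu})$ already lies in $Z^\imath_1$.
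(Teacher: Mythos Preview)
Your proposal is correct and takes a more self-contained route than the paper. The paper directly invokes \cite[Corollary~4.11 and Remark~4.12]{Ko20}, which gives that products $d_\mu d_{\nu}$ decompose as $\A'$-linear combinations of the $d_\lambda$; this immediately shows that the span $Z_1^\imath$ is a subalgebra generated by the $d_{\nu_i}$ after specialization, and the paper then appeals (somewhat tersely) to generic algebraic independence together with integrality for the polynomial-algebra claim. You instead derive both halves from Proposition~\ref{prop:dv}: distinct leading PBW monomials give $\C$-linear independence of $\{D_\nu\}$ in $\Uiv$ (hence algebraic independence of the $d_{\nu_i}$ at $q=v$, a point your argument makes more transparent than the paper's), and the triangular transition matrix transports the Kolb--Letzter relations from $\C(q)$ to $\A'$. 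Your route avoids the external citation at the cost of more bookkeeping.

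One caution on the integrality step: freeness of $\Ui_{\A'}$ over $\A'$ together with unitriangularity of the transition matrix over $\C(q)$ is not by itself enough to force the off-diagonal entries into $\A'$. What you actually need is that the family $\{d_\mu\}$ (equivalently $\{D_\mu\}$) extends to an $\A'$-basis of $\Ui_{\A'}$; this is exactly what Proposition~\ref{prop:dv} provides, since the leading term of each $d_\mu$ is a unit scalar times a distinct PBW monomial. With that stated, the iterated top-coefficient extraction goes through cleanly.
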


\begin{proof}
By \cite[Corollary 4.11 and Remark 4.12]{Ko20}, $Z_1^\imath$ is a subalgebra generated by $d_{\nu_i}$ for $1\leq i\leq m$. By \cite[Theorem 9.3]{KL08} and \cite[Proposition 4.9]{Ko20}, the center of $\Ui$ at generic $q$ is generated by $d_{\nu_i},1\leq i\leq m$ as a polynomial algebra. Hence, by Proposition~\ref{le:intn}, $d_{\nu_i},1\leq i\leq m$ generate a polynomial central subalgebra of $\Ui_{\A'}$ and this implies that $Z_1^\imath$ is a polynomial algebra.
\end{proof}

    

We call $Z_1^\imath$ the \emph{Kolb-Letzter center} of $\Uiv$. The main theorem in this subsection is the follows. 

\begin{theorem}\label{thm:icenter}
    The center $\Ziv$ of $\Uiv$ is generated by $\Zi$ and $Z^\imath_1$.
\end{theorem}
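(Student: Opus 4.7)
The plan is to prove the reverse containment $\Ziv\subseteq \Zi[Z_1^\imath]$ by passing to the associated graded algebra $\Gr \Uiv$ from Section~\ref{sec:deg} and comparing leading terms. Let $R:=\Zi[Z_1^\imath]$ and endow $R$ and $\Ziv$ with the induced filtrations coming from the $\N^{N+M+2}$-filtration on $\Uiv$. I would first observe that $\Gr \Ziv\subseteq Z(\Gr \Uiv)$: for $z\in \Ziv$ and $u\in \Uiv$, the identity $zu=uz$ descends to $\bar z\bar u=\bar u\bar z$ in $\Gr \Uiv$, since products in the $q$-skew polynomial algebra $\Gr \Uiv$ (Proposition~\ref{prop:assoc}) are homogeneous and the top-degree components of $zu$ and $uz$ must coincide. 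Hence $\Gr R\subseteq \Gr \Ziv\subseteq Z(\Gr \Uiv)$, so it suffices to prove $\Gr R = Z(\Gr \Uiv)$.

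The heart of the argument is to exhibit generators of $Z(\Gr \Uiv)$ inside $\Gr R$. By Proposition~\ref{prop:cendeg}, a generating set consists of the $m$ monomials \eqref{eq:bek} indexed by $\nu_i$, together with the $\ell$-th power elements $B_{\beta_t}^\ell$, $E_{\gamma_t}^\ell$, $K_{\ell\mu}$ from \eqref{eq:bbt}. The $\ell$-th powers lie in $\Gr \Zi$ by Lemma~\ref{lem:leading1} and Proposition~\ref{prop:PBWZi}, which identify them as leading terms of the PBW generators of $\Zi$. For each monomial in \eqref{eq:bek}, I would invoke Proposition~\ref{prop:dv}, which computes the leading term of $d_{\nu_i}\in Z_1^\imath\subseteq R$ as a nonzero scalar multiple of the monomial \eqref{eq:bek} with $\nu_i$ replaced by $-w_0\nu_i$, up to the $K$-factor being $K_{-(1+w_0)\nu_i}$ rather than $K_{(1+w_0)\nu_i}$. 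Because $-w_0$ equals $\tau_0$ on $P$ and $\tau_0\tau\nu_i=\nu_i$ for $\nu_i\in P^\imath$, the map $-w_0$ preserves $P^\imath_+$ and permutes the indecomposable set $\{\nu_1,\ldots,\nu_m\}$; so as $i$ ranges over $\{1,\ldots,m\}$, the collection $\{\overline{d_{\nu_i}}\}$ covers all the monomials \eqref{eq:bek}. The $K$-sign discrepancy is absorbed by multiplying by the central $\ell$-th power $K_{\ell(1+w_0)\nu_i}\in \Gr \Zi$.

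Once $\Gr R=\Gr \Ziv=Z(\Gr \Uiv)$ is established, the equality $R=\Ziv$ follows by a routine exhaustion on filtration degree: given $z\in \Ziv_{\le n}$ with $n\ge 1$, pick $r\in R_{\le n}$ with $\bar z=\bar r$ in $\Gr \Uiv$ so that $z-r\in \Ziv_{\le n-1}$, and iterate. The base case $\Ziv_{\le 0}\subseteq R$ is direct: the total-degree-zero part of $\Uiv$ is spanned by $\{K_\mu:\mu\in P^\theta\}$, and imposing commutation with $B_i$, $E_j$, $F_j$ forces $(\mu,\alpha_i)\in \ell\Z$ for every $i\in \I$, hence $\mu\in \ell P^\theta$ and $K_\mu\in \Zi\subseteq R$. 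The main technical hurdle will be the leading-term bookkeeping in the middle paragraph---specifically, verifying the $-w_0$-permutation of $\{\nu_i\}$ and checking that the sign discrepancies on the $K$-factors lie inside $\Gr \Zi$; once that is confirmed, the rest is standard filtered-algebra machinery.
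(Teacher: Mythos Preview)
Your approach is essentially the paper's: pass to $\Gr\Uiv$, match the images of the generators of $\Zi$ and $Z_1^\imath$ with the explicit generators of $Z(\Gr\Uiv)$ from Proposition~\ref{prop:cendeg}, then lift back by a filtered-algebra argument (which the paper packages as Lemma~\ref{le:degc}, applied iteratively via Remark~\ref{rmk:deg}).

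One correction on what you flag as ``the main technical hurdle'': there is in fact no $K$-factor discrepancy. Since $-w_0=\tau_0$ on $P$, the monomial \eqref{eq:bek} evaluated at the index $\tau_0\nu_i$ has $K$-part
\[
K_{(1+w_0)\tau_0\nu_i}=K_{\tau_0\nu_i+w_0\tau_0\nu_i}=K_{\tau_0\nu_i-\nu_i}=K_{-(1+w_0)\nu_i},
\]
which is exactly the $K$-part of $\overline{d_{\nu_i}}$ in Proposition~\ref{prop:dv}. This is fortunate, because your proposed fix---multiplying by $K_{\ell(1+w_0)\nu_i}$---would \emph{not} convert $K_{-(1+w_0)\nu_i}$ into $K_{(1+w_0)\nu_i}$ (that would require $K_{2(1+w_0)\nu_i}$, which generally does not lie in $\Gr\Zi$). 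The paper records the same matching from the other direction: $\tau_0$ permutes $\{\nu_1,\ldots,\nu_m\}$, and $\overline{d_{\tau_0\nu_i}}$ is a nonzero scalar multiple of the $i$-th monomial in \eqref{eq:bek}. Once you drop the spurious discrepancy, your argument is complete and coincides with the paper's.
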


In order to prove this theorem, we need the following lemma.

\begin{lemma}\label{le:degc}
    Let $R=\cup_{m\in \N}R_m$ be a filtered $\mathbb{C}$-algebra with no zero divisors. Let $\Gr R$ be the associated graded algebra. Suppose that there are central elements $z_i\in Z(R)$, $1\leq i\leq n$ such that $\overline{z_i}$  generate the center of $\Gr R$ as a $\C$-algebra. Then $z_i$, $1\leq i\leq n$, generate the center $Z(R)$ as a $\C$-algebra. 
\end{lemma}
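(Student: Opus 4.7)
The plan is to prove Lemma \ref{le:degc} by induction on the filtration degree of a central element. Let $z \in Z(R)$, and let $m = \min\{k \mid z \in R_k\}$ denote its filtration degree. I want to produce a polynomial expression for $z$ in $z_1,\ldots,z_n$.

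The first step is to check that if $z \in Z(R)$, then its image $\bar{z} \in \Gr_m R$ is central in $\Gr R$. This is immediate: for any homogeneous $\bar{y}$ represented by $y \in R_k$, the commutator $[z,y]$ vanishes in $R$, and hence the degree $(m+k)$ component of $[\bar{z},\bar{y}]$, which is the image of $[z,y]$ in $R_{m+k}/R_{m+k-1}$, also vanishes. Since the center $Z(\Gr R)$ is automatically graded (commutators of homogeneous elements are homogeneous), and by hypothesis $Z(\Gr R) = \C[\bar{z}_1,\ldots,\bar{z}_n]$, we may write $\bar{z}$ as a polynomial in the $\bar{z}_i$. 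Extracting the homogeneous part of the correct degree $m$, we obtain a polynomial $P \in \C[t_1,\ldots,t_n]$ whose value $P(\bar{z}_1,\ldots,\bar{z}_n)$ equals $\bar{z}$ in $\Gr_m R$.

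Next, consider the element $z' = z - P(z_1,\ldots,z_n) \in R$. Since $P(z_1,\ldots,z_n)$ is a polynomial in central elements, it is central, and hence $z' \in Z(R)$. By the previous paragraph, $z' \in R_{m-1}$, so the filtration degree of $z'$ is strictly smaller than that of $z$. The induction hypothesis asserts that every central element of filtration degree less than $m$ is a $\C$-polynomial in $z_1,\ldots,z_n$; applying this to $z'$ gives the desired expression for $z$. The base case $m = 0$ is handled identically: the identification $R_0 = \Gr_0 R$ means $\bar z = z$ in $R_0$, and the polynomial expression for $\bar z$ literally gives one for $z$.

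The argument is essentially routine bookkeeping, so I do not anticipate a serious obstacle. The only subtlety to be careful about is the choice of homogeneous representative $P$: one must use the fact that $Z(\Gr R)$ is a graded subalgebra of $\Gr R$ in order to truncate the polynomial to its degree $m$ component without losing the identity $P(\bar{z}_1,\ldots,\bar{z}_n) = \bar{z}$. The hypothesis that $R$ has no zero divisors is not actually used in this argument, though it is natural background ensuring that the filtration behaves well and that $\Gr R$ itself has no zero divisors in the cases of interest. The lemma will then be applied in the paper to $R = \Uiv$ with the filtration of Section \ref{sec:asa}, combining it with Proposition \ref{prop:cendeg} and Proposition \ref{prop:dv} to deduce Theorem \ref{thm:icenter}.
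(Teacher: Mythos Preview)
Your proof is correct and follows essentially the same approach as the paper's: induction on filtration degree, lifting a polynomial expression for the symbol $\bar z$ in terms of the $\bar z_i$ and noting the difference has strictly smaller degree. Your observation that the no-zero-divisor hypothesis is not actually needed is accurate; the paper includes it for convenience (it makes $\deg(z_1^{m_1}\cdots z_n^{m_n})=\sum m_i\deg z_i$ automatic) but, as you note, working with the formal degree of monomials suffices.
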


\begin{proof}
For $f\in R$, recall that we write $\deg f\in\mathbb{N}$ for the minimal $m$ such that $f\in R_m$. Let $Z'$ be the $\C$-algebra generated by $z_i$, $1\leq i\leq n$. 

Take $f\in Z(R)$. We show that $f\in Z'$. Note that $\overline{f}\in Z(\Gr R)$. By our assumption, we have 
\[
\overline{f}=\sum_{\mathbf{m}=(m_1,\cdots,m_n)\in \mathbb{N}^n}c_{\mathbf{m}}\overline{z_1}^{m_1}\cdots \overline{z_n}^{m_n},\qquad c_{\mathbf{m}}\in\mathbb{C}.
\]
Since $\overline{f}$ is homogeneous in $\Gr R$ of degree $\deg f$. We may assume that $c_{\mathbf{m}}=0$ unless $\deg (z_1^{m_1}\cdots z_m^{m_n})=\deg f$.

Let $$f'=\sum_{\mathbf{m}=(m_1,\cdots,m_n)\in \mathbb{N}^n}c_{\mathbf{m}}{z_1}^{m_1}\cdots {z_n}^{m_n}\qquad \text{in }R.$$
Then it is clear that $f'\in Z'$ and $\overline{f'}=\overline{f}$. Hence $\deg (f-f')<\deg f $, and $f-f' \in Z(R)$. Then $f\in Z'$ by induction on $\deg f$.
\end{proof}

\begin{proof}[Proof of Theorem \ref{thm:icenter}]
    Fix a reduced expression $w_0=s_{i_1}\cdots s_{i_L}s_{j_1}\cdots s_{j_M}$ such that $w_\bullet=s_{j_1}\cdots s_{j_M}$. Let $\beta_1<\cdots<\beta_L<\gamma_1<\cdots<\gamma_M$ be the associated convex ordering on $\mathcal{R}^+$. Recall from Section~\ref{sec:iPBW} the root vectors $B_{\beta_k}$, $E_{\gamma_t}$, $F_{\gamma_t}$ in $\Ui_{\A'}$, for $1\leq k\leq L$ and $1\leq t\leq M$. Let $\{\omega_{1}^0,\cdots,\omega_{R}^0\}$ be a $\mathbb{Z}$-basis of $P^\theta$. By Proposition~\ref{prop:PBWZi}, $\Zi$ is generated by $\Fri(\un{B_{\beta_k}})$, $\Fri(\un{E_{\gamma_t}})$,  $\Fri(\un{F_{\gamma_t}})$, and $K_{\ell\omega_s^0}$, for $1\leq k\leq L$, $1\leq t\leq M$, and $1\leq s \leq r$, as a $\mathbb{C}$-algebra. 
    
    Since $\Gr \Uiv$ can be obtained from $\Uiv$ by $N+M+2$-steps of taking associated graded algebras, where in each step one has a $\mathbb{N}$-filtered algebra, by Lemma \ref{le:poly} and Lemma \ref{le:degc}, it suffices to show that elements 
    \begin{equation}\label{eq:odnu}
    \qquad \ov{d_{\nu_i}},\quad 1\leq i\leq m;
    \end{equation}
    
    \begin{equation}\label{eq:ofri}
    \overline{\Fri(\un{B_{\beta_k}})}, \overline{\Fri(\un{E_{\gamma_t}})}, \overline{\Fri(\un{F_{\gamma_t}})},K_{\ell\mu},\quad 1\leq k\leq L,1\leq t\leq M,\mu\in P^\theta ;
    \end{equation}
    generate the center of $\Gr\Uiv$ as a $\C$-algebra. By Lemma \ref{le:frbb}, elements in \eqref{eq:ofri} coincide with elements in \eqref{eq:bbt}. Note that $\tau_0$ permutes the set $\{\nu_1,\cdots \nu_m\}$. By Proposition \ref{prop:dv}, $\ov{d_{\tau_0\nu_i}}$ coincides with with the element in \eqref{eq:bek} up to nonzero scalars. Thus, by Proposition \ref{prop:cendeg}, elements in \eqref{eq:odnu}-\eqref{eq:ofri} generate the center of $\Gr\Uiv$ as desired. 
    
    The proof is completed.
\end{proof}

\section{Parametrization of simple modules}\label{sec:par}

In this section, we relate simple modules of $\Uiv$ with the twisted conjugacy classes of $G$.

\subsection{Poisson orders}\label{sec:po}

In the work of De Concini-Kac-Procesi \cite{DCKP92}, the Frobenius center $Z_0$ of $\U_v$ plays a vital role on the parametrization of the simple modules of $\U_v$. The key structure in their approach was later axiomatized in \cite{BG03} under the name of the \emph{Poisson orders}, which we recall. 

A Poisson order is a triple $(R,C ,\partial)$, where $R$ is an associative $\C$-algebra, $C$ is a central subalgebra of $R$ such that $R$ is a finitely generated $C$-module, and $\partial: C\rightarrow \text{Der}_{\mathbb{C}}(R)$ is a $\mathbb{C}$-linear map such that the map
\[
\{z_1,z_2\}=\partial_{z_1}(z_2),\quad \text{for }z_1,z_2\in C,
\]
defines a Poisson bracket on $C$. Let $\mathcal{V}={\text{MaxSpec }}C$ be the Poisson variety associated with $C$.

The main theorem in \cite{BG03} states that irreducible $R$-modules are parametrized by \emph{symplectic cores} of $\mathcal{V}$. For any ideal $I$ of $C$, the \emph{Poisson core} $\cP(I)$ of $I$ is defined to be the maximal Poisson ideal of $C$ which is contained in $I$. Define an equivalence relation $\sim$ on $\mathcal{V}$ by setting $\mathfrak{m}\sim\mathfrak{n}$ if $\cP(\mathfrak{m})=\cP(\mathfrak{n})$, for $\mathfrak{m},\mathfrak{n}\in\mathcal{V}$. The equivalence classes of $\sim$ are called symplectic cores of $\mathcal{V}$. In the case when $\mathcal{V}$ is a smooth Poisson variety with Zariski locally closed symplectic leaves, \cite[Proposition~3.5]{BG03} shows that symplectic leaves coincide with symplectic cores. For $\mathfrak{m}\in\mathcal{V}$, set $R_\mathfrak{m}=R/\mathfrak{m}R$. Then \cite[Theorem~4.2]{BG03} shows that 
$$
R_{\mathfrak{m}}\cong R_{\mathfrak{n}}$$ as finite-dimensional algebras, when $\mathfrak{m}$ and $\mathfrak{n}$ belong to the same symplectic cores of $\mathcal{V}$. As a consequence, the surjective map 
\[
\text{Irr} (R)\longrightarrow \mathcal{V},
\]
sending an irreducible $R$-module, $M$, to its $Z$-character $\text{Ann}_Z(M)$, has isomorphic fibres over the symplectic cores of $\mathcal{V}$.

The following proposition follows from the general example considered in \cite[Section 2.2]{BG03}. We record the proof for readers' convenience.

\begin{proposition}\label{prop:Porder}
$(\Uiv, \Zi, \partial^\imath)$ is a Poisson order where the derivation $\partial^{\imath}$ is defined by
\begin{align}
\label{eq:der}
\partial^\imath:\Zi\rightarrow \mathrm{Der}_\C(\Uiv),\quad z\mapsto \partial^{\imath}_{z},\qquad \partial^{\imath}_{z}(x)=\pi_v^{\imath}\Big(\frac{[\tz,\tx]}{\ell^2 (qv^{-1}-1)}\Big),
\end{align}
where $\tz,\tx$ are preimages of $z,x$ in $\Ui_{\A'}$ under the map $\pi_v^{\imath}$.
\end{proposition}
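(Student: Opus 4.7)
The plan is to verify the Poisson order axioms in sequence. Three are essentially already available: centrality of $\Zi$ in $\Uiv$ is immediate from Definition~\ref{def:Zi}; finite generation of $\Uiv$ over $\Zi$ is Theorem~\ref{thm:findim}; and the bracket $\{z_1,z_2\}:=\partial^\imath_{z_1}(z_2)$ coincides by construction with the Poisson bracket of \eqref{eq:iPoissonv}, whose skew-symmetry and Jacobi identity are inherited from the Poisson bracket on $Z_0$ via the embedding $\Zi\hookrightarrow Z_0$ of diagram \eqref{eq:ZiZ0}. The essential task is therefore to check that, for each $z\in\Zi$, the expression \eqref{eq:der} defines a derivation of $\Uiv$.

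The main idea is to restrict the Poisson order structure on $(\U_v,Z_0,\partial)$ of De Concini-Kac-Procesi to $\Uiv$. Fix $z\in\Zi$ and $x\in\Uiv$ with lifts $\tz,\tx\in\Ui_{\A'}$. Viewing everything in $\U_{\A'}$ via $\iota:\Ui_{\A'}\hookrightarrow\U_{\A'}$, the element $\iota(z)$ is central in $\U_v$ by \eqref{eq:ZiZ0}, so the Poisson order axiom for $(\U_v,Z_0,\partial)$ gives
\begin{equation*}
\iota([\tz,\tx])=[\iota(\tz),\iota(\tx)]\in \ell^2(qv^{-1}-1)\,\U_{\A'}.
\end{equation*}
I then invoke the following \emph{purity} claim: $\iota(\Ui_{\A'})$ is a pure $\A'$-submodule of $\U_{\A'}$, i.e., $\iota(\Ui_{\A'})\cap f\,\U_{\A'}=f\,\iota(\Ui_{\A'})$ for every $f\in\A'$. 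Granted this, one has $[\tz,\tx]\in \ell^2(qv^{-1}-1)\,\Ui_{\A'}$, so $\partial^\imath_z(x):=\pi_v^\imath([\tz,\tx]/\ell^2(qv^{-1}-1))$ is a well-defined element of $\Uiv$; moreover, by the functoriality relation $\iota_v\circ\pi_v^\imath=\pi_v\circ\iota$, the image $\iota_v(\partial^\imath_z(x))$ equals $\partial_{\iota_v(z)}(\iota_v(x))$ in $\U_v$, so independence of the lifts $\tz$ and $\tx$ descends from the corresponding statement for $\partial$.

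Purity follows from the integral analogue of Lemma~\ref{le:tsl}: the PBW basis $\{M_{\mu,\ba,\bc,\bd}\}$ of $\Ui_{\A'}$ in Proposition~\ref{prop:intPBW} extends to an $\A'$-basis of $\U_{\A'}$ by adjoining monomials in the root vectors $E_{\beta_i}$ for $1\le i\le L$, together with suitable $K_\mu$ to enlarge the torus part from $P^\theta$ to $P$. The argument of Lemma~\ref{le:tsl} (filtration followed by leading-term comparison) transfers verbatim to $\A'$, since by \cite{LYZ25} each $B_{\beta_i}$ has $F_{\beta_i}$ as its leading term already inside $\U_{\A'}$. This exhibits $\Ui_{\A'}$ as an $\A'$-direct summand of $\U_{\A'}$, hence as a pure submodule. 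The Leibniz identity
\begin{equation*}
[\tz,\tx\ty]=[\tz,\tx]\ty+\tx[\tz,\ty]
\end{equation*}
in $\Ui_{\A'}$, divided by $\ell^2(qv^{-1}-1)$ and pushed through $\pi_v^\imath$, then shows that $\partial^\imath_z$ is a derivation of $\Uiv$; $\C$-linearity of $\partial^\imath$ in $z$ is clear.

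The main obstacle I expect is the purity claim, since the PBW generators $B_{\beta_k}$ of $\Ui_{\A'}$ are not root vectors of $\U$ but twisted versions, so one must verify that the basis-extension procedure of Lemma~\ref{le:tsl} indeed lifts from $\C$ to $\A'$. Once this is established, the remaining verifications are formal consequences of the Poisson order structure on $\U_v$ restricted to the coideal subalgebra $\Uiv\subset\U_v$.
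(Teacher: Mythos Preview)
Your proof is correct, but it takes a detour that the paper avoids. The paper's argument is entirely internal to $\Uiv$: since $z\in\Zi$ is already central in $\Uiv$ (not just in $\U_v$), one has $[z,x]=0$ in $\Uiv$, hence $[\tz,\tx]\in\ker\pi_v^\imath$. Because $\Ui_{\A'}$ is free over $\A'$ (Proposition~\ref{prop:intPBW}), this kernel is exactly $(q-v)\Ui_{\A'}$, and divisibility follows immediately. No reference to $\U_v$, $Z_0$, or purity is needed.

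Your route instead pushes $[\tz,\tx]$ into $\U_{\A'}$, uses centrality of $\iota_v(z)$ in $\U_v$ to get divisibility there, and then invokes purity of $\Ui_{\A'}\subset\U_{\A'}$ to pull the factor back. The purity claim is true (and your basis-extension argument for it is fine, being the integral version of Lemma~\ref{le:tsl}), but it is doing work that is unnecessary once you notice that centrality already holds at the level of $\Uiv$ itself. What your approach does buy is an explicit identification $\iota_v\circ\partial^\imath_z=\partial_{\iota_v(z)}\circ\iota_v$, i.e., compatibility of the Poisson order on $\Uiv$ with that on $\U_v$; the paper does not state this, though it is implicit in the way \eqref{eq:iPoissonv} is set up.
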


\begin{proof}
By Theorems~\ref{thm:Fri} and \ref{thm:findim}, $\Zi$ is central subalgebra of $\Uiv$ and $\Uiv$ is finite-dimensional over $\Zi$. Note that $\Ui_{\A'}$ is free over $\A'$, the kernel of $\pi_v^\imath$ is $(q-v)\Ui_{\A'}$. Since $z\in \Zi$ is central in $\Uiv$, $[z,x]=0$ and hence $[\tz,\tx]\in (q-v)\Ui_{\A'}$. This shows that the derivation in \eqref{eq:der} is well-defined. Comparing \eqref{eq:der} with \eqref{eq:iPoissonv}, it is clear that $ \partial^{\imath}_{z_1}(z_2)=\{z_1,z_2\}$ for $z_1,z_2\in \Zi$.
\end{proof}   

Recall from Section~\ref{sec:smQSP} the fibre algebras $\Ui_{v,y}$ for $y\in \X$.

\begin{corollary}\label{cor:sympl}
 Let $x,x'\in \X$. If $x,x'$ lie in the same symplectic core of $\X$, then one has an isomorphism $\Ui_{v,x}\cong\Ui_{v,x'}$ as $\C$-algebras.
\end{corollary}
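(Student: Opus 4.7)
The proof will be a direct application of the Poisson order machinery of Brown--Gordon, now that all the ingredients have been assembled.

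The plan is as follows. By Proposition~\ref{prop:Porder}, the triple $(\Uiv,\Zi,\partial^\imath)$ is a Poisson order in the sense of \cite{BG03}. Moreover, by Theorem~\ref{thm:Fri} and the commutative diagram \eqref{eq:ZiZ0}, the Poisson algebra isomorphism $\Fri:\C[\X]\xrightarrow{\sim}\Zi$ identifies the Poisson varieties $\MaxSpec \Zi\cong \X$. In particular, the symplectic cores of $\X$ (in the sense recalled in Section~\ref{sec:po}) correspond bijectively to the symplectic cores of $\MaxSpec \Zi$.

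Given this identification, I would invoke \cite[Theorem~4.2]{BG03} directly: for a Poisson order $(R,C,\partial)$, if two maximal ideals $\mathfrak m,\mathfrak n$ of $C$ lie in the same symplectic core of $\MaxSpec C$, then the fibre algebras $R/\mathfrak m R$ and $R/\mathfrak n R$ are isomorphic as finite-dimensional $\C$-algebras. Applying this with $R=\Uiv$, $C=\Zi$, and $\mathfrak m=\mathfrak m^\imath_x$, $\mathfrak n=\mathfrak m^\imath_{x'}$ yields the desired isomorphism $\Ui_{v,x}\cong \Ui_{v,x'}$.

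There is no essential obstacle here: the content of the result is entirely in the preceding setup, namely the construction of the Frobenius center $\Zi$ (Theorem~\ref{thm:Fri}), the verification that $\Uiv$ is a finite $\Zi$-module (Theorem~\ref{thm:findim}), and the Poisson order structure (Proposition~\ref{prop:Porder}). The corollary simply records the immediate output of these facts when combined with the Brown--Gordon theorem. The real work in exploiting this corollary happens later, when one shows that $\theta$-twisted conjugacy classes in $G$ lift through the map $\varphi:\X\to G$ to unions of symplectic cores (up to the torus rescaling action mentioned in the introduction); but that is the content of Theorem~\ref{thm:iiso} rather than of Corollary~\ref{cor:sympl} itself.
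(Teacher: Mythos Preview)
Your proposal is correct and matches the paper's approach exactly: the corollary is stated immediately after Proposition~\ref{prop:Porder} with no separate proof, precisely because it follows at once from that proposition together with the Brown--Gordon result \cite[Theorem~4.2]{BG03} recalled in Section~\ref{sec:po}. Your account of the identification $\MaxSpec \Zi\cong \X$ via $\Fri$ and of where the real work lies (the later Theorem~\ref{thm:thetaconjugacy}) is also accurate.
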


\subsection{Symplectic leaves}

Recall that $\X=K^\perp\backslash G^*$. In this subsection, we follow \cite{LY08} to study the Poisson structure of $\X$ and determine its symplectic leaves. 

Let $D=G\times G$ and $\mathfrak{d}=\g\oplus \g$ be the associated Lie algebra. Let $\mathfrak{d}$ equip with the nondegenerate symmetric bilinear form
\[
\langle (x_1,x_2),(y_1,y_2)\rangle=\langle\langle x_1,y_1\rangle\rangle-\langle\langle x_2,y_2\rangle\rangle,\qquad \text{for }x_1x_2,y_1,y_2\in\mathfrak{g},
\]
where $\langle\langle \cdot,\cdot\rangle\rangle$ is the Killing form on $\mathfrak{g}$. Let $\g_{\Delta}\subset \mathfrak{d}$ be the diagonal subalgebra, and $\g^*\subset \mathfrak{b}^+\oplus\mathfrak{b}^-$ be the Lie algebra of the dual Poisson Lie group $G^*$. Then $\mathfrak{d}=\mathfrak{g}_\Delta+\mathfrak{g}^*$ is the \emph{standard Lagrangian splitting}. Let 
\[
R=\frac{1}{2}\sum_{j=1}^n\xi_j\wedge x_j\in\wedge^2\mathfrak{d}
\]
be the associated $r$-matrix, where $\{x_1,\cdots,x_n\}$ is a basis of $\mathfrak{g}_\Delta$ and $\{\xi_1,\cdots,\xi_n\}$ is a dual basis of $\g^*$.

Let $G_\theta=\{(g,\theta(g))\mid g\in G\}$ be the closed subgroup of $D$. Its Lie algebra $\g_\theta=\{(X,\theta(X))\mid X\in\mathfrak{g}\}$ is clearly a Lagrangian subalgebra of $\mathfrak{d}$, that is, $\mathfrak{g}_\theta^\perp=\mathfrak{g}_\theta$ with respect to $\langle\langle\cdot,\cdot\rangle\rangle$. The right action of $D$ on $G_\theta\backslash D$ induces a linear map $\kappa:\mathfrak{d}\rightarrow \chi^1( G_\theta\backslash D)$ from $\mathfrak{d}$ to the space $\chi^1( G_\theta\backslash D)$ of vector fields on $G_\theta\backslash D$.

Let $\Pi_{G_\theta\backslash D}=(\wedge^2\kappa)(R)$ be the bivector field on $G_\theta\backslash D$. By \cite[Theorem~2.3]{LY08}, $\Pi_{G_\theta\backslash D}$ is a Poisson bivector field on $G_\theta\backslash D$. One has an isomorphism 
\begin{equation}\label{eq:gtd}
    G_\theta\backslash D\overset{\sim}{\longrightarrow} G;\qquad G_\theta(g_1,g_2)\mapsto \theta(g_1)^{-1}g_2,\quad\text{for $g_1,g_2\in G$}.
\end{equation}
Let $\pi_\theta$ be the Poisson structure on $G$ which is the pushforward of $\Pi_{G_\theta\backslash D}$ under the isomorphism.

Recall that $K^\perp\subset D$ is the identity component of $G^*\cap G_\theta$. Then the map 
\begin{equation}\label{eq:xg}
    \varphi: \X=K^\perp\backslash G^*\longrightarrow G;\qquad K^\perp(b_1,b_2)\mapsto \theta(b_1)^{-1}b_2,
\end{equation}
is well-defined. For $g\in G$, let $\mathcal{C}_\theta(g)=\{\theta(h)^{-1}gh\mid h\in G\}\subseteq G$ be the $\theta$-twisted conjugacy class. Recall the Poisson structure $\Pi_\X$ on $\X$ from Section \ref{sec:limit}. We write $g\sim_{\theta}g'$ if $g'\in\mathcal{C}_\theta(g)$. Let $H^{\theta0}=\{t\theta(t)\mid t\in H\}$ be the identity component of $H^\theta=\{t\in H\mid \theta t=t\}$. 

We are now ready to state the main result of this subsection. 

\begin{proposition}\label{prop:syml}
    The map $\varphi:(\X,\Pi_\X)\rightarrow (G,\pi_\theta)$ is Poisson, and is a $2^{|\I_\bullet|}$ to $1$ covering map onto $\theta(U^+) H^{\theta0} U^-$. The symplectic leaves of $\X$ are connected components of $\varphi^{-1}(\mathcal{C}_\theta(g))$ for $g\in G$.
\end{proposition}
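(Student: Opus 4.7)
The plan is to factor $\varphi$ through the Lu-Yakimov realization of $(G,\pi_\theta)$ via the double $D$, and to handle each of the three claims separately.

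\textbf{Step 1 (Poisson property).} Define $\psi:G^*\to G$ by $\psi(b_1,b_2)=\theta(b_1)^{-1}b_2$. The quotient $D\twoheadrightarrow G_\theta\backslash D$ composed with the identification \eqref{eq:gtd} gives a Poisson map $D\to G$ by the construction of $\pi_\theta$ in \cite{LY08}. Restricting along the Poisson embedding $G^*\hookrightarrow D$ (with $D$ equipped with its Drinfeld double Poisson structure, for which $G^*$ is a Poisson subgroup whose induced structure is $\Pi_{G^*}$), we conclude $\psi$ is Poisson. A direct check shows $\psi$ is left-$K^\perp$-invariant: for $(g,\theta(g))\in K^\perp$, one has $\psi(gb_1,\theta(g)b_2)=\theta(gb_1)^{-1}\theta(g)b_2 = \theta(b_1)^{-1}b_2=\psi(b_1,b_2)$. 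Hence $\psi$ descends to $\varphi:\X\to G$, which is Poisson because $\Pi_\X$ is the coisotropic quotient of $\Pi_{G^*}$ by $K^\perp$.

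\textbf{Step 2 (Image and covering degree).} Writing $(b_1,b_2)\in G^*$ as $b_1 = u_+ t$, $b_2 = u_- t^{-1}$ with $u_\pm\in U^\pm$ and $t\in H$, a direct computation gives
\[
\theta(b_1)^{-1}b_2 \;=\; \theta(v)\cdot h_0\cdot w,
\]
with $h_0 = (t\theta(t))^{-1}\in H^{\theta 0}$ and $v\in U^+$, $w\in U^-$ explicit in $u_\pm, t$. Since $t\mapsto t\theta(t)$ surjects $H$ onto $H^{\theta 0}$, inverting these formulas yields $\mathrm{Im}(\varphi) = \theta(U^+)\,H^{\theta 0}\,U^-$. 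For the covering degree, two points of $G^*$ have the same $\psi$-image iff they differ by left multiplication by an element of $G^*\cap G_\theta$, so the fibers of $\varphi$ are torsors under the component group $(G^*\cap G_\theta)/K^\perp$. A direct analysis using $\theta(\alpha)<0$ for $\alpha\in\mathcal{R}_\bullet^+$ gives $G^*\cap G_\theta = (U^+\cap\theta^{-1}(U^-))\cdot H^{-\theta}$ with connected unipotent factor, reducing the component group to $H^{-\theta}/H^{-\theta, 0}$. Combining $H = H^{\theta 0}\cdot H^{-\theta, 0}$ with $H^{-\theta} = H^{\theta 0}[2]\cdot H^{-\theta, 0}$ and Satake-diagram bookkeeping yields $|H^{-\theta}/H^{-\theta, 0}| = 2^{|\bI|}$.

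\textbf{Step 3 (Symplectic leaves).} The symplectic leaves of $(G,\pi_\theta)$ are the $\theta$-twisted conjugacy classes $\mathcal{C}_\theta(g)$ by the main results of \cite{LY08, Lu14}. Since $\varphi$ is a Poisson covering onto its image, the symplectic leaves of $\X$ are precisely the connected components of $\varphi^{-1}(\mathcal{C}_\theta(g))$ for $g$ in the image.

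The main obstacle will be the counting $|H^{-\theta}/H^{-\theta, 0}| = 2^{|\bI|}$ in Step 2. This comes down to computing the $2$-torsion of $H^{\theta 0}\cap H^{-\theta,0}$ and reconciling it with the Satake-diagram data, in particular condition (3) $w_\bullet(\alpha_j) = -\alpha_{\tau j}$ for $j\in\bI$, which controls the action of $\theta$ on the cocharacter lattice of $H$.
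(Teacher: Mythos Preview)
Your Step~3 contains an actual error: the symplectic leaves of $(G,\pi_\theta)$ are \emph{not} the twisted conjugacy classes $\mathcal{C}_\theta(g)$. Each $\mathcal{C}_\theta(g)$ is only a Poisson submanifold, hence a union of leaves; the leaves are the connected components of the intersections $\mathcal{C}_\theta(g)\cap\mathcal{O}$ where $\mathcal{O}$ runs over $G^*$-orbits in $G$. This is exactly what \cite[Proposition~2.1(3)]{Lu14} says and what the paper invokes. Your conclusion survives because $\Im\varphi$ is a single $G^*$-orbit: the leaves of $\Im\varphi$ (as a Poisson submanifold of $G$) are the components of $\mathcal{C}_\theta(g)\cap\Im\varphi$, and pulling back along the Poisson covering $\varphi$ gives the components of $\varphi^{-1}(\mathcal{C}_\theta(g))$. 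So the fix is local, but the reasoning as written is incorrect.

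On Step~2, the decomposition $H^{-\theta}=H^{\theta 0}[2]\cdot H^{-\theta,0}$ does not yield the count $2^{|\I_\bullet|}$: one has $|H^{\theta 0}[2]|=2^{\dim H^{\theta 0}}=2^{\rank P^\theta}$, which strictly exceeds $2^{|\I_\bullet|}$ whenever $\tau$ has a size-two orbit on $\I_\circ$, and the intersection $H^{\theta 0}[2]\cap H^{-\theta,0}$ is nontrivial. The paper instead works with the subtorus $H_\bullet=\langle\alpha_j^\vee(\C^\times):j\in\I_\bullet\rangle$ and its $2$-torsion $E$ (so $|E|=2^{|\I_\bullet|}$ on the nose), proving (i) $H_\theta^\circ\cap H_\bullet=\{1\}$, so distinct elements of $E$ lie in distinct components, and (ii) every $t\in H_\theta$ is an $H_\theta^\circ$-translate of some $e\in E$; step (ii) uses that $\theta(\alpha_i^\vee(\lambda))\in\alpha_{\tau i}^\vee(\lambda)\cdot H_\bullet$ for $i\in\I_\circ$. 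Your Step~1 and the overall architecture through the double $D$ are consistent with the paper's approach.
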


\begin{proof}
    Under the isomorphism \eqref{eq:gtd}, $G$ is a $D$-homogeneous space, where the right $D$-action is given by $g\cdot_\theta(g_1,g_2)=\theta(g_1)^{-1}gg_2$ for $g,g_1,g_2\in G$. Note that $G^*$ is a subgroup of $D$. The map $\varphi$ is clearly $G^*$-equivariant. Equip $D$ with the Poisson structure associated with the standard splitting (cf. \cite[(2.7)]{LY08}). By definition $G^*$ is a Poisson subgroup of $D$. The image of $\varphi$ is the $G^*$-orbit of the identity element in $G$, which is known to be a Poisson submanifold of $(G,\pi_\theta)$ and a $G^*$-Poisson homogeneous space, thanks to \cite[\S~2.2]{LY08}. It follows from the definition that $\Phi_{\X}(K^\perp\cdot e)=0$ and $\pi_\theta(e)=0$. Therefore we conclude that $\varphi$ is Poisson.

    It is clear from \eqref{eq:Gdual} that the image of $\varphi$ is $\theta(U^+) H^{\theta0} U^-$. To show that $\varphi$ is a $2^{|\I_\bullet|}$ to $1$ covering map onto the image, we note that $
    \Im \varphi\cong (G^*\cap G_\theta)\backslash G^*.
    $
    Therefore the fibre of $\varphi$ is isomorphic to $(G^*\cap G_\theta)/(G^*\cap G_\theta)^\circ$, the component group of $G^*\cap G_\theta$. By direct computation, one has 
    \[
    G^*\cap G_\theta\cong U_{{\I_\bullet}}\times H_\theta
    \]
    as varieties, where $U_{{\I_\bullet}}$ is the unipotent radical of the parabolic subgroup associated with $\I_\bullet$, and $H_\theta=\{t\in H\mid \theta(t)=t^{-1}\}$. We next count the cardinality of $H_\theta/H_\theta^\circ$, where $H_\theta^\circ$ is the identity component of $H_\theta$. 
    
    Let $H_\bullet=\{\alpha_j^\vee(a)\mid j\in \I_\bullet,a\in \C^\times\}$, and let $E=\{h\in H_\bullet\mid h^2=1\}$. It is clear that $|E|=2^{|\I_\bullet|}$ and $E\subseteq H_\theta$. By \cite[Prop~2.2]{Spr83}, we have $H_\theta^\circ=\{t\theta(t)^{-1}\mid t\in H\}$, which is generated by $\alpha_i^\vee(\lambda_i)\theta(\alpha_i^\vee(\lambda_i^{-1}))$, for $\lambda_i\in\C^\times$ and $i\in\I_{\circ,\tau}$. 
    
    Take two elements $h_1,h_2$ in $E$, such that $h_1\neq h_2$. We deduce that $h_1$ and $h_2$ are not in the same connected component. Suppose not, then $h_1h_2^{-1}\in H_\theta^\circ$. This is a contradiction since $H_\theta^\circ\cap H_\bullet=\{1\}$, due to $\theta(\alpha_i^\vee(\lambda_i^{-1}))=\alpha_{\tau i}^\vee(\lambda_i)t'$ for some $t'\in H_\bullet$. 
    
    On the other hand, we \emph{claim} that any element $t$ in $H_\theta$ can be written as $t=t_0e$ for some $t_0\in H_\theta^\circ$ and $e\in E$, and hence $t$ lies in the same connected component as $e$. Let us write $t=\prod_{i\in\I}\alpha_i^\vee(\lambda_i)$. Then $\theta(t)=t^{-1}$ implies that $\lambda_i=\lambda_{\tau i}$ for $i\in \I_\circ$. Set $t_0=\prod_{i\in\I_{\circ,\tau}}\alpha_i^\vee(\lambda'_i)\theta(\alpha_i^\vee({\lambda'_i}^{-1}))\in H_\theta^\circ$, where $\lambda_i'=\lambda_i$ if $\tau i\neq i$ and $\lambda_i'$ is a square root of $\lambda_i$ if $\tau i=i$. Then it is clear that $tt_0^{-1}\in H_\theta\cap H_\bullet=E$. We complete the proof for the claim.

    Therefore, the cardinality of $H_\theta/H_\theta^\circ$ equals $|E|=2^{|\bI|}$ as desired.

    The last statement on symplectic leaves of $\X$ follows from \cite[Proposition 2.1 (3)]{Lu14}.
\end{proof} 

In particular, symplectic leaves of $\X$ are locally closed in the Zariski topology. Hence on $\X$ we will not distinguish symplectic leaves and symplectic cores.

Let $H_\theta^\circ=\{t\theta(t)^{-1}\mid t\in H\}$ be the identity component of $\{t\in H\mid \theta(t)=t^{-1}\}$.

\begin{lemma}\label{le:gg'}
    Let $g,g'\in \Im \varphi$, and $g\sim_{\theta} g'$. Then there exists $t\in H_\theta^\circ$, such that $t^2=1$, and $g$, $tg't$ are in the same symplectic leaf of $(G,\pi_\theta)$.
\end{lemma}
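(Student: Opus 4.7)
The plan is to reduce the lemma to a transitivity statement for the $2$-torsion subgroup $H_\theta^{\circ,(2)} := \{s \in H_\theta^\circ : s^2 = 1\}$ acting on the connected components of $\mathcal{C}_\theta(g) \cap \Im\varphi$. By Proposition \ref{prop:syml}, the symplectic leaves of $\X$ are the connected components of $\varphi^{-1}(\mathcal{C}_\theta(g))$; since $\varphi$ is a Poisson covering onto its image, it follows that the symplectic leaves of $(\Im\varphi,\pi_\theta)$ are the connected components of $\mathcal{C}_\theta(g) \cap \Im\varphi$. Hence the lemma amounts to finding $t \in H_\theta^{\circ,(2)}$ such that $g$ and $tg't$ lie in the same connected component of this intersection.

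The first step is a direct computation showing that for $s \in H_\theta^{\circ,(2)}$ the $\theta$-twisted conjugation $y \mapsto sy\theta(s)^{-1} = sys$ preserves $\Im\varphi = \theta(U^+)H^{\theta 0}U^-$: writing $y = \theta(u_+) h_0 u_-$, one has
\[
sys = \theta(s u_+ s^{-1}) \cdot (s h_0 s) \cdot (s u_- s^{-1}),
\]
and the abelianness of $H$ gives $s h_0 s = s^2 h_0 = h_0 \in H^{\theta 0}$, precisely by the hypothesis $s^2 = 1$. Without the $2$-torsion assumption, $sh_0 s$ need not be $\theta$-fixed, so this step explains why the statement of the lemma is restricted to $H_\theta^{\circ,(2)}$.

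Next, to change components, I use a continuity argument: the full connected torus $H_\theta^\circ$ acts on $\mathcal{C}_\theta(g)$ by $\theta$-twisted conjugation, but this action does not preserve the open subset $\Im\varphi$ in general. Choosing a continuous path $s:[0,1] \to H_\theta^\circ$ with $s(0) = 1$ and $s(1) = t \in H_\theta^{\circ,(2)}$, the image $s(r) g' s(r)$ traces a continuous path in $\mathcal{C}_\theta(g)$ from $g'$ to $tg't$; this path may exit and re-enter $\Im\varphi$, thereby realizing a (generally nontrivial) permutation of $\pi_0(\mathcal{C}_\theta(g) \cap \Im\varphi)$. Since the endpoint $tg't$ lies in $\Im\varphi$ by the previous step, this construction yields a well-defined action of the finite group $H_\theta^{\circ,(2)}$ on the set of connected components.

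The core task is then to prove that this action is transitive. The strategy is to lift to $\X$ via $\varphi$ and identify the set of connected components of $\varphi^{-1}(\mathcal{C}_\theta(g))$ with the deck transformation group $G^* \cap G_\theta/K^\perp \cong E$, where $E = \{h \in H_\bullet : h^2 = 1\}$ as in the proof of Proposition \ref{prop:syml}; using the decomposition $H_\theta = H_\theta^\circ \cdot E$ with $H_\theta^\circ \cap E = \{1\}$ established there, any ``component-switching'' obstruction between two lifts of $g'$ is captured by an element of $H_\theta/H_\theta^\circ \cong E$, which can in turn be realized via a path in $H_\theta^\circ$ terminating at a $2$-torsion element. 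The main obstacle is making this last identification rigorous: one must analyze how paths in $\mathcal{C}_\theta(g)$ under the $H_\theta^\circ$-action cross the boundary $G \setminus \Im\varphi$ and match the resulting permutation of components with the explicit $E$-structure. If direct matching proves delicate, a cleaner route is to invoke the combinatorial description of $\pi_\theta$-symplectic leaves in \cite{LY08, Lu14}, where the leaves inside a single $\theta$-twisted conjugacy class are indexed by data that can be matched with cosets in $H_\theta^{\circ,(2)}$.
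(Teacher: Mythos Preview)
Your reformulation of the lemma as a transitivity statement for $H_\theta^{\circ,(2)}$ acting on $\pi_0(\mathcal{C}_\theta(g)\cap\Im\varphi)$ is correct, and the first step (that twisted conjugation by $s\in H_\theta^{\circ,(2)}$ preserves $\Im\varphi$) is fine. The gap is in the ``core task'': you do not actually prove transitivity.

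The continuity argument is not a valid mechanism. A path $r\mapsto s(r)\,g'\,s(r)$ in $\mathcal{C}_\theta(g)$ that leaves and re-enters $\Im\varphi$ does not produce any permutation of $\pi_0(\mathcal{C}_\theta(g)\cap\Im\varphi)$; it only shows that $g'$ and $tg't$ lie in the same component of $\mathcal{C}_\theta(g)$, which is vacuous since $\mathcal{C}_\theta(g)$ is connected. There is no monodromy representation here because you have not exhibited $\mathcal{C}_\theta(g)\cap\Im\varphi$ as the total space of a covering of anything. Your fallback via the deck group $E\subset H_\bullet$ is also off: the proof of Proposition~\ref{prop:syml} shows $H_\theta^\circ\cap H_\bullet=\{1\}$, hence $E\cap H_\theta^\circ=\{1\}$, so a nontrivial element of $E$ is never the endpoint of a path in $H_\theta^\circ$ starting at $1$.

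The paper's argument is organised very differently and avoids the transitivity question altogether. It first passes, via the Poisson isomorphism $g\mapsto \dot w^{-1}g$ with $w=w_\bullet w_0$, to the involution $\theta'=\mathrm{Ad}_{\dot w^{-1}}\!\circ\theta$ which stabilises $B^\pm$; in that setting it invokes \cite[Proposition~3.5]{Lu14} to obtain \emph{some} $t\in H_\theta^\circ$, with no $2$-torsion hypothesis, such that $g$ and $tg'\theta(t)^{-1}=tg't$ lie in the same $\pi_\theta$-leaf. The condition $t^2=1$ is then deduced \emph{a posteriori}: since $tg't$ lies in the leaf of $g\in\Im\varphi$, one has $tg't\in\Im\varphi$; projecting $g'$ and $tg't$ to $H$ along $B^+B^-\to H$ and using that $\Im\varphi$ projects into $H^{\theta0}$ forces $t^2\in H^{\theta0}$, while $t\in H_\theta^\circ$ gives $t^2\in H_\theta^\circ$, whence $t^2=1$. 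In short, the hard step is outsourced to Lu's description of $\pi_\theta$-leaves, and the $2$-torsion constraint is a consequence of landing back in $\Im\varphi$ rather than an assumption imposed from the start.
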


\begin{proof}
 Take $w=w_\bullet w_0$ and $\theta'=\text{Ad}_{\dot{w}^{-1}}\circ\theta$. By the definition of $\theta$, $\theta'$ stabilizes $B^{\pm}$ and $H$. By definition, $H=H^{\theta0}H_\theta^\circ$ and $H^{\theta0}$ can be identified with $T_{w\theta'}$ in \cite{Lu14}. Note that the Poisson isomorphism 
    \[
    (G,\pi_\theta)\rightarrow (G,\pi_{\theta'}),\qquad g\mapsto \dot{w}^{-1}g,
    \]
    intertwines the $\theta$-twisted conjugation and the $\theta'$-twisted conjugation. By \eqref{eq:xg}, $\Im \varphi\subset \dot{w}B^+\dot{w}^{-1} B^-$. Applying this isomorphism (cf. \cite[Section~6]{Lu14}) and \cite[Proposition~3.5]{Lu14}, there exists $t\in H_\theta^\circ$ such that $g$, $tg'\theta(t)^{-1}=tg't$ are in the same symplectic leaf of $(G,\pi_\theta)$. 
    
    It remains to show that $t^2=1$. Suppose that 
    $$
    g'=\varphi((u'_+h',h'^{-1}u'_-))=\theta(u'_+h')^{-1}h'^{-1}u'_-.
    $$
    Since $tg't$ is in the same symplectic leaf of $g\in\Im\varphi$ and $\Im \varphi$ is a Poisson submanifold of $(G,\pi_\theta)$, we conclude that $tg't\in \Im\varphi$. By \cite[Proposition 3.2]{So24}, $\Im\varphi\subset B^+B^-$, and under the canonical projection $B^+B^-\rightarrow H$, $\Im\varphi$ is sent to $H^{\theta0}$. Hence, we have $t^2\theta(h')^{-1}h'^{-1}=\theta(h)^{-1}h^{-1}$ for some $h\in H$. This implies that $t^2\in H^{\theta0}$. On the other hand, $t^2\in H_\theta^\circ$, and hence we conclude that $t^2=1$. 
\end{proof}

The preimage $\varphi^{-1}(\mathcal{C}_\theta(g))$ of the twisted conjugacy class is not necessarily connected. However, as shown in the proofs of Proposition~\ref{prop:syml} and Lemma~\ref{le:gg'}, different connected components can be obtained from each other by a discrete group action.

\begin{proposition}\label{prop:dimsym}
    For $x\in \X$, the dimension of the symplectic leaf of $\X$ containing $x$ equals to 
    \[
    \dim \mathcal{C}_\theta(\varphi(x))-\dim \mathfrak{g}+\dim \mathfrak{k}.
    \]
\end{proposition}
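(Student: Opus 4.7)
The plan is to combine the description of symplectic leaves from Proposition~\ref{prop:syml} with a transverse intersection computation in $G$. By that proposition, the symplectic leaf of $\X$ through $x$ is a connected component of $\varphi^{-1}(\mathcal{C}_\theta(\varphi(x)))$, and $\varphi$ is a finite étale cover onto its image, hence a local isomorphism. Consequently the dimension of the leaf equals the local dimension of $\mathcal{C}_\theta(\varphi(x))\cap \Im\varphi$ near $g:=\varphi(x)$. The proposition therefore reduces to two inputs: (i) $\dim \Im\varphi = \dim \k$, and (ii) $\mathcal{C}_\theta(g)$ and $\Im\varphi$ intersect transversally at $g$; once both are established, the standard formula for a transverse intersection of smooth subvarieties yields the desired dimension $\dim\mathcal{C}_\theta(g)+\dim\k-\dim\g$.

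For (i), I would use the identification $\Im \varphi \cong G^*/(G^*\cap G_\theta)$ together with the explicit description $G^*\cap G_\theta \cong U_{\I_\bullet}\times H_\theta$ from the proof of Proposition~\ref{prop:syml} to compute $\dim \Im \varphi = |\mathcal{R}^+| + |\mathcal{R}^+_\bullet| + \dim \h^\theta$. On the other hand, the explicit action $\theta(\alpha_j)=\alpha_j$ for $j\in \bI$ and $\theta(\alpha_i) = -w_\bullet\alpha_{\tau i}$ for $i\in \wI$ forces every root in $\mathcal{R}_\bullet$ to be fixed by $\theta$ with $\theta=+1$ on $\g_\alpha$, and every $\alpha\in \mathcal{R}^+\setminus \mathcal{R}^+_\bullet$ to be sent to a negative root outside $\mathcal{R}_\bullet$. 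Counting $+1$-eigenspaces in $\h\oplus\bigoplus_\alpha\g_\alpha$ then gives $\dim\k = \dim\h^\theta + 2|\mathcal{R}^+_\bullet| + (|\mathcal{R}^+|-|\mathcal{R}^+_\bullet|) = \dim \Im\varphi$.

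For (ii), after trivializing $T_g G\cong \g$ by left translation, the infinitesimal generators of twisted conjugation and of the right $G^*$-action $(b_1,b_2)\cdot g=\theta(b_1)^{-1}gb_2$ on $\Im\varphi$ yield
\[
T_g\mathcal{C}_\theta(g)=\{X-\mathrm{Ad}(g^{-1})d\theta(X)\mid X\in\g\},\quad T_g\Im\varphi=\{Y_2-\mathrm{Ad}(g^{-1})d\theta(Y_1)\mid (Y_1,Y_2)\in\g^*\}.
\]
A general element of the sum has the form $A-\mathrm{Ad}(g^{-1})d\theta(B)$ with $A=X+Y_2$ and $B=X+Y_1$; the defining constraint $\pi_H^+(Y_1)+\pi_H^-(Y_2)=0$ of $\g^*$ still allows $Y_1-Y_2$ to sweep out all of $\g$ as $(Y_1,Y_2)$ varies, so $(A,B)$ ranges over $\g\times\g$ and hence $T_g\mathcal{C}_\theta(g)+T_g\Im\varphi=\g$. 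The main obstacle, I expect, is executing this parametrization cleanly; conceptually it reflects the fact that $\g^*+\g_\theta=\mathfrak{d}$ even though this is not a Lagrangian splitting, since the intersection $\g^*\cap\g_\theta=\k^\perp$ is nonzero.
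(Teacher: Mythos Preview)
Your argument is correct and takes a genuinely different route from the paper. The paper's proof imports the dimension formula for symplectic leaves of $(G,\pi_\theta)$ from \cite[Theorem~1.1]{Lu14}: after translating by $\dot w^{-1}$ with $w=w_0w_\bullet$, that theorem gives the leaf dimension as $\dim\mathcal{C}_\theta(\varphi(x))-l(w_0w_\bullet)-\dim\ker(1+\theta|_{\h})$, and the remaining work is to check $l(w_0w_\bullet)+\dim\ker(1+\theta|_{\h})=\dim\g-\dim\k$. Your approach instead bypasses \cite{Lu14} entirely by showing that $\Im\varphi$ meets $\mathcal{C}_\theta(g)$ transversally inside $G$, reducing the problem to an elementary tangent-space computation together with the identity $\dim\Im\varphi=\dim\k$. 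The paper's proof is shorter because it outsources the geometry; yours is more self-contained and makes transparent the structural reason for the formula, namely that the map $\g\oplus\g^*\to\g\times\g$, $(X,(Y_1,Y_2))\mapsto(X+Y_2,X+Y_1)$ is surjective. Both approaches rest equally on Proposition~\ref{prop:syml}.
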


\begin{proof}
    By Proposition \ref{prop:syml}, the desired dimension equals the dimension of the symplectic leaf of $(G,\pi_\theta)$ containing $\varphi(x)$. By \cite[Theorem~1.1]{Lu14} and applying the isomorphism in \cite[Section~6]{Lu14} by setting $w=w_0w_\bullet$, the desired dimension equals
    \[
    \dim \mathcal{C}_\theta(\varphi(x))-l(w_0w_\bullet)-\dim \ker(1+\theta).
    \]
    Here we view $\theta$ as a linear map on $\mathfrak{h}=\text{Lie }H$. Note that 
    \[
    \dim \mathfrak{k}=l(w_0)+l(w_\bullet)+\text{dim rank}(1+\theta).
    \]
    Hence one has 
    \[
    \dim \g-\dim \mathfrak{k}=l(w_0w_\bullet)+\dim \ker(1+\theta).
    \]
    We complete the proof.
\end{proof}

For $x\in \X$, write $\cO_x\subset \X$ for the symplectic leaf containing $x$.

\begin{corollary}\label{cor:maxdim}
    For $x\in\X$, one has
    \[
    \max_{x\in \X}\dim \cO_x ={\dim\mathfrak{k}-\rank \mathfrak{k}}.
    \]
\end{corollary}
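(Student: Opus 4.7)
The plan is to deduce the corollary directly from Proposition \ref{prop:dimsym}. By that proposition, for every $x\in\X$,
\[
\dim\cO_x = \dim\mathcal{C}_\theta(\varphi(x)) - \dim\g + \dim\k,
\]
so the claim reduces to the equality $\max_{x\in\X}\dim\mathcal{C}_\theta(\varphi(x)) = \dim\g - \rank\k$.

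For the upper bound, the Lie algebra of the twisted centralizer $Z_\theta(g) = \{h\in G : hg\theta(h)^{-1} = g\}$ is the fixed-point subspace of the linear operator $\mathrm{Ad}_g\circ\theta$ acting on $\g$. By the Steinberg--Springer theory of regular elements in the non-connected reductive group $G\rtimes\langle\theta\rangle$ (cf.~\cite{Spr09}), one has $\dim Z_\theta(g)\ge\rank\k$ for all $g\in G$, with equality on a Zariski open dense subset $G^{\mathrm{reg}}_\theta$ of $G$. Hence $\dim\mathcal{C}_\theta(g)\le\dim\g-\rank\k$, which yields $\dim\cO_x\le\dim\k-\rank\k$ uniformly in $x\in\X$.

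For the matching lower bound, it suffices to produce some $x\in\X$ with $\varphi(x)\in G^{\mathrm{reg}}_\theta$. By Proposition \ref{prop:syml}, $\Im\varphi = \theta(U^+)H^{\theta 0}U^-$, and in particular $U^-\subseteq\Im\varphi$: for $u\in U^-$, the pair $(e,u)$ lies in $G^*$ (as $u\in B^-$ and $\pi_H^+(e)\pi_H^-(u)=e$), and $\varphi(K^\perp\cdot(e,u)) = \theta(e)^{-1}u = u$. Thus it is enough to exhibit a $\theta$-regular element of $U^-$. The existence follows from the Lagrangian complementarity of $\g^*$ and $\g_\Delta$ in $\mathfrak{d}=\g\oplus\g$: the $G^*$-orbit $\Im\varphi$ and a maximal-dimensional $\theta$-twisted conjugacy class (a $G_\Delta$-orbit) meet transversally at generic points of $\Im\varphi$, so $\Im\varphi\cap G^{\mathrm{reg}}_\theta\neq\emptyset$, furnishing the required $\theta$-regular element in $U^-\subseteq\Im\varphi$.

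The main obstacle is this last step, because $\Im\varphi$ is generally a proper subvariety of $G$ and the density of $G^{\mathrm{reg}}_\theta$ alone does not guarantee a nonempty intersection. Besides the Lagrangian complementarity sketch above, an alternative route is to invoke Lu's classification of symplectic leaves of $(G,\pi_\theta)$ in \cite{Lu14}: since $\Im\varphi$ is a $\dim\k$-dimensional Poisson submanifold of $(G,\pi_\theta)$ and the generic symplectic leaves of $\pi_\theta$ have dimension $\dim\k-\rank\k\le\dim\k$, one verifies directly that such generic leaves do occur inside $\Im\varphi$, which produces the desired $\theta$-regular element and completes the argument.
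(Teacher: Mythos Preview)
Your reduction via Proposition~\ref{prop:dimsym} and the upper bound $\dim\mathcal{C}_\theta(g)\le\dim\g-\rank\k$ are fine; citing Springer--Steinberg for $G\rtimes\langle\theta\rangle$ is a valid shortcut for what the paper proves by hand (via a $\theta$-stable Borel $B_0$ and a direct stabilizer estimate).

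The genuine gap is in your lower bound. The Lagrangian splitting $\mathfrak{d}=\g_\Delta\oplus\g^*$ does imply that at every $y\in G$ the twisted conjugacy class through $y$ and the $G^*$-orbit through $y$ are transversal; but transversality only controls the dimension of an intersection once you know it is nonempty --- it does not by itself tell you that a \emph{regular} twisted conjugacy class actually meets $\Im\varphi$. Your sentence ``meet transversally at generic points of $\Im\varphi$, so $\Im\varphi\cap G^{\mathrm{reg}}_\theta\neq\emptyset$'' assumes what is to be proved, and the alternative via \cite{Lu14} is asserted without verification. (The transversality idea \emph{can} be completed: it shows that the saturation map $G\times\Im\varphi\to G$, $(g,y)\mapsto\theta(g)^{-1}yg$, is a submersion along $\{e\}\times\Im\varphi$, so its image is open and hence meets the dense regular locus; since regularity is twisted-conjugation invariant, this yields a regular point already in $\Im\varphi$. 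But you have not supplied this step.)

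The paper bypasses this issue by constructing an explicit $\theta$-regular element. One takes a cocharacter $\gamma$ of a $\theta$-stable maximal torus $H_0$ with $\langle\gamma,(1+\theta)\alpha\rangle\ne0$ for every root $\alpha$, sets $h_0=\gamma(\zeta)\,\theta(\gamma(\zeta))$ for a suitable primitive odd root of unity $\zeta$, and checks directly on the $\pm1$-eigenspace decomposition of $\g$ that $\mathfrak{stab}_\g(h_0)=\h_0^\theta$, whence $\dim\mathcal{C}_\theta(h_0)=\dim\g-\rank\k$. Since the original torus $H$ is already $\theta$-stable in the Satake setup, one may take $H_0=H$; then $h_0\in H^{\theta0}\subset\Im\varphi$ by Proposition~\ref{prop:syml}, producing a point of $\X$ over a regular class with no Poisson-geometric input.
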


\begin{proof}
    Thanks to Proposition \ref{prop:dimsym}, it suffices to show that 
    $$
    \max_{g\in G} \dim \mathcal{C}_\theta(g)=\dim \mathfrak{g}-\rank \mathfrak{k}.
    $$
    Let $B_0$ be a $\theta$-stable maximal Borel subgroup of $G$, which contains a $\theta$-stable maximal torus $H_0$. Let $U_0\subset B_0$ be the unipotent radical of $B_0$. Then $\theta(U_0)=U_0$. By \cite[Lemma 5 (i)]{Spr06}, any element $g\in G$ is $\theta$-conjugate to an element in $B_0$. We may assume that $g=ut\in B_0$ where $u\in U_0$ and $t\in H_0$. For any $u't'\in B_0$ where $u'\in U_0$ and $t'\in H_0$, it is clear that 
    \[
    (u't')(ut)\theta(u't')^{-1}\in t H_0^{-\theta}U_0,
    \]
    where $H_0^{-\theta}=\{h\theta(h)^{-1}\mid h\in H_0\}$ is the identity component of the subgroup $\{h\in H_0\mid \theta(h)=h^{-1}\}$. Let $\cdot_\theta$ denote the $\theta$-twisted conjugate cation. Then one has $B_0\cdot_\theta g\subset tH_0^{-\theta}U_0$. In particular the dimension of $B_0\cdot_\theta g$ is at most $\dim H_0^{-\theta}+\dim U_0$. Since the subgroup $H_0^\theta=\{h\in H_0\mid \theta(h)=h\}$ is a maximal torus of $K$, we have 
    \[
    \dim H_0^{-\theta}=\rank \g-\rank \k.
    \]
    Let $\text{Stab}_{B_0}(g)$ (resp., $\text{Stab}_{G}(g)$) be the stablizer of $g$ under the $\theta$-twisted conjugate action of $B_0$ (resp., $G$). Then one has
    \[
    \dim \text{Stab}_{G}(g)\geq \dim \text{Stab}_{B_0}(g)\geq \dim B_0-\dim H_0^{-\theta}-\dim U_0=\rank \k.
    \]
    Hence, we have 
    \[
    \dim \mathcal{C}_\theta(g)\leq \dim\g-\rank\k.
    \]
    
    We next show that the equality can be reached. Let $\mathcal{R}_0\subset \text{Hom}(H_0,\C^*)$ be a root system associated with $H_0$. Take $\gamma\in \text{Hom}(\C^*,H_0)$, such that the pairing $\langle \gamma,(1+\theta)\alpha\rangle$ is nonzero for any $\alpha\in \mathcal{R}_0$. Let $p$ be a sufficiently large odd integer, and let $h_0=(\gamma\theta(\gamma))(e^{\frac{2\pi\sqrt{-1}}{p}})$. Then $h_0 \in H_0^\theta$ is a regular element in $H_0^\theta$ such that $h_0^p=1$. Let $\mathfrak{h}_0^\theta$ be the Lie algebra of $H_0^\theta$.


    Then $\text{Stab}_G(h_0)=\{g\in G| \theta(g)h_0= h_0 g \}$ and its Lie algebra is given by $\mathfrak{stab}_\g(h_0)=\{x\in \g| \theta(x) = \text{Ad}_{h_0}(x)\}$. For any $x\in \mathfrak{stab}_\g(h_0)$, write $x=x_0+x_1$ where $x_0\in \mathfrak{k}, x_1\in \mathfrak{p}=\{X\in \g\mid \theta(X)=-X\}$. Since $h_0\in H_0^\theta$, $\text{Ad}_{h_0}$ preserves both $\mathfrak{k},\mathfrak{p}$; in particular, $\text{Ad}_{h_0}(x_0)\in \mathfrak{k}$ and $\text{Ad}_{h_0}(x_1)\in \mathfrak{p}$. Now, $x\in \mathfrak{stab}_\g(h_0)$ forces that
    \[
    x_0=\text{Ad}_{h_0} (x_0), \qquad - x_1 = \text{Ad}_{h_0} (x_1),
    \]
    Since $h_0$ is a regular element in the maximal torus $H_0^\theta$ of $K$ and $x_0\in \mathfrak{k}$, the first identity above implies that $x_0\in \mathfrak{h}_0^\theta$. Since $h_0^p=1$, the operator $\text{Ad}_{h_0}$ on $\mathfrak{p}$ cannot have any nonzero eigenvector with the eigenvalue $-1$ and hence $x_1=0$. Thus, we have showed that 
    \[
    \mathfrak{stab}_\g(h_0)=\mathfrak{h}_0^\theta.
    \]
    Therefore, $\dim \mathfrak{stab}_\g(h_0) = \dim \mathfrak{h}_0^\theta =\rank \k$ as desired. 
\end{proof}

\subsection{Equivalence of blocks}

Recall the map $\varphi:\X\rightarrow G$ in \eqref{eq:xg}. For $g,g'\in G$, we write $g\sim_\theta g'$ if $g$ and $g'$ are in the same $\theta$-twisted conjugacy class. In this subsection, we establish the following refined version of Corollary~\ref{cor:sympl}.

\begin{theorem}\label{thm:thetaconjugacy}
   Let $x,x'\in \X$. If $\varphi(x)\sim_\theta\varphi(x')$, then one has an isomorphism $\Ui_{v,x}\cong\Ui_{v, x'}$ as $\C$-algebras.
\end{theorem}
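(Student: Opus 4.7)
The proof combines the Poisson order machinery of Corollary~\ref{cor:sympl} with a family of rescaling automorphisms of $\Uiv$ that realize a torus action on the fibres of $\varphi$. The first step is to construct, for each $t\in H^\theta$, a $\C$-algebra automorphism $\sigma_t$ of $\U_v$ by setting
\[
\sigma_t(K_\mu)=K_\mu,\qquad \sigma_t(E_i)=\alpha_i(t)E_i,\qquad \sigma_t(F_i)=\alpha_i(t)^{-1}F_i.
\]
Using the Satake relation $w_\bullet\alpha_{\tau i}=-\theta(\alpha_i)$ together with the identity $\theta(\alpha_i)(t)=\alpha_i(\theta(t))=\alpha_i(t)$ for $t\in H^\theta$, one checks $\sigma_t(Y_i)=\alpha_i(t)^{-1}Y_i$, so that $\sigma_t(B_i)=\alpha_i(t)^{-1}B_i$. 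Therefore $\sigma_t$ preserves $\Uiv$, and through the quantum Frobenius map $\Fri$, its restriction to $\Zi\cong\C[\X]$ is the pullback along right translation by $(t,t^{-1})\in G^*$ on $\X$. When $t^2=1$, the element $(t,t^{-1})=(t,t)$ lies in $G^*\cap G_\theta$, and the induced map on $\mathrm{Im}\,\varphi$ is the $\theta$-twisted conjugation $\varphi(x)\mapsto t\varphi(x)t$.

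Now suppose $x,x'\in\X$ satisfy $\varphi(x)\sim_\theta\varphi(x')$. Lemma~\ref{le:gg'} produces $t\in H_\theta^\circ$ with $t^2=1$---and hence $\theta(t)=t^{-1}=t$, so $t\in H^\theta$---such that $\varphi(x)$ and $t\varphi(x')t$ lie in the same symplectic leaf of $(G,\pi_\theta)$. Let $y'\in\X$ denote the right translate of $x'$ by $(t,t)\in G^*\cap G_\theta$, so that $\varphi(y')=t\varphi(x')t$. Then $\sigma_t$ induces an algebra isomorphism $\Ui_{v,x'}\cong\Ui_{v,y'}$. By Proposition~\ref{prop:syml}, the symplectic leaves of $\X$ are the connected components of $\varphi^{-1}(\mathcal{C}_\theta(\varphi(x)))$, and these components are permuted by the deck group $G^*\cap G_\theta/K^\perp\cong E$, where $E=\{h\in H_\bullet\mid h^2=1\}\subset H^\theta$ by the proof of Proposition~\ref{prop:syml}. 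The rescalings $\sigma_e$ for $e\in E$ therefore realize this $E$-action on $\Uiv$; composing a suitable $\sigma_e$ with $\sigma_t$ transports $x'$ to a point $y''\in\X$ lying in the same symplectic leaf of $\X$ as $x$, with $\Ui_{v,x'}\cong\Ui_{v,y''}$. Finally, Corollary~\ref{cor:sympl} yields $\Ui_{v,y''}\cong\Ui_{v,x}$, completing the chain of isomorphisms.

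The main technical obstacle is the last step: verifying that composing the rescalings $\sigma_t$ and $\sigma_e$ indeed suffices to transport $x'$ to a point in the same symplectic leaf of $\X$ as $x$. Since $\sigma_e$ for $e\in E$ moves $\varphi(y')$ to $e\varphi(y')e$ rather than realizing a pure deck transformation, this reduces to a transitivity statement on connected components of $\varphi^{-1}(\mathcal{C}_\theta(\varphi(x)))$ under the combined action of $H^\theta[2]$ together with motion within symplectic leaves---a purely classical fact following from the structure of the covering $\varphi$ and Lu's description of symplectic leaves of $(G,\pi_\theta)$.
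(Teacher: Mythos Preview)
Your outline is the same as the paper's: use Lemma~\ref{le:gg'} to arrange that $\varphi(x)$ and $t\varphi(x')t$ lie in one symplectic leaf of $(G,\pi_\theta)$, lift the torus action to an automorphism of $\Uiv$, and invoke Corollary~\ref{cor:sympl}. For $t^2=1$ your $\sigma_t$ agrees with the paper's automorphism $c_t$ (defined by $c_t(E_i)=\alpha_i(t)E_i$, $c_t(F_i)=\alpha_i(t)F_i$, $c_t(K_\mu)=K_\mu$) and does yield $\Ui_{v,x'}\cong\Ui_{v,x''}$ for some $x''$ with $\varphi(x'')=t\varphi(x')t$. (A minor point: the induced action on $\X$ is conjugation by $(t,t)$, not right translation by $(t,t^{-1})$, though for $t^2=1$ the effect on $\varphi$ is the same.)

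The genuine gap is the deck-transformation step. Once $\varphi(x'')$ lies in the same symplectic leaf of $(G,\pi_\theta)$ as $\varphi(x)$, you must still move $x''$ to the same symplectic leaf of $\X$ as $x$; equivalently, you need $\Ui_{v,y}\cong\Ui_{v,y'}$ whenever $\varphi(y)=\varphi(y')$. Your $\sigma_e$ for $e\in E$ does not accomplish this: as you concede, it sends $\varphi(y)$ to $e\varphi(y)e$ rather than fixing it, so it is not a deck transformation at all. The ``purely classical fact'' you then invoke---transitivity on connected components under the combined action of $H^\theta[2]$ and leaf motion---is neither formulated precisely nor proved, and does not follow from anything established here or in the cited work of Lu. The paper closes this gap with a \emph{second} family of automorphisms $r_\delta$ of $\Uiv$, for $\delta\in H$ with $\theta(\delta)=\delta$ and $\delta^2=1$, defined by $r_\delta(E_i)=\alpha_i(\delta)E_i$, $r_\delta(F_i)=F_i$, $r_\delta(K_\mu)=\mu(\delta)K_\mu$. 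One checks $r_\delta(B_i)=B_i$, so $r_\delta$ preserves $\Uiv$; its induced map on $\X$ is left multiplication by $(\delta,\delta)\in G^*\cap G_\theta$, and since $\varphi((\delta,\delta)\cdot y)=\theta(\delta)^{-1}\varphi(y)\delta=\varphi(y)$ this is a genuine deck transformation. That supplies the missing isomorphism $\Ui_{v,y}\cong\Ui_{v,y'}$ directly, with no appeal to unproved transitivity statements.
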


\begin{proof}
Thanks to Lemma \ref{le:gg'}, there exists $t\in H_{\theta}^\circ$ with $t^2=1$ such that $\varphi(x)$, $t\varphi(x')t$ are in the same symplectic leaf of $(G,\pi_\theta)$. In particular, one has $t\varphi(x')t=\varphi(x'')$ for some $x''\in\X$. Firstly, we show that $\Ui_{v,x'}\cong \Ui_{v,x''}$ as $\C$-algebra.   

Define a $\mathbb{C}(q^{1/2})$-algebra isomorphism $c_t:\U\rightarrow \U$ by $c_t(E_i)=\alpha_i(t)E_i$, $c_t(F_i)=\alpha_i(t)F_i$, and $c_t(K_\mu)=K_\mu$, for $i\in \I$ and $\mu\in P$. Note that $c_t$ is an algebra isomorphism since $\alpha_i(t)^2=\alpha_i(t^2)=1$ for $i\in \I$. 

We \emph{claim} that $c_t(\Ui)=\Ui$. It suffices to show that $c_t(x)\in \Ui$ for $x$ being the generators of $\Ui$. For $x\in\{E_i,F_i,K_\mu\mid i\in \I_\bullet,\mu\in P^\theta\}$, it is clear that that $c_t(x)\in\{x,-x\}\subset \Ui$. For $i\in \I_\circ$, since $\text{wt }(T_{w_\bullet}(E_{\tau i}))=\theta(\alpha_i)$, we have 
\[
c_t(T_{w_\bullet}(E_{\tau i}))=\theta(\alpha_i)(t)^{-1}T_{w_\bullet}(E_{\tau i})=\alpha_i(\theta(t)^{-1})T_{w_\bullet}(E_{\tau i})=\alpha_i(t)T_{w_\bullet}(E_{\tau i}).
\]
Hence, we conclude that $c_t(B_i)=\alpha_i(t)B_i\in \Ui$. The claim is proved.

It is clear that $c_t$ preserves the integral form $\U_{\A}$, and hence it induces a $\mathbb{C}$-algebra isomorphism $c_t:\Ui_v\rightarrow \Ui_v$. Moreover, it is direct to verify that $c_t(\Zi)=\Zi$, and hence it induces a morphism $\widetilde{c_t}:\X\rightarrow \X$. Then $c_t$ induces an isomorphism $\Ui_{v,x'}\cong \Ui_{v,\widetilde{c_t}(x')}$.

On the other hand, denote $\widetilde{t}=(t,t)\in G^*$. Then $\widetilde{c_t}(K^\perp x)=K^\perp \text{Ad}_{\widetilde{t}}(x)$ for any $x\in G^*$. In particular, one has $\varphi(\widetilde{c_t}(x))=t\varphi(x)t$, for any $x\in \X$. Thus, in order to show $\Ui_{v,x'}\cong\Ui_{v,x''}$, it remains to prove the following statement.

(a) For $y,y'\in\X$ with $\varphi(y)=\varphi(y')$, we have $\Ui_{v,y}\cong\Ui_{v,y'}$ as $\mathbb{C}$-algebras.

By the proof of Proposition \ref{prop:syml}, we can write $y=K^\perp(hu_+,h^{-1}u_-)$ and $y'=K^\perp(\delta hu_+,\delta h^{-1}u_-)$, where $\delta,h\in H, u_{\pm}\in U^{\pm}$, $\theta(\delta)=\delta$, and $\delta^2=1$. Let us define a $\C(q^{1/2})$-algebra isomorphism $r_\delta:\U\rightarrow \U$ by $r_\delta(E_i)=\alpha_i(\delta)E_i$, $r_\delta(F_i)=F_i$, and $r_\delta(K_\mu)=\mu(\delta)K_\mu$, for $i\in \I$ and $\mu\in P$. By the similar consideration as the case of $c_t$, we conclude that $r_\delta$ induces a $\C$-algebra isomorphism $r_\delta:\Ui_v\rightarrow \Uiv$ and $r_\delta(\mathfrak{m}_y)=\mathfrak{m}_{y'}$. Hence we obtain the $\C$-algebra isomorphism $\Ui_{v,y}\cong \Ui_{v,y'}$. This completes the proof of (a).

Therefore, we have showed that $\Ui_{v,x'}\cong \Ui_{v,x''}$, where $\varphi(x)$, $\varphi(x'')$ are in the same symplectic leaf of $(G,\pi_\theta)$. Thanks to Proposition \ref{prop:syml}, we can take $x'''\in \X$ such that $\varphi(x''')=\varphi(x'')$, and $x'''$, $x$ are in the same symplectic leaf of $\X$. By (a), we have $\Ui_{v,x'''}\cong \Ui_{v,x''}$. 
By Proposition~\ref{prop:Porder} we have $\Ui_{v,x'''}\cong\Ui_{v,x}$. Finally, combining all of the above arguments, it is clear that $\Ui_{v,x'}\cong \Ui_{v,x}$ as $\C$-algebras.
\end{proof}

\section{Dimensions of simple modules}\label{sec:dim}

Recall from Corollary~\ref{cor:dimv} that $\dim V\leq \ell^{(\dim \mathfrak{k}-\rank \mathfrak{k})/2}$ for any $V\in \Irr \Uiv$. The goal of this section is to prove the following theorem.

\begin{theorem}\label{thm:genrep}
    Let $x\in \X$ and $V\in\Irr \Ui_{v,x}$. If the $\theta$-twisted conjugacy class $\mathcal{C}_\theta(\varphi(x))$ has the maximal dimension ($=\dim \mathfrak{g}-\text{rank }\mathfrak{k}$), then $\dim V=\ell^{(\dim \mathfrak{k}-\rank \mathfrak{k})/2}$.
\end{theorem}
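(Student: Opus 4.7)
The plan is to reduce the theorem to the set-theoretic inclusion $\X^{\mathrm{reg}}\subset \X^0$, where $\X^{\mathrm{reg}}:=\varphi^{-1}(G^{\mathrm{reg},\theta})$ is the preimage of the regular locus $G^{\mathrm{reg},\theta}:=\{g\in G:\dim\mathcal{C}_\theta(g)=\dim\g-\rank\k\}$ and $\X^0:=\{x\in\X:\dim V=\ell^{N_0}\text{ for all } V\in\Irr \Ui_{v,x}\}$ is the Azumaya locus. Once this inclusion is established, the conclusion of the theorem is immediate. The basic inputs are: $\X^0$ is a non-empty Zariski open subset of $\X$ by Corollary~\ref{cor:dimv}(2); $G^{\mathrm{reg},\theta}$ is open in $G$ by upper semi-continuity of stabilizer dimension under $\theta$-twisted conjugation; and $G^{\mathrm{reg},\theta}\cap\Im\varphi$ is non-empty by Corollary~\ref{cor:maxdim} combined with Proposition~\ref{prop:dimsym}.

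I would then combine two structural ingredients. First, Theorem~\ref{thm:thetaconjugacy} shows that $\X^0$ is a union of preimages of $\theta$-twisted conjugacy classes; in particular $\X^0$ is $\varphi$-saturated, since two points in the same $\varphi$-fibre tautologically lie in the same $\theta$-twisted conjugacy class. Second, Proposition~\ref{prop:syml} asserts that the restriction $\varphi|_{\X^{\mathrm{reg}}}\colon \X^{\mathrm{reg}}\to G^{\mathrm{reg},\theta}\cap\Im\varphi$ is a finite \'etale covering, hence simultaneously open and closed. I would then partition $\X^{\mathrm{reg}}$ into the open piece $\X^0\cap\X^{\mathrm{reg}}$ and its closed complement $\X^{\mathrm{reg}}\setminus\X^0$; both pieces are $\varphi$-saturated, so pushing forward along the covering map yields a partition of $G^{\mathrm{reg},\theta}\cap\Im\varphi$ into an open subset $\varphi(\X^0\cap\X^{\mathrm{reg}})$ and a closed subset $\varphi(\X^{\mathrm{reg}}\setminus\X^0)$.

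To conclude, observe that $\X^0\cap\X^{\mathrm{reg}}$ is non-empty (two non-empty open sets in the irreducible variety $\X$), so $\varphi(\X^0\cap\X^{\mathrm{reg}})$ is a non-empty clopen subset of $G^{\mathrm{reg},\theta}\cap\Im\varphi$. Since the latter is a non-empty open subset of the irreducible variety $G$, it is itself irreducible and therefore connected; this forces $\varphi(\X^0\cap\X^{\mathrm{reg}})=G^{\mathrm{reg},\theta}\cap\Im\varphi$, and by $\varphi$-saturation $\X^{\mathrm{reg}}\subset \X^0$, as required. The delicate point is ensuring the saturation and covering ingredients interact cleanly: the saturation provided by Theorem~\ref{thm:thetaconjugacy} is with respect to the coarser equivalence relation of $\theta$-twisted conjugacy on $G$, but this already implies the $\varphi$-saturation that the partition argument needs. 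The overall strategy is parallel to the De~Concini--Kac--Procesi treatment of quantum groups at roots of unity, with Theorem~\ref{thm:thetaconjugacy} playing the role that conjugacy-class invariance of fibres plays for $\U_v$.
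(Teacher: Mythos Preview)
Your argument is correct and takes a genuinely different, more geometric route than the paper's.  The paper proceeds by degeneration: it shows $\overline{\cO_x}\cap\X^0\neq\emptyset$ by passing to the associated graded algebra $\Gr\Uiv$, invoking the explicit description of the Poisson center $\PZ(\Ui_1)$ (Proposition~\ref{prop:Pcenter}) and of the defining ideal of $\overline{\cO_x}$ (Proposition~\ref{prop:defidel}), and then applying \cite[Lemma~1.5]{DCKP93a} together with a direct degree computation for a quotient twisted polynomial algebra. Your approach instead exploits the full strength of Theorem~\ref{thm:thetaconjugacy}: once $\X^0$ is $\varphi$-saturated, the clopen partition argument on the connected target finishes the job with no further algebra. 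This bypasses Sections~\ref{sec:dim}.1--\ref{sec:dim}.2 entirely; the paper's machinery (Poisson center, leaf closures, graded degeneration) is of independent interest and closer to the original De~Concini--Kac--Procesi template, but for the bare statement of Theorem~\ref{thm:genrep} your route is shorter.

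One small correction: you assert that $G^{\mathrm{reg},\theta}\cap\Im\varphi$ is ``a non-empty open subset of the irreducible variety $G$''. In general $\Im\varphi=\theta(U^+)H^{\theta 0}U^-$ need not be open in $G$, so this sentence is not literally true. The connectedness you need follows instead from the fact that $\X^{\mathrm{reg}}$ is a non-empty open subset of the irreducible variety $\X$, hence irreducible, and $G^{\mathrm{reg},\theta}\cap\Im\varphi$ is its continuous image. Equivalently, work with the genuine quotient $\X/F$ (where $F=(G^*\cap G_\theta)/K^\perp$ acts freely) rather than with $\Im\varphi\subset G$; then $\varphi\colon\X\to\X/F$ is finite \'etale on the nose, and the target is visibly irreducible. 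With this adjustment your proof stands.
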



\subsection{Poisson centers}

For a commutative Poisson algebra $R$ over $\C$, we write 
$$
\PZ(R)=\big\{f\in R\mid \{f,g\}=0,\forall g\in R\big\}
$$ 
to denote the subalgebra of Casimir elements. The subalgebra $\PZ(R)$ is called the \emph{Poisson center} of $R$. 

Given an $n\times n$ skew-symmetric integer matrix $H=(h_{ij})$, the (log-canonical) Poisson algebra $\P_H=\C[x_i\mid 1\leq i\leq n\}$ is defined to be the polynomial algebra of variables $x_i$, $1\leq i\leq n$, as a commutative algebra, with the Poisson bracket defined by
\[
\{x_i,x_j\}=h_{ij}x_ix_j,\quad \text{for }1\leq i,j\leq n.
\]
For any subset $J\subset\{1,\cdots,n\}$, we write $\P_{H,J}=\P_H[x_j^{-1}\mid j\in J]$. 

The Poisson algebra $\P_{H,J}$ is the semi-classical limit of the localized twisted polynomial algebra $\tT_{H,J}$ in Section \ref{sec:tpa} with $\F=\A$. Then one has the canonical isomorphism 
\[
\C\otimes _{\A}\tT_{H,J}\cong \P_{H,J}
\]
as Poisson algebras. Here $\C$ is viewed as an $\A$-module by $q^{1/2}\mapsto 1$, and the Poisson bracket on the left hand side is given by \eqref{def:SCPoisson}.

For any $\ba=(a_1,\cdots,a_n)\in\Z^n$, we denote by $x^{\ba}=x_1^{a_1}\cdots x_n^{a_n}$ the element in $\C[x_i^{\pm1}\mid 1\leq i\leq n]$. Consider $H$ as a linear function $H:\Z^n\rightarrow\Z^n$. Let $K$ be the kernel of $H$. The following lemma is straightforward to prove.

\begin{lemma}\label{le:pcp}
Let $J\subset \{1,\cdots,n\}$ be any subset. The set  
\[
\{x^{\mathbf{a}}\mid \mathbf{a}\in K,\;a_i\geq0\;\text{ for }i\not\in J\}
\]
forms a $\C$-basis of the $\PZ(\P_{H,J})$. \end{lemma}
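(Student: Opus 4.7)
The plan is to reduce the statement to a direct computation of the Poisson bracket of a monomial against each generator, and then extract both inclusions from a single explicit formula, using linear independence of monomials in $\P_{H,J}$.

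First I would compute, via the Leibniz rule applied to the defining brackets $\{x_i,x_j\}=h_{ij}x_ix_j$, that for any $\mathbf{a}\in \Z^n$ with $a_i\ge 0$ for $i\notin J$,
\[
\{x^{\mathbf{a}}, x_j\}=\Big(\sum_{i=1}^n h_{ij}\,a_i\Big)\,x^{\mathbf{a}} x_j, \qquad 1\le j\le n.
\]
By skew-symmetry of $H$, the condition that this vanishes for every $j$ is precisely $H\mathbf{a}=0$, i.e.\ $\mathbf{a}\in \ker H$. This already shows that every monomial listed in the lemma Poisson-commutes with each generator $x_j$ (and hence with $x_j^{-1}$ for $j\in J$), and so lies in $\PZ(\P_{H,J})$, since by the Leibniz rule Poisson-centrality with respect to the full algebra is equivalent to Poisson-centrality with respect to a set of generators.

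For the reverse inclusion, I would take an arbitrary $f\in \PZ(\P_{H,J})$ and write it uniquely as $f=\sum_{\mathbf{a}} c_{\mathbf{a}} x^{\mathbf{a}}$, with $\mathbf{a}$ running over the index set of the claimed basis. Applying the bracket formula above,
\[
0=\{f,x_j\}=\sum_{\mathbf{a}} c_{\mathbf{a}}\Big(\sum_{i=1}^n h_{ij}\,a_i\Big)\,x^{\mathbf{a}+\mathbf{e}_j}.
\]
Since the monomials $\{x^{\mathbf{a}+\mathbf{e}_j}\}_{\mathbf{a}}$ are linearly independent in $\P_{H,J}$, this forces $c_{\mathbf{a}}=0$ unless $\sum_i h_{ij}a_i=0$ for every $j$, i.e.\ $\mathbf{a}\in \ker H$. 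Combined with the first step, this proves the lemma.

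No real obstacle is expected: the argument is a one-line Leibniz calculation plus a monomial-independence argument. The only cosmetic points to verify are the skew-symmetry identification $\ker H=\ker H^T$ and the reduction of Poisson centrality to centrality against the generators $x_1,\dots,x_n$ (together with $x_j^{-1}$ for $j\in J$), both of which are immediate.
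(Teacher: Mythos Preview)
Your proof is correct and complete; the paper itself does not give a proof, stating only that the lemma is straightforward. One small wording point: when you expand an arbitrary $f\in\PZ(\P_{H,J})$ as $\sum_{\mathbf{a}} c_{\mathbf{a}} x^{\mathbf{a}}$, the index $\mathbf{a}$ should run over all of $\{\mathbf{a}\in\Z^n : a_i\ge 0 \text{ for } i\notin J\}$ (the full monomial basis of $\P_{H,J}$), not over the set already restricted to $K$; your subsequent argument makes clear this is what you intended.
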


One can define filtrations on Poisson algebras. An ($\N$-)filtered Poisson algebra $R=\cup_{i\in \mathbb{N}}R_i$ is a commutative $\C$-Poisson algebra $R$ with subspaces $R_i$, $i\in \mathbb{N}$, such that $\{R_i,R_j\}\subset R_{i+j}$ and $R_iR_j\subset R_{i+j}$, for $i,j\in \mathbb{N}$. For a filtered Poisson algebra $R=\cup_{i\in \mathbb{N}}R_i$, its associated graded algebra $\Gr R=\oplus_{i\in\N}R_{i}/R_{i-1}$ carries a natural Poisson structure. For $r\in R$, write $\deg r$ to be the minimal number $i$ such that $r\in R_i$, and write $\overline{r}\in R_i/R_{i-1}$ to denote its image in $\Gr R$, as usual. The following theorem is the Poisson analogue of Lemma \ref{le:degc}, whose proof will be omitted.

\begin{lemma}\label{le:fpa}
    Let $R=\cup_{i\in\N}R_i$ be a filtered Poisson algebra and an integral domain. Let $z_i\in \PZ(R)$, for $1\leq i\leq n$. Suppose that $\overline{z_i}$, $1\leq i\leq n$, generate $\PZ(\Gr R)$ as a $\C$-algebra. Then $z_i$, $1\leq i\leq n$, generate $\PZ(R)$ as a $\C$-algebra.
\end{lemma}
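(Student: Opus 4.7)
The plan is to follow exactly the template of Lemma~\ref{le:degc}, with the Poisson-bracket structure taking the place of the commutator. Let $Z'\subseteq\PZ(R)$ be the $\C$-subalgebra generated by $z_1,\ldots,z_n$; the goal is to prove $Z'=\PZ(R)$ by induction on $\deg f$ for $f\in \PZ(R)$.

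First I would check the preparatory fact that passing to the associated graded preserves Poisson centrality, i.e.\ if $f\in \PZ(R)$ then $\overline{f}\in \PZ(\Gr R)$. This is immediate from the filtration compatibility $\{R_i,R_j\}\subseteq R_{i+j}$: for any $g\in R$ with $\deg g=e$, one has $\{f,g\}=0$ in $R$, so the symbol $\{\overline{f},\overline{g}\}$ vanishes in $R_{d+e}/R_{d+e-1}$, and since the $\overline{g}$'s span $\Gr R$ this shows $\overline{f}$ Poisson commutes with all of $\Gr R$. I would also record the basic fact that, because $R$ is a (commutative) domain, one has $\overline{ab}=\overline{a}\,\overline{b}$ whenever $a,b\in R\setminus\{0\}$, so that monomials in the $z_i$ and their symbols match cleanly.

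Next comes the induction. Given $f\in \PZ(R)$, by hypothesis we can write
\[
\overline{f}=\sum_{\mathbf{m}=(m_1,\ldots,m_n)\in\N^n} c_{\mathbf{m}}\,\overline{z_1}^{m_1}\cdots \overline{z_n}^{m_n},\qquad c_{\mathbf{m}}\in\C,
\]
and since $\overline{f}$ is homogeneous of degree $\deg f$ we may discard all terms whose multidegree differs from $\deg f$. Setting
\[
f':=\sum_{\mathbf{m}} c_{\mathbf{m}}\,z_1^{m_1}\cdots z_n^{m_n}\ \in\ Z'\subseteq \PZ(R),
\]
the identity $\overline{f'}=\overline{f}$ forces $\deg(f-f')<\deg f$. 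The base case $\deg f=0$ is handled directly: then every contributing monomial $z_1^{m_1}\cdots z_n^{m_n}$ lies in $R_0$ and $f'=\overline{f'}=\overline{f}=f\in Z'$. For the inductive step, $f-f'\in \PZ(R)$ has strictly smaller degree, so by induction $f-f'\in Z'$, and hence $f\in Z'$.

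The argument is entirely formal; I do not anticipate a genuine obstacle. The only point that needs a moment of care is the compatibility $\overline{ab}=\overline{a}\,\overline{b}$, which relies on $R$ being an integral domain so that leading symbols of products do not collapse to lower degree. This is precisely why the hypothesis that $R$ is a domain appears in the statement, and it is satisfied in the intended application to $R=\Gr\Uiv$ by the results of Section~\ref{sec:deg}.
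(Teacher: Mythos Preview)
Your argument is correct and is exactly what the paper intends: the paper omits the proof, remarking only that it is the Poisson analogue of Lemma~\ref{le:degc}, and you have carried out precisely that translation. One small side comment: the intended application in Proposition~\ref{prop:Pcenter} is to $R=\Ui_1$, not to $R=\Gr\Uiv$, and in fact the integral-domain hypothesis is not strictly needed in your argument---the image of $z_1^{m_1}\cdots z_n^{m_n}$ in $R_d/R_{d-1}$ equals $\overline{z_1}^{m_1}\cdots\overline{z_n}^{m_n}$ by the very definition of multiplication in $\Gr R$, whether or not that product is zero.
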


\subsection{Dimensions associated with regular classes}
In this subsection, we retain the notations in Section~\ref{sec:iPBW}.
Recall elements $d_{\nu_i}\in Z(\Ui_{\A'})$ for $1\leq i\leq m=\rank \mathfrak{k}$ from Section~\ref{sec:inii}, and the commutative Poisson algebra $\Ui_1$ from Section~\ref{sec:limit}. Write $d_i=\underline{d_{\nu_i}}$ to denote the image of $d_{\nu_i}$ in $\Ui_1$. Then it is clear that $d_i\in\PZ(\Ui_1)$ for $1\leq i\leq m$.

\begin{proposition}\label{prop:Pcenter}
    The algebra $\PZ(\Ui_1)$ is generated by $d_i$, $1\leq i\leq m$, as a $\C$-algebra.
\end{proposition}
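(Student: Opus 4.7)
The plan is to prove the statement by parallel analysis of the semi-classical degeneration, mirroring the proof of Theorem \ref{thm:icenter}. First, using Lemma \ref{le:basc}, I would specialize the $\N^{N+M+2}$-filtration on $\Ui_{\A'}$ of Section \ref{sec:asa} to a Poisson-algebra filtration on $\Ui_1$; by Theorem \ref{thm:assoc} combined with \eqref{def:SCPoisson}, the associated graded $\Gr \Ui_1$ is a localized polynomial Poisson algebra with log-canonical brackets
\[
\{\ov{x},\,\ov{y}\} = -(\mathrm{wt}\, x,\, \mathrm{wt}\, y)\, \ov{x}\,\ov{y}
\]
on the generators $\ov{B_{\beta_k}}, \ov{E_{\gamma_t}}, \ov{F_{\gamma_t}}, \ov{K_\mu}^{\pm 1}$. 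In the notation of Section \ref{sec:tpa}, this is an algebra $\P_{\widetilde{S},J}$, where $\widetilde{S}$ is the integral analog of the matrix $S$ in \eqref{eq:S} acting on $X = \Z^N \oplus \Z^M \oplus P^\theta$ (with the $P^\theta$-coordinates now unrestricted, since no $\ell$-modular reduction arises at $q = 1$).

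Second, Lemma \ref{le:pcp} provides a monomial basis of $\PZ(\Gr \Ui_1)$ indexed by $\ker \widetilde{S}$. The kernel computation parallels Lemma \ref{le:ker}: the identities in Lemma \ref{le:lmm} and the converse argument of Lemma \ref{le:ker} all hold integrally, reducing any kernel element to an element $\mu$ with $(w_0 - w_\bullet)\mu = 0$ and eventually to $\mu \in P^\imath$ via Lemma \ref{le:pi}. Hence $\ker \widetilde{S}$ is freely spanned by $\{x(\nu_i)\}_{i=1}^m$, and $\PZ(\Gr \Ui_1)$ is generated as a $\C$-algebra by the monomials
\[
\ov{z_i} := \ov{K_{(1+w_0)\nu_i}} \prod_{t=1}^N \ov{B_{\beta_t}}^{\langle \alpha_{i_t}^\vee, \nu_i \rangle} \prod_{t=1}^M \ov{E_{\gamma_t}}^{\langle \alpha_{j_t}^\vee, -w_0\nu_i \rangle}, \qquad 1\le i\le m.
\]
By Proposition \ref{prop:dv} specialized at $q^{1/2}\mapsto 1$, the leading term of $d_i = \un{d_{\nu_i}}$ in $\Gr \Ui_1$ is exactly $\pm \ov{z_i}$. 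Applying Lemma \ref{le:fpa} to $\Ui_1$, which is an integral domain since $\Ui_1 \cong \C[\X]$ by Section \ref{sec:limit}, then yields that $d_1, \ldots, d_m$ generate $\PZ(\Ui_1)$.

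The main obstacle will be confirming integrally that $\ker \widetilde{S} = \bigoplus_{i=1}^m \Z\, x(\nu_i)$ without the cushion of the $\ell X$ term present in the mod-$\ell$ analysis of Lemma \ref{le:ker}. This requires a careful but essentially routine re-examination of the linear-algebra steps of that proof, verifying that each step works over $\Z[2^{-1}]$ rather than $(\Z/\ell\Z)[2^{-1}]$; the descent by $\ell$ argument, together with the torsion-freeness of $X$, reduces the integral kernel computation to the one already carried out. A secondary check is that the filtration on $\Ui_{\A'}$ is Poisson-compatible upon specialization, which follows from the PBW basis of Proposition \ref{prop:intPBW} and the definition \eqref{def:SCPoisson} via the bracket in $\U_{\A'}$.
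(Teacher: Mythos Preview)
Your proposal is correct and follows essentially the same approach as the paper: the paper's proof also specializes the $\N^{N+M+2}$-filtration to $\Ui_1$, identifies $\Gr\Ui_1$ with the log-canonical Poisson algebra $\P_{S,J(S)}$ for the \emph{same} matrix $S$ as in \eqref{eq:S} (no separate $\widetilde S$ is needed), and then reduces the claim via Lemma~\ref{le:fpa} to the Poisson center of $\Gr\Ui_1$, which is computed via Lemma~\ref{le:pcp}, Proposition~\ref{prop:dv}, and the proof of Lemma~\ref{le:ker}. The integral kernel obstacle you flag is real but is implicitly handled by the paper: the converse direction of the proof of Lemma~\ref{le:ker} shows $S(x(\nu_i))=0$ over $\Z[2^{-1}]$, and the forward direction, read with all $\ell$-terms set to zero (together with the integral statements in Lemma~\ref{le:lmm} from \cite{DCP93}), yields $\ker S=\bigoplus_i \Z[2^{-1}]\,x(\nu_i)$.
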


\begin{proof}
    Recall the $\N^{N+M+2}$-filtration on $\Ui_{\A'}$ from Section \ref{sec:asa}. It induces a $\N^{N+M+2}$-filtration on the Poisson algrba $\Ui_1$ by Lemma \ref{le:basc} and \eqref{def:SCPoisson}. Then the associated graded algebra $\Gr \Ui_1$ is the Poisson algebra $\P_{S,J(S)}$ where $S$ is the skew-symmetric matrix defined in \eqref{eq:S} and $J(S)=\{t_{\omega_s^0}| 1\le s \le r\}$. Thanks to Remark \ref{rmk:deg} and  Lemma \ref{le:fpa}, it suffices to show that $\ov{d_i}$, $1\leq i\leq m$ generate the Poisson center $\PZ(\Gr\Ui_1)$. Indeed, this follows from Lemma \ref{le:pcp}, Proposition \ref{prop:dv}, and the proof of Lemma \ref{le:ker}.
\end{proof}

Recall from Section~\ref{sec:limit} that $\Ui_1$ is identified with the coordinate algebra $\mathbb{C}[\X]$ of the Poisson variety $\X=K^\perp\backslash G^*$. 

\begin{proposition}\label{prop:defidel}
     Let $x\in \X$ and $\cO_x\subset \X$ be the symplectic leaf containing $x$. Suppose that $\dim\cO_x=\dim \k-\text{rank }\k$. Then $\overline{\cO_x}\subset \X$ is defined by the equations
     \begin{equation}\label{eq:defeq}
     d_i=d_i(x)\in\C, \quad \text{ for }1\leq i\leq m=\text{rank }\k.
     \end{equation}
\end{proposition}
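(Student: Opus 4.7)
The inclusion $\overline{\cO_x}\subseteq V(d_1-d_1(x),\ldots,d_m-d_m(x))$ is immediate, since each $d_i$ is a Casimir (being the semi-classical limit of the central element $d_{\nu_i}$), hence is constant on $\cO_x$, and this equality passes to the Zariski closure. The substantive content is the reverse inclusion.

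My plan is first to establish a local statement at $x$, then globalize via the map $\varphi$ to $G$. For the local step, note that $d_1,\ldots,d_m$ are algebraically independent generators of the Poisson center $\PZ(\Ui_1)\cong\PZ(\C[\X])$ by Proposition \ref{prop:Pcenter} combined with Lemma \ref{le:poly} specialized at $q=1$. Combined with Corollary \ref{cor:maxdim}, this forces $\dim\X=2N_0+m=\dim\k$, since for a normal Poisson variety the generic symplectic leaf has dimension equal to $\dim\X$ minus the transcendence degree of the Poisson center. At the point $x$, each differential $\mathrm{d}d_i|_x$ lies in $\ker \pi^\sharp_x$, whose dimension equals $\dim\X-\dim\cO_x=m$ by the maximality hypothesis. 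Since by the Leibniz rule the differentials of the generators $d_1,\ldots,d_m$ span the differentials of all Casimirs at $x$, and since the latter fill $\ker\pi^\sharp_x$ at any regular point of the Poisson structure, the vectors $\mathrm{d}d_i|_x$ are $m$ linearly independent and span $\ker\pi^\sharp_x$. It follows that the closed subscheme $V:=V(d_1-d_1(x),\ldots,d_m-d_m(x))$ is smooth at $x$ of dimension $2N_0$, and hence $\overline{\cO_x}$ is the unique irreducible component of $V$ passing through $x$.

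To upgrade this to the global equality $V=\overline{\cO_x}$ as closed subvarieties of $\X$, I would pass to $G$ via the finite Poisson covering $\varphi$ of Proposition \ref{prop:syml}. The central elements $d_{\nu_i}$ are constructed via the quantum trace of the universal $K$-matrix (Section 6.1), so their semi-classical limits $d_i$ are $\theta$-twisted characters on $G$ pulled back along $\varphi$, and in particular are $\theta$-twisted conjugation invariants. By the maximality hypothesis together with Proposition \ref{prop:dimsym}, $\mathcal{C}_\theta(\varphi(x))$ is a regular $\theta$-twisted conjugacy class of dimension $\dim\g-m$. Invoking a Chevalley--Richardson type theorem for the $\theta$-twisted conjugation action (that $\C[G]^{G,\theta}$ is a polynomial algebra in $m$ variables and that the fibre over the image of a regular class is exactly the closure of that class), the variety $V$ corresponds under $\varphi$ to $\varphi^{-1}(\overline{\mathcal{C}_\theta(\varphi(x))})$. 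By Proposition \ref{prop:syml} this preimage is a disjoint union of closures of finitely many symplectic leaves; the $H_\theta^\circ$-torus rescaling action used in Lemma \ref{le:gg'} and in the proof of Theorem \ref{thm:thetaconjugacy} identifies these finitely many components with each other, so each coincides with $\overline{\cO_x}$, yielding $V=\overline{\cO_x}$.

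The main obstacle I anticipate is this last globalization step: verifying that the finite-to-one preimage under $\varphi$ of the closure of the regular twisted conjugacy class of $\varphi(x)$ collapses to the single component $\overline{\cO_x}$ rather than producing additional irreducible components of $V$. This relies on invoking a Chevalley--Richardson type invariant theory for the $\theta$-twisted conjugation action on $G$ (a symmetric-pair analogue of Steinberg's regularity theorem), together with a careful tracking of how the $d_i$'s transform under the $2^{|\bI|}$-fold deck group of $\varphi$. The rest of the argument is a routine dimension-plus-smoothness calculation using the Poisson-geometric machinery already assembled in Sections \ref{sec:Fcent} through \ref{sec:par}.
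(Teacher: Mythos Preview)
Your approach diverges substantially from the paper's, and the globalization step has a genuine gap at exactly the point you flag.

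The paper's proof is purely algebraic and avoids any geometry on $G$. It shows directly that $V_x:=V(d_1-d_1(x),\ldots,d_m-d_m(x))$ is irreducible of codimension $m$: by Proposition~\ref{prop:dv} the images $\overline{d_i}$ in $\Gr\Ui_1$ are monomials in \emph{disjoint} sets of the polynomial generators, hence form a regular sequence there, and this lifts to a regular sequence in $\Ui_1$ via \cite[Proposition~1.4(a)]{DCKP93a}. The containment $\overline{\cO_x}\subset V_x$ together with the dimension equality and irreducibility then forces $\overline{\cO_x}=V_x$. No appeal to $\varphi$, to twisted conjugacy classes, or to any Steinberg-type invariant theory is needed; the filtration machinery of Section~\ref{sec:deg} is precisely what replaces it.

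Your local argument is fine as far as it goes, but the globalization breaks in two places. First, you invoke a Chevalley--Richardson theorem for the $\theta$-twisted conjugation action (that fibres of the categorical quotient over regular values are closures of regular twisted classes); this is not established anywhere in the paper, and you also assert without justification that the $d_i$ descend along $\varphi$ to $\theta$-twisted conjugation invariants on $G$. Second, and more seriously, the inference that the $H_\theta^\circ$-action ``identifies these finitely many components with each other, so each coincides with $\overline{\cO_x}$'' is a non sequitur: an automorphism of $\X$ that permutes components of $V$ shows they are mutually isomorphic, not that they are equal as subsets. If $V$ genuinely had several components the torus action would merely shuffle them. So your argument never excludes components of $V$ disjoint from $\overline{\cO_x}$, which is exactly the irreducibility statement the paper extracts from the regular-sequence/associated-graded computation.
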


\begin{proof}
    Let $V_x\subset \X$ be the subvariety defined by \eqref{eq:defeq}. Since $d_i$ lie in $\PZ(\Ui_1)$, it is constant on each symplectic leaves. Hence $\overline{\cO_x}\subset V_x$. On the other hand, consider elements $\overline{d_i}$ for $1\leq i\leq m$ in $\Gr \Ui_1$. By Proposition~\ref{prop:dv}, elements $\overline{{d_i}}$, $1\leq i\leq m$ are monomials over $\C[P^\theta]$ in disjoint sets of indeterminates. In particular, they form a regular sequence in $\Gr \Ui_1$. By \cite[Proposition~1.4 (a)]{DCKP93a}, elements $d_i$, $1\leq i\leq m$ form a regular sequence in $\Ui_1$. Hence $V_x\subset \X$ is an irreducible closed subvariety of codimension $m=\text{rank }\mathfrak{k}$. Hence $\dim \cO_x=\dim V_x$. We conclude that $\overline{\cO_x}=V_x$.
\end{proof}

We are now ready to prove Theorem \ref{thm:genrep}.

\begin{proof}[Proof of Theorem \ref{thm:genrep}]
    Let us write $N_0=(\dim \mathfrak{k}-\text{rank }\mathfrak{k})/2$. Recall from Corollary \ref{cor:dimv} that 
    $$
    \X^0=\{x\in\X\mid \dim V=\ell^{N_0},\text{ for any }V\in \text{Irr }\Ui_{v,x}\}.
    $$
    is an non-empty Zariski open subset of $\X$. Fix an element $x\in \X$ such that $\dim \mathcal{C}_\theta(\varphi(x))$ is maximal. Then the dimension of the symplectic leaf $\cO_x$ is maximal among all the symplectic leaves, thanks to Proposition \ref{prop:dimsym}. We need to show that $x\in\X^0$. By Corollary \ref{cor:sympl}, $\X^0$ is a union of symplectic leaves. Hence it suffices to show that 
\begin{equation}\label{eq:nonempt}
\overline{\cO_x}\cap {\X}^0\neq \emptyset.
\end{equation}

Let $\Gr\Ui_1$ be the associated graded Poisson algebra as in the proof of Proposition~\ref{prop:Pcenter}.
Recall from \eqref{eq:filFr} that $\Fri:\Ui_1\rightarrow \Ui_v$ is compatible with the $\N^{N+M+2}$-filtrations, and hence it induces an algebra embedding $\overline{\Fri}:\Gr \Ui_1\rightarrow \Gr \Ui_v$, which identifies $\Gr\Ui_1$ with the central subalgebra $\overline{Z_0^\imath}=\{\ov{z}\mid z\in \Zi\}$ of $\Gr \Ui_v$. Let $\overline{\X}=\MaxSpec \Gr\Ui_1$, and let
\[
\overline{\X}^0=\{x\in\overline{\X}\mid \dim V=\ell^{N_0},\text{for any }V\in \text{Irr }(\Gr\Ui_v)_{x}\}.
\]

Let $I\subset \Ui_1$ be the defining ideal of $\overline{\cO_x}$, and let $\overline{I}\subset \Gr\Ui_1$ be the associated graded ideal. Let $\cO_1\subset \overline{\X}$ be the set of zeros of $\overline{I}$. By Theorem \ref{thm:deg} and Theorem \ref{prop:degd} we have $\deg \Ui_v=\deg (\Gr \Ui_v)$. Thanks to \cite[Lemma~1.5]{DCKP93a}, it suffices to show that 
\begin{equation}
    \cO_1\cap {\overline{\X}}^0\neq \emptyset.
\end{equation}

By Proposition \ref{prop:defidel}, the ideal $I$ is generated by elements $d_i-c_i$, for some $c_i\in\C$, $1\leq i\leq m$. As observed in the proof of Proposition \ref{prop:defidel}, the elements $\overline{d_i}$, for $1\leq i\leq m$, form a regular sequence in $\Gr \Ui_1$. By \cite[Proposition~1.4 (b)]{DCKP93a}, the ideal $\overline{I}$ is generated by $\overline{{d_i}}$, for $1\leq i\leq m$. 

Recall from \cite[Proposition 4.1]{SZ25} that $\Ui_1$ is a polynomial algebra with generators 
\[
\un{B_{\beta_{k}}},\un{E_{\gamma_t}},\un{K_{\omega_s^0}^{\pm1}},\qquad 1\leq k\leq N,1\leq t\leq M,1\le s \le r.
\]
For $1\leq i\leq m$, by Proposition \ref{prop:dv}, there exists $1\leq {k_i}\leq N$ such that $\un{B_{\beta_{k_i}}}$ divides the monomial $\overline{{d_i}}$ in $\Gr \Ui_1$. Let $\cO_1'\subset \overline{\X}$ be the set of zeros of $\un{B_{\beta_{k_i}}}$ for $1\leq i\leq m$. Then $\cO_1'\subset \cO_1$ is an irreducible component. It suffices to show that 
\begin{equation}
    \cO_1'\cap {\overline{\X}}^0\neq \emptyset.
\end{equation}

Let $P=\Gr\Ui_v/ J$ where $J\subset \Gr\Ui_v$ is the two sided ideal generated by $B_{\beta_{k_i}}$, for $1\leq i\leq m$. Then $P$ is again a localized twisted polynomial, containing a central subalgebra which is isomorphic to $\C[\cO_1']$. Moreover one has $P_x\cong (\Gr \Ui_1)_x$ for any $x\in \cO_1'$. Hence 
\[
\cO_1'\cap {\overline{\X}}^0=\{x\in \cO_1'\mid \text{dim }V=\ell^{N_0},\text{for any }V\in \text{Irr }  P_x\}.
\]
By Proposition \ref{prop:twpolycenter} it suffices to show that $\deg P=\ell^{N_0}$.

Let $S_P$ be the skew-symmetric matrix associated with the the localized twisted polynomial algebra $P$. Recall from Section \ref{sec:degui} the skew-symmetric matrix $S$ associated with $\Gr\Uiv$, and recall the $\Z[2^{-1}]$-module $W=V\oplus V'\oplus {'P}^\theta$. We view $S_{\ell}$ as a skew-symmetric $\Z/\ell\Z$-valued bilinear form on $W$ induced by $S$. Let $V_P$ be the $\Z[2^{-1}]$-submodule of $V$, spanned by $u_t$, for $1\leq t\leq N$ and $t\notin \{k_1,\cdots ,k_m\}$. Write $W_P=V_P\oplus V'\oplus {'P^\theta}\subset W$. We view $S_{P,\ell}$ as the $\Z/\ell\Z$-valued skew-symmetric bilinear form on $W_P$ induced by $S_P$. Then $S_{P,\ell}=S_\ell\mid_ {W_P}$. It is then straightforward to see that $$\text{ker }S_{P,\ell}=\text{ker }S_\ell\cap W_P=\ell W_P.$$
The second equality follows from Lemma \ref{le:ker} and our choice of $W_P$. Hence when viewing $S_{P,\ell}$ as a linear operator $S_{P,\ell}:W_P\rightarrow W_p/\ell W_P$, we have $\text{im }S_{P,\ell}\cong W_P/\ell W_P$. Since $W_P$ has rank $2N_0$ we conclude that $\deg P=\ell ^{N_0}$ by Proposition \ref{prop:twpolycenter}. We complete the proof.
\end{proof}

For general $x\in \X$ and $V\in \Irr\Ui_{v,x}$, we conjecture that the dimension of $\ell$ is controlled by the dimension of the symplectic leaf containing $x$. Recall from Proposition \ref{prop:dimsym} that the dimension formula of symplectic leaves of $\X$. 

\begin{conjecture}\label{conj:dim}
    Let $x\in \X$ and $V\in \Irr \Ui_{v,x}$. Let $\mathcal{C}=\mathcal{C}_\theta(\varphi(x))$ be the $\theta$-twisted conjugacy class containing $\varphi(x)$. Then 
    \[
    \ell^{(\dim \mathcal{C}-\dim \g+\dim \k)/2} \mid \text{dim }V.
    \]
\end{conjecture}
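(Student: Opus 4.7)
The plan is to adapt the strategies of Kremnizer \cite{Kre06} and Sevostyanov \cite{Sev21}, which resolved the original De Concini-Kac-Procesi conjecture for quantum groups, to the coideal setting. First observe that by Proposition~\ref{prop:dimsym} the exponent $(\dim\mathcal{C}-\dim\g+\dim\k)/2$ coincides with $\dim\mathcal{O}_x/2$, where $\mathcal{O}_x\subset\X$ denotes the symplectic leaf through $x$, so the conjecture takes the clean form ``$\ell^{\dim\mathcal{O}_x/2}$ divides $\dim V$ for every $V\in\Irr \Ui_{v,x}$''. Combining Theorem~\ref{thm:thetaconjugacy} with the Poisson order structure of Proposition~\ref{prop:Porder} reduces the problem to verifying the divisibility at a single conveniently chosen representative in each $\theta$-twisted conjugacy class.

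The next step is to pass to the associated graded algebra $\Gr\Uiv$ from Section~\ref{sec:deg}. Any irreducible $\Ui_{v,x}$-module deforms to an irreducible representation of the same dimension over the graded fiber algebra by the standard lifting argument \cite[Lemma~1.5]{DCKP93a}. By Proposition~\ref{prop:assoc} the graded fiber is a quotient of a localized twisted polynomial algebra by the ideal generated by the specialization of $\overline{\Zi}$, whose degree is computed by Proposition~\ref{prop:twpolycenter}(3) as $\ell$ to the power of half the rank of an induced skew-symmetric form. Extending the kernel calculation of Lemma~\ref{le:ker} to every (not necessarily maximal) symplectic leaf should identify this half-rank with $\dim\mathcal{O}_x/2$, thereby yielding the conjectured divisibility in the graded setting and hence in $\Uiv$ itself.

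A complementary, likely cleaner, approach is to construct explicitly for each $\theta$-twisted conjugacy class a family of $\Uiv$-modules of dimension exactly $\ell^{\dim\mathcal{O}_x/2}$ supported on the corresponding locus of $\X$. Natural candidates are modules parabolically induced from iquantum subalgebras attached to $\tau$-stable Levi subdiagrams of smaller real rank, or baby-Verma-type modules extending Wenzl's rank-one construction \cite{Wen20} to general Satake data. Once a single simple module of the predicted dimension is produced over a fiber $\Ui_{v,x}$, Proposition~\ref{prop:repalg} together with the Poisson order isomorphism in Theorem~\ref{thm:thetaconjugacy} propagates the divisibility to every simple module in $\Irr \Ui_{v,x}$.

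The main obstacle is the absence of a genuine ``quantum twisted conjugation'' action on $\Uiv$ that quantizes the $G$-action on $\X$ through $\varphi$. The quantum group proofs crucially use a continuous family of automorphisms that sweeps out an entire conjugacy class, whereas for $\Uiv$ only the discrete rescalings from the proof of Theorem~\ref{thm:thetaconjugacy} and the relative braid group symmetries of Proposition~\ref{prop:intT} are directly available. Bridging this gap will likely require either a Poisson-geometric analysis of the fibers $\varphi^{-1}(\mathcal{C})\subset\X$ using the Lu-Yakimov twisted Poisson structure on $G$, or a parabolic induction argument that reduces the general case to real rank one, where the iquantum subalgebras are small enough for explicit module-theoretic analysis.
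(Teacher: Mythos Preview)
This statement is a \emph{conjecture} in the paper, not a theorem; the authors do not prove it and explicitly flag it as open, noting only that Theorem~\ref{thm:genrep} settles the maximal-dimension case and that the diagonal case is the De Concini--Kac--Procesi conjecture resolved in \cite{Kre06,Sev21}. Your proposal is accordingly a research outline rather than a proof, and you yourself identify the main obstacle in the final paragraph.

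Beyond that, two of your reductions do not actually yield the divisibility claim even if their premises are granted. In the second paragraph, computing the degree of the graded fiber algebra via Proposition~\ref{prop:twpolycenter}(3) gives an \emph{upper bound} on the dimension of irreducibles (cf.\ Proposition~\ref{prop:repalg}(2)), not a divisibility statement; knowing that $\deg(\Gr\Uiv)_{\bar x}=\ell^{\dim\cO_x/2}$ would not force $\ell^{\dim\cO_x/2}\mid\dim V$ for every simple $V$. The deformation step via \cite[Lemma~1.5]{DCKP93a} also goes in the direction of lifting simples from the graded algebra, which again controls maximal dimension, not divisibility for arbitrary simples. In the third paragraph, producing a single simple of dimension exactly $\ell^{\dim\cO_x/2}$ over $\Ui_{v,x}$ does not propagate divisibility to all of $\Irr\Ui_{v,x}$: Proposition~\ref{prop:repalg}(1) only says every simple is a summand of the $d$-dimensional trace-compatible module $V(x)$, which places no divisibility constraint on the individual summands. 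The quantum-group proofs in \cite{Kre06,Sev21} require substantially more structure (Azumaya loci over symplectic leaves, or an explicit Whittaker/Zhelobenko construction) than either of these arguments supplies, and the analogue of that structure for $\Uiv$ is precisely what is missing.
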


\begin{remark} The following special cases of this conjecture is known.
\begin{itemize}
    \item[(1)] Theorem \ref{thm:genrep} implies that the conjecture holds when the dimension of $\mathcal{C}_\theta(\varphi(x))$ is maximal.

    \item[(2)] When $\theta$ is the trivial involution, Conjecture \ref{conj:dim} coincides with the De Concini-Kac-Procesi conjecture in \cite[Conjecture~6.8]{DCKP92}. Their conjecture was proved in \cite{Kre06,Sev21}.
\end{itemize}
\end{remark}

\section{Branching laws}\label{sec:bran}

In this section, we study the branching problem for representations of $\U_v$ restricting to $\Uiv$. In particular, we show that generic simple $\Uiv$-modules are direct summands of simple $\U_v$-modules.

\subsection{Compatible pairs}
We first recall the notion of reduced traces and compatible pairs from \cite{DCPRR05}. Let $R$ be a finitely generated $\C$-algebra. Assume that $R$ is an integrally closed domain, and is a finite module over its center $Z$. Let $A\subset Z$ be a central subalgebra. Assume that $A$ is finitely generated and integrally closed. There is a finite map $t:\MaxSpec Z\rightarrow \MaxSpec A$. For $x\in \MaxSpec A$, suppose that $t^{-1}(x)=\sum_{i=1}^s h_i y_i$ is the fibre of $x$, counted with multiplicities. Define the semisimple $R$-module $V(x)$ by 
\[
V(x)=\bigoplus_{i=1}^sV(y_i)^{\oplus h_i},
\]
where $V(y_i)$ is the semisimple $R$-module defined in Proposition \ref{prop:repalg}. 

We further assume that $Z$ is a projective $A$-module. Define the trace map $\tr _{Z/A}:Z\rightarrow A$ by setting $\tr_{Z/A}(z)$ to be the trace of the $A$-linear map $Z\rightarrow Z$, $x\mapsto zx$. Recall the reduced trace $\tr_{R/Z}:R\rightarrow Z$ from Section \ref{sec:icd}. Following \cite[Section 5.1]{DCPRR05}, define the \emph{reduced trace of $R$ over $A$} by $$\tr_{R/A}=\tr _{Z/A}\circ \tr_{R/Z}:R\rightarrow A,$$ and the \emph{degree of $R$ over $A$} by $$[R:A]=\deg R\cdot \dim_{Q(A)}Q(Z).$$
Note that $[R:Z]=\deg R$. By \cite[Section 5]{DCPRR05}, for $x\in \Spec A$, $V(x)$ is the unique $[R:A]$-dimensional semisimple representation of $R$ which is compatible with the trace $\tr_{R/A}$.

Let $R_1\subset R_2$ be two finitely generated $\C$-algebras which are integrally closed domains, and let $A_i\subset R_i$ be central subalgebras such that $A_i$ is finitely generated and $R_i$ is a finite $A_i$-module, for $i=1,2$. Suppose that $A_1$, $A_2$ are integrally closed, and the centers $Z(R_1)$, $Z(R_2)$ are projective modules over $A_1$, $A_2$, respectively. Following \cite[Definition 5.8 and Remark~5.9]{DCPRR05} we call that $R_1\subset R_2$ is \emph{compatible} with $A_1\subset A_2$ if the obvious map $R_1\otimes _{A_1}Q(A_2)\rightarrow R_2\otimes_{A_2}Q(A_2)$ is injective, where $Q(A_i)$ is the field of fractions of $A_i$. Let $\pi:\MaxSpec A_2\rightarrow \Spec A_1$ be the morphism induced by the embedding $A_1\subset A_2$.

\begin{proposition}[\text{\cite[Theorem 5.11 \& Theorem~5.12]{DCPRR05}}]\label{prop:brach}
    Retain the notations as above. Suppose that $R_1\subset R_2$ is compatible with $A_1\subset A_2$. Then there is positive integer $r$ and a nonempty Zariski open subset $\Omega\subset\MaxSpec A_2$, such that the following properties holds.

    (1) One has \[
    r[R_1:A_1]=[R_2:A_2],\qquad r\tr_{R_1/A_1}=\tr_{R_2/A_2}\quad \text{on }R_1.
    \]

    (2) When restricting to $R_1$, one has $V(x)\cong V(\pi(x))^{\oplus r}$, for any $x\in \Omega$.
\end{proposition}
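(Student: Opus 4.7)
The plan is to reduce to the generic fibre over $\MaxSpec A_2$ and use classical structure theory of semisimple algebras. Let $K = Q(A_2)$ and form the base changes $R_i^K := R_i \otimes_{A_i} K$ with centers $Z_i^K := Z(R_i) \otimes_{A_i} K$. Since each $R_i$ is an integrally closed domain and a finite module over its center, $R_i^K$ is a semisimple $K$-algebra, namely a product of central simple algebras over the factor fields of $Z_i^K$. The compatibility hypothesis is precisely the statement that $R_1^K \hookrightarrow R_2^K$ as $K$-algebras.

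Next I would further base change to an algebraic closure $\overline{K}$. Then $Z_2^K \otimes_K \overline{K}$ splits as a product of copies of $\overline{K}$ indexed by the geometric closed points of the generic fibre of $\MaxSpec Z(R_2) \to \MaxSpec A_2$; correspondingly $R_2^K \otimes_K \overline{K}$ is a product of matrix algebras $M_{d_2}(\overline{K})$ with $d_2 = \deg R_2$, and similarly for $R_1$ with matrix size $d_1 = \deg R_1$. Under this identification, the generic semisimple representation $V(\eta_2)$ (where $\eta_2$ is the generic point of $\MaxSpec A_2$) is, after extending scalars, the direct sum of the tautological $d_2$-dimensional representations over each factor, and analogously for $V(\eta_1)$.

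The embedding $R_1^K \hookrightarrow R_2^K$ together with the Double Centralizer Theorem applied inside each simple factor of $R_2^K \otimes \overline{K}$ produces, on each simple component, a decomposition of the restriction into $R_1$-isotypic pieces. A descent argument from $\overline{K}$ back to $K$, combined with the fact that all these data are Galois-equivariant, forces a single integer $r$ to appear: one obtains $V(\eta_2)|_{R_1} \cong V(\eta_1)^{\oplus r}$ as $R_1^K$-modules. Taking dimensions gives $r[R_1:A_1] = [R_2:A_2]$, and taking traces yields $r\,\tr_{R_1/A_1} = \tr_{R_2/A_2}$ at the generic point. Since both sides land in the reduced ring $A_1$, the identity lifts to all of $A_1$, proving (1).

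For (2), the isomorphism $V(x)|_{R_1} \cong V(\pi(x))^{\oplus r}$ is already established at the generic point. Using generic freeness of $R_2$ over $A_2$ and of $Z(R_2)$ over $A_2$, together with standard spreading-out, I would define $\Omega$ as the intersection of the loci where both $V(x)$ and $V(\pi(x))$ have the generic dimension predicted by Proposition \ref{prop:repalg}(3) and where the natural comparison map induced by the embedding is a fibrewise isomorphism. This is a finite intersection of non-empty Zariski open subsets, hence non-empty. The main obstacle, in my view, is the uniformity of the multiplicity $r$: a priori, different simple $R_1$-summands could appear with differing multiplicities inside different simple $R_2$-summands. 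Ruling this out requires the full strength of compatibility — not merely that $R_1 \subset R_2$, but that $R_1 \otimes_{A_1} Q(A_2)$ injects into $R_2 \otimes_{A_2} Q(A_2)$ — since this ensures that the $Z_1^K$-action remains faithful after base change and so pins down a single common multiplicity under the matrix-algebra decomposition.
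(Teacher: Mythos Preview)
The paper does not give its own proof of this proposition: it is stated as a direct citation of \cite[Theorem 5.11 \& Theorem 5.12]{DCPRR05}, with no argument supplied. So there is no ``paper's proof'' to compare against; your sketch is in effect a proposal for how one might reproduce the cited result.

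As a sketch of the cited argument, your outline is on the right track but leaves the key point unresolved. You correctly identify that the uniformity of the multiplicity $r$ is the crux, yet your justification (``the $Z_1^K$-action remains faithful\ldots and so pins down a single common multiplicity'') is not an argument. What actually forces uniformity is that each $Z(R_i)$ is a \emph{domain}, so $Z(R_i)\otimes_{A_i}Q(A_i)$ is a single field; after passing to $\overline{K}$ the factors of $R_i\otimes\overline{K}$ form a single Galois orbit, and the embedding $R_1^K\hookrightarrow R_2^K$ is defined over $K$, hence Galois-equivariant. It is this transitivity on both sides, not merely faithfulness of some action, that equates all the branching multiplicities. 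You allude to Galois equivariance but do not make this step explicit. Also, your assertion that ``both sides land in the reduced ring $A_1$'' in part (1) needs work: a priori $\tr_{R_2/A_2}$ takes values in $A_2$, and showing the restriction to $R_1$ lands in $A_1$ is part of what must be proved, typically via the integral closedness of $A_1$ and the trace identity over $Q(A_1)$.
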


\subsection{Branching laws for quantum symmetric pairs}
Let us come back to the setting of quantum symmetric pairs.

\begin{proposition}\label{prop:comp}
    The algebras $\Uiv\subset \U_v$ is compatible with $\Zi\subset Z_0$. Moreover one has
    \begin{equation}\label{eq:trce}
\tr_{\U_v\slash Z_0} = \ell^{N-N_0+\rank \g-\rank \k} \tr_{\Uiv\slash \Zi}.
\end{equation}
\end{proposition}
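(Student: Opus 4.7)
The plan is to reduce the statement to an application of Proposition~\ref{prop:brach} from \cite{DCPRR05}. I first verify the compatibility of $\Uiv \subset \U_v$ with $\Zi \subset Z_0$, and then pin down the resulting scalar $r$ by computing both sides of the degree identity $r[\Uiv:\Zi] = [\U_v:Z_0]$ furnished by that proposition.

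For compatibility, the key input is Lemma~\ref{le:tsl}: there exists a $\Zi$-basis $\{x_1, \ldots, x_s\}$ of $\Uiv$ (with $s=\ell^{\dim \k}$) that extends to a $Z_0$-basis $\{x_1, \ldots, x_t\}$ of $\U_v$ (with $t=\ell^{\dim \g}$). The images $\{x_i \otimes 1\}_{i=1}^t$ then form a $Q(Z_0)$-basis of $\U_v \otimes_{Z_0} Q(Z_0)$, so in particular the first $s$ of them are $Q(Z_0)$-linearly independent, which gives injectivity of the canonical map $\Uiv \otimes_{\Zi} Q(Z_0) \to \U_v \otimes_{Z_0} Q(Z_0)$. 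Proposition~\ref{prop:brach}~(1) then supplies a positive integer $r$ such that $r\,\tr_{\Uiv/\Zi} = \tr_{\U_v/Z_0}$ on $\Uiv$ and $r[\Uiv:\Zi] = [\U_v:Z_0]$.

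To identify $r$, I would compute the two degrees. On the quantum group side, Proposition~\ref{prop:DCKv} gives that $\U_v$ is free of rank $\ell^{\dim \g}$ over $Z_0$ and $\deg \U_v = \ell^N$ (by \cite{DCK90}); combining $\dim_{Q(Z(\U_v))} Q(\U_v) = (\deg \U_v)^2 = \ell^{2N}$ with $\dim_{Q(Z_0)} Q(\U_v) = \ell^{\dim \g}$ yields $\dim_{Q(Z_0)} Q(Z(\U_v)) = \ell^{\dim \g - 2N} = \ell^{\rank \g}$, so $[\U_v:Z_0] = \ell^{N + \rank \g}$. The iquantum analogue is parallel, using Theorem~\ref{thm:findim} ($\Uiv$ free of rank $\ell^{\dim \k}$ over $\Zi$), Theorem~\ref{thm:deg} ($\deg \Uiv = \ell^{N_0}$), and Theorem~\ref{thm:icenter} ($\dim_{Q(\Zi)} Q(Z_v^\imath) = \ell^{\rank \k}$), producing $[\Uiv:\Zi] = \ell^{N_0 + \rank \k}$. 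Substituting gives $r = \ell^{N - N_0 + \rank \g - \rank \k}$, which matches \eqref{eq:trce}.

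The main technical subtlety is in checking the ambient hypotheses of Proposition~\ref{prop:brach}, namely that $Z(\U_v)$ is projective over $Z_0$ and $Z_v^\imath$ is projective over $\Zi$. For $\U_v$ this follows from the classical description of $Z(\U_v)$ via the Harish--Chandra center, and for $\Uiv$ the required projectivity follows from Theorem~\ref{thm:icenter}, which exhibits $Z_v^\imath$ as generated by $\Zi$ together with the polynomial Kolb--Letzter center, combined with smoothness of the Poisson homogeneous space $\X$. A minor consistency point is that the identity $r\,\tr_{\Uiv/\Zi} = \tr_{\U_v/Z_0}$ on $\Uiv$ forces $\tr_{\U_v/Z_0}(\Uiv) \subset Z_0 \cap \Uiv = \Zi$, so both sides make sense as $\Zi$-valued maps. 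Once these structural facts are in place, the argument sketched above completes the proof.
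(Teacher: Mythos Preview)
Your proposal is correct and follows essentially the same route as the paper: use Lemma~\ref{le:tsl} to establish injectivity (compatibility), then read off $r$ from Proposition~\ref{prop:brach}(1) via the degree identity, citing $[\U_v:Z_0]=\ell^{N+\rank\g}$ from \cite{DCK90} and $[\Uiv:\Zi]=\ell^{N_0+\rank\k}$ from Theorem~\ref{thm:deg}. The paper's proof is terser: it simply quotes these two degrees rather than deriving them from $\dim_{Q(Z)}Q(R)=(\deg R)^2$ as you do, and it does not pause to discuss the projectivity hypotheses for Proposition~\ref{prop:brach}, which you flag (reasonably) as a point needing justification.
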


\begin{proof}
Let $Q(Z_0)$ and $Q(Z_0^\imath)$ denote the the field of fractions of $Z_0$ and $Z_0^\imath$, respectively. By Lemma~\ref{le:tsl}, $Q(Z_0)\otimes _{Z_0}\U_v$ admits a $Q(Z_0)$-basis which contains a $Q(\Zi)$-basis of $Q(\Zi)\otimes_ {\Zi}\Uiv$. Hence the map $Q(\Zi)\otimes_ {\Zi}\Uiv\rightarrow Q(Z_0)\otimes _{Z_0}\U_v$ is injective, and the first statement is proved. To prove the second statement, it is well-known that $[\U_v:Z_0]=\ell ^{N+\rank \g}$ (cf. \cite{DCK90}). By Theorem \ref{thm:deg}, one has $[\Uiv:\Zi]=\ell^{N_0+\rank \k}$. Then \eqref{eq:trce} follows from Proposition \ref{prop:brach} (1).
\end{proof}


\begin{theorem}\label{thm:dec}
    There is a non-empty Zariski open subset $\Omega$ of $G^*$, such that for any $x\in\Omega$ and $V\in \Irr \U_{v,x}$, when restricting to $\Uiv$ one has
    \[
    V\cong\bigoplus_{W\in\Irr\Ui_{v,\overline{x}}}W^{\oplus h_{V,W}},
    \] 
    for some $h_{V,W}\in \N$ such that
    \begin{equation}\label{eq:hvw}
    \sum_{W\in\Irr\Ui_{v,\overline{x}}}h_{V,W}=\ell ^{N-N_0},
    \quad \quad \sum_{V\in \Irr\U_{v,x}}h_{V,W}=\ell ^{N-N_0+\rank \g-\rank \k},
    \end{equation}
    where $N$ (resp., $N_0$) is the number of positive roots of $\g$ (resp., $\k$).
\end{theorem}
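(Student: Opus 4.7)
The plan is to derive Theorem \ref{thm:dec} as a direct application of the De Concini-Procesi-Reshetikhin-Rosso branching framework (Proposition \ref{prop:brach}) to the compatible pair $\Uiv\subset\U_v,\,\Zi\subset Z_0$ established in Proposition \ref{prop:comp}. First I would invoke Proposition \ref{prop:brach} to produce a non-empty Zariski open subset $\Omega_1\subset G^*=\MaxSpec Z_0$ and a positive integer $r$ such that $r[\Uiv:\Zi]=[\U_v:Z_0]$ and $V(x)|_{\Uiv}\cong V(\pi(x))^{\oplus r}$ for every $x\in\Omega_1$, where $\pi:G^*\to\X$ is the quotient map and $V(x)$, $V(\pi(x))$ are the semisimple modules introduced in Section \ref{sec:po}.

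Next I would compute $r$ explicitly. The classical De Concini-Kac-Procesi theory gives $\deg\U_v=\ell^N$ and $\dim_{Q(Z_0)}Q(Z(\U_v))=\ell^{\rank\g}$, hence $[\U_v:Z_0]=\ell^{N+\rank\g}$. Theorem \ref{thm:deg} yields $\deg\Uiv=\ell^{N_0}$ and $\dim_{Q(\Zi)}Q(\Ziv)=\ell^{\rank\k}$, hence $[\Uiv:\Zi]=\ell^{N_0+\rank\k}$. Consequently $r=\ell^{N-N_0+\rank\g-\rank\k}$, matching the value in the theorem statement. I then shrink $\Omega_1$ to a non-empty Zariski open $\Omega\subset G^*$ on which additionally: (a) every $V\in\Irr\U_{v,x}$ has dimension $\ell^N$ and each such simple occurs with multiplicity one in $V(x)$, so that $V(x)\cong\bigoplus_{V\in\Irr\U_{v,x}}V$; and (b) $\pi(x)\in\X^0$, so every $W\in\Irr\Ui_{v,\pi(x)}$ has dimension $\ell^{N_0}$ by Corollary \ref{cor:dimv} and $V(\pi(x))\cong\bigoplus_{W\in\Irr\Ui_{v,\pi(x)}}W$. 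Since $G^*$ is an irreducible variety (it is isomorphic to $H\times U^+\times U^-$), the intersection of these non-empty open sets is non-empty.

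On $\Omega$ the algebra $\Ui_{v,\pi(x)}$ is semisimple, so the restriction $V|_{\Uiv}$ is semisimple and admits a well-defined decomposition $V|_{\Uiv}\cong\bigoplus_W W^{\oplus h_{V,W}}$ with $h_{V,W}\in\N$. The first identity $\sum_W h_{V,W}=\ell^{N-N_0}$ is immediate from dimensions: $\ell^N=\dim V=\sum_W h_{V,W}\ell^{N_0}$. For the second, I would expand both sides of $V(x)|_{\Uiv}\cong V(\pi(x))^{\oplus r}$ using (a) and (b) as $\bigoplus_V\bigoplus_W W^{\oplus h_{V,W}}\cong\bigoplus_W W^{\oplus r}$, then equate multiplicities of each fixed $W$ to obtain $\sum_V h_{V,W}=r=\ell^{N-N_0+\rank\g-\rank\k}$. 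The main point requiring care is the simultaneous multiplicity-one conditions in (a) and (b), which reduce to the generic étaleness of the center extensions $Z(\U_v)/Z_0$ and $Z(\Uiv)/\Zi$. These hold in characteristic zero because the generic fiber extensions are finite and separable, so the corresponding étale loci are dense open subsets of $G^*$ and $\X$ respectively; no further obstacle is expected beyond assembling these open conditions into the single $\Omega$.
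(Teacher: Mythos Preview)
Your proposal is correct and follows essentially the same route as the paper: invoke Proposition~\ref{prop:comp} and Proposition~\ref{prop:brach} to obtain $V(x)|_{\Uiv}\cong V(\pi(x))^{\oplus r}$ on an open set, compute $r=\ell^{N-N_0+\rank\g-\rank\k}$ from the degrees (Theorem~\ref{thm:deg} and the classical result for $\U_v$), shrink to an open set where both $V(x)$ and $V(\pi(x))$ decompose multiplicity-free into simples of the maximal dimension, and then read off \eqref{eq:hvw} by Krull--Schmidt and dimension counting. Your justification of the multiplicity-one conditions via generic \'etaleness of $Z(\U_v)/Z_0$ and $\Ziv/\Zi$ is exactly what the paper does when it chooses $\Omega_1,\Omega_2$ so that the fibres of $\MaxSpec Z\to G^*$ and $\MaxSpec\Ziv\to\X$ consist of $\ell^{\rank\g}$ and $\ell^{\rank\k}$ distinct points respectively.
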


\begin{proof}
Let $Z\subset \U_v$ be the center of $\U_v$. Let $\Lambda:\MaxSpec Z\rightarrow \MaxSpec Z_0$ be the natural map induced by the algebra embedding. By Theorem~\ref{thm:deg}, $\Lambda$ is a finite map of degree $\ell^{\rank \g}$, and there is a non-empty Zariski open subset $\Omega_1$ of $G^*$, such that for $x\in \Omega_1$ the fibre $\Lambda^{-1}(x)$ consists of $\ell^{\rank \g}$ many distinct points. By Proposition \ref{prop:repalg} (3), we can choose $\Omega_1$ such that $\dim V=\ell^{N}$ for any $x\in \Omega_1$ and $V\in \Irr\U_{v,x}$. Then for any $x\in \Omega_1$, one has $V(x)=\oplus_{i=1}^{t}V(y_i) $ where $t=\ell^{\rank \g}$, $\{y_1,\cdots,y_{t}\}=\Lambda^{-1}(x)$, and $V(y_i)\in \Irr \U_{v,x}$ has dimension $\ell^{N}$. Similarly, there is a nonempty Zariski open subset $\Omega_2\subset \X$, such that for any $y\in \Omega_2$ one has $V(y)=\oplus_{i=1}^s V(z_i)$ where $s=\ell^{\rank \k}$ and $V(z_i)\in \Irr \Ui_{v,y}$ are distinct irreducible $\Uiv$-modules of dimension $\ell^{N_0}$. 

Let $\pi:G^*\rightarrow \X$ be the natural projection. Thanks to Proposition~\ref{prop:comp} and Proposition~\ref{prop:brach}, there is a nonempty Zariski open subset $\Omega_0\subset \MaxSpec Z_0\cong G^*$ such that 
$
V(x)\cong V(\pi(x))^{\oplus r}
$
as $\Uiv$-modules for any $x\in \Omega_0$, where $r=\ell^{N+\rank \g-(N_0+\rank \k)}$.

Let $\Omega:=\Omega_0\cap\Omega_1\cap \pi^{-1}(\Omega_2)$. By our construction above, for any $x\in \Omega$, we have
\[
\bigoplus_{V\in \Irr \U_{v,x}}V\cong\bigoplus_{W\in\Irr \Ui_{v,\overline{x}}}W^{\oplus r},
\]
as $\Uiv$-modules. Now the proposition follows from the Krull–Schmidt theorem and dimension counting.
\end{proof}

\begin{remark}
    When the symmetric pair is of diagonal type, that is, $G=G'\times G'$ and $\theta:(g_1,g_2)\mapsto (g_2,g_1)$, \cite[Theorem~7.7]{DCPRR05} proved a stronger result that the multiplicities $h_{V,W}$ can taken to be equal. This is false for general quantum symmetric pairs. For example, when $\theta=id$, then only $h_{V,V}$ equals to 1, and other multiplicities equal to 0.
\end{remark}

\begin{corollary}\label{cor:dsum}
    There is a non-empty Zariski open subset $\Omega^\imath\subset \X$, such that for any $x\in\Omega^\imath$ and $W\in \Irr\Ui_{v,x}$ such that $W$ is a direct summand of some $V\in \Irr \U_v$ as $\Uiv$-modules.
\end{corollary}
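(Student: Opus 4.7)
The plan is to derive this corollary directly from Theorem~\ref{thm:dec} by taking $\Omega^\imath$ to be the image of the set $\Omega\subset G^*$ (provided by Theorem~\ref{thm:dec}) under the natural projection $\pi : G^* \to \X = K^\perp \backslash G^*$, $x \mapsto \overline{x}$.

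First, I would observe that $K^\perp$ is a connected closed algebraic subgroup of $G^*$ acting freely on $G^*$ by left multiplication, so the quotient map $\pi$ is a principal $K^\perp$-bundle; in particular, $\pi$ is flat and open. Therefore $\Omega^\imath := \pi(\Omega)$ is a non-empty Zariski open subset of $\X$.

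Next, fix $y \in \Omega^\imath$ and any $W \in \Irr \Ui_{v,y}$. By the construction of $\Omega^\imath$, there exists $x \in \Omega$ with $\overline{x} = y$. Theorem~\ref{thm:dec} applies at $x$: every $V \in \Irr \U_{v,x}$ restricts to $\Uiv$ as a direct sum of irreducibles in $\Irr \Ui_{v,y}$ with non-negative multiplicities $h_{V,W'}$, and by the second identity of \eqref{eq:hvw},
\[
\sum_{V \in \Irr \U_{v,x}} h_{V,W} \;=\; \ell^{\,N - N_0 + \rank \g - \rank \k}.
\]
Since the right-hand side is a strictly positive integer, at least one $V \in \Irr \U_{v,x}$ must satisfy $h_{V,W} \geq 1$. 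Viewing such $V$ as an element of $\Irr \U_v$ via the quotient $\U_v \twoheadrightarrow \U_{v,x}$, we conclude that $W$ appears as a direct summand of $V\big|_{\Uiv}$, as required.

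The only non-formal step is the openness of $\pi$, which is standard for quotient maps by connected algebraic group actions; beyond this, the argument is an immediate consequence of the multiplicity identities already established in Theorem~\ref{thm:dec}, so no essential obstacle arises.
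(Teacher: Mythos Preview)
Your proposal is correct and follows essentially the same argument as the paper: both define $\Omega^\imath=\pi(\Omega)$, justify its openness via the quotient map, and then use the second identity in \eqref{eq:hvw} to find some $V\in\Irr\U_{v,x}$ with $h_{V,W}>0$. The only cosmetic difference is that you explain openness via the principal-bundle structure while the paper simply cites that $\pi$ is a quotient map.
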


\begin{proof}
    Take $\Omega\subset G^*$ to be the open subset as in the proof of Theorem \ref{thm:dec}. Recall $\pi:G^*\rightarrow\X$ is the projection map. Take $\Omega^\imath=\pi(\Omega)$. Then  $\Omega^\imath$ is open since $\pi$ is a quotient map. For any $x\in \X$ and $W\in \Irr \Ui_{v,x}$, take $x'\in\pi^{-1}(x)\cap \Omega$. By the second equality of \eqref{eq:hvw}, there exists $V\in \Irr\U_{v,x'}$ such that $h_{V,W}>0$, and hence $W$ is a direct summand of $V$. The proof is completed.
\end{proof}




\section{Relative braid group symmetries}\label{sec:comp}

Only in this section, we assume that the Satake diagram $(\I=\bI\cup \wI,\tau)$ is quasi-split; i.e., $\bI=\emptyset$. By \cite[Theorem 3.7]{SZ25}, the relative braid group symmetry $\TT_i$ preserves $\Ui_{\A'}$ and hence it induces algebraic automorphisms on both $\Ui_1$ and $\Ui_v$; we also denote both induced maps by $\TT_i$. The goal of this section is to show the compatibility between the quantum Frobenius map $\Fri:\Ui_1\rightarrow \Uiv$ and $\TT_i$.

\subsection{The compatibility theorem}
\begin{theorem}\label{thm:tfcom}
Let $(\I=\bI\cup \wI,\tau)$ be a quasi-split Satake diagram. We have 
\[
\TT_i \circ \Fri=\Fri\circ \TT_i.
\]
In particular, $\Zi$ is closed under the action of $\TT_i$.
\end{theorem}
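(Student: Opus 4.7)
The plan is to verify the identity on a set of algebra generators of $\Ui_1$. Since both $\TT_i \circ \Fri$ and $\Fri \circ \TT_i$ are $\C$-algebra homomorphisms from $\Ui_1$ to $\Uiv$, it suffices to check the commutation on any such generating set. In the quasi-split case $\bI = \emptyset$, Proposition~\ref{prop:intPBW} together with \cite[Proposition~4.1]{SZ25} shows that $\Ui_1 \cong \C[\X]$ is a polynomial algebra with generators $\un{B_\beta}$ for $\beta \in \cR^+$ and $\un{K_{\omega_s^0}^{\pm 1}}$, where $\{\omega_s^0\}$ is a $\Z$-basis of $P^\theta$.

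For the generators $\un{K_\mu^{\pm 1}}$ with $\mu \in P^\theta$, the explicit formulas in \cite{WZ23} show that $\TT_i(K_\mu) = K_{\mu'}$ for a suitable $\mu' \in P^\theta$, so the required identity becomes $K_{\mu'}^\ell = \TT_i(K_\mu^\ell)$, which is automatic from multiplicativity of $\TT_i$. For the generators $\un{B_\beta}$, the idea is to reduce to the base case of simple iroots $\beta = \alpha_j$ via the construction $B_{\beta_k} = \TT_{i_1} \cdots \TT_{i_{k-1}}(B_{i_k})$: since $\TT_j$ preserves $\Ui_{\A'}$ (Proposition~\ref{prop:intT}(2)), we have $\un{B_{\beta_k}} = \TT_{i_1} \cdots \TT_{i_{k-1}} \un{B_{i_k}}$ in $\Ui_1$, and combining this with the desired identity at each simple step --- together with the braid relations for $\TT$ --- propagates the simple-iroot case to all positive iroots.

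The core step is therefore to establish
\[
\TT_i(B_j^{[\ell]}) = \Fri(\un{\TT_i B_j}), \qquad i,j \in \wI,
\]
using $\Fri(\un{B_j}) = B_j^{[\ell]}$. The strategy is to expand the left-hand side using the closed-form formulas from \cite{WZ25} for $\TT_i$ acting on iDivided powers $B_j^{[n]}$ in quasi-split type, specialized at $n = \ell$ and $q^{1/2} = \widetilde v$, and to expand the right-hand side by first writing $\TT_i B_j \in \Ui_{\A'}$ in the integral PBW basis, specializing at $q^{1/2} = 1$, and applying $\Fri$ termwise. By Lemma~\ref{le:frbb}, $\Fri(\un{B_\beta}) = F_\beta^\ell + \text{l.o.t.}$ in $\U_v$, whose leading term matches $T_i(F_j^\ell) = F_{s_i\alpha_j}^\ell$ via Lusztig's braid symmetry (Proposition~\ref{prop:DCKv}(4)); the comparison thereby reduces to controlling the subleading corrections produced by the iDivided power structure.

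The main obstacle will be managing these subleading corrections. Because $\Fri$ is not the na\"ive $\ell$-th power map --- $B_j^{[\ell]}$ differs from $B_j^\ell$ by a nontrivial product given by \eqref{def:idv} --- the identity is not purely formal, and one needs that the $q^{1/2} \mapsto \widetilde v$ specialization of $\TT_i(B_j^{[\ell]})$ factor through the $q^{1/2} \mapsto 1$ specialization of $\TT_i B_j$ in precisely the way dictated by $\Fri$. The closed-form formulas from \cite{WZ25} are currently available only in quasi-split type, which is the sole reason for the restriction. The final assertion that $\Zi$ is $\TT_i$-stable then follows immediately: $\TT_i(\Zi) = \TT_i\,\Fri(\Ui_1) = \Fri\,\TT_i(\Ui_1) = \Fri(\Ui_1) = \Zi$.
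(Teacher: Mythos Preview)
Your reduction from all root vectors $\un{B_\beta}$ to simple $\un{B_j}$ is circular. You write $\un{B_{\beta_k}} = \TT_{i_1}\cdots\TT_{i_{k-1}}(\un{B_{i_k}})$ and propose to ``propagate'' the simple-root identity along this word, but to even compute $\Fri(\un{B_{\beta_k}})$ in a form comparable with $\TT_i\Fri(\un{B_{\beta_k}})$ you would need to commute $\Fri$ past each $\TT_{i_r}$, which is exactly the theorem you are proving. No induction scheme on the length of the braid word closes: at the inductive step one must show $\TT_i\Fri(y)=\Fri\TT_i(y)$ for $y=\TT_{w'}\un{B_j}$, and such $y$ is a general element of $\Ui_1$, not a simple generator. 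Since $\Ui_1$ is commutative and the $\un{B_\beta}$ are algebraically independent polynomial variables, the simple $\un{B_j}$ together with the $\un{K_\mu}$ do \emph{not} generate $\Ui_1$ as a $\C$-algebra, so checking the identity only on simple root vectors is genuinely insufficient for your algebra-homomorphism argument.

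The paper avoids this by working with \emph{Poisson} generators instead of algebra generators. It first verifies that $\TT_i$ on $\Uiv$ is compatible with the bracket $\{x,y\}=\pi_v^\imath\big([\tx,\ty]/(\ell^2(qv^{-1}-1))\big)$ on $\Zi$ (an easy consequence of $\TT_i$ preserving $\Ui_{\A'}$ and commuting with $\pi_v^\imath$), so that both $\TT_i\circ\Fri$ and $\Fri\circ\TT_i$ are Poisson algebra maps $\Ui_1\to\Uiv$. Since $\Ui_1$ is generated as a Poisson algebra by the simple $\un{B_j}$ and $\un{k_j}^{\pm1}$ (\cite[Lemma~3.4]{So24}), this immediately reduces the theorem to the core step $\TT_i(B_j^{[\ell]})=\Fri(\un{\TT_i B_j})$ for simple $j$ --- with no propagation needed. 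For that core step, the paper does not rely on leading-term matching via Lemma~\ref{le:frbb} as you suggest, but rather performs explicit root-of-unity computations, case by case on $a_{i,\tau i}\in\{2,0,-1\}$, using the closed formulas from \cite{WZ25}, a $q$-binomial identity at $q=v$, and product identities for idivided powers such as $B_i^{[a]}B_i^{[\ell-a]}=B_i^{[\ell]}$ in $\Uiv$.
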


\begin{proof} 
By \cite{SZ25}, the induced map $\TT_i$ on $\Ui_1$ is a Poisson automorphism. On the other hand, since $\TT_i\circ \pi_v^\imath=\pi_v^\imath \circ \TT_i$, $\TT_i(\ker \pi_v^\imath)\subset \ker \pi_v^\imath$ and we have
\[
\TT_i\{x,y\}=\TT_i\pi_v^\imath\left(\frac{[\tx,\ty]}{\ell^2 (qv^{-1}-1)} \right)
=\pi_v^\imath\left(\frac{[\TT_i\tx,\TT_i\ty]}{\ell^2 (qv^{-1}-1)} \right)=\{\TT_i x, \TT_iy\},
\quad \forall x,y\in \Zi,
\]
where $\tx,\ty$ are preimages of $x,y$ under $\pi_v^\imath:\Ui_{\A'}\rightarrow \Uiv$.

Thus, it suffices to verify 
\begin{align}
\label{eq:TTFri}
\TT_i \circ \Fri(x)=\Fri\circ \TT_i(x),
\end{align}
when $x$ lies in a set of Poisson generators of $\Ui_1$. Recall from \cite[Lemma 3.4]{So24} that $\Ui_1$ is generated by $\un{B_j},\un{k_j}^{\pm1},j\in \wI=\I$ as a Poisson algebra. 

By \cite[Proposition 6.3]{WZ23} (see also \cite[Proposition 2.13]{WZ25}), we have in $\Ui_{\A'}$
\[
\TT_i(B_i)=q_i^{a_{i,\tau i}/2-1} B_{\tau_i \tau (i)} k_{\tau_i (i)}^{-1},
\qquad
\TT_i(B_{\tau i})=q_i^{a_{i,\tau i}/2-1} B_{\tau_i (i)} k_{\tau_i \tau (i)}^{-1}.
 \]
Hence, in $\Uiv$ we have
\[
\TT_i(B_i^{[\ell]})=B_{\tau_i \tau (i)}^{[\ell]} k_{\tau_i (i)}^{-\ell},
\qquad 
\TT_i(B_i^{[\ell]})=B_{\tau_i (i)}^{[\ell]} k_{\tau_i\tau (i)}^{-\ell};
\]
while in $\Ui_1$ we have $ \TT_i(\un{B_i})= \un{B_{\tau_i \tau (i)}} \cdot \un{k_{\tau_i (i)}^{-1}}$ and $\TT_i(\un{B_{\tau i}})= \un{B_{\tau_i (i)}} \cdot \un{k_{\tau_i \tau (i)}^{-1}}$. Hence, \eqref{eq:TTFri} holds for $x=B_i,B_{\tau i}$. It is also straightforward to verify that \eqref{eq:TTFri} holds for $x=\un{k_j}^{\pm1}$.

It remains to verify \eqref{eq:TTFri} for $x=B_j,j\neq i,\tau i$. The proof for this part is separated into 3 cases depending on $a_{i,\tau i}$ and will be given in Propositions~\ref{prop:vsplit}, \ref{prop:vdiag}, \ref{prop:vqsplit} in the next three subsections.
\end{proof}

\subsection{Case $a_{i,\tau i}=2$} In this subsection, fix $i\in \wI$ such that $a_{i,\tau i}=2$; i.e., $\tau i=i$.

We define for $m\in \N$
\begin{align}
[m]!!=\prod_{r=0}^{\lceil m/2\rceil-1} [m-2r].
\end{align}

\begin{lemma}\label{lem:vbinom}
We have
\[
\sum_{r=1}^{\ell-1} (-1)^r \frac{v_i^{r}}{(v_i-v_i^{-1})^\ell [r]_i![\ell-r]_i!}=\frac{1-\ell}{2\ell}
\]
\end{lemma}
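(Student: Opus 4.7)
The plan is to reduce the claimed identity, which is a pure statement about the primitive $\ell$-th root of unity $v_i$ (note that $v_i$ is primitive since $\gcd(\ell,\epsilon_i)=1$), to two classical computations. Throughout, write $\omega=v_i-v_i^{-1}$. The two structural facts we will exploit are
\[
[\ell-k]_i=-[k]_i,\qquad 1\leq k\leq \ell-1,
\]
and, as a consequence, $\prod_{k=1}^{\ell-1}(v_i^k-v_i^{-k}) = \prod_{k=1}^{\ell-1}(v_i^{2k}-1)=\ell$, so that $[\ell-1]_i!=\ell\,\omega^{-(\ell-1)}$.

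First I would simplify the summand. Using $[\ell-r]_i!=[\ell-r]_i\cdot[\ell-1-r]_i!$ and the identities above,
\[
\frac{1}{[r]_i!\,[\ell-r]_i!}=\frac{\qbinom{\ell-1}{r}_i}{[\ell-1]_i!\,[\ell-r]_i}
=-\frac{\qbinom{\ell-1}{r}_i\,\omega^{\ell-1}}{\ell\,[r]_i}.
\]
A direct telescoping from $[\ell-k]_i=-[k]_i$ gives $\qbinom{\ell-1}{r}_i=(-1)^r$. Substituting and using $\omega\,[r]_i = v_i^r-v_i^{-r}$ yields
\[
\frac{v_i^r}{\omega^{\ell}[r]_i!\,[\ell-r]_i!}
=-\frac{(-1)^r\,v_i^{2r}}{\ell\,(v_i^{2r}-1)}.
\]
After inserting this into the sum and cancelling the sign $(-1)^r$, the identity reduces to showing
\[
\sum_{r=1}^{\ell-1}\frac{v_i^{2r}}{v_i^{2r}-1}=\frac{\ell-1}{2}.
\]

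Second, I would prove this last identity. Writing $v_i^{2r}/(v_i^{2r}-1)=1+1/(v_i^{2r}-1)$ splits the sum as $(\ell-1)+\sum_{r=1}^{\ell-1}(v_i^{2r}-1)^{-1}$. Setting $\zeta=v_i^2$, still a primitive $\ell$-th root of unity since $\ell$ is odd, the remaining sum is $\sum_{\eta^\ell=1,\,\eta\neq 1}(\eta-1)^{-1}$. Taking the logarithmic derivative of $\prod_{k=1}^{\ell-1}(x-\zeta^k)=(x^\ell-1)/(x-1)$ and evaluating the limit $x\to 1$ (so $\ell x^{\ell-1}/(x^\ell-1)-(x-1)^{-1}\to (\ell-1)/2$) gives
\[
\sum_{k=1}^{\ell-1}\frac{1}{\zeta^k-1}=-\frac{\ell-1}{2}.
\]
Combining the two pieces yields $(\ell-1)-(\ell-1)/2=(\ell-1)/2$, completing the proof.

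There is no real obstacle: every step is elementary once the reductions are organized in this order. The only care is bookkeeping of signs and of powers of $\omega$ in the first step, together with checking that the hypothesis $\gcd(\ell,\epsilon_i)=1$ and the oddness of $\ell$ are used exactly where needed (so that $v_i$ and $v_i^2$ are primitive $\ell$-th roots of unity, and the product $\prod_{k=1}^{\ell-1}(v_i^{2k}-1)$ evaluates to $\ell$ rather than $\pm\ell$).
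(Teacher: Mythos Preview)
Your proof is correct. Both your argument and the paper's use the evaluation $(v_i-v_i^{-1})^{\ell-1}[\ell-1]_i!=\ell$, but the overall strategies diverge from there. The paper works at generic $q$, invoking the $q$-binomial identity $\sum_{r=0}^{\ell}(-1)^r q_i^{r(1-\ell)}\qbinom{\ell}{r}_i=0$ from \cite[\S1.3.4]{Lus93}, isolating the boundary terms $r=0,\ell$, and then specializing $q\to v$ with the limit $\frac{q_i^{\ell-\ell^2}-1}{q_i^{\ell}-1}\to 1-\ell$. You instead stay at the root of unity throughout: the identities $[\ell-k]_i=-[k]_i$ and $\qbinom{\ell-1}{r}_i=(-1)^r$ collapse the sum to $-\ell^{-1}\sum_{r=1}^{\ell-1}v_i^{2r}/(v_i^{2r}-1)$, which you then evaluate by the classical logarithmic-derivative computation $\sum_{\zeta^{\ell}=1,\,\zeta\neq 1}(\zeta-1)^{-1}=-(\ell-1)/2$. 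Your route is more self-contained (no external $q$-identity or limit is needed) and makes transparent where the oddness of $\ell$ enters; the paper's route is shorter once the Lusztig identity is in hand and fits the general pattern, used elsewhere in the paper, of proving root-of-unity formulas by specializing generic-$q$ relations. One cosmetic point: your displayed equality $\prod_{k=1}^{\ell-1}(v_i^k-v_i^{-k})=\prod_{k=1}^{\ell-1}(v_i^{2k}-1)$ silently absorbs the factor $v_i^{-\ell(\ell-1)/2}=1$; this is harmless since $\ell$ is odd, but worth making explicit.
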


\begin{proof}
By \cite[\S 1.3.4]{Lus93}, for generic $q$, we have
\[
\sum_{r=0}^\ell (-1)^r q_i^{r(1-\ell)} \qbinom{\ell}{r}_i =0,
\]
which implies that 
\begin{align*}
\frac{1}{(q_i-q_i^{-1})^\ell}\sum_{r=1}^{\ell-1} (-1)^r q_i^{r(1-\ell)} \qbinom{\ell}{r}_i 
=
\frac{q_i^{\ell-\ell^2}-1}{(1+q_i^{-\ell})(q_i^{\ell}-1)} \frac{1}{(q_i-q_i^{-1})^{\ell-1}[\ell-1]_i!}
\end{align*}
Specialize $q=v$ in the above identity and note that $\frac{q_i^{\ell-\ell^2}-1}{q_i^{\ell}-1}$ specialize to $1-\ell$. By \cite{DCK90}, $(v_i-v_i^{-1})^{\ell-1}[\ell-1]_i!=\ell$. Thus, the desired identity follows.
\end{proof}

Recall the element $B_i^{[m]},m\in \N$ from Section~\ref{sec:iDP}.

\begin{lemma}\label{lem:prodBB}
For any $a\ge 0$, we have in $\Uiv$
\begin{align*}
B_i^{[a]} B_i^{[\ell-a]} &= B_i^{[\ell]},
\qquad \qquad
B_i^{[a]} B_i^{[2\ell-a]} = B_i^{[2\ell]}.
\end{align*}
\end{lemma}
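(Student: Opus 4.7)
The plan is to work directly with the product formulas \eqref{def:idv} for the idivided powers. Abbreviating $C_r := B_i^2 + (q_i-q_i^{-1})^2[r]_i^2$, we have in $\Uiv$
\[
B_i^{[2k]} = \prod_{r=1}^{k} C_{2r-1}, \qquad B_i^{[2k+1]} = B_i \prod_{r=1}^{k} C_{2r},
\]
and every $C_r$ is a polynomial in $B_i^2$, so all factors commute pairwise and with $B_i$. The key inputs at the root of unity $q_i = v_i$ are (a) $C_r = C_{\ell-r}$, since $[r]_{v_i} = -[\ell-r]_{v_i}$; (b) $C_\ell = B_i^2$, since $[\ell]_{v_i}=0$; and (c) $C_r$ depends only on $r$ modulo $\ell$. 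Thus both identities reduce to comparing multisets of indices.

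For the first identity, I would write $\ell = 2s+1$ and split on the parity of $a$. In the case $a = 2j$ (the other being symmetric), the product equals
\[
B_i \Bigl(\prod_{r=1}^{j} C_{2r-1}\Bigr)\Bigl(\prod_{r=1}^{s-j} C_{2r}\Bigr).
\]
Applying (a) in the form $C_{2r-1} = C_{2(s-r+1)}$ to the odd-indexed block turns it into $\{C_{2(s-j+1)}, \ldots, C_{2s}\}$; combining with the even-indexed block $\{C_{2},\ldots, C_{2(s-j)}\}$ yields exactly the multiset $\{C_2, C_4,\ldots, C_{2s}\}$ and hence $B_i^{[\ell]}$.

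For the second identity, my plan is first to prove the auxiliary formula
\[
B_i^{[\ell+m]} = B_i^{[\ell]} B_i^{[m]}, \qquad 0 \le m \le \ell,
\]
by the same kind of multiset calculation: indices exceeding $\ell$ in the LHS product are reduced modulo $\ell$ using (c), and when $\ell+m$ is odd the ``middle'' index $2r-1 = \ell$ contributes the extra factor $C_\ell = B_i^2$ by (b), which produces the second $B_i$ needed on the RHS. Granted this auxiliary formula together with the first identity, the second identity follows by
\[
B_i^{[a]} B_i^{[2\ell - a]} = B_i^{[a]} B_i^{[\ell]} B_i^{[\ell - a]} = B_i^{[\ell]} \bigl(B_i^{[a]} B_i^{[\ell-a]}\bigr) = (B_i^{[\ell]})^2 = B_i^{[2\ell]},
\]
where the last equality is Lemma~\ref{le:bkl}; if $a \ge \ell$ one argues symmetrically via $B_i^{[a]} = B_i^{[\ell]} B_i^{[a-\ell]}$.

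The main obstacle is nothing conceptual but rather the bookkeeping across the four parity subcases (two for the first identity, two for the auxiliary identity); all of them reduce, via the symmetries (a)--(c), to checking that a specific multiset of $C$-indices coincides with the expected one. Lemma~\ref{lem:vbinom} plays no role here and is presumably intended for a later computation.
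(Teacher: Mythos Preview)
Your approach is correct and genuinely different from the paper's. The paper does not touch the factored formulas \eqref{def:idv} at all; instead it invokes a closed product formula (from \cite{BS22}) valid for generic $q$,
\[
B_i^{[a]} B_i^{[k]} = \sum_{t\ge 0} (-1)^t\frac{[a+k]_i!}{[a+k-2t]_i!}\Bigl(\prod_{m=1}^t \frac{[a-2m+2]_i[k-2m+2]_i}{[a+k-2m+1]_i[2m]_i}\Bigr)(q_i-q_i^{-1})^{2t}B_i^{[a+k-2t]},
\]
and then argues that at $q=v$ every coefficient with $t>0$ vanishes when $a+k=\ell$ or $a+k=2\ell$, by tracking zeros and (absence of) poles in the $q$-integers. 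Your route is more elementary and self-contained: it needs only the three symmetries $C_r=C_{\ell-r}$, $C_\ell=B_i^2$, $C_{r+\ell}=C_r$ at $q=v$, and a multiset comparison. What the paper's route buys is uniformity --- one formula handles both identities with no parity splitting --- and it also immediately yields the intermediate fact $B_i^{[\ell]}B_i^{[r]}=B_i^{[\ell+r]}$ used later in the proof of Proposition~\ref{prop:vsplit}, which you obtain as your auxiliary formula.

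One small slip in your description: in the auxiliary identity $B_i^{[\ell+m]}=B_i^{[\ell]}B_i^{[m]}$, the ``middle'' factor $C_\ell=B_i^2$ appears on the left when $\ell+m$ is \emph{even} (so that $B_i^{[\ell+m]}=\prod C_{2r-1}$ runs over odd indices and hits $2r-1=\ell$), supplying the $B_i^2$ needed to match the two $B_i$'s on the right coming from $B_i^{[\ell]}$ and $B_i^{[m]}$ with $m$ odd. When $\ell+m$ is odd, each side already carries exactly one $B_i$ and no $C_\ell$ is needed. This does not affect the argument, only the labeling of the two subcases.
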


\begin{proof}
By \cite[Lemma 4.4]{BS22} and Section~\ref{sec:iDP}, for generic $q$ and $a,k\geq0$, we have in $\Ui$
\begin{align}\label{eq:prodBB}
B_i^{[a]} B_i^{[k]} = \sum_{t\geq 0} (-1)^t\frac{[a+k]_i!}{[a+k-2t]_i!} 
&\left(\prod_{m=1}^t \frac{[a-2m+2]_i[k-2m+2]_i}{[a+k-2m+1]_i [2m]_i} \right)
 (q_i-q_i^{-1})^{2t} B_i^{[a+k-2t]}.
\end{align}
Now we set $q=v$ to be a primitive $\ell$th root of unity. Note that $\frac{[\ell]_i!}{[\ell-2t]_i!}=0$ unless $t=0$ and $ \prod_{m=1}^t 
\frac{[a-2m+2]_i[\ell-a-2m+2]_i}{[\ell-2m+1]_i [2m]_i}$ does not have any pole at $q=v$. Hence, we have in $\Uiv$
\[
B_i^{[a]} B_i^{[\ell-a]} = B_i^{[\ell]}.
\]
We show the second identity. We consider the coefficient in the right-hand side of \eqref{eq:prodBB} for $t>0$. Note that $\frac{1}{[2\ell-2t]_i!\prod_{m=1}^t [2\ell-2m+1]_i}$ always has a single pole at $q=v$ for any $t>0$. 
Moreover, for any $0\le a \le 2\ell,t\ge 0$, $\prod_{m=1}^t\frac{[a-2m+2]_i[2\ell-a-2m+2]_i}{[2m]_i}$ does not have any pole at $q=v$. Hence, the coefficient in the right-hand side of \eqref{eq:prodBB} is $0$ unless $t=0$, which implies $B_i^{[a]} B_i^{[2\ell-a]} = B_i^{[2\ell]}.$
\end{proof}

\begin{proposition}\label{prop:vsplit}
Let $j\in \wI$ such that $j\neq i$. Then we have $\TT_i(B_j^{[\ell]})=\Fri(\TT_i(\un{B}_j))$.
\end{proposition}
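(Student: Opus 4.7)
The plan is to carry out an explicit computation using the closed-form formulas of \cite{WZ25} for $\TT_i$ acting on the higher idivided powers $B_j^{[m]}$ in quasi-split type. First, I would dispose of the trivial case $a_{ij}=0$: here $\TT_i(B_j)=B_j$ in $\Ui_{\A'}$, so $\TT_i(B_j^{[\ell]})=B_j^{[\ell]}$; on the other side $\TT_i(\un{B_j})=\un{B_j}$, giving $\Fri(\TT_i(\un{B_j}))=\Fri(\un{B_j})=B_j^{[\ell]}$ by the very definition of $\Fri$ established in the proof of Theorem~\ref{thm:Fri}.

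When $a_{ij}\neq 0$, the formulas of \cite{WZ25} express $\TT_i(B_j^{[m]})$ at generic $q$ as an explicit $\A'$-linear sum of monomials of the form $B_i^{[a]} B_j^{[b]} B_i^{[c]}$ (times powers of $K_\mu$), with coefficients that are products of quantum factorials and signs. Specializing this formula at $m=\ell$ and $q=v$ and working inside $\Uiv$, Lemma~\ref{lem:prodBB} collapses every product $B_i^{[a]} B_i^{[\ell-a]}$ to $B_i^{[\ell]}$ and every $B_i^{[a]} B_i^{[2\ell-a]}$ to $B_i^{[2\ell]}$, while the resulting scalar prefactor can be evaluated via Lemma~\ref{lem:vbinom}, which handles precisely the signed sum $\sum_{r=1}^{\ell-1}(-1)^r v_i^r/\bigl((v_i-v_i^{-1})^\ell [r]_i!\,[\ell-r]_i!\bigr)$ that arises. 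These two lemmas together should reduce $\TT_i(B_j^{[\ell]})$ to a short polynomial in $B_i^{[\ell]}$, $B_j^{[\ell]}$, $B_i^{[2\ell]}$, and $K_i^{\pm\ell}$, $K_j^{\pm\ell}$ with explicit rational coefficients.

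On the semi-classical side, $\TT_i(\un{B_j})\in\Ui_1\cong\C[\X]$ is the $q\to 1$ limit of the same generic formula for $\TT_i(B_j)$, producing a polynomial in $\un{B_i},\un{B_j},\un{k_i}^{\pm 1},\un{k_j}^{\pm 1}$. Since $\Fri$ is a $\C$-algebra map sending $\un{B_j}\mapsto B_j^{[\ell]}$, $\un{B_i}\mapsto B_i^{[\ell]}$, and $\un{k_\mu}\mapsto K_\mu^{\ell}$, applying $\Fri$ to $\TT_i(\un{B_j})$ yields a polynomial in the same generators as above whose coefficients can be read off directly. It then suffices to match the two expressions coefficient by coefficient. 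The main obstacle is purely bookkeeping: the generic $\TT_i(B_j^{[\ell]})$ expands into $O(\ell)$ terms and one must verify that after the collapses from Lemmas~\ref{lem:prodBB} and \ref{lem:vbinom} the surviving numerical coefficients exactly agree with those produced on the classical side. This amounts to a finite case analysis according to $|a_{ij}|\in\{1,2,3\}$ and whether $\tau j\in\{j\}$ or $\tau j\neq j$, each of which reduces to the evaluation already provided by Lemma~\ref{lem:vbinom}.
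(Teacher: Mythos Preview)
Your overall plan is close to the paper's, but there is a genuine gap: you do not address the poles in the individual summands when specializing at $q=v$. In the expansion
\[
\TT_i(B_j^{[\ell]})=\sum_{r+s=-\ell a_{ij}} (-1)^r q_i^r \,\frac{q_i^{\ell a_{ij}/2}}{(q_i-q_i^{-1})^{-\ell a_{ij}}}\,\frac{B_i^{[r]} B_j^{[\ell]} B_i^{[s]}}{[r]_i!\,[s]_i!},
\]
the boundary terms (those with $r$ or $s$ equal to $0$ or $\ell$, and similarly multiples of $\ell$ when $|a_{ij}|\ge 2$) carry a factor $1/[\ell]_i!$, which is not defined at $q=v$. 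Only the full sum lies in $\Ui_{\A'}$; you cannot ``specialize and then collapse'' term by term as you propose. Lemmas~\ref{lem:vbinom} and~\ref{lem:prodBB} handle exactly the interior terms $1\le r\le \ell-1$, but say nothing about the singular boundary terms. The same issue appears on the semi-classical side: the generic formula for $\TT_i(B_j)$ carries a prefactor $(q_i-q_i^{-1})^{a_{ij}}$ which is singular at $q=1$, so ``the $q\to 1$ limit of the same generic formula'' is not a polynomial in $\un{B_i},\un{B_j}$ obtained by naive substitution.

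The paper resolves both singularities simultaneously via the Poisson structure. On the $\Uiv$ side, one first groups the singular boundary terms into commutators such as $[B_j^{[\ell]},B_i^{[\ell]}]$; since $B_i^{[\ell]},B_j^{[\ell]}\in\Zi$ are central in $\Uiv$, this commutator lies in $(q-v)\Ui_{\A'}$, and $\pi_v^\imath\bigl([B_j^{[\ell]},B_i^{[\ell]}]/\ell^2(qv^{-1}-1)\bigr)$ is by definition the Poisson bracket $\{B_j^{[\ell]},B_i^{[\ell]}\}$ on $\Zi$ (cf.~\eqref{eq:iPoissonv}). On the $\Ui_1$ side, \cite[Lemma~3.6]{SZ25} gives $\TT_i(\un{B_j})$ directly as an expression in Poisson brackets, e.g.\ $\TT_i(\un{B_j})=\frac{1}{2\epsilon_i}\{\un{B_j},\un{B_i}\}-\frac{1}{2}\un{B_j}\cdot\un{B_i}$ when $a_{ij}=-1$. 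The match then follows because $\Fri$ is a \emph{Poisson} algebra map (Theorem~\ref{thm:Fri}), not merely a $\C$-algebra map. Your proposal should be amended to isolate the boundary terms, pass through the Poisson bracket, and invoke the Poisson property of $\Fri$; treating both sides as plain polynomials in $B_i^{[\ell]},B_j^{[\ell]}$ cannot by itself account for the commutator contributions. (Also, the case split on whether $\tau j=j$ is unnecessary here: since $\tau i=i$ in this subsection, the formula from \cite{WZ25} involves only $B_i$ and $B_j$, and the argument depends only on $a_{ij}$.)
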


\begin{proof}
By \cite[Theorem 3.11]{WZ25} and \cite[Proposition 4.6]{BS22}, we have in $\Ui_{\A'}$
\begin{align}
\label{eq:TTell}
\TT_i(B_j^{[\ell]})=\sum_{r+s=-\ell a_{ij}} (-1)^r q_i^r 
\frac{q_i^{\ell a_{ij}/2}}{(q_i-q_i^{-1})^{-\ell a_{ij}}}\frac{B_i^{[r]} B_j^{[\ell]} B_i^{[s]}}{[r]_i![s]_i!}.
\end{align}

We prove this proposition case by case depending on $a_{ij}$. The case $a_{ij}=0$ is trivial. 

Suppose that $a_{ij}=-1$. By \cite[Lemma 3.6]{SZ25}, we have $\TT_i(\un{B_j})=\frac{1}{2\epsilon_i}\{\un{B_j},\un{B_i}\}-\frac{1}{2} \un{B_j} \cdot \un{B_i}  $. On the other hand, recall from \cite{DCK90} that $(v_i-v_i^{-1})^{\ell-1}[\ell-1]_i!=\ell$. Note also that for generic $q$
\[
(q_i-q_i^{-1})[\ell]_i=(q_i^{-\ell}+1)(q_i^\ell-1)=(q_i^{-\ell}+1)(q_i-v_i)(q_i^{\ell-1}+q_i^{\ell-2}v_i+\cdots +v_i^{\ell-1}).
\]
Thus, by \eqref{eq:TTell} and \eqref{eq:Poissonv}, we have in $\Uiv$
\begin{align*}
\TT_i(B_j^{[\ell]})=&\pi_v^\imath\left(\frac{1}{(q_i-q_i^{-1})^{\ell-1}[\ell-1]_i!(q_i^{-\ell}+1)}
\bigg(\frac{\big[B_j^{[\ell]}, B_i^{[\ell]}\big]}{\ell(q_i v_i^{-1}-1)}-B_i^{[\ell]} B_j^{[\ell]}\bigg)\right)
\\
&+\sum_{r=1}^{\ell-1} (-1)^r v_i^r 
\frac{B_j^{[\ell]}}{(v_i-v_i^{-1})^{\ell}}\frac{B_i^{[r]}  B_i^{[\ell-r]}}{[r]_i![\ell-r]_i!}
\\
=& \frac{1}{2\epsilon_i} \big\{B_j^{[\ell]}, B_i^{[\ell]}\big\}-\frac{1}{2\ell} B_j^{[\ell]} B_i^{[\ell]}
+ \frac{B_j^{[\ell]}B_i^{[\ell]}}{(v_i-v_i^{-1})^{\ell}}\sum_{r=1}^{\ell-1} (-1)^r v_i^r 
\frac{1}{[r]_i![\ell-r]_i!}
\\
=& \frac{1}{2\epsilon_i} \big\{B_j^{[\ell]}, B_i^{[\ell]}\big\}-\frac{1}{2\ell} B_j^{[\ell]} B_i^{[\ell]}+\frac{1-\ell}{2\ell} B_j^{[\ell]} B_i^{[\ell]}
\\
=& \frac{1}{2\epsilon_i} \big\{B_j^{[\ell]}, B_i^{[\ell]}\big\}-\frac{1}{2} B_j^{[\ell]} B_i^{[\ell]}=\Fri\big(\TT_i(\un{B_j})\big),
\end{align*}
where the third equality follows from Lemma~\ref{lem:vbinom}. The desired statement is verified for $a_{ij}=-1$.

Suppose that $a_{ij}=-2$. In this case, $q_i=q,q_j=q^2$. By \cite[Lemma 3.6]{SZ25}, we have $\TT_i(\un{B_j})=\frac{1}{8}\big\{\{\un{B_j},\un{B_i}\},\un{B_i}\big\}-\frac{1}{4} \{\un{B_i} \cdot\un{B_j},\un{B_i}\}+ \un{B_j}$. On the other hand, note that $\frac{[2\ell]!}{[(\ell]!)^2}$ specializes to $2$ at $q=v$; cf. \cite[Lemma 34.1.2]{Lus93}. By \eqref{eq:prodBB} and similar arguments as in the proof of Lemma~\ref{lem:prodBB}, we have
\begin{align}
\pi_v^\imath\left(\frac{\big(B_i^{[\ell]}\big)^2-B_i^{[2\ell]}}{(q-q^{-1})^{2\ell}[2\ell]!}\right)
=\pi_v^\imath\left((-1)^\ell\frac{\big([\ell]!! \cdot [\ell-2]!!\big)^2}{[2\ell-1]!!\cdot [2\ell]!!}\right)=-\frac{1}{2}.
\end{align}
Moreover, by \eqref{eq:prodBB}, we have $B_i^{[\ell]}B_i^{[r]}=B_i^{[\ell+r]}$ for $0\le r\le \ell$ in $\Uiv$. Thus, we have in $\Uiv$
\begin{align*}
\TT_i(B_j^{[\ell]})=&\pi_v^\imath\sum_{r+s=2\ell } (-1)^r q_i^r 
\frac{q_i^{\ell}}{(q_i-q_i^{-1})^{2\ell }}\frac{B_i^{[r]} B_j^{[\ell]} B_i^{[s]}}{[r]_i![s]_i!}.
\\
=&\pi_v^\imath\left(\frac{\big[[B_j^{[\ell]},B_i^{[\ell]}],B_i^{[\ell]}\big]}{2(q-q^{-1})^{2\ell }([\ell]!)^2}\right)
+\pi_v^\imath\left(B_j^{[\ell]} \frac{-\big(B_i^{[\ell]}\big)^2+B_i^{[2\ell]}}{(q-q^{-1})^{2\ell}[2\ell]!}+q^{2\ell}\frac{-\big(B_i^{[\ell]}\big)^2+B_i^{[2\ell]}}{(q-q^{-1})^{2\ell}[2\ell]!}B_j^{[\ell]}\right)
\\
&+\pi_v^\imath\left(\frac{(q^{2\ell}-1) \big(B_i^{[\ell]}\big)^2 B_j^{[\ell]}-2(q^\ell-1)  B_i^{[\ell]} B_j^{[\ell]} B_i^{[\ell]}}{2(q-q^{-1})^\ell([\ell]!)^2}\right)
\\
&+ \sum_{r=1 }^{\ell-1} (-1)^r \frac{v_i^r}{(v_i-v_i^{-1})^{\ell }[r]_i![\ell-r]_i!} \pi_v^\imath \left(\frac{B_i^{[r]} B_j^{[\ell]} B_i^{[2\ell -r]}-B_i^{[\ell+r]} B_j^{[\ell]} B_i^{[\ell-r]}}{(q_i-q_i^{-1})^{\ell }[\ell]!}\right)
\\
=&\frac{1}{8}\Big\{\big\{B_j^{[\ell]},B_j^{[\ell]}\big\},B_j^{[\ell]}\Big\} + B_j^{[\ell]}
-\frac{1}{4\ell}\big\{B_i^{[\ell]} B_j^{[\ell]},B_i^{[\ell]}\big\}
+\frac{1-\ell}{4\ell}\big\{B_i^{[\ell]} B_j^{[\ell]},B_i^{[\ell]}\big\}
\\
=&\Fri \big(\TT_i(\un{B_j})\big)
\end{align*}
where the third equality follows from Lemma~\ref{lem:vbinom}. The desired statement is verified for $a_{ij}=-2$.

The case $a_{ij}=-3$ can be verified similarly. We omit the detail.
\end{proof}

\subsection{Case $a_{i,\tau i}=0$} In this subsection, fix $i\in \wI$ such that $a_{i,\tau i}=0$. In this case, $q_i=q$.

\begin{proposition}\label{prop:vdiag}
Let $j\in \wI$ such that $j\neq i,\tau i$. Then we have $\TT_i(B_j^{[\ell]})=\Fri(\TT_i(\un{B}_j))$.
\end{proposition}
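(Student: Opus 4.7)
The plan is to follow the same strategy as in the proof of Proposition~\ref{prop:vsplit}. Since $a_{i,\tau i}=0$ forces $\tau i \neq i$, by \eqref{eq:diagiDP} the idivided powers at $i$ and $\tau i$ reduce to honest divided powers, i.e.\ $B_i^{[m]}=B_i^m$ and $B_{\tau i}^{[m]}=B_{\tau i}^m$. In particular $(B_i^{[\ell]})^k = B_i^{k\ell}$ in $\Uiv$, which considerably simplifies both sides of the claimed identity and lets me avoid the two-term recurrence \eqref{eq:prodBB} that dominated the split-case analysis.

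First I would extract from \cite{WZ25} (whose split-case counterpart is \eqref{eq:TTell}) the explicit closed formula for $\TT_i(B_j^{[\ell]})$ on $\Ui_{\A'}$, expressed as a weighted sum over pairs $(r,s)\in\N^2$ of monomials in $B_i^{[r]}, B_{\tau i}^{[s]}, B_j^{[\ell]}$ together with Cartan factors $K_\mu$, with $r+s$ fixed by the weight constraint and binomial coefficients $1/[r]_i![s]_{\tau i}!$. Upon specializing $q\mapsto v$, the resulting sum in $\Uiv$ naturally partitions into three kinds of contributions: ``corner'' terms, in which both $r$ and $s$ are multiples of $\ell$, surviving as products of $B_i^{[\ell]}, B_{\tau i}^{[\ell]}, B_j^{[\ell]}$; ``Poisson'' terms, in which a single commutator $[B_j^{[\ell]},B_i^{[\ell]}]$ or $[B_j^{[\ell]},B_{\tau i}^{[\ell]}]$ appears with a compensating pole $(qv^{-1}-1)^{-1}$ that, via \eqref{eq:iPoissonv}, yields a Poisson bracket; and ``interior'' double sums over $r,s$ whose residues modulo $\ell$ are both nonzero, which must collapse to scalar multiples of corner terms.

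Second, I would compute $\TT_i(\un{B_j})$ in $\Ui_1$ using the semi-classical formulas from \cite[Lemma~3.6]{SZ25} together with the Poisson structure recalled in Section~\ref{sec:limit}, apply $\Fri$, and match the outcome term-by-term with the specialization above. The corner and Poisson contributions match automatically from the construction of $\Fri$ and its compatibility with the Poisson bracket, exactly as in the final computations of Proposition~\ref{prop:vsplit}.

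The main obstacle will be the combinatorial identities required to simplify the interior sums — these are the $a_{i,\tau i}=0$ analogues of Lemma~\ref{lem:vbinom}, now involving two summation indices $r$ and $s$ rather than one. The saving grace is that $a_{i,\tau i}=0$ implies $B_i$ and $B_{\tau i}$ commute modulo Cartan factors, so the ordering of $B_i^{[r]}$ and $B_{\tau i}^{[s]}$ in each term introduces only a benign $v$-power, and the two-variable sum factors as a product of two independent single-variable sums, each of which matches exactly the identity of Lemma~\ref{lem:vbinom}. As in Proposition~\ref{prop:vsplit}, the verification splits into cases according to the pair $(a_{ij},a_{\tau i,j})$, of which only finitely many are possible since $j\neq i,\tau i$; each case is then handled by an explicit but short computation paralleling the $a_{ij}=-1$ and $a_{ij}=-2$ arguments in the split case.
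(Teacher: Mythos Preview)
Your overall strategy and case split by $(a_{ij},a_{\tau i,j})$ are exactly what the paper does: the cases where at most one of these entries is nonzero reduce immediately to the split computation of Proposition~\ref{prop:vsplit}, and the only genuinely new case is $a_{ij}=a_{\tau i,j}=-1$. For that remaining case, however, there is a structural feature of the formula from \cite[Theorem~4.13]{WZ25} that you have underestimated.

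The explicit expression for $\TT_i(B_j^{[\ell]})$ is not a sum over pairs $(r,s)$ with $r+s$ fixed by weight; it is a \emph{triple} sum
\[
\TT_i(B_j^{[\ell]})=\sum_{u=0}^\ell \sum_{r=0}^{\ell-u}\sum_{s=0}^{\ell-u} (-1)^{r+s} q^{\,\cdots}\, k_i^{-u}\,
\frac{B_i^{[r]}B_{\tau i}^{[s]} B_{j}^{[\ell]}B_{\tau i}^{[\ell-u-s]}B_i^{[\ell-u-r]} }{(q-q^{-1})^{2\ell-2u}[r]![s]![\ell-u-r]![\ell-u-s]!},
\]
with a genuine Cartan factor $k_i^{-u}$ indexed by a third parameter $u$. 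This extra index is precisely what the relation $[B_i,B_{\tau i}]\in\C(q)k_i\oplus\C(q)k_i^{-1}$ (your ``commute modulo Cartan factors'') generates; it is not a benign $v$-power you can absorb into a reordering. The crucial first step in the paper is to show that for each $0<u<\ell$ the inner double sum vanishes in $\Uiv$: since $B_j^{[\ell]}$ is central one may bring the two $B_{\tau i}$-factors together, and then the $s$-sum factors out as $\sum_{s=0}^{\ell-u}(-1)^s v^{s(u+1)}\qbinom{\ell-u}{s}=0$ by the $q$-binomial identity in \cite[\S 1.3.4]{Lus93} (not Lemma~\ref{lem:vbinom}). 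Only after this elimination are you left with the contributions $u=0$ (the double sum over $(r,s)$ that you envisioned) and $u=\ell$ (a single term $k_i^{-\ell}B_j^{[\ell]}$), at which point your corner/Poisson/interior decomposition and the factorization into two applications of Lemma~\ref{lem:vbinom} go through essentially as you describe.
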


\begin{proof}
If $c_{ij}=c_{\tau i, j}=0$, then the statement is trivial. If $c_{\tau i,j}=0,c_{ij}=-1$, then $\TT_i(\un{B}_j)=\frac{1}{2}\{\un{B}_j, \un{B}_i\}-\frac{1}{2} \un{B}_j \cdot \un{B}_i$ and one can use the same arguments as in the proof of Proposition~\ref{prop:vsplit} to show that $\TT_i(B_j^{[\ell]})=\Fri(\TT_i(\un{B}_j))$. The case for $c_{ij}=0,c_{\tau i,j}=-1$ is similar.

It remains to consider the nontrivial case $c_{ij}=c_{\tau i,j}=-1$.  By \cite[Lemma 3.6]{SZ25}, we have 
\[
\TT_i(\un{B}_j)=\frac{1}{4}\big\{\{\un{B_j},\un{B_{\tau i}}\},\un{B_i}\big\}
-\frac{1}{4} \un{B_i}\{ \un{B_j},\un{B_{\tau i}}\}
-\frac{1}{4} \{\un{B_{\tau i}} \cdot \un{B_j},\un{B_i}\}
+ \frac{1}{4}\un{B_i}\cdot \un{B_{\tau i}} \cdot \un{B_j} + \un{k_i}^{-1}\cdot \un{B_j}.
\]
On the other hand, by \cite[Theorem 4.13]{WZ25}, we have for generic $q$
\begin{align}
\TT_i(B_j^{[\ell]})=\sum_{u=0}^\ell \sum_{r=0}^{\ell-u} \sum_{s=0}^{\ell-u}
(-1)^{r+s} q^{r(-u+1)+s(u+1)} k_i^{-u}\frac{q^{-\ell+u} B_i^{[r]}B_{\tau i}^{[s]}B_j^{[\ell]}B_{\tau i}^{[\ell-s-u]}B_i^{[\ell-r-u]}}{(q-q^{-1})^{2\ell-2u}[r]![s]![\ell-r-u]![\ell-s-u]!}.
\end{align}
Now we specialize $q$ to $v$. Note that for any $0<u<\ell$, we have in $\Uiv$
\begin{align*}
&\quad \sum_{r=0}^{\ell-u} \sum_{s=0}^{\ell-u}
(-1)^{r+s} v^{r(-u+1)+s(u+1)} \frac{  B_i^{[r]}B_{\tau i}^{[s]}B_j^{[\ell]}B_{\tau i}^{[\ell-s-u]}B_i^{[\ell-r-u]}}{(v-v^{-1})^{2\ell-2u}[r]![s]![\ell-r-u]![\ell-s-u]!}
\\
&=\frac{1}{\big([\ell-u]!\big)^2}\sum_{r=0}^{\ell-u}(-1)^{r} v^{r(-u+1)} 
\Big( \sum_{s=0}^{\ell-u}(-1)^s v^{s(u+1)}\qbinom{\ell-u}{s}\Big)
\qbinom{\ell-u}{r}\frac{ B_j^{[\ell]} B_i^{[r]} B_{\tau i}^{[\ell-u]} B_i^{[\ell-r-u]}}{(v-v^{-1})^{2\ell-2u} }
\\
&=0,
\end{align*}
where the last equality follows from the $q$-binomial identity in \cite[\S 1.3.4]{Lus93}. Thus, we have in $\Uiv$
\begin{align*}
\TT_i(B_j^{[\ell]})&=  \pi_v^\imath \left(\sum_{r=0}^{\ell} \sum_{s=0}^{\ell}(-1)^{r+s} q^{r +s} 
\frac{q^{-\ell} B_i^{[r]}B_{\tau i}^{[s]}B_j^{[\ell]}B_{\tau i}^{[\ell-s]}B_i^{[\ell-r]}}{(q-q^{-1})^{2\ell}[r]![s]![\ell-r]![\ell-s]!}\right)+k_i^{-\ell} B_j^{[\ell]}
\\
&=\pi_v^\imath\left(\frac{B_j^{[\ell]}B_{\tau i}^{[\ell]}B_{i}^{[\ell]}- q^{\ell} B_i^{[\ell]} B_j^{[\ell]}B_{\tau i}^{[\ell]}
-q^{\ell}B_{\tau i}^{[\ell]} B_j^{[\ell]}B_i^{[\ell]}
+q^{2\ell}B_i^{[\ell]} B_{\tau i}^{[\ell]}B_j^{[\ell]}}{(q-q^{-1})^{2\ell} \big([\ell]!\big)^2}\right)
\\
&\quad +\pi_v^\imath\left(\sum_{r=1}^{\ell-1} (-1)^{r} q^{r} 
\frac{B_i^{[r]}B_j^{[\ell]}B_{\tau i}^{[\ell]}B_i^{[\ell-r]}
-q^{\ell}B_i^{[r]}B_{\tau i}^{[\ell]} B_j^{[\ell]}B_i^{[\ell-r]}}{(q-q^{-1})^{2\ell}[r]![\ell-r]![\ell]!}\right)
\\
&\quad +\pi_v^\imath \left(  \sum_{s=0}^{\ell}(-1)^{s} q^{s} 
\frac{B_{\tau i}^{[s]} B_j^{[\ell]}B_{\tau i}^{[\ell-s]}B_i^{[\ell]}
-q^{\ell}B_i^{[\ell]} B_{\tau i}^{[s]} B_j^{[\ell]}B_{\tau i}^{[\ell-s]} }{(q-q^{-1})^{2\ell}[\ell]![s]![\ell-s]!}\right)
\\
&\quad +\sum_{r=1}^{\ell-1} \sum_{s=1}^{\ell-1}(-1)^{r+s} v^{r +s} 
\frac{  B_i^{[r]}B_{\tau i}^{[s]}B_j^{[\ell]}B_{\tau i}^{[\ell-s]}B_i^{[\ell-r]}}{(v-v^{-1})^{2\ell}[r]![s]![\ell-r]![\ell-s]!}+k_i^{-\ell} B_j^{[\ell]}
\\
&=\frac{1}{4}\big\{\{B_j^{[\ell]},B_{\tau i}^{[\ell]}\},B_i^{[\ell]}\big\}
-\frac{1}{4\ell} B_{i}^{[\ell]}\{ B_j^{[\ell]},B_{\tau i}^{[\ell]}\}
-\frac{1}{4\ell} \{B_{\tau i}^{[\ell]} B_{j}^{[\ell]},B_{i}^{[\ell]}\}
+ \frac{1}{4\ell^2} B_{i}^{[\ell]} B_{\tau i}^{[\ell]} B_{j}^{[\ell]}
\\
&\quad +\frac{1-\ell}{4\ell} B_{i}^{[\ell]}\{ B_j^{[\ell]},B_{\tau i}^{[\ell]}\}
-\frac{1-\ell}{4\ell^2} B_{i}^{[\ell]} B_{\tau i}^{[\ell]} B_{j}^{[\ell]}
+\frac{1-\ell}{4\ell} \{B_{\tau i}^{[\ell]} B_{j}^{[\ell]},B_{i}^{[\ell]}\}
-\frac{1-\ell}{4\ell^2} B_{i}^{[\ell]} B_{\tau i}^{[\ell]} B_{j}^{[\ell]}
\\
&\quad+\frac{(1-\ell)^2}{4\ell^2} B_{i}^{[\ell]} B_{\tau i}^{[\ell]} B_{j}^{[\ell]}
+k_i^{-\ell} B_j^{[\ell]}
\\
&=\Fri(\TT_i(\un{B}_j)),
\end{align*}
where we used Lemma~\ref{lem:vbinom} in the third equality. The proof is completed.
\end{proof}

\subsection{Case $a_{i,\tau i}=-1$} In this subsection, fix $i\in \wI$ such that $a_{i,\tau i}=-1$. In this case, $q_i=q$. 

\begin{proposition}\label{prop:vqsplit}
Let $j\in \wI$ such that $j\neq i,\tau i$. Then we have $\TT_i(B_j^{[\ell]})=\Fri(\TT_i(\un{B}_j))$.
\end{proposition}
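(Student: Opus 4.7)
The plan is to follow the case-by-case strategy already employed in Propositions \ref{prop:vsplit} and \ref{prop:vdiag}. The identity $\TT_i(B_j^{[\ell]})=\Fri(\TT_i(\un{B}_j))$ should be verified according to the values of the pair $(a_{ij}, a_{\tau i, j})$, which by the Satake condition $a_{ij}=a_{\tau i,\tau j}$ are symmetric under swapping $i\leftrightarrow \tau i$. The case $a_{ij}=a_{\tau i,j}=0$ is trivial, since both sides reduce to $B_j^{[\ell]}$.

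When exactly one of $a_{ij}, a_{\tau i, j}$ is non-zero, the generic-$q$ formula from \cite[Theorem 3.11 or Proposition 4.6]{WZ25} shows that $\TT_i(B_j^{[\ell]})$ is a single sum $\sum_{r+s=-\ell a_{ij}}$ in $B_i^{[r]}B_j^{[\ell]}B_i^{[s]}$ (up to a $k_i$-power twist arising from $a_{i,\tau i}=-1$). I would specialize $q\mapsto v$, use Lemma \ref{lem:vbinom} to evaluate the resulting partial sums, and match the surviving terms against $\Fri(\TT_i(\un{B}_j))$ computed via \cite[Lemma 3.6]{SZ25}. This is essentially parallel to the argument in the proof of Proposition \ref{prop:vsplit}, with the only new feature being the twist by $k_i^{-\ell}$ that needs to be tracked.

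The genuinely new case is $a_{ij}=a_{\tau i,j}=-1$. Here, the formula for $\TT_i(B_j^{[\ell]})$ from \cite{WZ25} is a triple sum over $(u,r,s)$ analogous to the one appearing in the proof of Proposition \ref{prop:vdiag}, but with the crucial difference that $B_i$ and $B_{\tau i}$ no longer commute (they satisfy a quasi-split Serre relation reflecting $a_{i,\tau i}=-1$). The plan is: first, to show that for $0<u<\ell$ the inner double sum vanishes after specialization, via the standard $q$-binomial identity in \cite[\S 1.3.4]{Lus93}; second, for $u=0$ and $u=\ell$ to isolate the boundary contributions with $r,s\in\{0,\ell\}$ which collapse to products of divided powers $B_i^{[\ell]}, B_{\tau i}^{[\ell]}, B_j^{[\ell]}$; and third, to identify the remaining mid-range contributions with $0<r<\ell$ or $0<s<\ell$ as Poisson brackets via \eqref{eq:iPoissonv} together with Lemma \ref{lem:vbinom}. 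The right-hand side $\Fri(\TT_i(\un{B}_j))$ is then obtained by expanding $\TT_i(\un{B}_j)$ through the quasi-split case of \cite[Lemma 3.6]{SZ25}, which involves iterated Poisson brackets together with $k_i^{-\ell}$-twisted multiplicative corrections arising from the non-vanishing quasi-split commutation.

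The main obstacle is the combinatorial bookkeeping for this last case. Unlike in Proposition \ref{prop:vdiag}, where $B_i$ and $B_{\tau i}$ commute and only the diagonal $k_i$-twist enters, the non-trivial relation between $B_i$ and $B_{\tau i}$ introduces additional cross-terms when commuting them past $B_j$, and these must be carefully absorbed into the Poisson-bracket expressions on the classical side. In particular, I expect a quasi-split analogue of Lemma \ref{lem:vbinom}, accounting for the extra $k_i$-monomials produced by the braid relations among $B_i, B_{\tau i}, B_j$, to be the technical heart of the computation. Once that identity is established, verifying the match with $\Fri(\TT_i(\un{B}_j))$ is a routine but tedious comparison.
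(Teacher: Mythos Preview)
Your case analysis is miscalibrated. When $a_{i,\tau i}=-1$, the vertices $i$ and $\tau i$ are adjacent in the Dynkin diagram; since $\g$ is semisimple of finite type, the diagram is a tree, so no third vertex $j$ can be adjacent to both. Hence the case $a_{ij}=a_{\tau i,j}=-1$ that you call ``genuinely new'' never arises, and the only nontrivial rank-two cases are $a_{ij}=-1,\ a_{\tau i,j}=0$ and its swap.

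Conversely, you have underestimated that remaining case. Because $\bs_i=s_is_{\tau i}s_i$ here, the relative braid operator $\TT_i$ moves $B_j$ through \emph{both} $B_i$ and $B_{\tau i}$ even when $a_{\tau i,j}=0$; the relevant formula (\cite[Theorem~6.17]{WZ25}) is a genuine triple sum
\[
\TT_i(B_j^{[\ell]})=\sum_{u=0}^\ell\sum_{s=0}^{\ell-u}\sum_{r=0}^{\ell-u}(-1)^{r+s}q^{r(u+1)+s(-2u+1)-u^2/2}k_i^{u}\,
\frac{q^{-\ell+u}\,B_{\tau i}^{[s]}B_i^{[r]}B_j^{[\ell]}B_i^{[\ell-u-r]}B_{\tau i}^{[\ell-u-s]}}{(q-q^{-1})^{2\ell-2u}[s]![r]![\ell-u-r]![\ell-u-s]!},
\]
not a single sum as you assert. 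The good news is that the machinery you sketched for the non-existent case is exactly what is needed here: specialize $q\mapsto v$, use centrality of $B_j^{[\ell]}$ and the $q$-binomial identity to kill the contributions with $0<u<\ell$, and then handle the boundary $u=0,\ell$ with Lemma~\ref{lem:vbinom} and \eqref{eq:iPoissonv}. Because $a_{\tau i,j}=0$ means $B_{\tau i}$ commutes with $B_j$, the noncommutativity of $B_i$ and $B_{\tau i}$ causes no extra cross-terms, no new analogue of Lemma~\ref{lem:vbinom} is needed, and the computation is essentially the same as in Proposition~\ref{prop:vdiag}. The classical side is $\TT_i(\un{B_j})=\tfrac14\{\{\un{B_j},\un{B_i}\},\un{B_{\tau i}}\}-\tfrac14\{\un{B_i}\cdot\un{B_j},\un{B_{\tau i}}\}-\tfrac14\un{B_{\tau i}}\{\un{B_j},\un{B_i}\}+\tfrac14\un{B_i}\cdot\un{B_{\tau i}}\cdot\un{B_j}+\un{k_i}\cdot\un{B_j}$ from \cite[Lemma~3.6]{SZ25}.
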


\begin{proof}
When $a_{i,\tau i}=-1$, the only nontrivial rank 2 cases involving vertices $i,j$ are $a_{ij}=-1,a_{\tau i,j}=0$ and $a_{\tau i,j}=-1,a_{i,j}=0$. It suffices to consider the first case $a_{ij}=-1,a_{\tau i,j}=0$.

By \cite[Lemma 3.6]{SZ25}, we have
\[
\TT_i(\un{B}_j)=\frac{1}{4}\big\{\{\un{B_j},\un{B_i}\},\un{B_{\tau i}}\big\}
-\frac{1}{4}\{ \un{B_i} \cdot \un{B_j},\un{B_{\tau i}}\}
-\frac{1}{4} \un{B_{\tau i}}\{ \un{B_j},\un{B_i}\}
+ \frac{1}{4}\un{B_i}\cdot \un{B_{\tau i}} \cdot \un{B_j} +\un{k_i}\cdot\un{B_j}.
\]
On the other hand, by \cite[Theorem 6.17]{WZ25}, we have
\begin{align}
\TT_i(B_j^{[\ell]})=\sum_{u=0}^\ell \sum_{s=0}^{\ell-u}\sum_{r=0}^{\ell-u} (-1)^{r+s} q^{r(u+1)+s(-2u+1)-\frac{u^2}{2}} k_i^u 
\frac{q^{-\ell+u}B_{\tau i}^{[s]}B_i^{[r]} B_{j}^{[\ell]}B_{i}^{[\ell-u-r]}B_{\tau i}^{[\ell-u-s]} }{(q-q^{-1})^{2\ell-2u}[s]![r]![\ell-u-r]![\ell-u-s]!}.
\end{align}
By similar arguments as in the proof of Proposition~\ref{prop:vdiag}, for any fixed $0<u<\ell$, the summand on the right-hand side is $0$ in $\Uiv$. Hence, we have in $\Uiv$,
\begin{align*}
\TT_i(B_j^{[\ell]})&= \pi_v^\imath \left(\sum_{s=0}^{\ell}\sum_{r=0}^{\ell} (-1)^{r+s} q^{r+s} 
\frac{B_{\tau i}^{[s]}B_i^{[r]} B_{j}^{[\ell]}B_{i}^{[\ell-r]}B_{\tau i}^{[\ell-s]} }{(q-q^{-1})^{2\ell}[s]![r]![\ell-r]![\ell-s]!} \right) + k_i^\ell B_j^{[\ell]}
\\
&=\frac{1}{4}\big\{\{B_j^{[\ell]},B_{i}^{[\ell]}\},B_{\tau i}^{[\ell]}\big\}
-\frac{1}{4\ell} \{B_{i}^{[\ell]} B_j^{[\ell]},B_{\tau i}^{[\ell]}\}
-\frac{1}{4\ell} B_{\tau i}^{[\ell]}\{ B_{j}^{[\ell]},B_{i}^{[\ell]}\}
+ \frac{1}{4\ell^2} B_{i}^{[\ell]} B_{\tau i}^{[\ell]} B_{j}^{[\ell]}
\\
&\quad +\frac{1-\ell}{4\ell} \{B_{i}^{[\ell]} B_j^{[\ell]},B_{\tau i}^{[\ell]}\}
-\frac{1-\ell}{4\ell^2} B_{i}^{[\ell]} B_{\tau i}^{[\ell]} B_{j}^{[\ell]}
+\frac{1-\ell}{4\ell} B_{\tau i}^{[\ell]}\{ B_{j}^{[\ell]},B_{i}^{[\ell]}\}
-\frac{1-\ell}{4\ell^2} B_{i}^{[\ell]} B_{\tau i}^{[\ell]} B_{j}^{[\ell]}
\\
&\quad+\frac{(1-\ell)^2}{4\ell^2} B_{i}^{[\ell]} B_{\tau i}^{[\ell]} B_{j}^{[\ell]}
+k_i^{-\ell} B_j^{[\ell]}
\\
&=\Fri(\TT_i(\un{B}_j)).
\end{align*}
(The computation involved here is essentially the same as the one in the proof of Proposition~\ref{prop:vdiag}.) The proof is completed.
\end{proof}

\end{document}